\numberwithin{equation}{section}
\theoremstyle{plain}
\newtheorem{thm}{Theorem}[section]
\newtheorem{thmx}{Theorem}
\newtheorem{prop}[thm]{Proposition}
\newtheorem{lem}[thm]{Lemma}
\newtheorem{cor}[thm]{Corollary}
\theoremstyle{definition}
\newtheorem{defn}[thm]{Definition}
\newtheorem{set}[thm]{Setup}
\newtheorem{rem}[thm]{Remark}
\newtheorem{ex}[thm]{Example}
\newcommand{\A}{{\mathcal{A}}}
\newcommand{\B}{{\mathcal{B}}}
\newcommand{\C}{{\mathcal{C}}}
\newcommand{\D}{{\mathcal{D}}}
\newcommand{\E}{{\mathcal{E}}}
\newcommand{\M}{{\mathcal{M}}}
\newcommand{\He}{{\mathcal{H}}}
\newcommand{\Hea}{{\mathcal{H}_\aff}}
\newcommand{\F}{{\mathsf{F}}}
\newcommand{\Fc}{{\mathcal{F}}}
\newcommand{\G}{{\mathsf{G}}}
\newcommand{\R}{{\mathsf{R}}}
\newcommand{\Ll}{{\mathsf{L}}}
\newcommand{\LM}{{\mathcal{L}^G_M}}
\newcommand{\RM}{{\mathcal{R}^G_M}}
\newcommand{\Ss}{{\mathcal{S}}}
\newcommand{\m}{{\mathfrak{m}}}
\newcommand{\T}{{\mathcal{T}}}
\newcommand{\W}{{\mathcal{W}}}
\newcommand{\XX}{{\mathbb{X}}}
\newcommand{\Z}{{\mathbb{Z}}}
\newcommand{\Coind}{{\operatorname{Coind}}}
\newcommand{\coker}{{\operatorname{coker}}}
\newcommand{\diag}{{\operatorname{diag}}}
\newcommand{\End}{{\operatorname{End}}}
\newcommand{\Ext}{{\operatorname{Ext}}}
\newcommand{\Ho}{{\operatorname{Ho}}}
\newcommand{\Hom}{{\operatorname{Hom}}}
\newcommand{\id}{{\operatorname{id}}}
\newcommand{\ind}{{\operatorname{ind}}}
\newcommand{\Ind}{{\operatorname{Ind}}}
\newcommand{\Ker}{{\operatorname{Ker}}}
\newcommand{\Loc}{{\operatorname{Loc}}}
\newcommand{\Mod}{{\operatorname{Mod}}}
\newcommand{\pd}{{\operatorname{pd}}}
\newcommand{\Rep}{{\operatorname{Rep}}}
\newcommand{\Res}{{\operatorname{Res}}}
\newcommand{\val}{{\operatorname{val}}}
\newcommand{\aff}{{\operatorname{aff}}}
\newcommand{\uu}[1]{\underline{\underline{#1}}}
\newcommand{\abs}[1]{\left| #1 \right|}
\renewcommand{\Im}{{\operatorname{Im}}}
\newenvironment{psmallmatrix}
  {\left(\begin{smallmatrix}}
  {\end{smallmatrix}\right)}
\begin{document}


\title{Parabolic induction in the homotopy category of pro-$p$ Iwahori-Hecke modules}
\date{}
\author{Nicolas Dupr\'e}
\address{Universit\"at Duisburg-Essen\\
Fakult\"at f\"ur Mathematik\\
Thea-Leymann-Stra{\ss}e 9\\
D--45127 Essen, Germany}
\email{nicolas.dupre@uni-due.de}


\begin{abstract}
Let $G$ be the group of rational points of a split connected reductive group over a non-archimedean local field of residue characteristic $p$, and let $\He$ denote the pro-$p$ Iwahori-Hecke algebra of $G$ over a field of characteristic $p$. We study the parabolic induction functor for $\He$-modules in terms of the Gorenstein projective model structures introduced by Hovey. Let $\Ho(\He)$ denote the associated homotopy category of this model structure. We show that $\Ho(\He)$ and its thick subcategory generated by the essential images of finitely many parabolic induction functors are related via a recollement of triangulated categories. We then investigate the isomorphism classes of simple $\He$-modules in $\Ho(\He)$ and give a complete classification for simple supersingulars when $G=\mathrm{GL}_n$.
\end{abstract}

\maketitle
\thispagestyle{empty}
\footnotetext{{\it 2020 Mathematics Subject Classification}. Primary 20C08, 18N40, 18N55.}

\setcounter{tocdepth}{1}
\tableofcontents


\section{Introduction}

\subsection{Background and motivation} Let $p$ be a prime and let $\mathfrak{F}$ be a non-archimedean local field of residue characteristic $p$. Let $G=\mathbb{G}(\mathfrak{F})$ be the group of $\mathfrak{F}$-rational points of a split connected reductive group $\mathbb{G}$ over $\mathfrak{F}$. Let $k$ be a field of characteristic $p$ and denote by $\Rep_k^\infty(G)$ the category of $k$-linear smooth representations of $G$, i.e. those representations such that the stabiliser of each vector is open. This category lies on one side of the mod-$p$ local Langlands program.

Let $I$ be a pro-$p$ Iwahori subgroup of $G$ and let $\He=k[I\backslash G/I]$ be the corresponding \emph{pro-$p$ Iwahori-Hecke algebra} of $G$, i.e.\ the convolution algebra of $I$-biinvariant compactly supported functions on $G$. Write $\Mod(\He)$ for the category of right $\He$-modules. Then taking $I$-invariants defines a functor $U:\Rep_k^\infty(G)\to\Mod(\He)$ which sends non-zero representations to non-zero modules. This functor is expected to provide a strong relation between the two categories, however its behaviour is not sufficiently understood beyond $G=\mathrm{GL}_2(\mathbb{Q}_p)$ and a few related cases (cf.\ the work of Ollivier \cite{Oll09}). In particular, one of the major open problems in the theory is to determine the relationship between the isomorphism classes of irreducible admissible smooth representations of $G$ and of simple $\He$-modules.

In previous work \cite{KD21} with Kohlhaase, we showed that $U$ restricts to an equivalence between an explicit full subcategory of $\Rep_k^\infty(G)$ and the full subcategory $\mathrm{GProj}(\He)$ of \emph{Gorenstein projective} $\He$-modules. These modules are a key component in the definition of a model structure on $\Mod(\He)$ due to Hovey \cite{Hov2}. We used this in \cite{KD21} to construct and study a model structure on $\Rep_k^\infty(G)$ and the derived functor $\mathbf{R}U$ between the associated homotopy categories. We generally obtained that that the formal properties of this functor are improved at the homotopy level, for instance $\mathbf{R}U$ is essentially surjective. This suggests that one should instead investigate the relationship between the \emph{homotopy classes} of irreducible admissible smooth representations of $G$ and of simple $\He$-modules.

This paper is concerned with the Hecke side of that question, i.e.\ we wish to study the simple $\He$-modules viewed as objects of the homotopy category $\Ho(\He)$ of $\Mod(\He)$. A first step towards this was given by Koziol in \cite{Koz} where the simple $\He$-modules of finite projective dimension were classified. When rephrased in our language, this gives a classification of the simple modules which are isomorphic to zero in $\Ho(\He)$. In particular, it was shown that the simple supersingular $\He$-modules are generically never zero in $\Ho(\He)$.

Our main goal in this work is to treat the harder problem of determining whether two nonisomorphic simple $\He$-modules of infinite projective dimension can become isomorphic in $\Ho(\He)$. If $M$ denotes a standard Levi subgroup of $G$ with associated Hecke algebra $\He_M$ then a key ingredient in Abe's construction of the simple modules in \cite{Abe19} is the \emph{parabolic induction} functor $\Ind^{\He}_{\He_M}:\Mod(\He_M)\to\Mod(\He)$. We therefore first aim to study the properties of $\Ind^{\He}_{\He_M}$ in the language of Hovey's Gorenstein projective model structure. This allows us to essentially reduce our question to the case of supersingular modules. In many situations of interest, we are then able to obtain a rather complete understanding of the situation (cf.\ \Cref{second_main_thm} \& \Cref{third_main_thm} below for details).

\subsection{Main results} We first show that the parabolic induction functor induces decompositions of $\Ho(\He)$ via various recollements. To state this more precisely, we note that $\Ho(\He)$ has the same class of objects as $\Mod(\He)$, being a localisation of it, and that it was proved by Hovey to be canonically a triangulated category. Fixing a finite collection $\M$ of proper standard Levi subgroups of $G$, we may therefore let $\Ss^G_\M$ be the thick subcategory of $\Ho(\He)$ generated by the essential images of $\Ind^{\He}_{\He_{M}}$ for all $M\in\M$. When $\M$ equals the set of all proper standard Levi subgroups of $G$, we simply write $\Ss^G_{\mathrm{all}}$ for this thick subcategory.

Also, associated to $G$ is a group $\Omega$ which is equal to the length zero elements in the Iwahori-Weyl group $W=N_G(T)/(T\cap I')$ (where $I'$ is the \emph{Iwahori} subgroup of $G$ with pro-$p$ radical $I$). This group $\Omega$ is known to be a finitely generated abelian group, and we denote by $\Omega_{\text{tor}}$ its (finite) torsion-subgroup. With this notation established, our first main result then says:

\begin{thmx}\label{first_main_thm}
\begin{enumerate}
\item The inclusion $i^G_\M:\Ss^G_\M\hookrightarrow \Ho(\He)$ is part of a recollement
\[
\xymatrix@C=0.8cm{
\Ss^G_\M \ar[rrr]^{i^G_\M} &&& \Ho(\He) \ar[rrr]^{\pi^G_\M} \ar @/_1.5pc/[lll]_{\mathcal{L}^G_\M } \ar
 @/^1.5pc/[lll]^{\mathcal{R}^G_\M} &&& \Ho(\He)_\M
\ar @/_1.5pc/[lll]_{\ell^G_\M} \ar @/^1.5pc/[lll]^{r^G_\M}
 }
\]
of triangulated categories, where $\Ho(\He)_\M=\Ho(\He)/\Ss^G_\M$.
\item Assume that $k$ is algebraically closed, that the root system of $\mathbb{G}$ is irreducible and that $p\nmid\abs{\Omega_{\text{tor}}}$. Suppose that $\m\in\Mod(\He)$ has finite length. Then the following are equivalent:
\begin{itemize}
    \item[(a)] $\m$ lies in the essential image of $\ell^G_{\mathrm{all}}$
    \item[(b)] $\m$ lies in the essential image of $r^G_{\mathrm{all}}$
    \item[(c)] the inclusion $\m_{\mathrm{ss}}\subseteq \m$ is an isomorphism in $\Ho(\He)$, where $\m_{\mathrm{ss}}$ denotes the largest supersingular submodule of $\m$.
\end{itemize}
\end{enumerate}
\end{thmx}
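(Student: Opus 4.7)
For part (i), I would exploit that parabolic induction fits into an adjoint triple $(\mathcal{L}^G_M, \Ind^{\He}_{\He_M}, \mathcal{R}^G_M)$ on the module categories, as established in earlier work on pro-$p$ Iwahori--Hecke modules. The first task is to verify that each functor in this triple is Quillen with respect to Hovey's Gorenstein projective model structures, so that the triple descends to a triangulated adjoint triple between $\Ho(\He_M)$ and $\Ho(\He)$ for each $M\in\M$. Assembling the images of compact generators as $M$ ranges over $\M$ exhibits $\Ss^G_\M$ as a compactly generated thick subcategory of $\Ho(\He)$, and hence as both localizing and colocalizing; standard Bousfield-style theory for compactly generated triangulated categories then produces the six-functor recollement.

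For part (ii), the main tool is Abe's parametrization of simple $\He$-modules by triples $(P,\sigma,Q)$ with $\sigma$ supersingular on a Levi $M_P$: under the hypotheses, every non-supersingular simple arises as a subquotient of some $\Ind^{\He}_{\He_{M}}(\sigma')$ with $M$ a proper standard Levi, and therefore lies in $\Ss^G_{\mathrm{all}}$. The key intermediate step is a block-type decomposition $\m \cong \m_{\mathrm{ss}} \oplus \m'$ for finite length modules, with $\m'$ having only non-supersingular composition factors. This follows from vanishing of $\Ext^1_{\He}$ between supersingular and non-supersingular simples, which I would extract from the central character analysis of Ollivier--Schneider once the hypotheses ($k$ algebraically closed, irreducible root system, $p\nmid\abs{\Omega_{\text{tor}}}$) are imposed.

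Granting the decomposition, we have $\m' \in \Ss^G_{\mathrm{all}}$ (its composition factors are, and $\Ss^G_{\mathrm{all}}$ is thick), while $\m_{\mathrm{ss}}$ lies in both orthogonals ${}^\perp\Ss^G_{\mathrm{all}}$ and $(\Ss^G_{\mathrm{all}})^\perp$ inside $\Ho(\He)$ by the same Ext-vanishing, lifted from the derived to the stable category. Condition (c) is then equivalent to $\m' \cong 0$ in $\Ho(\He)$, i.e.\ $\m'$ has finite Gorenstein projective dimension. For (a) $\Leftrightarrow$ (c) (and symmetrically for (b)): the essential image of $\ell^G_{\mathrm{all}}$ in the recollement is precisely ${}^\perp\Ss^G_{\mathrm{all}}$, so (a) implies that $\m'$, as a direct summand of $\m$, lies in $\Ss^G_{\mathrm{all}} \cap {}^\perp\Ss^G_{\mathrm{all}}$; this forces $\Hom_{\Ho(\He)}(\m',\m')=0$ and hence $\m'\cong 0$ in $\Ho(\He)$. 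The converse is immediate because $\m_{\mathrm{ss}}$ already lies in both orthogonals, so that $\m \cong \m_{\mathrm{ss}}$ in $\Ho(\He)$ lands in each essential image.

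The main obstacle I anticipate is the block decomposition: one needs not only Ext-vanishing at the level of $\Mod(\He)$, but also a clean transport of the resulting orthogonality to the stable homotopy category built from Gorenstein projectives. This is precisely where the arithmetic assumptions on $p$, $k$ and the root system will enter essentially, and it is where I would expect most of the technical work to lie.
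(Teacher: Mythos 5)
Your plan for part (i) starts along the same lines as the paper, namely verifying that the adjoint triple $(\mathcal{L}^G_M,\Ind^{\He}_{\He_M},\mathcal{R}^G_M)$ is Quillen for Hovey's model structures and descends to the homotopy level. But the pivot to compact generation to handle the case of several Levi subgroups has a gap. If $\Ss$ is a localizing subcategory of a compactly generated triangulated category $\T$ and $\Ss$ is generated by compacts, Brown representability gives a \emph{right} adjoint to the inclusion $\Ss\hookrightarrow\T$ and a \emph{right} adjoint to the quotient $\T\to\T/\Ss$ (and the quotient is again compactly generated). That is only the colocalization half of \Cref{recollement_defn}: it does not by itself produce the \emph{left} adjoint $\mathcal{L}^G_\M$ to the inclusion, nor the left adjoint $\ell^G_\M$ to the quotient. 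The paper's argument is different: it induces on $|\M|$, using \Cref{triple_fun} and \Cref{double_recollement} to build an explicit adjoint triple between quotient categories coming from the concrete module-theoretic adjoints to parabolic induction, and this is precisely what supplies both sides of the recollement simultaneously in \Cref{many_Levi_rec}. As written, your route would produce a semiorthogonal (Bousfield) localization rather than a full recollement unless you separately justify the left adjoints.

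For part (ii), the decomposition $\m\cong\m_{\mathrm{ss}}\oplus\m'$ is not justified and is in general false. A splitting of the exact sequence $0\to\m_{\mathrm{ss}}\to\m\to\m_{\mathrm{non\text{-}ss}}\to 0$ would require vanishing of $\Ext^1_\He(\m_{\mathrm{non\text{-}ss}},\m_{\mathrm{ss}})$, and the $\Ext$-orthogonality established in the paper (cf.\ \Cref{ss_ext_orthogonal_finite}) goes the other way: it controls $\Ext^i_\He(\Ind^{\He}_{\He_M}(\mathfrak{n}),\m)$ with $\m$ supersingular. The module $\m_{\mathrm{ss}}$ is constructed only as a submodule, not a direct summand. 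The paper's \Cref{finite_dim_mod_ss} instead works with the distinguished triangle coming from $0\to\m_{\mathrm{ss}}\to\m\to\m_{\mathrm{non\text{-}ss}}\to 0$ and reduces to showing that hypothesis (a) or (b) forces $\m_{\mathrm{non\text{-}ss}}$ to have finite projective dimension, hence to vanish in $\Ho(\He)$, at which point the inclusion $\m_{\mathrm{ss}}\subseteq\m$ is a weak equivalence. The arithmetic hypotheses also enter in a different place than you expect: the irreducibility of the root system is used so that a composition factor $\mathfrak{s}=I_\He(P,\mathfrak{n},Q)$ of $\m_{\mathrm{non\text{-}ss}}$ with $P(\mathfrak{n})=G$ must have $P=B$, whence $\pd_\He(\mathfrak{s})<\infty$ by the finite global dimension of $\He_T$ (\Cref{torus_finite_global_dim}); and $p\nmid\abs{\Omega_{\text{tor}}}$ enters through Koziol's criterion for projective dimension, not through any block-theoretic splitting.
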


This is proved in \Cref{many_Levi_rec} and \Cref{finite_dim_mod_ss}. Roughly speaking, part (i) of \Cref{first_main_thm} says that the triangulated category $\Ho(\He)$ can be decomposed into two orthogonal thick subcategories, one equal to $\Ss^G_\M$ and the other one equal to $\Ho(\He)_\M$, where the latter category is embedded fully faithfully into $\Ho(\He)$ via either one of the functors $\ell^G_\M$ or $r^G_\M$ (cf.\ \Cref{recollement_defn} and the discussion following it for details). We point out that while the proof of (i) is mostly formal, it however relies on the fact that parabolic induction is fully faithful (cf.\ \Cref{fully_faithful}), which appears to be a new result.

We also point out that the recollement in \Cref{first_main_thm}(i) can be entirely reconstructed from various new model structures on $\Mod(\He)$. This is proved in the more general context of a recollement in the homotopy category of an abelian model category, see \Cref{recollement_model}, \Cref{apply_Becker}, \Cref{other_proj_inj} and \Cref{last_piece} for the details.

The proof of part (ii) of \Cref{first_main_thm} relies on Abe's classification of the simple $\He$-modules in \cite{Abe19}, on Koziol's results on the projective dimension of simple modules from \cite{Koz}, and on a lift to the homotopy level of a computation of Abe of the image of the simple modules under the adjoints of the parabolic induction functor (cf.\ \Cref{Abe_formula_derived}). As a consequence of the latter computation, we also obtain a new proof of a result of Koziol and reduce the study of isomorphism classes of simple $\He$-modules in $\Ho(\He)$ to the case of simple supersingular $\He$-modules, cf. \Cref{half_Koziol}.

Having established this, the next step is to determine when two simple supersingular $\He$-modules can become isomorphic in $\Ho(\He)$. Here the main difficulty is that computing the Hom spaces in $\Ho(\He)$ a priori requires to compute explicit Gorenstein projective replacements in $\Mod(\He)$. Instead, inspired by the arguments of Koziol in \cite{Koz} and by the functorial resolutions of Ollivier-Schneider \cite{OS14}, we use Bruhat-Tits theory. If $F$ denotes a face of the Bruhat-Tits building contained in the closure of a chosen chamber $C$, then there is an associated subalgebra $\He_F$ of $\He$. These subalgebras together generate a Gorenstein subalgebra $\Hea$ of $\He$ called the \emph{affine Hecke algebra}. The simple supersingular Hecke modules were classified by Vign\'eras, cf.\ \cite{Vign3}, and a key input in this classification are the so-called supersingular characters of $\Hea$. Our main insight is to consider the `diagonal' functor
$$
\Delta=(\Res^\Hea_{\He_F})_F:\Mod(\Hea)\to\prod_{F\subseteq \overline{C}}\Mod(\He_F)
$$
which is right Quillen and in fact determines the model structure on $\Mod(\Hea)$ via a right transfer property, cf.\ \Cref{facts_H_F}. This indicates that we should in principle be able to do computations in the product category $\prod_{F\subseteq \overline{C}}\Ho(\He_F)$ instead of $\Ho(\Hea)$. The advantage here is that there is no need to compute replacements in order to compute Hom spaces in $\Ho(\He_F)$ for any $F$, because $\He_F$ is known to be self-injective and this implies that every $\He_F$-module is Gorenstein projective.

More specifically, we have that $\Delta$ lifts to a functor $\Ho(\Delta):\Ho(\Hea)\to\prod_{F\subseteq \overline{C}}\Ho(\He_F)$. One major technical input in this paper is the computation of the image under $\Ho(\Delta)$ of the Hom space in $\Ho(\Hea)$ between two supersingular characters. Roughly, one gets that this image is always zero unless the root system is of type $\Phi_1\times \Phi_2$, where $\Phi_1$ is irreducible of rank 2 and $\Phi_2$ is a product of $A_1$'s, and the two characters are of a very explicit form which we will denote here by $\chi_1^{\text{special}}$ and $\chi_2^{\text{special}}$, see \Cref{characters_Haff_Ho} for the details. In order to go from supersingular characters of $\Hea$ to simple supersingular modules, we then analyse the actions of the group $\Omega$ on these characters and on the Hom spaces between these characters. In particular, given a supersingular character $\chi$ we denote by $\Omega_\chi$ its stabiliser in $\Omega$. We also note that $\Omega$ acts on the faces $F$ contained in $\overline{C}$, and we denote the stabiliser of a face $F$ by $\Omega_F$. By combining this analysis of the $\Omega$-action on Hom spaces in $\Ho(\Hea)$, cf.\ \Cref{Ho_hom_invariants2}, with the above computation of the image of Hom spaces under $\Ho(\Delta)$, we obtain the following:

\begin{thmx}\label{second_main_thm}
Assume that $k$ is algebraically closed and let $\m$ and $\m'$ be two simple supersingular $\He$-modules of infinite projective dimension.
\begin{enumerate}
    \item Suppose that $\mathbb{G}$ is semisimple and simply-connected. Then $\m\cong\m'$ in $\Ho(\He)$ if and only if $\m\cong\m'$ in $\Mod(\He)$.
    \item Suppose that if one of $\Res^\He_\Hea(\m)$ and $\Res^\He_\Hea(\m')$ contains $\chi_1^{\text{special}}$ as a submodule then the other one does not contain $\chi_2^{\text{special}}$. Furthermore, if we fix a supersingular character $\chi$ occuring in $\Res^\He_\Hea(\m)$, assume that there exists a face $F\subseteq\overline{C}$ such that $\Omega_\chi\subseteq \Omega_F$ and $\Res^\Hea_{\He_F}(\chi)$ is not projective. Then $\m\cong\m'$ in $\Ho(\He)$ if and only if $\m\cong\m'$ in $\Mod(\He)$.
\end{enumerate}
\end{thmx}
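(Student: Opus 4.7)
Since the ``only if'' direction is immediate, the content lies in showing that an isomorphism $\m\cong\m'$ in $\Ho(\He)$ lifts to an isomorphism in $\Mod(\He)$. My plan is to push the problem along $\Res^\He_\Hea$ to the affine Hecke algebra $\Hea$, and then further along $\Ho(\Delta)$ into the product $\prod_{F\subseteq\overline{C}}\Ho(\He_F)$, where each factor is the classical stable module category of a self-injective algebra, so that Hom spaces can be accessed without explicit Gorenstein projective replacements.

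First, $\Res^\He_\Hea$ is exact and preserves Gorenstein projectives, so it descends to a triangulated functor on homotopy categories. By Vign\'eras' classification, every simple supersingular $\He$-module $\m$ is determined by a pair $(\chi,\psi)$, where $\chi$ is a supersingular character of $\Hea$ and $\psi$ is a character of its $\Omega$-stabiliser $\Omega_\chi$, and $\Res^\He_\Hea(\m)$ is the direct sum of the $\Omega$-translates of $\chi$. Applying $\Ho(\Delta)$ to the induced isomorphism in $\Ho(\Hea)$, I land in a product of stable categories where a module vanishes iff it is projective. Under the hypothesis in (ii), the non-projectivity of $\Res^\Hea_{\He_F}(\chi)$ for some $F$ with $\Omega_\chi\subseteq\Omega_F$ ensures that $\Ho(\Delta)(\chi)\ne 0$, whence $\chi\ne 0$ already in $\Ho(\Hea)$. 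Combining this with \Cref{characters_Haff_Ho}, which forces Hom spaces between distinct supersingular characters of $\Hea$ to vanish in $\Ho(\Hea)$ outside the excluded $(\chi_1^{\text{special}},\chi_2^{\text{special}})$-configuration, I conclude that the $\Omega$-orbit of supersingular characters attached to $\m$ coincides with that of $\m'$; in particular the character parts agree.

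To finish, I would match the $\Omega_\chi$-twists using \Cref{Ho_hom_invariants2}, which computes the $\Omega_\chi$-action on $\Hom_{\Ho(\Hea)}(\chi,\chi)$: the isomorphism $\m\cong\m'$ in $\Ho(\He)$ is automatically $\Omega$-equivariant, and descending this equivariance to the character level should force $\psi=\psi'$, giving $\m\cong\m'$ in $\Mod(\He)$. Part (i) is the special case where $\mathbb{G}$ is semisimple simply-connected; then the cocharacter lattice coincides with the coroot lattice, $\Omega=1$, $\He=\Hea$, and $\m,\m'$ are themselves supersingular characters with trivial stabilisers. The face condition from (ii) reduces to non-projectivity of $\m$ over some $\He_F$, which holds since $\m$ has infinite projective dimension over $\He=\Hea$, while the potential exceptional $(\chi_1^{\text{special}},\chi_2^{\text{special}})$-case must be disposed of separately by showing directly that any nonzero map between those two characters in $\Ho(\He)$ fails to be invertible.

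The main obstacle is this last matching step: transferring the character-level identification back to a module-level isomorphism requires tracking how $\Omega$ acts on Hom spaces in $\Ho(\Hea)$ via \Cref{Ho_hom_invariants2} and reconciling this with the $\Omega$-twist data classifying simple supersingular $\He$-modules; in addition, the exceptional case in part (i) will need a careful direct analysis of the two distinguished characters to confirm that the nonzero morphism between them is not invertible in $\Ho(\He)$.
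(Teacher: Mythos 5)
Your outline follows the paper's route: push along $\Res^\He_\Hea$ and then $\Ho(\Delta)$ into the product of stable categories of the self-injective $\He_F$, invoke \Cref{characters_Haff_Ho} to force the underlying supersingular $\Hea$-characters to be $\widetilde{\Omega}$-conjugate, and then match the $\widetilde{\Omega}_\chi$-twist data using the $\Omega$-equivariance structure of \Cref{Ho_hom_invariants2}; part~(i) collapses to the $\Hea$-level statement since $\Omega$ is trivial and $\He=\Hea$.

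The step you flag as the main obstacle is precisely where the paper's \Cref{Omega_iso} does the work, and you have slightly misread the role of the hypothesis $\Omega_\chi\subseteq\Omega_F$. It is not there to detect that $\chi\ne 0$ in $\Ho(\Hea)$: that is already guaranteed by $\pd_\He(\m)=\infty$ together with \Cref{Koziol_ss_thm} and \Cref{facts_H_F}(iii), and \emph{some} face with non-projective restriction always exists. The face condition enters in the twist-matching. After the reduction of \Cref{same_character} one may assume $\chi=\chi'$, and by \Cref{characters_Haff_Ho}(ii) the restriction of any $\Ho(\He)$-isomorphism to $\Ho(\He_F)$ is a diagonal matrix with respect to the decomposition \eqref{n_decomp}; non-projectivity of $\chi|_{\He_F}$ plus \Cref{basic_htpy_lemma} upgrades each diagonal entry to an actual $\He_F$-linear isomorphism, and \Cref{Ho_hom_invariants2} shows it is $\widetilde{\Omega}_{F,\chi}$-equivariant. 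This only gives $V\cong V'$ as representations of $\widetilde{\Omega}_{F,\chi}=\widetilde{\Omega}_F\cap\widetilde{\Omega}_\chi$, which could be a proper subgroup of $\widetilde{\Omega}_\chi$; the hypothesis $\Omega_\chi\subseteq\Omega_F$ is imposed exactly to force $\widetilde{\Omega}_{F,\chi}=\widetilde{\Omega}_\chi$, so that the resulting isomorphism of twist characters is at the level of the full stabiliser and therefore yields $\m\cong\m'$ in $\Mod(\He)$. Making this dependence explicit, rather than tying the face condition to non-triviality, is the missing content in your sketch.
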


The above is a combination of \Cref{classification_affine} and \Cref{imperfect_theorem}. We then use these results to give a complete classification of the isomorphism classes of simple supersingular modules in the situation where $G$ is a product of $\mathrm{GL}_n$'s. Roughly, we then get that two nonisomorphic simple supersingular $\He$-modules almost never become isomorphic in $\Ho(\He)$ and when they do it is only in a restricted family of cases. In what follows, we denote by $\widetilde{\Omega}$ the lift of $\Omega$ to $\widetilde{W}=N_G(T)/(T\cap I)$. This group embeds canonically as a subgroup of $\He^\times$.

\begin{thmx}[{\Cref{last_thm}}]\label{third_main_thm}
Assume that $k$ is algebraically closed and suppose that $G$ is a product of $\mathrm{GL}_{n}$'s. Let $\m$ and $\m'$ be two simple supersingular $\He$-modules of infinite projective dimension.
\begin{enumerate}
    \item If $G$ does not have root system of type $A_2\times A_1\times\cdots\times A_1$ (possibly empty product of $A_1$'s), then $\m\cong\m'$ in $\Ho(\He)$ if and only if $\m\cong\m'$ in $\Mod(\He)$.
    \item If $G$ does have root system of type $A_2\times A_1\times\cdots\times A_1$ (possibly empty product of $A_1$'s), then $\m\cong\m'$ in $\Ho(\He)$ if and only if either $\m\cong\m'$ in $\Mod(\He)$ or, up to swapping $\m$ and $\m'$, we have that $\chi_1^{\text{special}}$ and $\chi_2^{\text{special}}$ occur as submodules of $\Res^\He_\Hea(\m)$ and $\Res^\He_\Hea(\m')$ respectively and $\m\cong \m'$ as representations of $\widetilde{\Omega}$.
\end{enumerate}
\end{thmx}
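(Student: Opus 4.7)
The plan is to reduce everything to \Cref{second_main_thm}(ii). Since $G=\prod\mathrm{GL}_{n_i}$ is not simply connected, part (i) of \Cref{second_main_thm} is unavailable, but part (ii) applies whenever both of its hypotheses can be checked. The crucial first observation is that the exceptional characters $\chi_1^{\text{special}},\chi_2^{\text{special}}$ exist only when the root system of $\mathbb{G}$ splits as $\Phi_1\times\Phi_2$ with $\Phi_1$ irreducible of rank $2$, and for products of $\mathrm{GL}_n$'s the only type-$A$ irreducible root system of rank $2$ is $A_2$. Consequently the special character hypothesis of \Cref{second_main_thm}(ii) can fail \emph{only} when $G$ has root system of type $A_2\times A_1\times\cdots\times A_1$; this aligns exactly with the dichotomy between (i) and (ii).

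For case (i), as well as the subcase of (ii) where the special characters do not simultaneously appear in the prescribed way, we only have to verify the second hypothesis of \Cref{second_main_thm}(ii): given a supersingular character $\chi$ in $\Res^\He_\Hea(\m)$, produce a face $F\subseteq\overline{C}$ with $\Omega_\chi\subseteq\Omega_F$ such that $\Res^\Hea_{\He_F}(\chi)$ is not projective. For products of $\mathrm{GL}_n$'s the group $\Omega$ is a product of infinite cyclic groups rotating the fundamental alcove in each factor, the stabilisers $\Omega_\chi$ and $\Omega_F$ are completely explicit, and the existence of some $F$ at which $\Res^\Hea_{\He_F}(\chi)$ fails to be projective follows from the assumption that $\m$ has infinite projective dimension together with \Cref{half_Koziol}. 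A short combinatorial check that one can simultaneously arrange $\Omega_\chi\subseteq\Omega_F$, using the explicit shape of supersingular characters from Vign\'eras's classification, then completes the verification, and \Cref{second_main_thm}(ii) yields $\m\cong\m'$ in $\Mod(\He)$.

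The remaining case is when $G$ has root system $A_2\times A_1\times\cdots\times A_1$ and, up to swapping, $\chi_1^{\text{special}}$ and $\chi_2^{\text{special}}$ genuinely occur as submodules of $\Res^\He_\Hea(\m)$ and $\Res^\He_\Hea(\m')$ respectively. Here \Cref{second_main_thm} cannot be invoked, and we must compute $\Hom_{\Ho(\He)}(\m,\m')$ directly. The tool is the factorisation through $\Ho(\Delta)$: the main computation of $\Ho(\Delta)$ on Hom spaces between supersingular characters (\Cref{characters_Haff_Ho}) shows that the only possibly nonzero contribution comes from precisely this pair of special characters. Lifting from $\Hea$ to $\He$ via the $\Omega$-action analysis of \Cref{Ho_hom_invariants2}, and combining with the Abe--Vign\'eras parametrisation of simple supersingular $\He$-modules by $\widetilde{\Omega}$-orbits of characters, identifies $\Hom_{\Ho(\He)}(\m,\m')$ with a space of $\widetilde{\Omega}$-equivariant maps between the underlying $\widetilde{\Omega}$-representations, forcing an isomorphism in $\Ho(\He)$ to be one of $\widetilde{\Omega}$-representations. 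The hard part is the bookkeeping of stabilisers, face-incidences and induction/restriction adjunctions required to upgrade the $\Hea$-level computation to the final $\widetilde{\Omega}$-equivariant statement at the $\He$-level.
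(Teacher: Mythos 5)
The proposal's plan of reducing (i) and most of (ii) to \Cref{imperfect_theorem} (i.e.\ \Cref{second_main_thm}(ii)) founders on a case the paper has to treat separately. You claim that the hypothesis ``there exists a face $F\subseteq\overline{C}$ with $\Omega_\chi\subseteq\Omega_F$ and $\Res^\Hea_{\He_F}(\chi)$ not projective'' can be arranged by a ``short combinatorial check.'' That check fails when $S=S_\xi$ and the stabiliser $\widetilde{\Omega}_\chi$ is as small as possible, namely $\widetilde{\Omega}_\chi=\widetilde{\Omega}_{T'}\rtimes(\langle\hat\omega_1^2\rangle\times\cdots\times\langle\hat\omega_r^2\rangle)$ with all $n_i$ even. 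The paper records this obstruction explicitly in \Cref{not_apply}: when $S=S_\xi$, non-projectivity of $\chi|_{\He_F}$ forces $S_F$ to contain two adjacent nodes of some affine cycle $S_i$, and then rotating by $\omega_i^2$ cannot fix $F$ unless $S_F\supseteq S_i$, which is forbidden for a face in $\overline{C}$. So $\widetilde{\Omega}_\chi\nsubseteq\widetilde{\Omega}_F$ for \emph{every} face with non-projective restriction, and \Cref{imperfect_theorem} is never applicable. This case (which occurs, e.g., for $\mathrm{GL}_2\times\mathrm{GL}_2$ or $\mathrm{GL}_4$ with $S_\xi=S$) is genuinely inside case (i) of the theorem and is not covered by your reduction.

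The paper deals with this via a separate argument (\Cref{case_d_equal_2}), which does \emph{not} go through \Cref{imperfect_theorem} at all: it chooses faces $F$, $F\omega_i^{-1}$, $F'$ carefully, writes the restriction of the isomorphism $f$ as block-diagonal matrices using \Cref{Omega_iso} and \Cref{basic_htpy_lemma}, and then exploits the compatibility $f_{F}=f_{F\omega_i^{-1}}\cdot\hat\omega_i$ (\Cref{action_Omega_faces}) to extract the matrix identity $A=B^{-1}AB'$ forcing $\lambda_i=\lambda_i'$. You will need an analogue of this argument; it is not subsumed by \Cref{second_main_thm}(ii) and is arguably the most delicate point of the proof. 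Your sketch for the remaining $A_2\times A_1\times\cdots$ case with the two special characters is directionally reasonable, but note that the ``if'' direction requires actually \emph{constructing} an isomorphism in $\Ho(\He)$; the paper does so by factoring $\He\cong\He_1\otimes_k\He_{\geq 2}$, invoking the explicit $\mathrm{GL}_3$ computation of \Cref{GL3_exception}, and tensoring by the finite-projective-dimension factor via \Cref{tensor_lemma} --- this is more than an identification of Hom spaces.
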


In the case $G=\mathrm{GL}_n$, we note in particular that \Cref{third_main_thm} says that two simple supersingular $\He$-modules are isomorphic in $\Ho(\He)$ if and only if they are isomorphic in $\Mod(\He)$, except if $n=3$. As the $n$-dimensional simple supersingular $\He$-modules are in bijection with certain irreducible $n$-dimensional Galois representations by the work of Grosse-Kl\"onne \cite{GK16}, our results show that the modules appearing on the Hecke side of this bijection are generically still non-isomorphic in $\Ho(\He)$.

\subsection{Organisation of the paper} In \S2.1-2.3 we recall background material on pro-$p$ Iwahori-Hecke algebras, parabolic induction and simple $\He$-modules, and we prove that parabolic induction is fully faithful. In \S2.4-2.5 we recall the notion of an abelian model category, left/right transfers of model structures and the definition of Hovey's Gorenstein projective model structure, and we prove some general preparatory lemmas. 

In \S3.1-3.2, we study the properties of the functor of parabolic induction in the language of model categories, culminating with the proof of \Cref{first_main_thm}(i). We only use the fact that parabolic induction is fully faithful, has both a left and a right adjoint, and its left adjoint is exact. Consequently, \S3.1 is written in a higher level of generality (cf.\ \Cref{setup1}). In \S3.3, we study the relation between the recollement of \Cref{first_main_thm}(i) and supersingular modules and prove \Cref{first_main_thm}(ii).

In \S4.1, we investigate recollements in the homotopy category of an abelian model category $\A$. We show that if $\A$ is equipped with both a projective and injective model structure with the same class of trivial objects, then any recollement in $\Ho(\A)$ can be reconstructed from new model structures on $\A$. In the case of Hecke modules, we show that two of these new model structures can be right transferred to $\Rep_k^\infty(G)$. In \S4.2, we show that these new model structures are Bousfield localisations of the original model structures on $\A$.

Finally, in \S5 we compute the isomorphism classes of simple supersingular $\He$-modules. After recalling the necessary background on the affine Hecke algebra and supersingular characters in \S5.1, we study the image of the Hom space between two supersingular characters under the functor $\Ho(\Delta)$ in \S5.2, proving \Cref{second_main_thm}(i). In \S5.3, we study the action of $\Omega$ on the Hom spaces in $\Ho(\Hea)$, proving \Cref{second_main_thm}(ii). We finish in \S5.4 by analysing the case where $G$ is a product of $\mathrm{GL}_n$'s and prove \Cref{third_main_thm}.

\subsection{Acknowledgments} We would like to thank Jan Kohlhaase and Claudius Heyer for helpful conversations and for giving comments on earlier versions of this paper. The author was supported by the {\it Deutsche Forschungsgemeinschaft} (DFG, German Research Foundation) within the project {\it Smooth modular representation theory of $p$-adic reductive groups}, project number 435414187. The author is also a member of the DFG Research Training Group 2553 {\it `Symmetries and Classifying Spaces -- Analytic, Arithmetic and Derived'}. The financial support of the DFG is gratefully acknowledged. We thank the anonymous referees for providing detailed feedback and for making several suggestions which helped improve the paper.

\subsection{Notation and conventions} A class of objects of a category $\C$ will usually be identified with the corresponding full subcategory. If we denote an adjunction by $F:\C\rightleftarrows\D:U$ then $F$ is always assumed to be left adjoint to $U$. The unit (resp.\ the counit) of an adjunction will always be denoted by $\eta$ (resp.\ $\varepsilon$). When such an adjunction is a Quillen adjunction between model categories, we denote by $\mathbf{L}F:\Ho(\C)\rightleftarrows\Ho(\D):\mathbf{R}U$ the total derived adjunction between the homotopy categories. Given a functor $F:\C\to\D$, we denote by $\Im(F)$ its essential image. We also say that $F$ {\it preserves} (resp.\ {\it reflects}) a property (P) if $F*$ (resp.\ $*$) has property (P) whenever $*$ (resp.\ $F*$) does.

For any unital ring $R$ we denote by $\Mod(R)$ the category of right $R$-modules. Given $\m,\mathfrak{n}\in\Mod(R)$ we write $\Hom_R(\m,\mathfrak{n})$ for the set of $R$-linear maps from $\m$ to $\mathfrak{n}$, and we denote by $\pd_R(\m)$ and $\id_R(\m)$ the projective and injective dimensions of $\m$, respectively. For any topological group $J$ and field $k$, we denote by $\Rep_k^\infty(J)$ the category of $k$-linear smooth representations of $J$, i.e.\ the category of all $k$-vector spaces $V$ carrying a $k$-linear action of $J$ such that the stabiliser of any $v\in V$ is open in $J$. The representation $V=k$ equipped with the trivial $J$-action will be denoted by $\mathbf{1}\in \Rep_k^\infty(J)$.  Finally, for any $V\in \Rep_k^\infty(J)$ and any subgroup $J'\leq J$ we write $V^{J'}$ for the vector subspace of $J'$-invariants, i.e.\ of $v\in V$ such that $j\cdot v=v$ for all $j\in J'$.

The conventions for pro-$p$ Iwahori-Hecke algebras across the literature can differ depending on the authors. Our previous work \cite{KD21} as well as \cite{OS14} define $\He=\End_G(\ind_I^G(\mathbf{1}))^{\text{op}}$ and work with \emph{left} $\He$-modules, while the literature on parabolic induction for Hecke modules defines $\He=\End_G(\ind_I^G(\mathbf{1}))$ and works with \emph{right} $\He$-modules. Since this paper relies on many existing computations with parabolic induction, and in order to avoid confusions with the notions of positive/negative elements, we will stick with the latter convention and work with right $\He$-modules.

\section{Reminders on pro-$p$ Iwahori-Hecke algebras and model structures}

\subsection{The Bruhat-Tits building and Weyl groups} Let $p$ be a prime and let $\mathfrak{F}$ be a nonarchimedean local field of residue field $\mathbb{F}_q$ of characteristic $p$. Let $G=\mathbb{G}(\mathfrak{F})$ denote the group of $\mathfrak{F}$-rational points of a split connected reductive algebraic group defined over $\mathfrak{F}$. We fix a maximal split $\mathfrak{F}$-torus $\mathbb{T}$ of $\mathbb{G}$ and let $\mathbb{C}$ denote the connected component of the centre of $\mathbb{G}$. We denote by $T=\mathbb{T}(\mathfrak{F})$ and $Z=\mathbb{C}(\mathfrak{F})$ the corresponding groups of $\mathfrak{F}$-rational points. Let $r_{ss}$, resp.\ $r_Z$, denote the semisimple rank of $G$, resp.\ the rank of $Z$.

We let $\mathscr{X}$ denote the semisimple Bruhat-Tits building of $G$. In the standard apartment $\mathscr{A}:=X_*(\mathbb{T}/\mathbb{C})\otimes_{\Z}\mathbb{R}$ of $\mathscr{X}$, we fix a chamber $C$ and a hyperspecial vertex $x_0$ such that $x_0\in \overline{C}$. Given a face $F$ in $\mathscr{X}$, we let $P_{F}$ denote the parahoric subgroup of $G$ associated to $F$ and $P_F^\dagger$ the stabiliser in $G$ of the face $F$. The group $P_F$ has a pro-$p$ radical which we denote by $I_F$. In particular, we set $I=I_C$ and $I'=P_C$ which are called a pro-$p$ Iwahori subgroup and Iwahori subgroup of $G$, respectively.

The chamber $C$ determines a set $\Phi^+$ of positive roots of the root system $\Phi\subseteq X^*(\mathbb{T}/\mathbb{C})$ of $(\mathbb{G},\mathbb{T})$. We will view elements of $\Phi$ as characters of $T$. If $\Phi'$ is an irreducible component of $\Phi$, we write $\mathrm{rk}(\Phi')$ to denote its rank. We let $\Pi$ denote the basis of $\Phi$ defined by $\Phi^+$, and $B=T\ltimes U$ the Borel subgroup containing $T$ defined by $\Phi^+$, where $U$ is the unipotent radical of $B$. A \emph{standard parabolic subgroup} $P=M\ltimes N$ is any parabolic subgroup of $G$ containing $B$. The corresponding Levi subgroup $M$ will also be called \emph{standard}. Given a standard parabolic subgroup $P=MN$, we let $\Pi_M$ (resp.\ $\Phi_M$, resp.\ $\Phi_M^+$) be the corresponding simple roots (resp.\ root system, resp.\ positive roots) defined by $M$. We will sometimes also write $\Pi_P$ to denote this same set of simple roots.

Recall that the affine functions on $\mathscr{A}$ given by
$$
(\alpha, m):=\alpha(\bullet)+m\quad \text{for $\alpha\in\Phi$ and $m\in\Z$}
$$
are called the affine roots (note that we are implicitly using splitness of our group, see \cite[\S 4.2]{OS14} for the general definition). Identifying $\Phi$ with the affine roots of the form $(\alpha, 0)$, we will view $\Phi$ as a subset of the set $\Phi_\aff$ of affine roots. The chamber $C$ then also determines a set of positive affine roots $\Phi^+_\aff$ defined as those which take non-negative value on $C$. There is a partial order on $\Phi$ given by $\alpha\leq \beta$ if and only if $\beta-\alpha$ is a non-negative integral linear combination of elements of $\Pi$. We let $\Phi^{\min}\subset \Phi$ denote the set of minimal elements with respect to this partial order, and let $\Pi_\aff:=\Pi\cup\{(\alpha, 1): \alpha\in\Phi^{\min}\}$.

Now let $W_0=N_G(T)/T$ be the finite Weyl group of $G$, with length function $\ell: W_0\to\Z_{\geq 0}$ defined with respect to $\Pi$. Moreover we denote by
$$
W=N_G(T)/(T\cap I')\cong \Lambda\rtimes W_0
$$
the extended Weyl group of $(G, T)$, where $\Lambda:=T/(T\cap I')$. The action of $G$ on $\mathscr{X}$ restricts to an action of $W$ on $\mathscr{A}$ by affine automorphisms. We let
$$
\langle -,-\rangle:X^*(\mathbb{T}/\mathbb{C})\times X_*(\mathbb{T}/\mathbb{C})\to\Z
$$
denote the natural perfect pairing. The action of $T$ on the standard apartment is then given by translation via $\nu$, where $\nu:T\to X_*(\mathbb{T}/\mathbb{C})$ is the group homomorphism characterised by
$$
\langle \alpha,\nu(t)\rangle=-\val(\alpha(t))
$$
for all $\alpha\in\Phi$ and all $t\in T$. Here $\val: \mathfrak{F}^\times\to \Z$ denotes the normalised valuation.

We now set
$$
\widetilde{\Lambda}:=T/(T\cap I) \quad\text{and}\quad \widetilde{W}:=N_G(T)/(T\cap I).
$$
Note that the map $\nu$ descends to $\Lambda$ and $\widetilde{\Lambda}$. The quotient $(T\cap I')/(T\cap I)$ identifies with the group of $\mathbb{F}_q$-rational points of $\mathbb{T}$, which we denote by $T(\mathbb{F}_q)$. So we see that $\widetilde{\Lambda}$ is an extension of $\Lambda$ by $T(\mathbb{F}_q)$. For any standard Levi subgroup $M$, we let $W_{0,M}\subset W_0$ be the corresponding finite Weyl group. We then denote by $W_M$ the preimage of $W_{0, M}$ under the natural projection $W\twoheadrightarrow W_0$. Given a subset $X\subseteq W$ we write $\widetilde{X}$ for its pre-image under the projection $\widetilde{W}\twoheadrightarrow W$. Also, given any $w\in W_0$ we fix a lift $\hat{w}\in \widetilde{W}$. Note that the length function $\ell$ extends to both $W$ and $\widetilde{W}$.

By the identification $W\cong \Lambda\rtimes W_0$, we may (and indeed always will) view $W_0$ as a subgroup of $W$. Moreover, any element $\tilde{w}\in \widetilde{W}$ may be written as $\lambda \hat{\overline{w}}$, where $\lambda\in\widetilde{\Lambda}$ and $\overline{w}$ is the image of $\tilde{w}$ in $W_0$ under the projection. We also define
$$
W_0^M:=\{w\in W_0 \mid w(\Phi^+_M)\subset\Phi^+\}.
$$
By the Bruhat decomposition,
\begin{equation}\label{Bruhat}
G=\coprod_{\tilde{w}\in \widetilde{W}} I\tilde{w}I, \quad M=\coprod_{\tilde{w}\in \widetilde{W}_M} I_M\tilde{w}I_M
\end{equation}
where $I_M:=I\cap M$ is a pro-$p$ Iwahori subgroup of $M$.

Let $\Omega=\{w\in W \mid \ell(w)=0\}$. Then we have a decomposition $W=W_{\text{aff}}\rtimes\Omega$ where $W_{\text{aff}}$ denotes the affine Weyl group, generated by the set $S$ of simple affine reflections fixing the walls of $C$. The elements of $S$ are all of the form $s_\alpha$ for $\alpha\in \Pi_\aff$ (cf.\ \cite[\S 4.3]{OS14}). Given any affine root $\alpha=(\beta, m)$ with $\beta\in\Phi$, $m\in\Z$, we define $\alpha^\vee:=\beta^\vee$. Now, given an $\overline{\mathbb{F}_p}$-valued character $\xi$ of $T(\mathbb{F}_q)$, we let
\begin{equation}\label{S_xi}
S_\xi:=\{s_\alpha\in S\mid \xi(\alpha^\vee(x))=1 \text{ for all }x\in \mathbb{F}_q^\times\}.
\end{equation}
The group $\Omega$ is a finitely generated abelian group and we write $\Omega_{\text{tor}}$ for its (finite) torsion subgroup.

To each face $F\subseteq\overline{C}$ we associate the subset $S_F\subseteq S$ of those reflections fixing $F$ pointwise and we let $W_F$ denote the subgroup of $W$ generated by $S_F$. Note that $S_C=\emptyset$. If $\Phi=\sqcup_{i=1}^r \Phi_i$ is the decomposition of $\Phi$ into irreducible components and $\Pi=\sqcup_{i=1}^r\Pi_i$ is the corresponding decomposition of the basis, we also have an associated decomposition $S=\sqcup_{i=1}^r S_i$. Here $S_i=\{s_\beta\mid \beta\in\Pi_i\}\cup\{s_{(-\alpha_i,1)}\}$, where $\alpha_1, \ldots, \alpha_r$ denote the highest roots in each irreducible component. Then the map
\begin{align}\label{bijection_faces}
\{\text{faces $F$ with $F\subseteq \overline{C}$}\}&\longrightarrow \{S'\subseteq S\mid \text{$S'\cap S_i\neq S_i$ for every $1\leq i\leq r$}\}\\
F&\longmapsto S_F \notag
\end{align}
is a bijection (cf.\ \cite[\S 1.3.5]{BruTit72}). Moreover, this bijection is order-reversing in the sense that $F'\subseteq \overline{F}$ if and only if $S_F\subseteq S_{F'}$.

\subsection{Pro-$p$ Iwahori-Hecke algebras}

Let $k$ denote a field of characteristic $p$, and set
$$
\XX:=\ind_I^G(\mathbf{1})=\{f:I\backslash G\to k \mid f \text{ has finite support}\}\in\Rep_k^\infty(G)
$$
with $G$-action given by right translation. The representation $\XX$ is a left module over its endomorphism ring
$$
\He:=\End_G(\XX),
$$
the so-called pro-$p$ Iwahori-Hecke algebra of $G$ with respect to $I$ over $k$. By Frobenius reciprocity, $\He$ can be identified with the convolution $k$-algebra
$$
\He=\Hom_G(\XX, \XX)\cong \XX^I=\ind_I^G(\mathbf{1})^I=k[I\backslash G/I]
$$
of $I$-biinvariant compactly supported maps $G\to k$. This construction has analogues for various subgroups of $G$, which we briefly recall now.

For any face $F$ of $\mathscr{X}$, we set
$$
\XX_F=\ind_{I_{C(F)}}^{P_F}(\mathbf{1})\quad\text{and}\quad \XX_F^\dagger=\ind_{I_{C(F)}}^{P_F^\dagger}(\mathbf{1}),
$$
where $C(F)$ is the chamber defined in \cite[Lemma 1.3]{Koh}, and the corresponding Hecke algebras are then defined analogously as
$$
\He_F=\End_{P_F}(\XX_F)\quad\text{and}\quad \He_F^\dagger=\End_{P_F^\dagger}(\XX_F^\dagger).
$$
When $F\subseteq \overline{C}$ then $C(F)=C$, $I_{C(F)}=I$ and so extending functions by zero gives embeddings of $\XX_F$ and $\XX_F^\dagger$ into $\XX$ which are $P_F$-equivariant and $P_F^\dagger$-equivariant respectively. The algebras $\He_F$ and $\He_F^\dagger$ can then be naturally identified with subalgebras of $\He$ by viewing them as $I$-biinvariant functions on $P_F$ and $P_F^\dagger$ respectively and extending functions by zero. We then have that $\He$ is free over $\He_F^\dagger$, which itself is also free over $\He_F$ (cf. \cite[Lemma 4.20 \& Proposition 4.21]{OS14}).

We also let $G_\aff$ be the subgroup of $G$ generated by all the parahoric subgroups $P_F$ for $F$ varying among all the faces of $\mathscr{X}$. We can then also construct a $G_\aff$-representation $\XX_\aff=\ind_{I}^{G_\aff}(\mathbf{1})$ and an associated Hecke algebra $\Hea=\End_{G_\aff}(\XX_\aff)$. Via extension by zero, we can once again identify $\XX_\aff$ with a $G_\aff$-subrepresentation of $\XX$ and $\Hea$ with a $k$-subalgebra of $\He$. Viewing all the above Hecke algebras as subalgebras of $\He$, we note that $\Hea$ contains all the $\He_F$ for $F\subseteq \overline{C}$ and is in fact generated by them as an algebra (cf.\ the proof of \cite[Lemma 1.4]{Koh}). Furthermore, for any $F\subseteq \overline{C}$ we have that $\Hea$ is free as both a left and right $\He_F$-module.

For any standard Levi subgroup $M$, we also let $\XX_M:=\ind_{I_M}^M(\mathbf{1})$ and $\He_M:=\End_M(\XX_M)$. By Frobenius reciprocity, $\He_M$ can again be identified with the convolution $k$-algebra of $I_M$-biinvariant compactly supported maps $M\to k$. Note however that $\He_M$ is \emph{not} a subalgebra of $\He$ in general. By Frobenius reciprocity again, we have a functor $U_M:\Rep_k^\infty(M)\to \Mod(\He_M)$ defined by $U_M(V):=\Hom_M(\XX_M,V)\cong V^{I_M}$ for any $V\in\Rep_k^\infty(M)$.

For any $\tilde{w}\in \widetilde{W}_M$, we set $T^M_{\tilde{w}}:=\mathds{1}_{I_M\tilde{w}I_M}\in k[I_M\backslash M/I_M]=\He_M$, the characteristic function of the double coset $I_M\tilde{w}I_M$. When $M=G$, we drop the superscript and write $T_{\tilde{w}}$. By the Bruhat decomposition \eqref{Bruhat}, the set $\{T^M_{\tilde{w}}\mid \tilde{w}\in\widetilde{W}\}$ is a $k$-basis of $\He_M$. There are also certain dual elements $T^{M,*}_{\tilde{w}}$ which form another $k$-basis of $\He_M$, cf. \cite[\S 4.8, eq.\ (4.11)]{OS14}. In addition, the subalgebra $\Hea$ also has a $k$-basis given by the set $\{T_{\tilde{w}}\mid \tilde{w}\in\widetilde{W}_\aff\}$. The ring structure of $\He$ is determined by the braid relation
$$
T_{\tilde{w}\tilde{w}'}=T_{\tilde{w}}T_{\tilde{w}'}\quad\text{if}\quad \ell(\tilde{w}\tilde{w}')=\ell(\tilde{w})+\ell(\tilde{w}'),
$$
and a certain quadratic relation which we now describe. Fix the set $\{\hat{s}\mid s\in S\}\subseteq \widetilde{W}_\aff$ of lifts of the simple affine reflections defined as in \cite[\S 4.8]{OS14}. Then the quadratic relations for $\He$ are given by
\begin{equation}\label{quadratic_rel}
T_{\hat{s}}^2=T_{\hat{s}}\cdot\left(|\mu_{\alpha}|\sum_{t\in \alpha^\vee(\mathbb{F}_q^\times)} T_t\right)
\end{equation}
for all $s=s_\alpha\in S$, where $\mu_{\alpha}\cong \ker(\alpha^\vee|_{\mathbb{F}_q^\times})$ has order one or two, cf.\ \cite[\S 2.2, eqs.\ (27)-(28), \& \S 2.1.6, eq.\ (13) \& Remark 2.7]{OS19}.

Recall from \cite[\S 4.1]{Abe19} that for $w\in W_{0,M}$ and $\lambda\in\widetilde{\Lambda}$, the element $\lambda \hat{w}\in \widetilde{W}_M$ is called \emph{$M$-positive} (resp.\ \emph{$M$-negative}) if $\langle \alpha, \nu(\lambda)\rangle\leq 0$ (resp.\ $\langle \alpha, \nu(\lambda)\rangle\geq 0$) for every $\alpha\in\Phi^+\setminus \Phi_M^+$. The $k$-span of the $T^M_{\tilde{w}}$ with $M$-positive (resp.\ $M$-negative) $\tilde{w}\in\widetilde{W}$ forms a $k$-subalgebra of $\He_M$ which we denote by $\He_M^+$ (resp.\ $\He_M^-$). By Lemma 4.6 of \textit{loc.\ cit.} (see also \cite[Theorem 1.4(iii)]{Vign5}) we have injective algebra homomorphisms
\begin{equation}\label{embed_plusminus}
j_M^+: \He_M^+\hookrightarrow\He\quad\text{and}\quad j_M^-: \He_M^-\hookrightarrow\He
\end{equation}
given respectively by $j_M^+(T^M_{\tilde{w}})=T_{\tilde{w}}$ and $j_M^-(T^{M,*}_{\tilde{w}})=T^*_{\tilde{w}}$ for all $M$-positive (resp.\ $M$-negative) $\tilde{w}\in\widetilde{W}_M$. Now fix $\lambda_0^+, \lambda_0^-\in\widetilde{\Lambda}$ such that they are in the centre of $\widetilde{W}_M$ and $\langle \alpha, \nu(\lambda_0^+)\rangle< 0$ (resp.\ $\langle \alpha, \nu(\lambda_0^-)\rangle> 0$) for every $\alpha\in\Phi^+\setminus \Phi_M^+$.  We will need the following:

\begin{prop}[{\cite[Theorem 1.4(ii)]{Vign5}}]\label{lambda_pm}
For any $\lambda_0^+, \lambda_0^-\in\widetilde{\Lambda}$ chosen as above,  the elements $T^M_{\lambda_0^\pm}\in\He_M^\pm$ are central and non-zero divisors,  and $\He_M$ is the Ore localisation of $\He_M^\pm$ at $T^M_{\lambda_0^\pm}$.
\end{prop}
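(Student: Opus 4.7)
The plan is to reduce the proposition to a single length-additivity identity: for every $\tilde{w}\in\widetilde{W}_M$,
\[
\ell_M(\lambda_0^\pm\tilde{w})=\ell_M(\lambda_0^\pm)+\ell_M(\tilde{w}).
\]
Granting this, the braid relation in $\He_M$ yields $T^M_{\lambda_0^\pm}T^M_{\tilde{w}}=T^M_{\lambda_0^\pm\tilde{w}}$; since $\lambda_0^\pm$ is central in $\widetilde{W}_M$, symmetrically $T^M_{\tilde{w}}T^M_{\lambda_0^\pm}=T^M_{\tilde{w}\lambda_0^\pm}=T^M_{\lambda_0^\pm\tilde{w}}$. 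Thus $T^M_{\lambda_0^\pm}$ commutes with every element of the basis $\{T^M_{\tilde{w}}\}_{\tilde{w}\in\widetilde{W}_M}$ of $\He_M$, so it is central in $\He_M$ and a fortiori in $\He_M^\pm$.

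Next I would handle the non-zero divisor claim: the same braid identity shows that left multiplication by $T^M_{\lambda_0^\pm}$ sends the basis $\{T^M_{\tilde{w}}\}$, with $\tilde{w}$ ranging over the $M$-positive (resp.\ $M$-negative) elements, to the family $\{T^M_{\lambda_0^\pm\tilde{w}}\}$, which is linearly independent inside $\He_M$ because $\tilde{w}\mapsto\lambda_0^\pm\tilde{w}$ is injective on $\widetilde{W}_M$. Hence left multiplication by $T^M_{\lambda_0^\pm}$ is injective on $\He_M^\pm$, and by centrality it is a two-sided non-zero divisor.

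For the Ore localisation claim, centrality makes the Ore conditions automatic, so $(\He_M^\pm)[(T^M_{\lambda_0^\pm})^{-1}]$ exists and maps naturally into $\He_M$; injectivity of this map is immediate because $\He_M^\pm\hookrightarrow\He_M$ is injective and $T^M_{\lambda_0^\pm}$ becomes invertible in $\He_M$. For surjectivity, fix $\tilde{w}=\lambda\hat{w}\in\widetilde{W}_M$ with $\lambda\in\widetilde{\Lambda}$ and $w\in W_{0,M}$. The strict sign condition on $\nu(\lambda_0^\pm)$ over $\Phi^+\setminus\Phi_M^+$ ensures that $n\nu(\lambda_0^\pm)+\nu(\lambda)$ has the required sign for all sufficiently large $n$, so that $(\lambda_0^\pm)^n\tilde{w}$ is $M$-positive (resp.\ $M$-negative). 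Length additivity then gives $T^M_{(\lambda_0^\pm)^n\tilde{w}}=(T^M_{\lambda_0^\pm})^n T^M_{\tilde{w}}\in\He_M^\pm$, whence $T^M_{\tilde{w}}=(T^M_{\lambda_0^\pm})^{-n}T^M_{(\lambda_0^\pm)^n\tilde{w}}$ lies in the image of the localisation; since the $T^M_{\tilde{w}}$ form a $k$-basis of $\He_M$, surjectivity follows.

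The main obstacle will be the length-additivity identity itself. Since $\lambda_0^\pm$ is central in $\widetilde{W}_M$, the vector $\nu(\lambda_0^\pm)$ is $W_{0,M}$-invariant, so that $\langle\alpha,\nu(\lambda_0^\pm)\rangle=0$ for every $\alpha\in\Phi_M$; this means $\lambda_0^\pm$ contributes to $\ell_M$ only through reflections whose gradient lies outside $\Phi_M$. The strict sign condition on $\Phi^+\setminus\Phi_M^+$ then has to be used to rule out cancellations when composing minimal galleries representing $\tilde{w}$ and $\lambda_0^\pm$. Making this combinatorial analysis rigorous is the delicate technical core and is precisely the content of the cited result of Vign\'eras, which I would invoke directly rather than redo.
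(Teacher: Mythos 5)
The paper offers no argument of its own here; the proposition is stated verbatim as a citation of Vign\'eras \cite[Theorem 1.4(ii)]{Vign5}, so you are being compared against a quotation rather than a proof. Your outline recovers the correct mechanics (braid relation from length additivity $\Rightarrow$ centrality; injectivity of multiplication $\Rightarrow$ non-zero-divisor; the sign condition on $\Phi^+\setminus\Phi_M^+$ forces $(\lambda_0^\pm)^n\tilde{w}$ to become $M$-positive, resp.\ $M$-negative, for $n\gg 0$, giving surjectivity of the localisation map), and the first three paragraphs are sound.

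Your last paragraph, however, misidentifies where the content lies. You correctly observe that centrality of $\lambda_0^\pm$ in $\widetilde{W}_M$ forces $\langle\alpha,\nu(\lambda_0^\pm)\rangle=0$ for all $\alpha\in\Phi_M$, but the right conclusion to draw is that $\ell_M(\lambda_0^\pm)=0$ outright: $\ell_M$ is computed entirely from the $\Phi_M$-walls, and there are no ``reflections whose gradient lies outside $\Phi_M$'' contributing to it. Once $\ell_M(\lambda_0^\pm)=0$, the identity $\ell_M(\lambda_0^\pm\tilde{w})=\ell_M(\lambda_0^\pm)+\ell_M(\tilde{w})$ is not a delicate gallery-cancellation fact at all; it is the standard bi-invariance of length under the length-zero subgroup $\Omega_M$, and it also gives you for free that $T^M_{\lambda_0^\pm}$ is a \emph{unit} in $\He_M$ (with inverse $T^M_{(\lambda_0^\pm)^{-1}}$), which immediately yields non-zero-divisor on the subring $\He_M^\pm$ and renders the Ore conditions automatic. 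The strict sign condition on $\Phi^+\setminus\Phi_M^+$ plays no role in establishing length additivity; its only job, as you in fact use earlier, is in the surjectivity step. So the argument goes through, but the self-assessed ``main obstacle'' was not an obstacle, and the appeal to gallery combinatorics in that final paragraph should be replaced by the two-line observation $\ell_M(\lambda_0^\pm)=0$.
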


We finally need to introduce twists by the longest element $w_0\in W_0$ of the finite Weyl group. Given a standard parabolic $P$ with Levi subgroup $M$, let $w_0^M=w_0w_{0,M}$ where $w_{0,M}$ denotes the longest element in $W_{0,M}$. Then we let $w_0(M)$ denote the standard Levi subgroup determined by $W_{0, w_0(M)}=w_0W_{0,M}w_0$, and we let $w_0(P)$ be the standard parabolic whose Levi subgroup is $w_0(M)$. Lastly, we fix a lift $\hat{w}_0^M\in \widetilde{W}$ of $w_0^M$.

\begin{prop}[{\cite[Proposition 2.20]{Vign5}}]\label{twist} The $k$-linear map
$$
\He_M\to \He_{w_0(M)},\quad\quad T^M_{\tilde{w}}\mapsto T^{w_0(M)}_{\hat{w}_0^M\tilde{w}(\hat{w}_0^M)^{-1}} \quad \text{for } \tilde{w}\in \widetilde{W}_M,
$$
is an isomorphism of $k$-algebras. This induces an exact equivalence of categories
$$
\iota_0^M:\Mod(\He_M)\to \Mod(\He_{w_0(M)})
$$
such that $\iota_0^M(\He_M)\cong \He_{w_0(M)}$.
\end{prop}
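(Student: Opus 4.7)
The plan is to exhibit the map explicitly as an inner automorphism of a larger structure and then check that its restriction to $\He_M$ lands in $\He_{w_0(M)}$ with the prescribed formula. More precisely, I will build the map by directly verifying compatibility with the braid and quadratic presentation of the Hecke algebras $\He_M$ and $\He_{w_0(M)}$.

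First I would analyse the conjugation map $c:\widetilde{W}\to\widetilde{W}$, $\tilde{w}\mapsto \hat{w}_0^M\tilde{w}(\hat{w}_0^M)^{-1}$, at the level of Weyl groups. The element $w_0^M=w_0w_{0,M}$ satisfies $w_0^M(\Phi_M^+)=\Phi_{w_0(M)}^+$: indeed $w_{0,M}$ swaps $\Phi_M^+$ with $\Phi_M^-$, which lies in $\Phi^-$, and $w_0$ then sends $\Phi_M^-\subseteq\Phi^-$ into $\Phi^+$ while restricting to a bijection $\Phi_M\to\Phi_{w_0(M)}$. This shows that $c$ restricts to a group isomorphism $\widetilde{W}_M\xrightarrow{\sim}\widetilde{W}_{w_0(M)}$ and, because it maps simple roots of $M$ bijectively to those of $w_0(M)$, also satisfies $\ell_{w_0(M)}\circ c=\ell_M$ on $\widetilde{W}_M$. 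Since $(w_0^M)^{-1}=w_{0,M}w_0=w_0^{w_0(M)}$, the inverse of $c$ is the analogous conjugation attached to $w_0(M)$, so $c$ is bijective.

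Next, I would define $\phi:\He_M\to\He_{w_0(M)}$ as the $k$-linear extension of $T^M_{\tilde{w}}\mapsto T^{w_0(M)}_{c(\tilde{w})}$. Multiplicativity reduces, via the Iwahori–Matsumoto presentation, to checking the braid and quadratic relations. The braid relations transport directly: if $\ell_M(\tilde{w}\tilde{w}')=\ell_M(\tilde{w})+\ell_M(\tilde{w}')$ then the same additivity holds after applying $c$ by the length identity above, so $\phi(T^M_{\tilde{w}}T^M_{\tilde{w}'})=\phi(T^M_{\tilde{w}\tilde{w}'})=T^{w_0(M)}_{c(\tilde{w})c(\tilde{w}')}=\phi(T^M_{\tilde{w}})\phi(T^M_{\tilde{w}'})$. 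For the quadratic relations attached to the simple affine reflections $s\in S_M$, I would show that $c$ maps $S_M$ to $S_{w_0(M)}$ and that the parameters (the characters of $T(\mathbb{F}_q)$ encoding the quadratic relation $(T^M_s)^2 = \cdots$) are preserved, using that conjugation by $\hat{w}_0^M$ on $T(\mathbb{F}_q)$ is compatible with the permutation $w_0^M$ of roots and hence of the subtori $T_s(\mathbb{F}_q)$. This is the step I expect to be the main obstacle, since it requires unwinding the precise description of the quadratic relation in characteristic $p$ (e.g. the formula appearing as \eqref{quadratic_rel} later in the paper) and matching the associated character data on both sides; the cleanest way is to verify it directly on each simple affine reflection of $M$.

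Having a $k$-algebra isomorphism $\phi$, bijectivity is automatic from the existence of the inverse conjugation, and the statement $\iota_0^M(\He_M)\cong \He_{w_0(M)}$ is just the assertion that the regular right $\He_M$-module becomes the regular right $\He_{w_0(M)}$-module after restriction of scalars along $\phi^{-1}$, which is immediate. Finally, $\iota_0^M$ is exact because restriction of scalars along an algebra \emph{isomorphism} is an equivalence of abelian categories with inverse given by restriction along $\phi$; alternatively, exactness on both sides follows since $\iota_0^M$ does not change the underlying $k$-vector space or its lattice of subspaces.
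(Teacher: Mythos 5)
The paper does not actually prove this proposition; it is imported verbatim from Vign\'eras \cite[Proposition 2.20]{Vign5}, and no argument is given in the text. So your proposal cannot be compared with ``the paper's own proof'' — instead it attempts to supply one, and what you sketch is the natural direct verification via the Iwahori--Matsumoto presentation, which is essentially the route Vign\'eras herself takes. Your key observations are correct: $w_0^M=w_0w_{0,M}$ carries $\Phi_M^+$ bijectively onto $\Phi_{w_0(M)}^+$ (and indeed, being a linear automorphism of the apartment fixing the origin, it carries the whole positive affine root system of $M$ onto that of $w_0(M)$); hence conjugation $c$ by $\hat w_0^M$ restricts to a length-preserving isomorphism $\widetilde W_M\to\widetilde W_{w_0(M)}$, which immediately transports the braid relations. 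You should make the alcove/affine-root version of the length-preservation explicit rather than invoking only the finite roots, since $\ell_M$ is the extended-affine length.

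The step you flag as ``the main obstacle'' — matching the quadratic relations — is indeed where the real content lies, and you do not carry it out. The required fact is that for each $s\in S_M$ with associated subtorus $T_s(\mathbb{F}_q)$ one has $\hat w_0^M\,T_s(\mathbb{F}_q)\,(\hat w_0^M)^{-1}=T_{c(s)}(\mathbb{F}_q)$, so that the two sides of \eqref{quadratic_rel} correspond under $\phi$. This follows from the compatibility of the $W_0$-action on roots and coroots in the root datum of $\mathbb{G}$: $w_0^M$ takes $\alpha^\vee$ to $(w_0^M(\alpha))^\vee$, hence takes the image torus $T_s$ to $T_{c(s)}$, and the conjugation action on $k[T(\mathbb{F}_q)]$ then intertwines the summations defining the two quadratic relations. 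So the gap is one of bookkeeping rather than of idea, and what remains is to write that verification down. One cosmetic point: your justification that ``the inverse of $c$ is the analogous conjugation attached to $w_0(M)$'' is unnecessary for bijectivity — conjugation by any fixed element of $\widetilde W$ is automatically invertible — and it also requires care with the choice of lift, since $(\hat w_0^M)^{-1}$ need not coincide with the chosen $\hat w_0^{w_0(M)}$; it is cleaner to simply invert the conjugation.
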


The above satisfies a transitivity property as follows. If $P\subseteq P'$ are standard parabolics with Levi subgroups $M\subseteq M'$, define $w_{M'}^M:=w_{0,M'}w_{0,M}$. Then as above we have an equivalence of categories $\iota_{M'}^M:\Mod(\He_M)\to \Mod(\He_{w_{0,M'}(M)})$ induced by the isomorphism $T^M_{\tilde{w}}\mapsto T^{w_{0,M'}(M)}_{\hat{w}_{M'}^M\tilde{w}(\hat{w}_{M'}^M)^{-1}}$. Furthermore, the map $T^{w_{0,M'}(M)}_{\tilde{w}}\mapsto T^{w_0(M)}_{\hat{w}_0^{M'}\tilde{w}(\hat{w}_0^{M'})^{-1}}$ also defines an isomorphism $\He_{w_{0,M'}(M)}\cong \He_{w_0(M)}$ which induces an equivalence $\iota_{0,M'}^M:\Mod(\He_{w_{0,M'}(M)})\to \Mod(\He_{w_0(M)})$. Then the equality
\begin{equation}\label{transitivity_twists}
\iota_0^M=\iota_{0,M'}^M \circ\iota_{M'}^M
\end{equation}
holds, cf.\ \cite[Proposition 2.21]{Vign5}.

\subsection{Parabolic induction and simple modules}

We recall the construction of the parabolic induction and coinduction functors from \cite[\S 3]{Vign5}. We fix a standard Levi subgroup $M$ of $G$. By \eqref{embed_plusminus} we may view the algebra $\He$ as an $(\He_M^+,\He_M^-)$-bimodule via the maps $j_M^+$ and $j_M^-$ respectively. Thus we may define two functors $\Mod(\He_M)\to\Mod(\He)$ by setting
$$
\Ind_{\He_M}^{\He}(\m):=\m \otimes_{\He_M^+}\He\quad\text{and}\quad\Coind_{\He_M}^{\He}(\m):=\Hom_{\He_M^-}(\He,\m)
$$
for any $\m\in\Mod(\He_M)$. The functors $\Ind_{\He_M}^{\He}$ and $\Coind_{\He_M}^{\He}$ are called \emph{parabolic induction} and \emph{parabolic coinduction}, respectively.

By the tensor-hom adjunction, we see that $\Coind_{\He_M}^{\He}$ has left adjoint $\Loc_M:=(-)\otimes_{\He_M^-}\He_M$ and $\Ind_{\He_M}^{\He}$ has right adjoint $\RM:=\Hom_{\He_M^+}(\He_M,-)$. Similarly to the above, we are using $j_M^-$ to view $\He$-modules as $\He_M^-$-modules via restriction in the definition of $\Loc_M$, and similarly $j_M^+$ is used in the definition of $\RM$. Note as well that since $\He_M$ is the Ore localisation of $\He_M^-$ at $T^M_{\lambda_0^-}=T^{M,*}_{\lambda_0^-}$, the functor $\Loc_M$ is the corresponding localisation functor and hence is exact. The counit of the adjoint pair $(\Loc_M,\Coind_{\He_M}^{\He})$ is given as follows. For $\m\in \Mod(\He_M)$, we have a natural map $\Coind_{\He_M}^{\He}(\m)=\Hom_{\He_M^-}(\He,\m)\to \m$ by evaluating at $1$. Localising at $T^M_{\lambda_0^-}$ this gives the counit
\begin{align}\label{counit}
\varepsilon_{\m}:\Coind_{\He_M}^{\He}(\m)\otimes_{\He_M^-}\He_M &\to\m\\
\varphi \otimes (T^M_{\lambda_0^-})^{-n} &\mapsto \varphi(1)\cdot(T^M_{\lambda_0^-})^{-n} \nonumber
\end{align}
The main properties of these functors that we'll need are summarised below:

\begin{thm}[{\cite[1.6-1.10]{Vign5}}]\label{ind_coind_thm} Let $M$ be a standard Levi subgroup of $G$. Then:
\begin{enumerate}
\item there are natural isomorphisms
$$
\Ind_{\He_M}^{\He}\cong \Coind_{\He_{w_0(M)}}^{\He}\circ \iota_0^M \quad \text{and}\quad \Coind_{\He_M}^{\He}\cong \Ind_{\He_{w_0(M)}}^{\He}\circ \iota_0^M,
$$
where $\iota_0^M$ is the equivalence from \Cref{twist}; and
\item parabolic induction and coinduction are transitive and have both a left and a right adjoint. In particular they are exact functors.
\end{enumerate}
\end{thm}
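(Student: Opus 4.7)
The plan is to deduce part (i) from a careful bookkeeping of how conjugation by the lift $\hat{w}_0^M$ interacts with positivity. First I would observe that since $w_0^M=w_0 w_{0,M}$ maps $\Phi^+\setminus\Phi_M^+$ bijectively onto $-(\Phi^+\setminus\Phi_{w_0(M)}^+)$, conjugation by $\hat{w}_0^M$ sends an $M$-positive element $\tilde w\in\widetilde{W}_M$ to a $w_0(M)$-negative element of $\widetilde{W}_{w_0(M)}$ (and vice versa) while preserving lengths. Consequently the algebra isomorphism from \Cref{twist} restricts to isomorphisms $\He_M^+\cong \He_{w_0(M)}^-$ and $\He_M^-\cong \He_{w_0(M)}^+$ that are compatible with the embeddings $j_M^\pm$ and $j_{w_0(M)}^\mp$ into $\He$.

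Granted this, producing the isomorphism $\Ind_{\He_M}^{\He}\cong \Coind_{\He_{w_0(M)}}^{\He}\circ\iota_0^M$ reduces to constructing, naturally in $\m\in\Mod(\He_M)$, an $\He$-linear isomorphism
$$\m \otimes_{\He_M^+}\He \;\cong\; \Hom_{\He_M^+}(\He,\m),$$
where on the right $\He$ is viewed as a left $\He_M^+$-module via $j_M^+$. This is the step I expect to be the main obstacle, as it amounts to establishing a self-duality of $\He$ as an $\He_M^+$-bimodule in the spirit of Bernstein's second adjointness. The approach I would take is to use the Bruhat decomposition to write $\He$ as a finitely generated free module over $\He_M^+$ and over $\He_M^-$ on a basis indexed by $W_0^M$, and then exhibit an explicit nondegenerate pairing $\He\otimes_{\He_M^+}\He\to \He_M^+$ (built from the involution $T_{\tilde w}\mapsto T^*_{\tilde w^{-1}}$ together with an invariant trace form) which identifies the tensor product and Hom descriptions. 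The second isomorphism in (i) is then immediate by applying the first with $M$ replaced by $w_0(M)$ and using that $\iota_0^{w_0(M)}\circ\iota_0^M=\id$.

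For part (ii), transitivity of induction is essentially associativity of the tensor product: for $M\subseteq M'$ one checks that the embedding $j_M^+$ factors as $\He_M^+\hookrightarrow \He_{M'}^+\hookrightarrow \He$ (using that $M$-positivity inside $\widetilde{W}_{M'}$ composed with $M'$-positivity inside $\widetilde{W}$ coincides with $M$-positivity inside $\widetilde{W}$), after which the standard identification $(\m\otimes_{\He_M^+}\He_{M'})\otimes_{\He_{M'}^+}\He\cong \m\otimes_{\He_M^+}\He$ does the job. Transitivity of coinduction then follows by combining this with part (i) and the cocycle identity \eqref{transitivity_twists}. For the existence of adjoints, note that $\Ind_{\He_M}^{\He}$ comes with the tensor-hom right adjoint $\RM=\Hom_{\He_M^+}(\He_M,-)$, and by part (i) it is also isomorphic to $\Coind_{\He_{w_0(M)}}^{\He}\circ\iota_0^M$, which has the left adjoint $(\iota_0^M)^{-1}\circ\Loc_{w_0(M)}$; a symmetric argument handles $\Coind_{\He_M}^{\He}$. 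Exactness is then automatic, since any functor possessing both a left and a right adjoint preserves all small limits and colimits.
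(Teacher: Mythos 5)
The paper does not prove \Cref{ind_coind_thm}; it is imported verbatim from \cite[1.6--1.10]{Vign5}, so there is no in-paper argument to compare your proposal against. Judged on its own terms, however, your sketch of part (i) has a gap at its very first, load-bearing step.

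You claim that the twist $\iota_0^M$ restricts to an isomorphism $\He_M^+\cong\He_{w_0(M)}^-$ that is \emph{compatible with the embeddings} $j_M^+$ and $j_{w_0(M)}^-$ into $\He$. But these two embeddings are defined on different bases: $j_M^+$ sends $T^M_{\tilde w}\mapsto T_{\tilde w}$, while $j_{w_0(M)}^-$ sends $T^{w_0(M),*}_{\tilde v}\mapsto T^*_{\tilde v}$ (see \eqref{embed_plusminus}). The twist $\iota_0^M$ is defined on the $T$-basis as $T^M_{\tilde w}\mapsto T^{w_0(M)}_{\hat w_0^M\tilde w(\hat w_0^M)^{-1}}$, so to compose with $j_{w_0(M)}^-$ one must first rewrite $T^{w_0(M)}_{\hat w_0^M\tilde w(\hat w_0^M)^{-1}}$ in the $T^*$-basis, and the result is not $T_{\tilde w}$ in general --- the $T$- and $T^*$-bases differ by lower-order corrections. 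The desired equality $j_{w_0(M)}^-\circ\iota_0^M=j_M^+$ on $\He_M^+$ therefore fails, and without it your reduction to the self-duality statement $\m\otimes_{\He_M^+}\He\cong\Hom_{\He_M^+}(\He,\m)$ (with the \emph{same} $\He_M^+$-structure on $\He$ on both sides) is unjustified. This mismatch between $j^+$ and $j^-$ is precisely what makes the theorem nontrivial; Vign\'eras's proof circumvents it by comparing both functors against the localisation description of $\He_M$ as the Ore localisation of $\He_M^\pm$ at $T^M_{\lambda_0^\pm}$ (cf.\ \Cref{lambda_pm}), rather than by intertwining the embeddings directly. Moreover, even granting the reduction, the self-duality you invoke amounts to the assertion that $\He_M^+\subset\He$ is a Frobenius extension, which you do not establish: the ``explicit nondegenerate pairing built from the involution and an invariant trace form'' is gestured at but never constructed. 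Part (ii), by contrast, is essentially correct given part (i): transitivity via associativity of tensor product using the factorisation $\He_M^+\hookrightarrow\He_{M'}^+\hookrightarrow\He$, the left adjoint via part (i) and $\Loc_{w_0(M)}$, and exactness from the existence of both adjoints.
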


Note that the left (resp.\ right) adjoint of $\Ind_{\He_M}^{\He}$ (resp.\ $\Coind_{\He_M}^{\He}$) can be described explicitly, namely it is equal to the composite $\mathcal{L}_M^G:=\iota_0^{w_0(M)}\circ \Loc_{w_0(M)}$ (resp.\ $\iota_0^{w_0(M)}\circ \mathcal{R}^G_{w_0(M)}$), cf.\ \cite[Theorem 1.9]{Vign5}.

We will use the next general categorical fact several times, so we record it here:

\begin{lem}[{\cite[Proposition 1.5.6]{KaSch}}]\label{ff_counit} Suppose $F:\C\rightleftarrows\D:U$ is an adjunction, and denote by $\eta$ and $\varepsilon$ the unit and counit respectively. Then $F$ (resp.\ $U$) is fully faithful if and only if $\eta$ (resp.\ $\varepsilon$) is an isomorphism.
\end{lem}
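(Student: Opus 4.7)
The plan is to prove the statement by a standard application of the Yoneda lemma, using that the hom-map induced by a left (resp.\ right) adjoint admits an explicit factorisation through the unit (resp.\ counit) via the adjunction isomorphism. I would only write out the ``$F$ fully faithful $\iff$ $\eta$ iso'' half in detail, since the other half is entirely dual.

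First, I would fix $X,X'\in \C$ and recall that the hom-map
$$
F_{X,X'}:\Hom_\C(X,X')\longrightarrow \Hom_\D(F(X),F(X'))
$$
factors as the composite
$$
\Hom_\C(X,X')\xrightarrow{(\eta_{X'})_*}\Hom_\C(X,UF(X'))\xrightarrow{\sim}\Hom_\D(F(X),F(X')),
$$
where the second arrow is the adjunction bijection and the first is post-composition with the component $\eta_{X'}$ of the unit. This identity is just the well-known formula expressing the mate of $F(f)$ as $\eta_{X'}\circ f$, and follows immediately from naturality of $\eta$ together with the triangular identity $\varepsilon_{F(X')}\circ F(\eta_{X'})=\id_{F(X')}$.

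Granting this factorisation, $F$ is fully faithful precisely when $F_{X,X'}$ is a bijection for all $X,X'$, which (as the second arrow is always an isomorphism) is equivalent to $(\eta_{X'})_*$ being a bijection for every $X$ and every $X'$. By the Yoneda lemma, post-composition with $\eta_{X'}$ is a bijection of hom-sets out of every $X$ if and only if $\eta_{X'}$ is itself an isomorphism in $\C$. Hence $F$ is fully faithful iff $\eta_{X'}$ is an isomorphism for every $X'$, i.e.\ iff $\eta$ is an isomorphism of functors.

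The second assertion about $U$ and $\varepsilon$ follows by applying the first assertion to the opposite adjunction $U^{\op}:\D^{\op}\rightleftarrows\C^{\op}:F^{\op}$, whose unit is $\varepsilon^{\op}$; alternatively, one can argue directly using the dual factorisation of $U_{Y,Y'}$ via pre-composition with $\varepsilon_Y$ and the contravariant Yoneda lemma. There is no real obstacle here beyond verifying the factorisation above, which is formal; this is why the authors are content to cite the result from Kashiwara--Schapira.
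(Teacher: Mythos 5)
Your proof is correct. Note that the paper does not give its own proof of this lemma; it only cites \cite[Proposition 1.5.6]{KaSch}, so there is nothing in the text to compare against. Your argument is the standard one: the factorisation of $F_{X,X'}$ through $(\eta_{X'})_*$ and the adjunction bijection is exactly the content of the triangle identity $\varepsilon_{F(X')}\circ F(\eta_{X'})=\id$, and the Yoneda step (postcomposition with $\eta_{X'}$ is bijective on all hom-sets out of every $X$ iff $\eta_{X'}$ is invertible) correctly finishes the forward direction, with the converse immediate. Passing to $F^{\op}\dashv U^{\op}$ to get the statement for $U$ and $\varepsilon$ is the usual duality and is fine. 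This matches the textbook treatment the paper cites, so there is no gap.
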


The next result is, to the best of our knowledge, not explicitly available anywhere in the literature. Its proof was however inspired by the arguments used in the proof of \cite[Lemma 8.1]{Koz}. We note that it also answers the question asked in the last paragraph of \cite[\S 1]{Vign5}, in the case where the coefficient ring is a field of characteristic $p$.

\begin{prop}\label{fully_faithful} For any standard parabolic subgroup $M$ of $G$, the functors $\Ind_{\He_M}^{\He}$ and $\Coind_{\He_M}^{\He}$ are fully faithful.
\end{prop}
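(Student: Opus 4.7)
By \Cref{ff_counit} applied to the adjunction $\Loc_M\dashv \Coind_{\He_M}^{\He}$, it suffices to show that the counit $\varepsilon_\m$ of \eqref{counit} is a natural isomorphism for every $\m\in \Mod(\He_M)$. This proves full faithfulness of $\Coind_{\He_M}^{\He}$, and the full faithfulness of $\Ind_{\He_M}^{\He}$ is then a formal consequence of \Cref{ind_coind_thm}(i): the natural isomorphism $\Ind_{\He_M}^{\He}\cong \Coind_{\He_{w_0(M)}}^{\He}\circ\iota_0^M$ exhibits $\Ind_{\He_M}^{\He}$ as the composite of a fully faithful functor with the equivalence $\iota_0^M$.

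Set $s:=T^M_{\lambda_0^-}\in\He_M^-$ and $\tilde{s}:=j_M^-(s)\in\He$. Because $\m$ is an $\He_M$-module, the central non-zero-divisor $s$ acts invertibly on $\m$. By \Cref{lambda_pm}, the tensor product $\Coind(\m)\otimes_{\He_M^-}\He_M$ is the right Ore localization of $\Coind(\m)$ at the right $s$-action inherited from its right $\He$-module structure, which is given explicitly by $(\varphi\cdot s^k)(x)=\varphi(\tilde{s}^k\cdot x)$. Thus $\varepsilon_\m$ is an isomorphism if and only if (a) every $m\in\m$ admits a lift to some $\varphi\in\Coind(\m)$ with $\varphi(1)=m$, and (b) every $\varphi\in\Coind(\m)$ with $\varphi(1)=0$ satisfies $\varphi\cdot s^k=0$ in $\Coind(\m)$ for some $k\geq 0$.

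Part (a) uses Vign\'eras's structural results to produce a direct summand decomposition $\He=j_M^-(\He_M^-)\oplus V$ of $\He$ as a right $\He_M^-$-module in which $1$ lies in the first summand; the right $\He_M^-$-linear map sending $a\in j_M^-(\He_M^-)$ to $m\cdot a$ and vanishing on $V$ then provides the desired $\varphi$. Part (b) is the technical heart of the argument and is where we follow the approach of Koziol in the proof of \cite[Lemma 8.1]{Koz}. Using the finite generation of $\He$ as a right $\He_M^-$-module, one reduces to verifying, for a fixed finite set of generators $\{T^*_{\tilde{w}_i}\}_i$ of $\He$, that for $k$ large enough each product $\tilde{s}^k\cdot T^*_{\tilde{w}_i}$ lies in a subspace of $\He$ on which any $\varphi$ in the kernel of evaluation at $1$ must vanish by right $\He_M^-$-linearity. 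This relies on the strict $M$-negativity of $\lambda_0^-$ together with the braid and quadratic relations in the pro-$p$ Iwahori-Hecke algebra to pin down the products of $\tilde{s}^k$ with the $T^*_{\tilde{w}_i}$ sufficiently precisely.

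\textbf{Main obstacle.} The principal difficulty is the absorption statement underlying part (b): to control the products $\tilde{s}^k\cdot T^*_{\tilde{w}_i}$ for $k\gg 0$ uniformly in $i$, one must exploit the explicit combinatorics of left multiplication by a central, very $M$-negative element of $\He$. This amounts to a careful but essentially structural computation involving the length function and the product formulas for the $T^*$-basis in $\widetilde{W}$.
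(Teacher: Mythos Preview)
Your overall strategy coincides with the paper's: reduce to $\Coind$ via the twist, apply \Cref{ff_counit}, and interpret the counit as the localisation at $s=T^M_{\lambda_0^-}$ of evaluation at $1$. Your part (a) is also essentially the paper's first step: the required splitting is exactly the free decomposition $\He=\bigoplus_{v\in W_0^M}T_{\hat v}\cdot j_M^-(\He_M^-)$ coming from \cite[Lemma 4.10]{Abe19} (equivalently \cite[Lemma 3.9]{Vign5}).

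Where your proposal diverges is in part (b), and there it is too vague. You propose to show that $\tilde s^{\,k}\cdot T^*_{\tilde w_i}$ eventually lands in $j_M^-(\He_M^-)$ by ``braid and quadratic relations''; but you neither choose the generators nor isolate the combinatorial fact that makes this work. The paper instead invokes \cite[Proposition 4.12]{Abe19} (together with \cite[Lemma 2.18]{Vign5}) to describe the action of $T_{\lambda_0^-}$ on the $v$-summand of $\Coind(\m)$: it vanishes precisely when $\hat v^{-1}(\lambda_0^-)$ is not $M$-negative, and on the $v=1$ summand it is multiplication by $T^M_{\lambda_0^-}$. This reduces everything to a single, clean combinatorial lemma: for every $v\in W_0^M\setminus\{1\}$, the element $\hat v^{-1}(\lambda_0^-)$ is not $M$-negative. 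The paper proves this by a short induction on $\ell(v)$. That lemma is the actual content of the proof, and your outline does not surface it; the phrase ``central, very $M$-negative element of $\He$'' is also slightly off, since $T_{\lambda_0^-}$ is not central in $\He$ (only $T^M_{\lambda_0^-}$ is central in $\He_M^\pm$). If you carry out your absorption computation carefully you will find yourself reproving Abe's proposition and then needing exactly this non-$M$-negativity statement; it is cleaner to cite the former and prove the latter directly, as the paper does.
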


\begin{proof}
It suffices to show the result for coinduction since the two functors differ by a twist. By \Cref{ff_counit} we must show that the counit $\varepsilon_{\m}$ is an isomorphism for all $\m\in\Mod(\He_M)$. By \cite[Lemma 4.10]{Abe19} (see also \cite[Lemma 3.9]{Vign5}), the map $\varphi\mapsto (\varphi(T_{\hat{v}}))_{v\in W_0^M}$ gives a $k$-linear isomorphism $\Coind_{\He_M}^{\He}(\m)\cong \bigoplus_{v\in W_0^M} \m$, where $\hat{v}$ is a fixed lift of $v$ to $\widetilde{W}$ (and by convention we impose for $v=1$ that $\hat{v}=1_{\widetilde{W}}$). Under this identification and from \eqref{counit}, we see that $\varepsilon_{\m}$ is the localisation of the projection onto the $v=1$ summand.

Now, we need to consider the action of $T^*_{\lambda_0^-}=j_M^-(T^M_{\lambda_0^-})$. By \cite[Propositions 2.7(ii) \& 4.12]{Abe19} (whose statements assume $k$ to be algebraically closed but whose proof works for arbitrary fields of characteristic $p$), the action of $T^*_{\lambda_0^-}$ on the summand corresponding to $v\in W_0^M$ is zero if $\hat{v}^{-1}(\lambda_0^-)$ is not $M$-negative, and the action on the $v=1$ component agrees with the action of $T^M_{\lambda_0^-}$ on $\m$. Hence, from the definition of $\varepsilon_{\m}$, we'll be done if we show that $\hat{v}^{-1}(\lambda_0^-)$ is not $M$-negative for $v\neq 1$.

We argue by induction on $\ell(v)$. Pick $\alpha\in \Pi$ such that $\ell(s_\alpha v)<\ell(v)$. First assume that $\alpha\in \Phi^+\setminus\Phi_M^+$, noting that this holds in the base case $\ell(v)=1$ by definition of $W_0^M$. Then, using the fact that $v\in W_0^M$ and that $v(-v^{-1}(\alpha))=-\alpha\in-\Phi^+$, we obtain that $-v^{-1}(\alpha)\in \Phi^+\setminus\Phi_M^+$. Then, we get
$$
\langle -v^{-1}(\alpha), \nu(\hat{v}^{-1}(\lambda_0^-))\rangle=-\langle \alpha, \nu(\lambda_0^-) \rangle<0
$$
by definition of $\lambda_0^-$, and so $\hat{v}^{-1}(\lambda_0^-)$ is not $M$-negative as required.

Assuming instead that $\alpha\in\Phi_M^+$, we then have that $(\hat{s_\alpha} \hat{v})^{-1}(\lambda_0^-)=\hat{v}^{-1}(\lambda_0^-)$ since $\lambda_0^-$ is central in $\widetilde{W}_M$. But since $\ell(s_\alpha v)<\ell(v)$ and $v\in W_0^M$ we also have
$$
\{\beta\in\Phi^+\mid s_\alpha v(\beta)\in-\Phi^+\}\subseteq \{\beta\in\Phi^+\mid v(\beta)\in-\Phi^+\}\subseteq \Phi^+\setminus\Phi_M^+,
$$
and thus $s_\alpha v\in W_0^M$. We are then done by the inductive hypothesis.
\end{proof}

We now recall the classification of simple Hecke modules in terms of supersingulars. The classification was initially stated in terms of parabolic coinduction by Abe \cite[Theorem 4.22]{Abe19} for algebraically closed fields, and was later shown to hold for general $k$ in \cite[Theorem I.8]{HeVi19}. This classification was also reworded in terms of parabolic induction (cf.\ \cite[Corollary 4.30]{AHV18}). We first need to recall the definition of a supersingular $\He$-module as follows. Recall from \cite[\S 5]{Vign1} that for a choice of spherical orientation $o$, i.e. of a closed Weyl chamber of $W_0$, we can associate a basis $\{E_o(\tilde{w})\}_{\tilde{w}\in\widetilde{W}}$ of $\He$ with the property that $E_o(\tilde{w})\in T_{\tilde{w}}+\sum_{\tilde{v}<\tilde{w}}kT_{\tilde{v}}$. Here $\tilde{v}<\tilde{w}$ refers to the Bruhat order on $W_\aff$, suitably extended to $W=W_\aff\rtimes\Omega$ and then inflated to $\widetilde{W}$, cf.\ the discussion preceeding Corollary 5.26 in \textit{loc.\ cit.} Now let $\mathcal{O}$ be a $\widetilde{W}$-conjugacy class  in $\tilde{\Lambda}$, and let
$$
z_\mathcal{O}=\sum_{\tilde{\lambda}\in\mathcal{O}} E_o(\tilde{\lambda}).
$$
Then this element does not depend on the choice of orientation $o$, and defines an element of the centre of $\He$, cf.\ \cite[Theorem 5.1]{Vign3}. The length of $\tilde{\lambda}\in\mathcal{O}$ is independent of the choice of representative $\tilde{\lambda}$, and we denote it by $\ell(\mathcal{O})$.

\begin{defn} An $\He$-module $\m$ is called \emph{supersingular} if for each $\widetilde{W}$-conjugacy class $\mathcal{O}$ in $\widetilde{\Lambda}$ with $\ell(\mathcal{O})>0$ and for every $x\in \m$, there exists some $n\in\mathbb{N}$ such that $xz_\mathcal{O}^n=0$. If the integer $n$ can be chosen independently of $x$ and $\mathcal{O}$, we call $\m$ \emph{uniformly supersingular}.
\end{defn}

Note that the notions of supersingular and uniformly supersingular coincide for $\He$-modules of finite length, or more generally for finitely generated modules, and hence in particular for simple modules.

Next, a \emph{standard simple supersingular triple} for $\He$ is a triple $(P, \mathfrak{n}, Q)$ such that
\begin{itemize}
\item $P$ is a standard parabolic subgroup of $G$ with Levi decomposition $P=MN$;
\item $\mathfrak{n}$ is a simple supersingular module over $\He_M$; and
\item $Q$ is a parabolic subgroup of $G$ with $P\subseteq Q\subseteq P(\mathfrak{n})$, where $P(\mathfrak{n})$ is the standard parabolic subgroup of $G$ corresponding to $\Pi(\mathfrak{n}):=\Pi_M\cup \Pi_{\mathfrak{n}}\subseteq \Pi$ and $\alpha\in \Pi_{\mathfrak{n}}$ if and only if both $\langle \Pi_M, \alpha^\vee\rangle=0$ and $T_\lambda^M$ acts trivially on $\mathfrak{n}$ for all $\lambda\in T'_\alpha$.
\end{itemize}
In the above, for $\alpha\in\Pi$, we set $T'_\alpha=T\cap G'_\alpha$ where $G'_\alpha$ denotes the subgroup of $G$ generated by the root subgroups corresponding to $\pm \alpha$. Two supersingular triples $(P, \mathfrak{n}, Q)$ and $(P', \mathfrak{n}', Q')$ are called equivalent if $P=P'$, $Q=Q'$ and $\mathfrak{n}\cong \mathfrak{n}'$.

Given a supersingular triple $(P, \mathfrak{n}, Q)$, we define $I_\He(P, \mathfrak{n}, Q), CI_\He(P, \mathfrak{n}, Q)\in \Mod(\He)$ as follows. Suppose that $Q$ has Levi decomposition $Q=LV$. Now assume that $Q'$ is any parabolic subgroup of $G$ with $P\subseteq Q'\subseteq P(\mathfrak{n})$ and say it has Levi decomposition $Q'=L'V'$. Then $\mathfrak{n}$ can be extended uniquely to an $\He_{L'}$-module $e_{\He_{L'}}(\mathfrak{n})$ with certain properties, and moreover whenever $P\subseteq Q\subseteq Q'\subseteq P(\mathfrak{n})$ there is a natural injective $\He$-linear map $\Ind_{\He_{L'}}^\He(e_{\He_{L'}}(\mathfrak{n}))\to \Ind_{\He_L}^\He(e_{\He_L}(\mathfrak{n}))$ (cf.\ \cite[Corollary 3.9 \& Proposition 4.5]{AHV18}). There is also an analogous map for coinduction, cf.\ the discussion preceding \cite[Theorem 4.22]{Abe19}. We may thus define the modules
\begin{equation}\label{defn_simple1}
I_\He(P, \mathfrak{n}, Q):=\coker\left( \bigoplus_{Q\subsetneq Q'\subseteq P(\mathfrak{n})} \Ind_{\He_{L'}}^\He(e_{\He_{L'}}(\mathfrak{n}))\to\Ind_{\He_L}^\He(e_{\He_L}(\mathfrak{n}))\right)
\end{equation}
and
\begin{equation}\label{defn_simple2}
CI_\He(P, \mathfrak{n}, Q):=\coker\left( \bigoplus_{Q\subsetneq Q'\subseteq P(\mathfrak{n})} \Coind_{\He_{L'}}^\He(e_{\He_{L'}}(\mathfrak{n}))\to\Coind_{\He_L}^\He(e_{\He_L}(\mathfrak{n}))\right).
\end{equation}
The classification of simple $\He$-modules states that $(P, \mathfrak{n}, Q)\mapsto I_\He(P, \mathfrak{n}, Q)$ and $(P, \mathfrak{n}, Q)\mapsto CI_\He(P, \mathfrak{n}, Q)$ define two bijections between the set of equivalence classes of standard simple supersingular triples and the set of isomorphism classes of simple $\He$-modules, cf.\ \textit{loc.\ cit.\ }and \cite[Corollary 4.30]{AHV18}. Moreover, the two bijections are related by an $\He$-equivariant isomorphism
\begin{equation}\label{compatibility_simples}
CI_\He(P, \mathfrak{n}, Q)\cong I_\He(w_0(P), \iota_0^M(\mathfrak{n}), w_0(Q))
\end{equation}
cf.\ \cite[Corollary 4.24]{AHV18}. Finally, a simple module $CI_\He(P, \mathfrak{n}, Q)$ is supersingular if and only if $P=Q=G$ (cf.\ \cite[Corollary 4.31]{Abe19}), and analogously for $I_\He(P, \mathfrak{n}, Q)$.

\subsection{Recollections on abelian model categories} 

We briefly recall some facts about model structures on abelian categories. For details on the general theory of model categories, we refer the reader to \cite[\S 1]{KD21} and the references therein. Fix a bicomplete abelian category $\A$. We will say that a model structure on $\A$ is an \emph{abelian model structure} if the class of (trivial) fibrations, resp.\ (trivial) cofibrations, coincides with the class of epimorphisms with (trivially) fibrant kernel, resp.\ monomorphisms with (trivially) cofibrant cokernel. An abelian category equipped with such a model structure will be called an \emph{abelian model category}.

By work of Hovey, cf.\ \cite{Hov2}, constructing abelian model structures on $\A$ is equivalent to constructing certain cotorsion pairs. Given a pair $(\C, \Fc)$ of classes of objects of $\A$, consider the pair $(^\perp\Fc, \C^\perp)$ defined by
$$
^\perp\Fc=\{X\in\A \mid \Ext^1_\A(X,Y)=0 \text{ for all }Y\in\Fc\}
$$
and
$$
\C^\perp=\{Y\in\A \mid \Ext^1_\A(X,Y)=0 \text{ for all }X\in\C\}.
$$
Then we say that $(\C, \Fc)$ is a \emph{cotorsion pair} if $\C={^\perp}\Fc$ and $\Fc=\C^\perp$ (cf.\ \cite[Definition 2.3]{Hov2}). It is said to have \emph{enough functorial projectives} if for all $X\in\A$, there is a functorial epimorphism $f:Y\to X$ with $Y\in\C$ such that $\ker(f)\in\Fc$. Dually, it is said to have \emph{enough functorial injectives} if for all $X\in\A$, there is a functorial monomorphism $g:X\to Z$ with $Z\in\Fc$ such that $\coker(g)\in\C$. When a cotorsion pair has both enough functorial projectives and enough functorial injectives, then it is said to be \emph{functorially complete}.

Recall that a class of objects of $\A$ is called \emph{thick} if it is closed under direct summands and satisfies the 2-out-of-3 property with respect to short exact sequences. When $\A$ is equipped with a model structure, we will say that an object $X\in\A$ is \emph{trivial} if the zero map $0\to X$ (or equivalently $X\to 0$) is a weak equivalence. The fundamental result we need is the following.

\begin{thm}[{\cite[Theorem 2.2]{Hov2}}]\label{Hovey} Let $\A$ be a bicomplete abelian category.
\begin{enumerate}
\item If $\A$ is an abelian model category, denote by $\C$ (resp.\ $\Fc$, resp.\ $\W$) the class of cofibrant (resp.\ fibrant, resp.\ trivial) objects. Then $\W$ is thick, and $(\C\cap \W,\Fc)$ and $(\C, \Fc\cap \W)$ are functorially complete cotorsion pairs.
\item Conversely, assume that $(\C, \Fc, \W)$ is a triple of classes of objects of $\A$ such that $\W$ is thick, and $(\C\cap \W,\Fc)$ and $(\C, \Fc\cap \W)$ are functorially complete cotorsion pairs. Then there is a unique abelian model structure on $\A$ such that $\C$, $\Fc$ and $\W$ coincide with the classes of cofibrant, fibrant and trivial objects, respectively.
\end{enumerate}
\end{thm}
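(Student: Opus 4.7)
My plan is to follow the strategy of Hovey's original proof of this correspondence in \cite[Theorem 2.2]{Hov2}. The key observation powering both directions is that the definition of an abelian model structure encodes (trivial) cofibrations and (trivial) fibrations in terms of short exact sequences with (trivially) cofibrant/fibrant ends, so the lifting and factorisation axioms translate directly into Ext-vanishing and ``enough functorial projectives/injectives'' conditions on the classes $\C$, $\Fc$, $\W$.

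For part (i), the first step is to prove that $\W$ is thick. Closure under retracts is immediate from the closure of weak equivalences under retracts, and the 2-out-of-3 property along short exact sequences $0\to A\to B\to C\to 0$ can be deduced by applying the functorial factorisation axiom to the maps $0\to A$ or $A\to B$ and then identifying the connecting maps as trivial cofibrations or trivial fibrations. For the cotorsion pair $(\C\cap\W,\Fc)$, given $X\in\C\cap\W$ and $Y\in\Fc$, any extension $0\to Y\to E\to X\to 0$ makes $Y\to E$ a trivial cofibration; lifting the identity of $Y$ against the fibration $Y\to 0$ produces a retraction $E\to Y$ that splits the extension, showing $\C\cap\W\subseteq{}^\perp\Fc$. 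Conversely, if $\Ext^1_\A(X,Z)=0$ for all $Z\in\Fc$, I would factor $0\to X$ as a trivial cofibration $0\to X'$ followed by a fibration $X'\to X$; the kernel $K$ then lies in $\Fc$, so $0\to K\to X'\to X\to 0$ splits by hypothesis, and $X$ is a direct summand of $X'\in\C\cap\W$, hence itself in $\C\cap\W$ by thickness of $\W$ and stability of $\C$ under summands. The symmetric arguments handle the reverse perpendicular and the pair $(\C,\Fc\cap\W)$. Functorial completeness of both pairs is immediate from functorial factorisation of $0\to X$ and $X\to 0$.

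For part (ii), the plan is to declare a morphism to be a (trivial) cofibration if it is a monomorphism with cokernel in $\C$ (resp.\ in $\C\cap\W$), a (trivial) fibration if it is an epimorphism with kernel in $\Fc$ (resp.\ in $\Fc\cap\W$), and a weak equivalence if it admits a factorisation as a trivial cofibration followed by a trivial fibration. The two lifting axioms then correspond precisely to the Ext-vanishing relations of the two cotorsion pairs, via the standard identification of $\Ext^1_\A$ with both extensions and lifting problems, and the functorial factorisation axioms are a direct restatement of functorial completeness. Uniqueness follows at once: the classes of (trivial) cofibrations and fibrations are forced by $(\C,\Fc,\W)$ through the definition of an abelian model structure, and the factorisation axiom then pins down the weak equivalences.

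The main obstacle I expect is verifying the 2-out-of-3 axiom for weak equivalences in part (ii), together with the fact that the class so defined really gives a well-defined notion (independent of the chosen factorisation). The argument proceeds by taking two composable maps and constructing compatible factorisations using functorial completeness, then extracting commutative diagrams of short exact sequences between the kernels and cokernels of the resulting trivial (co)fibrations and applying the thickness of $\W$ to propagate triviality. This diagram chase is the technical heart of the theorem, but no genuinely new idea beyond thickness is required.
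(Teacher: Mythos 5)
The paper does not prove this statement: it quotes it verbatim as Hovey's Theorem~2.2 with a citation to \cite{Hov2} and uses it as a black box, so there is no internal proof to compare your sketch against.

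That said, your outline correctly tracks the strategy of Hovey's original proof. Part~(i) is handled accurately: the identification of $\C\cap\W$ with ${}^{\perp}\Fc$ via lifting against $\Fc\to 0$, and the converse via factoring $0\to X$ and splitting the resulting extension, is exactly Hovey's Lemma~5.6-style argument, and functorial completeness does fall out of the functorial factorisation axiom applied to $0\to X$ and $X\to 0$. Two places where the sketch is lighter than it ought to be, and where Hovey has to work harder: first, your argument for the thickness of $\W$ in part~(i) (``applying the functorial factorisation axiom to $0\to A$ or $A\to B$ and identifying the connecting maps'') does not actually close on its own; in Hovey's development, thickness of $\W$ is \emph{derived after} the cotorsion-pair characterisation (his Lemma~5.8), not before it, precisely because one needs the cotorsion pairs to control the kernels and cokernels of the auxiliary factorisations. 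Second, in part~(ii), the claim that the lifting axioms ``correspond precisely to the Ext-vanishing relations'' needs the auxiliary fact that for a mono $f$ with cokernel $C$ and an epi $g$ with kernel $K$ in an abelian category, $\Ext^1_\A(C,K)=0$ forces the left lifting property of $f$ against $g$ for \emph{arbitrary} commuting squares, not just the trivial ones encoding extensions; Hovey establishes this as a separate lemma before he can invoke it, and it is worth naming explicitly rather than folding it into ``the standard identification.'' You have correctly located the two-out-of-three axiom as the technical heart of the converse direction, which is where Hovey's proof spends most of its effort.
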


In fact, Hovey proved a more general version of the above theorem which applies to a wider class of model structures on $\A$, using the notion of a proper class of short exact sequences, and this has been further generalised to the context of exact categories by Gillespie in \cite{Gil11}. In this paper, we will however only need to work with abelian model structures. We will sometimes denote an abelian model structure on $\A$ by the triple $(\C, \Fc, \W)$ as in \Cref{Hovey}. Note that the homotopy category $\Ho(\A)$ of an abelian model category is canonically equivalent to the quotient $\C\cap \Fc/\Gamma$, where $\Gamma=\C\cap\Fc\cap \W$ (cf.\ \cite[Proposition 4.4(5)]{Gil11}).

Given an abelian model structure $(\C, \Fc, \W)$ on $\A$, the completeness of the cotorsion pairs $(\C\cap \W,\Fc)$ and $(\C, \Fc\cap \W)$ ensures that for any $X\in\A$ there are short exact sequences $0\to Y\to Q_c X\to X\to 0$ and $0\to X\to Q_f X\to Z\to 0$, with $Q_c X\in\C$, $Q_f X\in\Fc$, $Y\in\Fc\cap \W$ and $Z\in \C\cap\W$. This can be done functorially in our setup, but by slight abuse of terminology we will refer to \emph{any} two elements $Q_c X$ and $Q_f X$ fitting in short exact sequences as above as choices of \emph{cofibrant replacement} and \emph{fibrant replacement} for $X$, respectively. We will always denote such replacements by $Q_cX$ and $Q_fX$ as we did above.

As a final piece of notation, if a functor $\F:\A\to \B$ between the underlying categories of two abelian model categories preserves all weak equivalences, it then naturally induces a functor $\Ho(\F):\Ho(\A)\to\Ho(\B)$ given by $\Ho(\F)(X):=\F(X)$ for $X\in\Ho(\A)$. When $\F$ is Quillen we can identify $\Ho(\F)$ with the total derived functor of $\F$ under mild assumptions as follows.

\begin{lem}\label{preserve_reflect_we} Suppose that $\A$ and $\B$ are abelian model categories, and let $\F:\A\to\B$ be an exact functor between the underlying abelian categories.
\begin{enumerate}
\item If $\F$ preserves trivial objects then it preserves all weak equivalences.
\item Assume that $\F$ is a (left or right) Quillen functor. If $\F$ reflects trivial objects then it reflects all weak equivalences.
\end{enumerate}
Furthermore, with the assumption of (i) and if $\F$ is Quillen, then the total derived functor of $\F$ is naturally isomorphic to $\Ho(\F)$. 
\end{lem}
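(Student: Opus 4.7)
My approach rests on a standard characterization of weak equivalences in an abelian model category: a morphism $f:X\to Y$ is a weak equivalence if and only if both $\ker(f)$ and $\coker(f)$ are trivial objects. I would first recall this characterization. Its proof proceeds by factoring $f$ through its image, $f=\iota\circ\pi$ with $\pi$ an epimorphism and $\iota$ a monomorphism, and using 2-out-of-3 to reduce to showing that a monomorphism is a weak equivalence iff its cokernel is trivial (and dually that an epimorphism is a weak equivalence iff its kernel is trivial). These dual claims in turn follow from the factorization axiom of the model structure (factoring as a trivial cofibration followed by a fibration, or as a cofibration followed by a trivial fibration), combined with the thickness of $\W$ in short exact sequences.

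Once this characterization is available, parts (i) and (ii) become immediate consequences of the exactness of $\F$. For (i), if $f$ is a weak equivalence in $\A$ then $\ker(f),\coker(f)\in\W_\A$; exactness of $\F$ yields natural identifications $\ker(\F(f))\cong\F(\ker(f))$ and $\coker(\F(f))\cong\F(\coker(f))$, both of which lie in $\W_\B$ by the preservation hypothesis, so $\F(f)$ is a weak equivalence in $\B$. For (ii), the same identifications combined with the reflection hypothesis force $\ker(f),\coker(f)\in\W_\A$ whenever $\ker(\F(f)),\coker(\F(f))\in\W_\B$, so $f$ is a weak equivalence in $\A$; the Quillen hypothesis ensures that $\F$ interacts appropriately with the model structures (and would also permit an alternative factorization-based argument replacing the ker/coker characterization in this direction).

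For the final assertion, assume $\F$ is (say) left Quillen and preserves trivial objects, and hence by (i) preserves all weak equivalences. Then $\F$ descends tautologically to the homotopy categories, giving a functor which on objects sends $X$ to $\F(X)$, i.e.\ exactly $\Ho(\F)$. The total left derived functor is computed on objects as $\mathbf{L}\F(X)\cong\F(Q_cX)$ for a cofibrant replacement $Q_cX\to X$; since this replacement is a weak equivalence, its image $\F(Q_cX)\to\F(X)$ is also a weak equivalence and hence an isomorphism in $\Ho(\B)$, which assembles naturally in $X$ into the desired isomorphism $\mathbf{L}\F\cong\Ho(\F)$ (the right Quillen case being dual). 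The only genuinely delicate point in this plan is the ker/coker characterization of weak equivalences invoked at the outset; everything else is a formal consequence of exactness together with the preservation or reflection hypotheses on trivial objects.
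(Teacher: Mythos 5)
The characterization your argument rests on --- that in an abelian model category a morphism $f$ is a weak equivalence if and only if both $\ker(f)$ and $\coker(f)$ are trivial --- is false, and the proof sketch you offer only establishes one implication. Factoring $f=\iota\circ\pi$ through its image does show that trivial kernel and cokernel imply weak equivalence (each factor is then a weak equivalence by \cite[Lemma 5.8]{Hov2}), but for the converse you cannot invoke 2-out-of-3 on $f=\iota\circ\pi$ without already knowing that one of $\iota,\pi$ is a weak equivalence, which is exactly what would need to be shown. In fact the converse fails: take $R=\Z/4$ with the Gorenstein projective model structure, so the trivial objects are precisely the projective $R$-modules, and let $f\colon\Z/4\to\Z/4$ be multiplication by $2$. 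This is a map between trivial objects, hence a weak equivalence, yet $\ker(f)\cong\coker(f)\cong\Z/2$ is not projective. Both parts (i) and (ii) of your argument therefore rest on a false premise.

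The correct route, which the paper takes, is to use the factorization supplied by the model structure rather than the one through the image. For (i), factor $f$ as a trivial cofibration $i$ followed by a trivial fibration $p$: then $i$ is a monomorphism with trivial cokernel and $p$ is an epimorphism with trivial kernel; exactness and preservation of trivial objects transport these properties to $\F i$ and $\F p$, and $\F f=\F p\circ\F i$ is a weak equivalence by \cite[Lemma 5.8]{Hov2}. For (ii) the Quillen hypothesis is not optional, as your aside suggests it might be: with $\F$ left Quillen, factor $f=gh$ with $h$ a trivial cofibration and $g$ a fibration; then $\F h$ is a trivial cofibration hence a weak equivalence, 2-out-of-3 gives that $\F g$ is a weak equivalence, and applying \cite[Lemma 5.8]{Hov2} to the epimorphism $\F g$ shows $\F(\ker g)=\ker(\F g)$ is trivial, so $\ker g$ is trivial by reflection and $g$ is a trivial fibration. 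Your argument for the final assertion about $\Ho(\F)$ is sound as written.
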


\begin{proof}
Suppose that $f$ is a morphism in $\A$. For (i), assume first that $f$ is either a trivial cofibration or trivial fibration. Then in particular it is either a monomorphism with trivial cokernel, or an epimorphism with trivial kernel. Under our assumptions then $\F f$ is still a monomorphism/epimorphism with trivial cokernel/kernel. It then follows from \cite[Lemma 5.8]{Hov2} that $\F f$ is a weak equivalence. The case of a general $f$ follows immediately by factorising $f$ as the composite of a trivial cofibration and a trivial fibration.

For (ii), we assume that $\F$ is left Quillen (the other case is dual). Factorise $f=gh$ where $h$ is a trivial cofibration and $g$ is a fibration. Then by the 2-out-of-3 property of weak equivalences it suffices to show that if $\F g$ is a weak equivalence then so is $g$. Since $\F$ is exact, we may apply \textit{loc.\ cit.} again to see that $\F g$ is a weak equivalence precisely when $\F(\ker(g))$ is trivial. It then follows that $g$ is a trivial fibration since $\F$ reflects trivial objects.

Finally, the total derived functor of $\F$ is defined to be $\Ho(\F\circ Q)$ where $Q$ is some fibrant or cofibrant replacement functor, depending on whether $\F$ is right or left Quillen. But by definition, any object is weakly equivalent to its replacement and $\F$ preserves such weak equivalences under the assumption of (i). Thus $\Ho(\F)$ is isomorphic the total derived functor in that case, as required.
\end{proof}

An abelian model category $\A$ is called \emph{hereditary} if the class $\C$ of cofibrant objects is closed under taking kernels of epimorphisms and if the class $\Fc$ of fibrant objects is closed under taking cokernels of monomorphisms. Such an abelian model category is automatically stable (cf.\ \cite[Corollary 1.1.15]{Beck14}), meaning that its associated homotopy category $\Ho(\A)$ is canonically equipped with the structure of a triangulated category. We quickly recall what this triangulated structure is.

\begin{itemize}
\item The shift functor is given by the suspension functor $\Sigma$ associated with the model structure. For a cofibrant object $X\in\C$, we may choose from the factorisations in the model structure a cofibration $j_X:X\to W$ where $W$ is trivially fibrant, and in fact $W\in \Gamma= \C\cap\Fc\cap \W$ since cofibrations are closed under compositions. Then we let $\Sigma(X)$ be defined as $\coker(j_X)$. Since $\A$ is hereditary, note that $\Sigma(X)\in\C\cap\Fc$ whenever $X\in\C\cap\Fc$. This construction doesn't depend on the choice of $j_X$ up to weak equivalence, and hence this defines an endofunctor of $\Ho(\A)\simeq \C\cap\Fc/\Gamma$ which can be shown to be an auto-equivalence. There is also a dually defined loop functor which is naturally isomorphic to $\Sigma^{-1}$. We note that the loop and suspension functors can be defined more generally for any (not necessarily stable) pointed model category, cf.\ \cite[Definition 3.1.2]{BR}, and are then adjoint functors.
\item The distinguished triangles are obtained from the so-called cofiber sequences as in \cite[Definition 3.4.1 \& Theorem 4.2.1]{BR}. In the context of hereditary abelian model categories, these are given as follows. Suppose that $0\to X\stackrel{f}{\to} Y\stackrel{g}{\to} Z\to 0$ is a short exact sequence in $\A$ with $X, Y, Z\in\C$. Fix $j_X:X\to W$ as above, so that $W\in\Fc\cap \W$ and $\Sigma X\cong \coker(j_X)$. Since $\Ext^1_\A(Z, W)=0$ we have that $j_X$ extends along $f$ to a map $Y\to W$. Postcomposing with the projection $W\to \Sigma X$, we obtain a map $l:Y\to \Sigma X$ such that $l\circ f=0$. Thus $l$ descends to a map $h:Z\to \Sigma X$. The distinguished triangles in $\Ho(\A)$ are then, up to isomorphism, the triangles of the form
\begin{equation}\label{d_triangles}
X\stackrel{f}{\to} Y\stackrel{g}{\to} Z \stackrel{h}{\to} \Sigma X
\end{equation}
as above.
\end{itemize}

\begin{rem}\label{fiber_remark}
There is a dual notion of fiber sequences which may be used to construct the distinguished triangles as well, cf.\ \cite[Definition 3.4.2, Corollaries 4.4.1 \& 4.4.2]{BR}.  In our context, those are obtained similarly to the above by using short exact sequences of fibrant objects and the loop functor. That these two constructions give the same triangulated structure on the homotopy category holds in the general context of stable model categories (cf.\ \textit{loc.\ cit.}) but for hereditary abelian model categories this can be also seen directly by applying \cite[Lemma 1.4.4]{Beck14} and its dual. 
\end{rem}

Recall that a ring $R$ is called {\it Gorenstein} if it is both left and right noetherian and if it has finite selfinjective dimension both as a left and as a right $R$-module. In this case the left and right selfinjective dimensions of $R$ coincide (cf.\ \cite{EJ11}, Proposition 9.1.8). If $n$ denotes their common value then $R$ is called an {\it $n$-Gorenstein ring}. We will frequently make use of the fact that if $R$ is an $n$-Gorenstein ring then an $R$-module $\m$ has finite projective dimension if and only if it has finite injective dimension. In this case both dimensions are bounded above by $n$ (cf.\ \cite{EJ11}, Theorem 9.1.10). The relevance of this notion for us comes from the following result of Ollivier-Schneider.

\begin{thm}[{\cite[Theorem 0.1]{OS14}}] The pro-$p$ Iwahori-Hecke algebra $\He$ is $n$-Gorenstein where $n\leq r_{\text{ss}}+r_Z$.
\end{thm}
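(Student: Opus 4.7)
The plan is to construct, via the Bruhat-Tits building, a finite resolution of $\He$ by projective right $\He$-modules whose length is bounded by $r_{ss}+r_Z$, and then dualise the argument on the other side. The key geometric input is that the (enlarged) Bruhat-Tits building $\mathscr{X}$ of $G$ is contractible and has dimension equal to $r_{ss}+r_Z$, so its augmented oriented chain complex with coefficients in the smooth $G$-representation $\XX=\ind_I^G(\mathbf{1})$ yields an exact complex
\[
0\longrightarrow C_{r_{ss}+r_Z}^{\mathrm{or}}(\mathscr{X},\XX)\longrightarrow\cdots\longrightarrow C_0^{\mathrm{or}}(\mathscr{X},\XX)\longrightarrow \XX\longrightarrow 0
\]
in $\Rep_k^\infty(G)$. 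Applying $\Hom_G(\XX,-)=(-)^I$, which is exact on the relevant terms and sends $\XX$ to $\He$, produces a finite complex of right $\He$-modules augmented over $\He$.

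Next I would decompose each chain module using transitivity of $G$ on faces of a fixed type. After choosing representatives in the closure of the standard chamber $C$, Frobenius reciprocity identifies $C_i^{\mathrm{or}}(\mathscr{X},\XX)^I$ with a finite direct sum of modules of the shape $\XX_F^{\dagger,\epsilon_F}\otimes_{\He_F^\dagger}\He$, where $\epsilon_F$ is the orientation character of the stabiliser $P_F^\dagger$ on the face $F$. The freeness of $\He$ over $\He_F^\dagger$ (cf.\ the results of \S 2.2 quoted from \cite{OS14}) then reduces projectivity of these terms over $\He$ to projectivity of $\XX_F^{\dagger,\epsilon_F}$ over $\He_F^\dagger$.

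To secure the latter, I would prove that each finite-dimensional algebra $\He_F^\dagger$ is self-injective, so that projective and injective $\He_F^\dagger$-modules coincide. In positive characteristic this is the crucial ingredient: $\He_F^\dagger$ is isomorphic (up to a central extension by the length-zero part) to the endomorphism ring of a universal principal series for the pro-$p$ Iwahori of the finite reductive quotient $P_F/I_F$, which one can show is Frobenius by exhibiting an explicit symmetrising pairing via the dual basis $\{T^{\dagger,*}_{\tilde w}\}$. Granting this, every $\XX_F^{\dagger,\epsilon_F}$ is projective over $\He_F^\dagger$, each $\He_i':=C_i^{\mathrm{or}}(\mathscr{X},\XX)^I$ is projective over $\He$, and we obtain a projective resolution of $\He$ as a right $\He$-module of length at most $r_{ss}+r_Z$. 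Running the dual construction with injective chains bounds the injective dimension of $\He$ as a right module by the same integer, and symmetry of the setup in $I\backslash G/I$ yields the corresponding bound on the left.

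The main obstacle I expect is the verification that $\He_F^\dagger$ is self-injective together with the careful identification of the orientation twists: one must track how the $P_F^\dagger$-action on the top exterior power of the tangent space of $F$ interacts with the passage to $I$-invariants, and ensure that the twisted modules $\XX_F^{\dagger,\epsilon_F}$ are indeed projective rather than merely injective objects over $\He_F^\dagger$. The accounting of the $r_Z$ summand is a subtler point too: working with the enlarged building adds a polynomial ring factor corresponding to translations by $Z/(Z\cap I')$, and one must check that this Koszul-type contribution of length $r_Z$ preserves projectivity after tensoring with $\He$.
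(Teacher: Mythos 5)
Your proposal departs from the Ollivier--Schneider argument in ways that do not work. First, they use the \emph{semisimple} Bruhat--Tits building $\mathscr{X}$, which has dimension $d=r_{ss}$; their functorial resolution \eqref{res_Oll_Sch} therefore has length $r_{ss}$, not $r_{ss}+r_Z$. For a general split reductive group the enlarged building is $\mathscr{X}\times V$ with $V$ a real vector space of dimension $r_Z$ carrying no canonical polysimplicial structure, so the chain complex of length $r_{ss}+r_Z$ you posit is not available. Second, the claim that ``each finite-dimensional algebra $\He_F^\dagger$ is self-injective'' is false whenever $r_Z>0$: the connected centre $Z$ acts trivially on $\mathscr{X}$, so $Z\subseteq P_F^\dagger$, the group $P_F^\dagger$ is non-compact, and $\He_F^\dagger$ is infinite-dimensional. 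It is the parahoric Hecke algebra $\He_F$ that is a finite-dimensional Frobenius algebra; $\He_F^\dagger$ is a twisted group algebra of $\He_F$ by the finitely generated abelian group $P_F^\dagger/P_F$, which has free rank $r_Z$, and is therefore $r_Z$-Gorenstein rather than self-injective.

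This is where the summand $r_Z$ actually enters, and it is not from the dimension of a building. Given an arbitrary $\m\in\Mod(\He)$, the Ollivier--Schneider resolution of $\m$ has length $r_{ss}$ with terms $\Res^\He_{\He_F^\dagger}(\m)\otimes_{\He_F^\dagger}(\epsilon_F)\He$, and these terms are in general \emph{not} projective right $\He$-modules, so a finite projective resolution of $\He$ itself cannot be extracted this way. Instead, by freeness of $\He$ over $\He_F^\dagger$ and adjunction one obtains, for any $\He_F^\dagger$-module $\mathfrak{n}$, isomorphisms
\begin{equation*}
\Ext^i_\He\bigl(\mathfrak{n}\otimes_{\He_F^\dagger}\He,\ \He\bigr)\cong\Ext^i_{\He_F^\dagger}\bigl(\mathfrak{n},\ \Res^\He_{\He_F^\dagger}(\He)\bigr),
\end{equation*}
and the right-hand side vanishes for $i>r_Z$ because $\Res^\He_{\He_F^\dagger}(\He)$ is free and $\He_F^\dagger$ is $r_Z$-Gorenstein. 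Feeding the length-$r_{ss}$ resolution into a hypercohomology spectral sequence yields $\Ext^i_\He(\m,\He)=0$ for $i>r_{ss}+r_Z$ and all $\m$, which is exactly the required bound on the selfinjective dimension, with the analogous argument on the other side. Your closing remark correctly senses that the $r_Z$ arises from a Laurent polynomial factor related to $Z/(Z\cap I')$, but this factor lives inside the face algebras $\He_F^\dagger$ rather than inside the building, and it contributes to the selfinjective dimension of $\He_F^\dagger$ rather than to the length of a resolution.
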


The bound on the selfinjective dimension of $\He$ is actually sharp (cf.\ \cite[Remark 8.2]{Koz}). The theorem applies generally to the algebras $\He_M$ for any standard Levi subgroup $M$ of $G$. 

We now recall the construction, due to Hovey, of two abelian model structures on the category $\Mod(R)$ for any Gorenstein ring $R$. Let $\A$ be an abelian category with enough projectives. An object $A\in\A$ is called {\it Gorenstein projective} if there is an acyclic complex
\[
Y = \cdots\longrightarrow Y_1\stackrel{d_1}{\longrightarrow}Y_0\stackrel{d_0}{\longrightarrow} Y_{-1}\longrightarrow\cdots
\]
of projective objects of $\A$ for which $A=Z_0Y=\Ker(d_0)$ and which remains exact upon applying $\Hom_{\A}(\cdot,P)$ for any projective object $P$ of $\A$. When $\A=\Mod(R)$ for a Gorenstein ring $R$, then a module $\m$ is Gorenstein projective as an object of $\Mod(R)$ if and only if $\Ext^i_R(\m,\mathfrak{p})=0$ for all $i\geq 1$ and all modules $\mathfrak{p}$ of finite projective dimension (cf.\ \cite[Corollary 11.5.3]{EJ11}). Then $\m$ is Gorenstein projective if and only if there is an acyclic complex $Y$ of projectives with $\m=Z_0Y$, cf.\ \cite[Remark 3.1]{KD21}. There is also dually a notion of \emph{Gorenstein injective} object of $\A$, and for $\A=\Mod(R)$ these have dual properties to the above (cf.\ \cite[Definition 10.1.1 \& Corollary 11.2.2]{EJ11})). We denote the classes of Gorenstein projective and Gorenstein injective modules over a ring $R$ by $\mathrm{GProj}(R)$ and $\mathrm{GInj}(R)$, respectively.

The following fundamental theorem is due to Hovey.

\begin{thm}[{\cite[Theorem 8.6]{Hov2}}]\label{GP_R_Mod}
Let $R$ be a Gorenstein ring and let $\W$ denote the class of $R$-modules which have finite projective/injective dimension. Then there is a cofibrantly generated abelian model structure on $\Mod(R)$ given by the triple $(\mathrm{GProj}(R), \Mod(R), \W)$. There is dually a cofibrantly generated abelian model structure on $\Mod(R)$ given by the triple $(\Mod(R), \mathrm{GInj}(R), \W)$.
\end{thm}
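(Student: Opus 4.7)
The plan is to apply Hovey's correspondence (\Cref{Hovey}) to the triple $(\mathrm{GProj}(R), \Mod(R), \W)$: I must verify that $\W$ is thick and that both cotorsion pairs $(\mathrm{GProj}(R) \cap \W, \Mod(R))$ and $(\mathrm{GProj}(R), \W)$ are functorially complete. Thickness of $\W$ over an $n$-Gorenstein ring is standard, since $\W$ coincides both with the class of modules of projective dimension at most $n$ and with the class of modules of injective dimension at most $n$, while the two-out-of-three property follows from the long exact $\Ext$-sequence applied to a short exact sequence in which two terms lie in $\W$.

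For the first cotorsion pair, the key observation is that $\mathrm{GProj}(R) \cap \W = \Proj(R)$: any Gorenstein projective module $\m$ with a finite projective resolution splits off from that resolution by the vanishing of $\Ext^1_R(\m, -)$ against projectives. The resulting pair $(\Proj(R), \Mod(R))$ is trivially a functorially complete cotorsion pair, with enough projectives and enough injectives given by free covers and by identity morphisms respectively.

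The substance lies in the pair $(\mathrm{GProj}(R), \W)$. The orthogonality inclusions $\mathrm{GProj}(R) \subseteq {}^\perp \W$ and $\W \subseteq \mathrm{GProj}(R)^\perp$ are immediate from the characterisation recalled in the paper: any $\m \in \mathrm{GProj}(R)$ satisfies $\Ext^i_R(\m, \mathfrak{q}) = 0$ for all $i \geq 1$ and all $\mathfrak{q}$ of finite projective dimension. For the saturation ${}^\perp \W \subseteq \mathrm{GProj}(R)$, given $\m \in {}^\perp \W$, I would deduce $\Ext^i_R(\m, \mathfrak{p}) = 0$ for all $i \geq 1$ and all projective $\mathfrak{p}$ (which characterises Gorenstein projectives over $R$) by a dimension-shifting trick: the $(i-1)$-th cosyzygy $W$ of $\mathfrak{p}$ in an injective coresolution still lies in $\W$ by thickness, since both $\mathfrak{p}$ and injectives lie in $\W$, so that $\Ext^i_R(\m, \mathfrak{p}) = \Ext^1_R(\m, W) = 0$. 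The dual inclusion $\mathrm{GProj}(R)^\perp \subseteq \W$ is obtained by testing $\m$ against $G = \Omega^n \mathfrak{n}$ for arbitrary $\mathfrak{n}$; since $R$ is $n$-Gorenstein, every such $n$-th syzygy is Gorenstein projective, and the vanishing $\Ext^1_R(G, \m) = 0$ translates via dimension shifting into $\Ext^{n+1}_R(\mathfrak{n}, \m) = 0$ for all $\mathfrak{n}$, forcing $\id_R(\m) \leq n$.

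The main obstacle is functorial completeness, and in particular cofibrant generation, of $(\mathrm{GProj}(R), \W)$. The plan is to exhibit a set $\mathcal{S}$ of Gorenstein projective modules of bounded cardinality depending on $|R|$ which cogenerates the cotorsion pair, and then appeal to the small object argument to extract the required functorial factorisations. Because $R$ is Noetherian, every Gorenstein projective module is a continuous filtered union of Gorenstein projective submodules of the bounded size, so Eklof's lemma ensures ${}^\perp(\mathcal{S}^\perp) = \mathrm{GProj}(R)$; the Eklof--Trlifaj theorem then furnishes functorial special precovers and preenvelopes, yielding cofibrant generation. Finally, the dual model structure $(\Mod(R), \mathrm{GInj}(R), \W)$ is produced by an entirely dual argument, replacing Gorenstein projectives with Gorenstein injectives, projective resolutions with injective coresolutions, and the cosyzygy dimension-shift with a syzygy dimension-shift throughout.
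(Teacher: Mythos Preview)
The paper does not supply its own proof of this theorem: it is stated with attribution to \cite[Theorem 8.6]{Hov2} and used as input. So there is no ``paper's proof'' to compare against; your proposal is effectively a sketch of Hovey's original argument.

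Your outline is essentially correct and follows the standard route. A couple of comments on points you left slightly implicit. First, for cofibrant generation of $(\mathrm{GProj}(R),\W)$ one does not actually need the full deconstructibility statement you invoke (that every Gorenstein projective is a filtered union of small Gorenstein projectives); it suffices to exhibit an explicit cogenerating set contained in $\mathrm{GProj}(R)$. Since $\W$ coincides with the modules of injective dimension at most $n$, and $\id_R(\m)\leq n$ is detected by the vanishing of $\Ext_R^{n+1}(R/J,\m)$ over all right ideals $J$, the set $\mathcal{S}=\{\Omega^n(R/J)\mid J\text{ a right ideal}\}$ satisfies $\mathcal{S}^\perp=\W$, and each $\Omega^n(R/J)$ is Gorenstein projective over an $n$-Gorenstein ring. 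Eklof--Trlifaj then gives functorial completeness and cofibrant generation directly. This is closer to how Hovey actually proceeds and avoids the (true but harder) filtration claim. Second, your dual statement for $(\Mod(R),\mathrm{GInj}(R),\W)$ is not quite ``entirely dual'' at the level of cofibrant generation, since the small object argument is inherently asymmetric; Hovey handles this case by cogenerating the pair $(\W,\mathrm{GInj}(R))$ by a set as well (using that $\W$ is closed under syzygies and generated by modules of bounded size), rather than by a naive dualisation.
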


We call these the {\it Gorenstein projective} and {\it Gorenstein injective} model structures on $\Mod(R)$, respectively. We will write $\Mod(R)^{\text{GP}}$, resp.\ $\Mod(R)^{\text{GI}}$, for the category $\Mod(R)$ viewed as an abelian model category with the Gorenstein projective, resp.\ Gorenstein injective, model structure.

\begin{rem}\label{we_same} The identity adjunction
$$
\id:\Mod(R)^{\text{GP}}\rightleftarrows \Mod(R)^{\text{GI}}:\id
$$
is Quillen and in fact a Quillen equivalence (cf.\ \cite[bottom of p.583]{Hov2}). The latter means that if $\m, \mathfrak{n}\in\Mod(R)$ are such that $\m$ is Gorenstein projective and $\mathfrak{n}$ is Gorenstein injective, then a map between them is a weak equivalence in one model structure if and only if it is a weak equivalence in the other. But this holds because, more generally, the weak equivalences in both model structures are the same. Indeed, that follows from \Cref{preserve_reflect_we} as the trivial objects are the same in either model structure.

Even without using the notion of Quillen equivalences, we therefore see that the two model structures have \emph{the same} homotopy category, namely the abstract localisation $\Ho(R):=\Mod(R)[(\text{w.e.})^{-1}]$ as defined in \cite[Definition 1.2.1]{HovBook} (where `w.e.' stands for `weak equivalences'). Our two model structures provide two categories, namely $\mathrm{GProj}(R)/\mathrm{Proj}(R)$ and $\mathrm{GInj}(R)/\mathrm{Inj}(R)$, which are equivalent to it and in particular which describe the $\Hom$ sets explicitly. Similarly, the two model structures give two different collections of representatives for the isomorphism classes of the same distinguished triangles in $\Ho(R)$, and similarly give two different (explicit) constructions for the same shift functor $\Sigma$ (see also \Cref{triangulated_remark} later). In this paper we will use both of these model structures, depending on the situation.
\end{rem}

We record here a particular application of \Cref{preserve_reflect_we} which will be useful in order to compare the Gorenstein projective model structures over various Gorenstein rings.

\begin{lem}\label{free_adjunction}
Suppose that $A$ and $B$ are Gorenstein rings, with $A$ isomorphic to a subring of $B$. Further, assume that $B$ is free as a left $A$-module. Write $F^B_A:\Mod(A)\to\Mod(B)$ for the functor $\m\mapsto \m\otimes_A B$ and $\Res^B_A$ for its right adjoint given by restriction of scalars. Then the adjunction $F^B_A\dashv \Res^B_A$ is Quillen with respect to the Gorenstein projective model structure and there are natural isomorphisms $\mathbf{L}F^B_A\cong \Ho(F^B_A)$ and $\mathbf{R}\Res^B_A\cong \Ho(\Res^B_A)$.
\end{lem}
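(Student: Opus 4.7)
The plan is to verify the Quillen property directly and then invoke \Cref{preserve_reflect_we} to identify the total derived functors with $\Ho(F^B_A)$ and $\Ho(\Res^B_A)$. A key observation I would use in both steps is that $\Res^B_A$ sends projective $B$-modules to projective $A$-modules: since $B$ is free over $A$, a free $B$-module restricts to a free $A$-module, and any projective is a direct summand of a free.

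To show that $F^B_A$ is left Quillen with respect to the Gorenstein projective model structures, I would first note that since $B$ is flat over $A$, the functor $F^B_A$ is exact and so preserves monomorphisms. The cofibrations are monomorphisms with Gorenstein projective cokernel, and the trivial cofibrations are monomorphisms with projective cokernel (since a Gorenstein projective module of finite projective dimension over a Gorenstein ring is projective). Preservation of trivial cofibrations is immediate from the fact that $F^B_A$ takes projective $A$-modules to projective $B$-modules. For general cofibrations, given a Gorenstein projective $A$-module $\m$ with totally acyclic complex $P_\bullet$ of projectives such that $\m=Z_0P_\bullet$, the complex $F^B_A(P_\bullet)$ is acyclic with $F^B_A(\m)$ as its zeroth cycle and consists of projective $B$-modules. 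The tensor-hom adjunction gives
\[
\Hom_B(F^B_A(P_\bullet), Q)\cong \Hom_A(P_\bullet, \Res^B_A Q),
\]
which is exact for any projective $B$-module $Q$ by the opening observation, so $F^B_A(\m)$ is Gorenstein projective.

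Finally, to identify the derived functors I would apply \Cref{preserve_reflect_we}. Both $F^B_A$ and $\Res^B_A$ are exact, so by part (i) it suffices to check that each preserves the class of modules of finite projective dimension. Applying either functor to a finite projective resolution yields a finite projective resolution of the image, since both functors are exact and carry projectives to projectives (again using the opening observation for $\Res^B_A$). Hence both preserve all weak equivalences, and the final assertion of \Cref{preserve_reflect_we} supplies the desired natural isomorphisms $\mathbf{L}F^B_A\cong \Ho(F^B_A)$ and $\mathbf{R}\Res^B_A\cong \Ho(\Res^B_A)$. The main step, which I expect to be the only mildly technical obstacle, is the verification that $F^B_A$ preserves Gorenstein projectives; once one reduces this, via tensor-hom, to the fact that $\Res^B_A$ carries projectives to projectives, everything else is essentially formal.
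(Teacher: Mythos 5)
Your opening observation — that $\Res^B_A$ sends projective $B$-modules to projective $A$-modules because $B$ is free over $A$ — does not follow from the stated hypothesis. The lemma only assumes $B$ is free as a \emph{left} $A$-module, which is exactly what makes $F^B_A(\m)=\m\otimes_A B$ exact; but for $\Res^B_A$ of a free right $B$-module $B^{(I)}$ to be a free (or even projective) right $A$-module, you would need $B$ to be projective as a \emph{right} $A$-module, and that is not part of the hypothesis. Your entire argument hinges on this claim in two places: the verification that $\Hom_A(P_\bullet,\Res^B_A Q)$ is exact when $Q$ is $B$-projective, and the claim that $\Res^B_A$ carries finite projective resolutions to finite projective resolutions.

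The fix, which is also what the paper's proof does, is to exploit the right-adjoint direction and the Gorenstein hypothesis instead. Since $F^B_A$ is exact, its right adjoint $\Res^B_A$ preserves \emph{injective} modules; being exact itself, $\Res^B_A$ then preserves finite injective dimension. Over a Gorenstein ring finite injective dimension coincides with finite projective dimension, so $\Res^B_A$ preserves the class $\W$ of trivial objects after all. From here the Quillen property is easiest seen on the right: $\Res^B_A$ preserves epimorphisms (fibrations) and, by the above, preserves epimorphisms with kernel of finite projective dimension (trivial fibrations), hence is right Quillen. If you prefer to stay with your argument that $F^B_A$ preserves Gorenstein projectives, the $\Hom$ step can be salvaged because over a Gorenstein ring $A$ a module $\m=Z_0 P_\bullet$ (with $P_\bullet$ acyclic of projectives) is Gorenstein projective if and only if $\Ext^i_A(\m,\mathfrak{p})=0$ for all $i\geq 1$ and all $\mathfrak{p}$ of \emph{finite projective dimension}, and the corrected argument shows $\Res^B_A Q$ is such a $\mathfrak{p}$. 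With that repair, and with the derived-functor identification then running through \Cref{preserve_reflect_we} using preservation of $\W$ rather than of projectives, your overall structure is sound, though it is more work than the paper's direct check of the right Quillen condition.
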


\begin{proof}
Under our assumptions, both $F^B_A$ and $\Res^B_A$ are exact. We claim that they preserve trivial objects, i.e.\ modules of finite projective/injective dimension. Indeed, $F^B_A$ and $\Res^B_A$ preserve projective and injective modules, respectively, since they are an adjoint pair and exact. It therefore follows by exactness that $F^B_A$, resp.\ $\Res^B_A$, preserves finite projective resolutions, resp.\ finite injective resolutions, and thus we get our claim. From all of this we also obtain that $\Res^B_A$ preserves epimorphisms, i.e.\ fibrations, and epimorphisms with kernels of finite projective dimension, i.e.\ trivial fibrations, and hence is right Quillen as required. Finally, the isomorphism on the total derived functors is immediate from \Cref{preserve_reflect_we}.
\end{proof}

The following characterisation of the morphisms in the homotopy category of any abelian model category is due to Gillespie.

\begin{prop}[{\cite[Proposition 4.4]{Gil11}}]\label{factor_through}
Let $\A$ be an abelian model category with model structure $(\C, \Fc, \W)$. Suppose $X\in \C$ and $Y\in \Fc$. Then the abelian group $\Hom_{\Ho(\A)}(X,Y)$ is isomorphic to the quotient of $\Hom_{\A}(X,Y)$ by the equivalence relation given by $f\sim g$ if and only if $f-g$ factors through an object of $\Gamma=\C\cap\Fc\cap\W$, if and only if $f-g$ factors through an object of $\C\cap\W$,  if and only if $f-g$ factors through an object of $\Fc\cap\W$.
\end{prop}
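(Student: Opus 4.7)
The plan is to invoke Quillen's classical description of hom-sets in the homotopy category for cofibrant source and fibrant target, and then to identify the resulting homotopy relation on $\Hom_\A(X,Y)$ with each of the three factorization conditions. Since $X\in\C$ is cofibrant and $Y\in\Fc$ is fibrant, the canonical map $\gamma:\Hom_\A(X,Y)\to\Hom_{\Ho(\A)}(X,Y)$ is surjective and its fibres are the equivalence classes of the left (equivalently right) homotopy relation $\sim$. This relation is compatible with the additive structure of $\Hom_\A(X,Y)$, and likewise each of the three factorization conditions is additive because $\Gamma$, $\C\cap\W$ and $\Fc\cap\W$ are closed under finite direct sums. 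It therefore suffices to show, for any $f\in\Hom_\A(X,Y)$, the equivalence of the statements: (0) $f$ is null-homotopic; (1) $f$ factors through an object of $\Gamma$; (2) $f$ factors through an object of $\C\cap\W$; (3) $f$ factors through an object of $\Fc\cap\W$.

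For $(0)\Rightarrow (3)$, I would build a concrete good cylinder by factoring the codiagonal $X\oplus X\to X$, via the factorization axiom, as a cofibration $(i_0,i_1):X\oplus X\hookrightarrow \mathrm{Cyl}(X)$ followed by a trivial fibration $p:\mathrm{Cyl}(X)\twoheadrightarrow X$. By the abelian model axioms, $K:=\ker(p)\in\Fc\cap\W$. Since $Y$ is fibrant, any null-homotopy witnessing $f\sim 0$ can be realized on this specific cylinder, giving $H:\mathrm{Cyl}(X)\to Y$ with $Hi_0=f$ and $Hi_1=0$. From $p\circ(i_0-i_1)=\id_X-\id_X=0$ we deduce that $i_0-i_1$ factors through $K$; post-composing with $H$ yields $f=Hi_0-Hi_1=H\circ(i_0-i_1)$, a factorization of $f$ through $K\in\Fc\cap\W$. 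The implication $(1)\Rightarrow (0)$ is immediate: any $W\in\Gamma$ is trivial, cofibrant, and fibrant, hence isomorphic to $0$ in $\Ho(\A)$, so any map factoring through such $W$ vanishes in $\Ho(\A)$.

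To close the circle I would prove $(3)\Rightarrow (2)\Rightarrow (1)$. For $(3)\Rightarrow (2)$, given $f=g\circ h$ with $h:X\to W$, $g:W\to Y$ and $W\in\Fc\cap\W$, the enough-projectives property of the cotorsion pair $(\C,\Fc\cap\W)$ provides a short exact sequence $0\to V\to U\to W\to 0$ with $U\in\C$ and $V\in\Fc\cap\W$; thickness of $\W$ forces $U\in\C\cap\W$, and the vanishing $\Ext^1_\A(X,V)=0$ from the same cotorsion pair lifts $h$ to $\tilde h:X\to U$, producing a factorization of $f$ through $U\in\C\cap\W$. For $(2)\Rightarrow (1)$, starting from such a factorization through $U\in\C\cap\W$, the enough-injectives property of the cotorsion pair $(\C\cap\W,\Fc)$ yields $0\to U\to U'\to T\to 0$ with $U'\in\Fc$ and $T\in\C\cap\W$; thickness of $\W$ together with the closure of $\C$ under extensions put $U'\in\Gamma$, and the vanishing $\Ext^1_\A(T,Y)=0$ extends the composite $U\to W\to Y$ along $U\hookrightarrow U'$, giving a factorization of $f$ through $U'\in\Gamma$. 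The substantive step is the cylinder-based analysis of null-homotopy in the second paragraph (in particular the appeal to the standard fact that for $Y$ fibrant any homotopy can be realized on the chosen good cylinder); the replacement arguments in the third paragraph are routine bookkeeping with the two cotorsion pairs supplied by \Cref{Hovey}.
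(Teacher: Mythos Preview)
The paper does not prove this proposition; it is cited without proof from \cite[Proposition 4.4]{Gil11}. Your argument is correct and is essentially the standard one: identify the homotopy relation via a good cylinder built from the factorization axioms, then shuttle between the three classes using the two cotorsion pairs from \Cref{Hovey}. One minor slip: in the step $(2)\Rightarrow(1)$ you write ``extends the composite $U\to W\to Y$'', but at that point there is no $W$ in play; you simply mean the given map $U\to Y$ coming from the factorization of $f$ through $U\in\C\cap\W$. With that corrected, the cycle $(0)\Rightarrow(3)\Rightarrow(2)\Rightarrow(1)\Rightarrow(0)$ closes and the claim follows.
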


For a Gorenstein ring $R$ and writing $\W$ for the class of $R$-modules of finite projective dimension, it is shown in \cite[Corollary 8.5]{Hov2} that the classes $\mathrm{GProj}(R)\cap\W$ and $\mathrm{GInj}(R)\cap\W$ are that of projective and injective $R$-modules respectively. We thus see from \Cref{factor_through} that for $\m$ a Gorenstein projective module and $\mathfrak{n}$ any module, two maps $f,g:\m\to \mathfrak{n}$ are equal in $\Ho(R)$ if and only if $f-g$ factors through a projective module. Similarly, if $\m$ is any module and if $\mathfrak{n}$ is Gorenstein injective, then two maps $f,g:\m\to \mathfrak{n}$ are equal in $\Ho(R)$ if and only if $f-g$ factors through an injective module.

To make our notation simpler, for any two $\m, \mathfrak{n}\in\Mod(R)$ we will denote the $k$-vector space $\Hom_{\Ho(R)}(\m, \mathfrak{n})$ by $[\m, \mathfrak{n}]_R$. More generally, if $R$ and $S$ are two Gorenstein rings with $R$ isomorphic to a subring of $S$ and if $\m, \mathfrak{n}\in\Mod(S)$, we will write $[\m, \mathfrak{n}]_R$ for short instead of $[\Res^S_R(\m), \Res^S_R(\mathfrak{n})]_R$.

\begin{rem}\label{factor_projective}
Suppose that $R$ is a Gorenstein ring and $f:\m\to \mathfrak{n}$ is a map in $\Mod(R)$. Fix a surjection $\theta:\mathfrak{p}\to\mathfrak{n}$ where $\mathfrak{p}$ is a projective module. Assume that $f$ factors as a composite $\m\to\mathfrak{q}\stackrel{g}{\to}\mathfrak{n}$ where $\mathfrak{q}$ is a projective module. Since $\mathfrak{q}$ is projective, it follows that $g$, and hence $f$, factor through $\theta$. Consequently, we see that the maps $f\in \Hom_R(\m, \mathfrak{n})$ which factor through a projective module are precisely those in the image of $\theta\circ(-):\Hom_R(\m, \mathfrak{p})\to\Hom_R(\m, \mathfrak{n})$. Dually, if $\iota:\m\to\mathfrak{e}$ is a monomorphism into an injective module $\mathfrak{e}$ then the maps $f\in \Hom_R(\m, \mathfrak{n})$ which factor through a injective module are precisely those in the image of $(-)\circ\iota:\Hom_R(\mathfrak{e}, \mathfrak{n})\to\Hom_R(\m, \mathfrak{n})$.
\end{rem}

By combining \Cref{factor_through} and \Cref{factor_projective}, we see that for $\m, \mathfrak{n}\in\Mod(R)$ with $\m$ Gorenstein projective and for $\theta:\mathfrak{p}\to\mathfrak{n}$ as above, $[\m, \mathfrak{n}]_R\cong \coker (\theta\circ(-))\cong \Ext^1_S(\m, \ker(\theta))$. This generalises as follows. We say that an abelian model structure $(\C, \Fc, \W)$ on an abelian category $\A$ is \emph{projective}, resp. \emph{injective}, if $\Fc=\A$,  resp.\ $\C=\A$. We note that any projective or injective abelian model structure on $\A$ is hereditary, cf.\ \cite[Corollary 1.1.12]{Beck14}.

\begin{prop}\label{Hom_Ho} Let $\A$ be an abelian model category with model structure $(\C, \Fc, \W)$, and assume that $\A$ is either injective or projective. Let $X\in \C$ and $Y\in\Fc$. Then we have isomorphisms
$$
\Hom_{\Ho(\A)}(\Sigma^{-k}X,Y)\cong \Hom_{\Ho(\A)}(X,\Sigma^kY)\cong \Ext^k_\A(X, Y)
$$
of abelian groups for all $k>0$.
\end{prop}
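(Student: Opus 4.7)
The plan is as follows. The first isomorphism $\Hom_{\Ho(\A)}(\Sigma^{-k}X,Y)\cong \Hom_{\Ho(\A)}(X,\Sigma^kY)$ is formal from the fact that $\Sigma$ is an auto-equivalence of the triangulated category $\Ho(\A)$: applying $\Sigma^k$ to morphisms gives the desired bijection. For the second isomorphism I would treat the projective case in detail, the injective case being entirely dual. So assume the model structure is projective, i.e.\ $\Fc=\A$. Then $\Gamma=\C\cap\W$ coincides with the class of classical projective objects of $\A$, because the cotorsion pair $(\C\cap\W,\A)$ forces $\Ext^1_\A(P,-)=0$ for every $P\in\C\cap\W$. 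The two structural facts I would exploit are: (a) $\A$ is automatically hereditary (cf.\ \cite[Corollary 1.1.12]{Beck14}), so in particular $\C$ is closed under kernels of epimorphisms; and (b) the cotorsion pair $(\C,\W)$ gives $\Ext^1_\A(C,W')=0$ whenever $C\in\C$ and $W'\in\W$.

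Next I would choose a projective resolution $\cdots\to P_1\to P_0\to X\to 0$ in $\A$ and denote the $k$-th syzygy by $K_k:=\Ker(P_{k-1}\to P_{k-2})$ (with the convention $P_{-1}:=X$ so that $K_0=X$), yielding short exact sequences $0\to K_k\to P_{k-1}\to K_{k-1}\to 0$. Each such sequence produces a distinguished triangle in $\Ho(\A)$ whose middle term $P_{k-1}$ vanishes, so that the connecting map is an isomorphism $K_{k-1}\cong\Sigma K_k$; induction on $k$ then gives $K_k\cong\Sigma^{-k}X$ in $\Ho(\A)$. Moreover, fact (a) combined with $X\in\C$ ensures that each $K_k$ lies in $\C$. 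Applying $\Hom_\A(-,Y)$ to the above short exact sequence and using $\Ext^1_\A(P_{k-1},Y)=0$ gives a surjection $\Hom_\A(K_k,Y)\twoheadrightarrow\Ext^1_\A(K_{k-1},Y)$ whose kernel is the image of $\Hom_\A(P_{k-1},Y)$; classical dimension shifting then identifies the target with $\Ext^k_\A(X,Y)$.

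By \Cref{factor_through}, $\Hom_{\Ho(\A)}(K_k,Y)$ is the quotient of $\Hom_\A(K_k,Y)$ by the subgroup of maps factoring through an object of $\C\cap\W$. Since any map $K_k\to Y$ extending to $P_{k-1}$ manifestly factors through the projective $P_{k-1}$, the preceding surjection descends to a well-defined surjection $\Hom_{\Ho(\A)}(K_k,Y)\twoheadrightarrow\Ext^k_\A(X,Y)$. The main content of the proof, and what I expect to be the hardest step, is the reverse inclusion: namely, that any $f:K_k\to Y$ which factors through \emph{some} object $W\in\C\cap\W$ must already extend to $P_{k-1}$. To prove this, write $f=h\circ g$ with $g:K_k\to W$ and $h:W\to Y$, and form the pushout of the short exact sequence $0\to K_k\to P_{k-1}\to K_{k-1}\to 0$ along $g$, obtaining
\[
0\longrightarrow W\longrightarrow P_{k-1}\sqcup_{K_k}W\longrightarrow K_{k-1}\longrightarrow 0.
\]
The class of this sequence in $\Ext^1_\A(K_{k-1},W)$ vanishes by fact (b), hence the sequence splits. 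Composing a retraction $P_{k-1}\sqcup_{K_k}W\to W$ with the canonical map $P_{k-1}\to P_{k-1}\sqcup_{K_k}W$ and then with $h$ produces the required extension $\tilde f:P_{k-1}\to Y$ of $f$. The injective case is handled by the precisely dual argument, using an injective coresolution $0\to Y\to E^0\to E^1\to\cdots$, cosyzygies $\Sigma^kY$ defined as the corresponding cokernels, and the cotorsion pair $(\W,\Fc)$ to produce the analogous splitting via pullback along $h$.
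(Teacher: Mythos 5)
Your proof is correct and essentially self-contained, whereas the paper disposes of this statement by citing \cite[Theorem 9.3]{Hov2} and asserting the Gorenstein case transfers ``mutatis mutandis.'' The mathematical substance of what you do --- work with a projective resolution, identify the syzygies $K_k$ with $\Sigma^{-k}X$ via the vanishing of the middle terms of the associated distinguished triangles, and then show that the ``factors through $\Gamma$'' relation on $\Hom_\A(K_k,Y)$ coincides with the ``extends to $P_{k-1}$'' relation --- is precisely what underlies Hovey's Theorem 9.3, so the routes are not genuinely different; you have simply spelled out a direct verification the paper leaves implicit. The key non-formal ingredient, the pushout argument showing that a map $K_k\to Y$ factoring through a projective $W$ already extends to $P_{k-1}$ because $\Ext^1_\A(K_{k-1},W)=0$ (using $K_{k-1}\in\C$ by heredity and $W\in\W\cap\Fc$), is exactly the right idea and is carried out correctly.

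One small presentational slip is worth flagging. You write that the trivial inclusion ``extending to $P_{k-1}$ implies factoring through a projective'' is what lets the surjection $\Hom_\A(K_k,Y)\twoheadrightarrow\Ext^k_\A(X,Y)$ descend to $\Hom_{\Ho(\A)}(K_k,Y)$. In fact the logical roles are swapped: the descent requires the \emph{hard} inclusion (factoring through $\Gamma$ implies extending, i.e.\ the factoring maps land in the kernel of the surjection), while the trivial inclusion is what gives injectivity of the descended map. Since you prove both inclusions, the argument as a whole is complete; only the attribution of which inclusion does which job is reversed.
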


\begin{proof}
The arguments given in \cite[Theorem 9.3]{Hov2} for Gorenstein projective/injective model structures carry over mutatis mutandis to this more general situation.
\end{proof}

Since the Gorenstein projective, resp.\ Gorenstein injective, model structure on $\Mod(R)$ is projective, resp.\ injective, it is hereditary. In particular, it is stable which is also proved directly in \cite[Theorem 9.3]{Hov2}. Hence we see that the homotopy category $\Ho(R)$ has the structure of a triangulated category as described earlier. We note that since every object is cofibrant in the Gorenstein injective model structure the triangles \eqref{d_triangles} are obtained for \emph{any} short exact sequence in $\Mod(R)$.

\subsection{Left and right transfers} We now recall the notion of left/right transfer of a model structure along an adjunction. If $F:\mathcal{E}\rightleftarrows\D:U$ is an adjunction where $\mathcal{E}$ is a model category and $\D$ is a locally small, bicomplete category, there is then a natural candidate for a model structure on $\D$ making the adjunction Quillen. Namely, one defines fibrations (resp.\ weak equivalences) in $\D$ to be the morphisms $f$ such that $U(f)$ is a fibration (resp.\ weak equivalence). The cofibrations are then defined as those morphisms which satisfy the left lifting property with respect to the trivial fibrations. This, however, may not in general give a well-defined model structure on $\D$. When it does, we call it the \emph{right transfer} of the model structure on $\mathcal{E}$ along $U$, and we say that the right transfer exists. In \cite[Proposition 4.10]{KD21}, it was shown that the right transfer of Hovey's Gorenstein projective model structure on $\Mod(\He)$ along the functor $U$ of pro-$p$ Iwahori invariants exists. This gives a model structure on $\Rep_k^\infty(G)$, called the $I$-Gorenstein projective model structure.

Dually, when $\D$ is a model category and $\mathcal{E}$ is a locally small, bicomplete category, the \emph{left transfer} of the model structure on $\D$ along $F$, if it exists, is the model structure on $\mathcal{E}$ whose cofibrations (resp.\ weak equivalences) are the morphisms $g$ such that $F(g)$ is a cofibration (resp.\ weak equivalence), and the fibrations are defined as those morphisms which satisfy the right lifting property with respect to the trivial cofibrations.

\section{Parabolic induction, recollements and supersingular modules}

\subsection{Parabolic induction and recollement: one parabolic}

We now wish to study how the parabolic (co)induction functor behaves with respect to Hovey's Gorenstein projective and Gorenstein injective model structures on $\Mod(\He_M)$ and $\Mod(\He)$. Since we only know that the Gorenstein projective model structure right transfers to the category $\Rep_k^\infty(G)$, we a priori favour that model structure over the injective one. But as we will see, when it comes to parabolic induction it is more fruitful to consider the Gorenstein injective model structure as well. Since working specifically with Hecke algebras does not simplify the approach, we work in this subsection in the more general situation described below.

\begin{set}\label{setup1}
We let $R, S$ be two Gorenstein rings. Moreover, we assume the following:
\begin{itemize}
\item there is an adjoint triple of additive functors
\[
\xymatrix@C=0.5cm{
\Mod(S) \ar[rrr]^{\G} &&& \Mod(R) \ar @/_1.5pc/[lll]_{\Ll}  \ar
 @/^1.5pc/[lll]^{\R}
 },
\]
meaning that $\Ll$ (resp.\ $\R$) is the left (resp.\ right) adjoint of $\G$;
\item the functor $\G$ is fully faithful, and $\Ll$ is exact.
\end{itemize}
\end{set}

\begin{ex} The above setup applies to $S=\He_M$ ($M$ some standard Levi subgroup), $R=\He$ and $\G\in\{\Coind_{\He_M}^{\He}, \Ind_{\He_M}^{\He}\}$ along with its two adjoints. The left adjoint is exact in either case because $\Loc_M$ is a localisation functor and the equivalence in \Cref{twist} is also exact. We've also seen that parabolic (co)induction is fully faithful in \Cref{fully_faithful}.
\end{ex}

Note that the functor $\G$ is automatically exact as it is both a left and a right adjoint. But $\R$ is only known to be left exact, being a right adjoint. Part (i) of the Lemma below is a generalisation of \cite[Lemma 8.1]{Koz} to our setup:

\begin{lem}\label{proj_dim} Let $\m\in\Mod(S)$ and $\mathfrak{n}\in\Mod(R)$. Then:
\begin{enumerate}
\item $\pd_{S}(\m)<\infty$ if and only if $\pd_{R}(\G(\m))<\infty$;
\item if $\pd_{R}(\mathfrak{n})<\infty$ then $\pd_{S}(\Ll(\mathfrak{n}))<\infty$; and
\item $\m$ is Gorenstein injective if and only if $\G(\m)$ is Gorenstein injective.
\end{enumerate}
\end{lem}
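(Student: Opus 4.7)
The plan has three main ingredients. First, since $\G$ is fully faithful with left adjoint $\Ll$ and right adjoint $\R$, \Cref{ff_counit} gives that the counit $\Ll\G\Rightarrow\id_{\Mod(S)}$ and the unit $\id_{\Mod(S)}\Rightarrow\R\G$ are both natural isomorphisms. Second, both $\Ll$ and $\G$ are exact, and by the standard adjoint criterion this already implies that $\Ll$ preserves projectives (because its right adjoint $\G$ is exact) and that $\G$ preserves injectives (because its left adjoint $\Ll$ is exact). Third, over a Gorenstein ring finite projective and finite injective dimensions coincide, and Gorenstein injectivity of $\m\in\Mod(S)$ is characterised by the vanishing $\Ext^i_S(\mathfrak{q},\m)=0$ for all $i\geq 1$ and all $\mathfrak{q}$ of finite injective dimension (dually to the Gorenstein projective characterisation stated in the excerpt).

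Part (ii) is immediate: apply $\Ll$ (exact, projective-preserving) to a finite projective resolution of $\mathfrak{n}$. For part (i), in the forward direction use the Gorenstein property to replace a finite projective resolution of $\m$ by a finite injective one, apply $\G$ (exact, injective-preserving) to obtain a finite injective resolution of $\G(\m)$, and invoke the Gorenstein property once more to conclude $\pd_R(\G(\m))<\infty$. In the reverse direction, apply $\Ll$ to a finite projective resolution of $\G(\m)$ and use the counit isomorphism $\Ll\G\cong\id$ to identify the outcome with a finite projective resolution of $\m$.

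For (iii), I would establish two Ext-adjunction identities and then reduce the claim to the vanishing characterisation of Gorenstein injectivity. The identity
\[
\Ext^i_R(\mathfrak{p},\G(\m))\cong \Ext^i_S(\Ll(\mathfrak{p}),\m)
\]
follows by applying $\Ll$ to a projective resolution of $\mathfrak{p}$ in $\Mod(R)$ (still a projective resolution since $\Ll$ is exact and preserves projectives) and translating $\Hom$ via the $\Ll\dashv\G$ adjunction. The identity
\[
\Ext^i_R(\G(\mathfrak{q}),\G(\m))\cong \Ext^i_S(\mathfrak{q},\m)
\]
follows by applying $\G$ to an injective resolution of $\m$ in $\Mod(S)$ (still an injective resolution since $\G$ is exact and preserves injectives), translating $\Hom$ via the $\G\dashv\R$ adjunction, and collapsing $\R\G\cong\id$. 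The direction $\Rightarrow$ of (iii) then follows by taking $\mathfrak{p}$ of finite injective dimension over $R$ and using (ii) to see that $\Ll(\mathfrak{p})$ has finite injective dimension over $S$, so that the first identity combined with Gorenstein injectivity of $\m$ yields $\Ext^i_R(\mathfrak{p},\G(\m))=0$ for $i\geq 1$. The direction $\Leftarrow$ is symmetric: take $\mathfrak{q}$ of finite injective dimension over $S$, use (i) to transfer finiteness across $\G$, and apply the second identity together with Gorenstein injectivity of $\G(\m)$.

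The only delicate point I anticipate is the second Ext-adjunction, since $\R$ is not assumed exact; the trick is to take the injective resolution on the $S$-side, where the preservation properties of $\G$ are available, so that the unit isomorphism $\R\G\cong\id$ obviates any need to derive $\R$. Once the two Ext-adjunctions are in place, the remaining proof is organisational, with (ii) serving as the base computation, (i) bootstrapping on the Gorenstein symmetry between $\pd$ and $\id$, and (iii) combining (i)--(ii) with the Ext-vanishing characterisation.
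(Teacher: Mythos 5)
Your proof is correct and the core structure---establishing the two Ext-adjunction identities $\Ext^i_R(\mathfrak{n},\G(\m))\cong\Ext^i_S(\Ll(\mathfrak{n}),\m)$ and $\Ext^i_R(\G(\m),\G(\m'))\cong\Ext^i_S(\m,\m')$, then feeding them into the vanishing characterisation of Gorenstein injectives together with parts (i) and (ii)---is the same as the paper's. The only departures are cosmetic: you derive (i) and (ii) directly by pushing finite resolutions through $\G$ and $\Ll$ (using that $\G$ preserves injectives and $\Ll$ preserves projectives) rather than reading them off the Ext identities, and you obtain the second Ext identity by resolving $\m$ injectively over $S$ and collapsing $\R\G\cong\id$, whereas the paper substitutes $\mathfrak{n}=\G(\m)$ into the first identity and collapses $\Ll\G\cong\id$; these are dual packagings of the same computation.
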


\begin{proof}
(i)-(ii) Since $\Ll$ and $\G$ are exact adjoint functors, we have an isomorphism of abelian groups
\begin{equation}\label{Ext_Ind1}
\Ext^i_{S}(\Ll(\mathfrak{n}),\m)\cong\Ext^i_{R}(\mathfrak{n},\G(\m))
\end{equation}
for all $\m\in\Mod(S)$, all $\mathfrak{n}\in\Mod(R)$ and all $i\geq 0$. This gives that $\id_R(\G(\m))\leq \id_S(\m)$ and also immediately implies (ii). Furthermore, given any $\m'\in\Mod(S)$, \Cref{ff_counit} and \eqref{Ext_Ind1} together give isomorphisms
\begin{equation}\label{Ext_Ind2}
\Ext^i_{S}(\m,\m')\cong\Ext^i_{S}(\Ll(\G(\m)),\m')\cong \Ext^i_{R}(\G(\m),\G(\m')).
\end{equation}
We deduce immediately that $\pd_{S}(\m)\leq \pd_{R}(\G(\m))$. Since $R$ and $S$ are assumed to be Gorenstein, finite injective dimension coincides with finite projective dimension for modules over both $R$ and $S$, and (i) follows from the above two inequalities.

(iii) Recall that $\m$ is Gorenstein injective if and only if $\Ext^i_{S}(\mathfrak{e},\m)=0$ for all $\mathfrak{e}\in\Mod(S)$ of finite injective dimension (and similarly for $\G(\m)$ and $R$-modules of finite injective dimension). Therefore, (ii) and \eqref{Ext_Ind1} give that if $\m$ is Gorenstein injective then so is $\G(\m)$. Conversely, (i) and \eqref{Ext_Ind2} give that if $\G(\m)$ is Gorenstein injective then so is $\m$.
\end{proof}

We next show that the Gorenstein projective/injective model structures on $\Mod(S)$ are in fact determined by those on $\Mod(R)$. Note that since the notion of weak equivalences doesn't depend on which model structure we choose to work with (cf.\ \Cref{we_same}), there is no ambiguity in the statement of part (iii) of the Proposition below.

\begin{prop}\label{ind_tran} With $S$, $R$, $\G$ and $\Ll$ as in \Cref{setup1}, we have:
\begin{enumerate}
\item the Gorenstein projective model structure on $\Mod(S)$ is the right transfer of the Gorenstein projective model structure on $\Mod(R)$ along $\G$. In particular, $\G$ is a right Quillen functor for the Gorenstein projective model structures;
\item the Gorenstein injective model structure on $\Mod(S)$ is both the left transfer and the right transfer of the Gorenstein injective model structure on $\Mod(R)$ along $\G$. In particular, $\G$ is both a left and a right Quillen functor for the Gorenstein injective model structures; and
\item the functor $\Ll$ preserves weak equivalences.
\end{enumerate}
\end{prop}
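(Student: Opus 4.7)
The strategy rests on two basic observations about $\G$: it is exact (being both a left and a right adjoint) and faithful (being fully faithful), so it preserves and reflects monomorphisms, epimorphisms, and the zero object; and by \Cref{proj_dim}(i) and (iii), it preserves and reflects both the property of having finite projective dimension and the property of being Gorenstein injective. Part (iii) I would dispatch immediately: $\Ll$ is exact by hypothesis and sends trivial objects to trivial objects by \Cref{proj_dim}(ii), so \Cref{preserve_reflect_we}(i) yields that $\Ll$ preserves all weak equivalences.

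For (i), I would first verify directly that $\G$ is right Quillen for the Gorenstein projective model structures. The GP-fibrations are precisely the epimorphisms, which $\G$ preserves, while the GP-trivial fibrations are epimorphisms with kernel of finite projective dimension, which $\G$ also preserves by combining exactness with \Cref{proj_dim}(i) (using $\ker(\G f)=\G(\ker f)$). To identify the GP model structure on $\Mod(S)$ with the right transfer of that on $\Mod(R)$ along $\G$, it then suffices to check that its fibrations and weak equivalences coincide with those the transfer would prescribe, since its existence is already guaranteed by \Cref{GP_R_Mod}. The fibrations match because $\G$ both preserves and reflects epimorphisms (by exactness and faithfulness). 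The weak equivalences match by \Cref{preserve_reflect_we}: $\G$ preserves them because it is exact and preserves trivial objects, and it reflects them because it is right Quillen and reflects trivial objects (again by \Cref{proj_dim}(i)).

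For (ii) the template is identical. I would verify that $\G$ is both left and right Quillen for the Gorenstein injective model structures: cofibrations are monomorphisms, preserved by exactness; trivial cofibrations are monos with cokernel of finite projective dimension, preserved via \Cref{proj_dim}(i); fibrations are epimorphisms with Gorenstein injective kernel, preserved via \Cref{proj_dim}(iii); and trivial fibrations are just epimorphisms, preserved by exactness. By the same reflection arguments (exactness, faithfulness, and the relevant parts of \Cref{proj_dim}), the weak equivalences, cofibrations and fibrations of the GI structure on $\Mod(S)$ coincide with those of both the left transfer and the right transfer of the GI structure on $\Mod(R)$, so \Cref{GP_R_Mod} again exhibits the GI structure on $\Mod(S)$ as both transfers. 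The only real subtlety to beware is avoiding a separate existence proof for the transfers; the clean resolution is to let \Cref{GP_R_Mod} guarantee the model structures on $\Mod(S)$ and then simply match the classes of distinguished morphisms on both sides, rather than trying to run the small-object argument or verify cofibrant generation directly.
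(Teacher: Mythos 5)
Your overall strategy — verify that $\G$ preserves and reflects the distinguished classes of morphisms, then observe that the already-existing Gorenstein projective and injective model structures on $\Mod(S)$ (guaranteed by \Cref{GP_R_Mod}) must therefore coincide with the transfers — is the same as the paper's, and your explicit verification that $\G$ is Quillen before invoking \Cref{preserve_reflect_we}(ii) is actually a small improvement in exposition, since that lemma does require the Quillen hypothesis.

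There is, however, a genuine error in part (ii): the trivial fibrations of the Gorenstein injective model structure are \emph{not} just epimorphisms. This cannot be right on purely formal grounds, since trivial fibrations are a subclass of fibrations and you have already correctly identified the GI-fibrations as the epimorphisms with Gorenstein injective kernel, which form a proper subclass of all epimorphisms. In an abelian model structure $(\C,\Fc,\W)$, the trivial fibrations are the epimorphisms with kernel in $\Fc\cap\W$; for the GI structure, $\Fc\cap\W = \mathrm{GInj}(R)\cap\W$ is precisely the class of injective modules. So the trivial fibrations are the epimorphisms with \emph{injective} kernel, and ``preserved by exactness'' is not enough: you also need $\G$ to preserve injectives. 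Fortunately this does hold in \Cref{setup1} — either because $\G$ has the exact left adjoint $\Ll$, or more in the spirit of your argument because an injective module is exactly a Gorenstein injective module of finite projective dimension, so preservation follows from \Cref{proj_dim}(i) and (iii). So the conclusion survives, but the reasoning as written is wrong. Note also that this verification is avoidable: the left Quillen-ness of $\G$ for GI (which you verify correctly) already suffices to apply \Cref{preserve_reflect_we}(ii), and right Quillen-ness then falls out once the GI structure on $\Mod(S)$ is identified as the right transfer.
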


\begin{proof}
We fix a morphism $f$ in $\Mod(S)$. Our first goal is to establish the following:
\begin{itemize}
\item[(A)] $f$ is a fibration if and only if $\G(f)$ is a fibration, in the Gorenstein projective model structures on $\Mod(S)$ and $\Mod(R)$; and
\item[(B)] $f$ is a fibration, resp.\ cofibration, if and only if $\G(f)$ is a fibration, resp.\ cofibration, in the Gorenstein injective model structures on $\Mod(S)$ and $\Mod(R)$.
\end{itemize}
To get (A), we simply observe that fibrations are epimorphisms and the statement holds because $\G$ is exact and faithful. To get (B), the case of cofibrations is proved completely dually to (A): the cofibrations are the monomorphisms, so that the claim holds by exactness and faithfulness of $\G$. The fibrations on the other hand are precisely the epimorphisms with Gorenstein injective kernels. Now the claim for that case holds by exactness and faithfulness of $\G$ and by \Cref{proj_dim}(iii).

To prove (i)-(ii), it now suffices to show that $f$ is a weak equivalence if and only if $\G(f)$ is a weak equivalence. First note that $\G$ preserves trivial objects, i.e.\  modules of finite projective dimension, by \Cref{proj_dim}(i). Since $\G$ is exact, it now follows from \Cref{preserve_reflect_we}(i) that $\G$ preserves weak equivalences. In particular, by (A)-(B) above we have that $\G$ is right Quillen for both the Gorenstein projective and injective model structures, and left Quillen for the Gorenstein injective model structure. Hence, we may apply \Cref{preserve_reflect_we}(ii) which, together with \Cref{proj_dim}(i) again, gives us that $\G$ reflects weak equivalences as required.

Finally, (iii) follows from \Cref{preserve_reflect_we}(i) and \Cref{proj_dim}(ii), given that (i) above implies that $\Ll$ is left Quillen for the Gorenstein projective model structure.
\end{proof}

\begin{cor}\label{derived_nice}
Continuing with $S$, $R$, $\G$ and $\Ll$ as in \Cref{setup1}, we have:
\begin{enumerate}
\item the two total right derived functors of $\G$ (with respect to the Gorenstein projective and Gorenstein injective model structures respectively) and the total left derived functor of $\G$ (with respect to the Gorenstein injective model structure) are all naturally isomorphic to $\Ho(\G)$. Similarly, the two total left derived functors of $\Ll$ are naturally isomorphic to $\Ho(\Ll)$;
\item the functor $\Ho(\G)$ is fully faithful; and
\item $\Ll$ reflects trivial objects, i.e.\ the converse of \Cref{proj_dim}(ii) holds, if and only if $(\Ll,\G)$ is a Quillen equivalence (for either/both model structures).
\end{enumerate}
\end{cor}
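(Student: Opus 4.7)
The plan is as follows. For part (i), I would invoke Lemma \ref{preserve_reflect_we}(i) directly. Under Setup \ref{setup1}, $\G$ is exact (it admits both a left and a right adjoint) and $\Ll$ is exact by hypothesis, while Lemma \ref{proj_dim}(i)-(ii) shows that each preserves trivial objects in the sense of any of the three relevant Gorenstein projective/injective model structures. Proposition \ref{ind_tran} has already established that $\G$ and $\Ll$ are Quillen functors for these structures, so the last assertion of Lemma \ref{preserve_reflect_we} identifies each of the total derived functors in question with the corresponding $\Ho(\G)$ or $\Ho(\Ll)$.

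For part (ii), my plan is to apply Lemma \ref{ff_counit} at both the underived and derived levels. Since $\G$ is fully faithful by assumption, the counit $\varepsilon: \Ll\G \to \id$ is a natural isomorphism. Via the identification from (i), the derived counit of the Quillen adjunction $(\Ll, \G)$ is simply $\Ho(\varepsilon)$, which is still a natural isomorphism. A second appeal to Lemma \ref{ff_counit}, now applied to the derived adjunction $(\mathbf{L}\Ll, \mathbf{R}\G)$, then yields that $\mathbf{R}\G \cong \Ho(\G)$ is fully faithful.

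For part (iii), I would first observe that $(\Ll, \G)$ is a Quillen equivalence if and only if the derived adjunction $(\Ho(\Ll), \Ho(\G))$ is an equivalence of categories. Since $\Ho(\G)$ is already fully faithful by (ii), this is equivalent (by Lemma \ref{ff_counit} once more) to $\Ho(\Ll)$ being fully faithful, i.e.\ to the derived unit being a natural isomorphism. Using (i), the derived unit at any $\mathfrak{n} \in \Mod(R)$ is represented by $\eta_{\mathfrak{n}}: \mathfrak{n} \to \G\Ll(\mathfrak{n})$ itself, so the task reduces to determining when every such $\eta_{\mathfrak{n}}$ is a weak equivalence.

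For the forward implication, I would combine the triangle identity $\varepsilon_{\Ll \mathfrak{n}} \circ \Ll(\eta_{\mathfrak{n}}) = \id_{\Ll \mathfrak{n}}$ with the fact that $\varepsilon$ is a natural isomorphism to conclude that $\Ll(\eta_{\mathfrak{n}})$ is itself an isomorphism, hence a weak equivalence. Since $\Ll$ is exact and left Quillen for either model structure (as $\G$ is right Quillen for both by Proposition \ref{ind_tran}), Lemma \ref{preserve_reflect_we}(ii) upgrades the hypothesis that $\Ll$ reflects trivial objects to the stronger statement that $\Ll$ reflects all weak equivalences, whence $\eta_{\mathfrak{n}}$ is a weak equivalence. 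Conversely, if $(\Ll, \G)$ is a Quillen equivalence then $\Ho(\Ll)$ is an equivalence of categories, in particular conservative; so $\Ll(\mathfrak{n})$ being trivial forces $\Ho(\Ll)(\mathfrak{n}) \cong 0$ in $\Ho(S)$, and thus $\mathfrak{n} \cong 0$ in $\Ho(R)$, i.e.\ $\mathfrak{n}$ is trivial. The only bookkeeping concern is to ensure that the derived unit and counit are represented by the underived $\eta$ and $\varepsilon$ themselves rather than by any replacement, which is precisely what part (i) accomplishes; beyond this, the entire argument is a formal unpacking of the various adjunction and triangle identities.
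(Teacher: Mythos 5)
Your proposal is correct and follows essentially the same route as the paper: part (i) via \Cref{preserve_reflect_we}, part (ii) by observing that the derived counit of $(\Ll,\G)$ is an isomorphism, and part (iii) by reducing the Quillen-equivalence condition to whether $\Ll$ reflects weak equivalences and then invoking \Cref{preserve_reflect_we}(ii). The only notable difference is that the ``bookkeeping'' you flag --- identifying the derived unit and counit with $\eta$ and $\varepsilon$ themselves after the isomorphisms of part (i) --- is handled in the paper by quoting the explicit description of the derived counit as the composite $\Ll Q_c\G\m\xrightarrow{\ \Ll q_c\ }\Ll\G\m\xrightarrow{\ \varepsilon_\m\ }\m$ from [Hov99, Prop.\ 1.3.13], whose first arrow is a weak equivalence by \Cref{ind_tran}(iii); the paper then cites [Hov99, Cor.\ 1.3.16] to reduce (iii) to reflection of weak equivalences between cofibrant objects, where you instead unpack that criterion by hand via the triangle identity $\varepsilon_{\Ll\mathfrak{n}}\circ\Ll(\eta_{\mathfrak{n}})=\id$ --- a transparent, equivalent argument. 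Your acknowledgement that the bookkeeping needs care is apt: part (i) identifies the derived functors with $\Ho(\G)$ and $\Ho(\Ll)$, but passing from there to the claim that the derived unit and counit are represented by $\eta$ and $\varepsilon$ still requires chasing naturality squares through the fibrant/cofibrant replacements; this works precisely because $\G$ and $\Ll$ both preserve weak equivalences, which is what \Cref{preserve_reflect_we} and \Cref{ind_tran} provide.
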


\begin{proof}
Part (i) is immediate from \Cref{preserve_reflect_we}. For the rest of this proof, we work with the Gorenstein injective model structure. This suffices to show (iii) because the derived functors of $\G$ (resp.\ $\Ll$) for both model structures can be identified with $\Ho(\G)$ (resp.\ $\Ho(\Ll)$), and moreover an adjoint pair is a Quillen equivalence precisely when the derived functors form an adjoint equivalence of categories (cf.\ \cite[Proposition 1.3.13]{HovBook}).

(ii) By \Cref{ff_counit} we need to show that the counit of the adjunction $(\Ho(\Ll), \Ho(\G))$ is an isomorphism. By the proof of \cite[Proposition 1.3.13]{HovBook}, this holds if and only if, for any $\m\in\Mod(S)$, the composite
$$
\Ll Q_c\G\m\stackrel{\Ll q_c}{\longrightarrow}\Ll\G\m\stackrel{\varepsilon_{\m}}{\longrightarrow}\m
$$
is a weak equivalence. Here $Q_c\G\m$ denotes a cofibrant replacement of $\G\m$ in $\Mod(R)$, which comes equipped with a trivial fibration $q_c:Q_c\G\m\to\G\m$. But by \Cref{ind_tran}(iii) we have that $\Ll q_c$ is a weak equivalence and $\varepsilon_{\m}$ is an isomorphism (cf.\ \Cref{ff_counit} again), so that our claim holds.

(iii) By the above $\varepsilon_{\m}\Ll q_c$ is always a weak equivalence. It therefore follows from \cite[Corollary 1.3.16]{HovBook} that $(\Ll,\G)$ is a Quillen equivalence if and only if $\Ll$ reflects weak equivalences between cofibrant objects. But in the Gorenstein injective model structure all objects are cofibrant, so the latter condition says that $\Ll$ reflects all weak equivalences. The result now follows from \Cref{preserve_reflect_we}(ii).
\end{proof}

Note that from \Cref{ind_tran}(iii) and exactness we get that $\Ll$ preserves both fibrations and trivial fibrations in the Gorenstein projective model structure. This is somewhat unusual for a \emph{left} Quillen functor. Also, we will see later that the Quillen equivalence as in \Cref{derived_nice}(iii) essentially never holds in the case of parabolic (co)induction from a standard Levi subgroup $M$ unless $M=G$, cf.\ \Cref{ind_non_ess}. For now we only illustrate that this is clear when $M$ is the maximal split torus.

\begin{ex}\label{torus_finite_global_dim} If $M=T$ is the maximal split torus, then $\He_T=k[\widetilde{\Lambda}]$ is the group algebra of $\widetilde{\Lambda}=T/(T\cap I)\cong \Z^{r_{ss}+r_Z}\times T(\mathbb{F}_q)$. Hence $\He_T$ is free of finite rank, coprime with $p$, over the Laurent polynomial ring $k[x_1^{\pm 1},\ldots, x_{r_{ss}+r_Z}^{\pm 1}]$. The latter ring has finite global dimension, and therefore so does $\He_T$ (cf.\ \cite[Theorem 7.5.6(iii)]{MCR01}). In particular, the homotopy category $\Ho(\Mod(\He_T))$ is zero. On the other hand, $\He$ typically has infinite global dimension. Indeed, when $G$ does not have root system of type $A_1\times\cdots\times A_1$ or when $r_{ss}>0$ and $q >2$, then $\He$ has infinite global dimension and in fact has a simple module $\m$ of infinite projective dimension (cf.\ \cite[7.1-7.3]{OS14}). So, under these hypotheses, $\Ho(\He)\neq 0$ and $\Ind_{\He_T}^{\He}$ is not a Quillen equivalence. And indeed, for this module $\m$ one automatically has $\pd_{\He_T}(\Loc_M(\m))<\infty$ even though $\pd_{\He}(\m)=\infty$.
\end{ex}

Suppose that $\m\in\Mod(R)$ and $\mathfrak{n} \in\Mod(S)$. We have an isomorphism $\Hom_{R}(\G(\mathfrak{n}), \m)\cong \Hom_{S}(\mathfrak{n}, \R(\m))$. Since $\G$ is exact we have that $\R$ preserves injectives, and hence we obtain a first quadrant Grothendieck spectral sequence
\begin{equation}\label{spec_seq}
E_2^{ij}=\Ext_{S}^i(\mathfrak{n}, R^j\R(\m))\Rightarrow \Ext_{R}^{i+j}(\G(\mathfrak{n}),\m).
\end{equation}
Note that \Cref{ind_tran}(ii) implies that $\R$ preserves fibrations in the Gorenstein injective model structures, i.e.\ surjections with Gorenstein injective kernels. Given that the functor $\R$ is not necessarily right exact, this may seem surprising but it is actually fairly straightforward to show this directly using the above spectral sequence.

\begin{lem}\label{right_adjoint_cohomology} Suppose that $R$ is an $n$-Gorenstein ring. Then:
\begin{enumerate}
\item the functor $\R$ has cohomological dimension bounded above by $n$;
\item the Gorenstein injective modules are all acyclic for $\R$; 
\item an $R$-module $\m$ is $\R$-acyclic if and only if there is an isomorphism of abelian groups
$$
\Ext_R^j(\G(\mathfrak{n}), \m)\cong \Ext^j_S(\mathfrak{n}, \R(\m))
$$
for all $j\geq 0$ and all $\mathfrak{n}\in\Mod(S)$; and
\item $\R$ preserves Gorenstein injectives.
\end{enumerate}
\end{lem}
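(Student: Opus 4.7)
My plan is to extract all four statements from the Grothendieck spectral sequence \eqref{spec_seq} by specialising it to $\mathfrak{n} = S$. Since $S$ is projective as a module over itself, $\Ext^i_S(S, -)$ vanishes for $i \geq 1$, so only the $i = 0$ column of \eqref{spec_seq} survives and the spectral sequence collapses at $E_2$ to yield a natural isomorphism
$$
R^j\R(\m) \cong \Ext^j_R(\G(S), \m)
$$
for every $\m\in\Mod(R)$ and every $j \geq 0$. Moreover, applying \Cref{proj_dim}(i) to $S \in \Mod(S)$ (which satisfies $\pd_S(S) = 0$) gives that $\G(S)$ has finite projective dimension over $R$. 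This single observation is the engine for everything that follows.

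I would then deduce (ii) directly: over the Gorenstein ring $R$ a Gorenstein injective module $\m$ satisfies $\Ext^j_R(M, \m) = 0$ for all $j \geq 1$ whenever $M$ has finite projective (equivalently, finite injective) dimension, and taking $M = \G(S)$ therefore forces $R^j\R(\m) = 0$ for $j \geq 1$. Part (i) then follows by a standard Gorenstein-homological argument: since $R$ is $n$-Gorenstein every $R$-module admits a coresolution of length at most $n$ by Gorenstein injective modules, and by (ii) such a resolution is $\R$-acyclic, so it computes $R^j\R(\m)$ and forces vanishing for $j > n$.

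For (iii), the forward direction is immediate from the same collapse: if $\m$ is $\R$-acyclic then $R^j\R(\m) = 0$ for $j \geq 1$, so \eqref{spec_seq} collapses to the asserted isomorphism. Conversely, specialising the assumed isomorphism to $\mathfrak{n} = S$ gives $\Ext^j_R(\G(S), \m) \cong \Ext^j_S(S, \R(\m)) = 0$ for $j \geq 1$, forcing $R^j\R(\m) = 0$ in positive degrees via the collapse. For (iv) I would then combine (ii) and (iii): a Gorenstein injective $\m$ is $\R$-acyclic by (ii), so (iii) yields $\Ext^j_S(\mathfrak{n}, \R(\m)) \cong \Ext^j_R(\G(\mathfrak{n}), \m)$ for all $\mathfrak{n}$ and $j$; when $\mathfrak{n}$ has finite injective dimension over $S$, \Cref{proj_dim}(i) promotes $\G(\mathfrak{n})$ to finite projective dimension over $R$, so the right-hand side vanishes for $j \geq 1$ by Gorenstein injectivity of $\m$, proving that $\R(\m)$ is Gorenstein injective. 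I do not foresee any genuine obstacle; the only creative step is the opening move of feeding the regular module $\mathfrak{n} = S$ into \eqref{spec_seq} to reduce all higher derived functors of $\R$ to $\Ext$-groups against the single finite-projective-dimension module $\G(S)$.
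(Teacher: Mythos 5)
Your proposal is correct and follows the same strategy as the paper: plug $\mathfrak{n}=S$ into \eqref{spec_seq} to collapse it to $R^j\R(\m)\cong\Ext^j_R(\G(S),\m)$, and invoke \Cref{proj_dim}(i) to see that $\G(S)$ has finite projective dimension. Your arguments for (ii), (iii) and (iv) coincide with the paper's. The only divergence is in (i), which you prove after (ii) by observing that every module over an $n$-Gorenstein ring admits a Gorenstein injective coresolution of length at most $n$, that such coresolutions are $\R$-acyclic by (ii), and hence that they compute the derived functors. That is valid, but it imports the nontrivial fact that Gorenstein injective dimension is uniformly bounded by $n$ over such a ring. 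The paper's proof of (i) is more direct and precedes (ii): since $R$ is $n$-Gorenstein and $\pd_R(\G(S))<\infty$, one automatically has $\pd_R(\G(S))\leq n$, so the Ext groups $\Ext^j_R(\G(S),\m)$ vanish for $j>n$ with no further input. Both routes are sound; the paper's is slightly more economical.
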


\begin{proof}
Let $\m\in\Mod(R)$. By plugging $\mathfrak{n}=S$ in the spectral sequence \eqref{spec_seq}, the $E_2^{ij}$ term becomes zero for $i>0$, and we obtain an isomorphism
\begin{equation}\label{formula_derad}
\Ext_{R}^j(\G(S),\m)\cong R^j\R(\m)
\end{equation}
for all $j\geq 0$. But by \Cref{proj_dim}(i), $\G(S)$ has finite projective dimension and hence $\pd_{R}(\G(S))\leq n$ by assumption on $R$. Thus the left hand side of \eqref{formula_derad} vanishes for $i>n$, proving (i).

For (ii), assume now that $\m$ is Gorenstein injective. Then for any module of finite injective dimension $\mathfrak{e}\in \Mod(R)$ and any $j>0$, we have $\Ext_{R}^j(\mathfrak{e}, \m)=0$. Applying this to $\mathfrak{e}=\G(S)$, we immediately get from \eqref{formula_derad} that $R^j\R(\m)=0$ for all $j>0$ as claimed.

For (iii), the spectral sequence \eqref{spec_seq} immediately implies that the isomorphism of Ext groups holds when $\m$ is is $\R$-acyclic. Conversely, assume that $\Ext_R^j(\G(\mathfrak{n}), \m)\cong \Ext^j_S(\mathfrak{n}, \R(\m))$ for all $\mathfrak{n}\in\Mod(S)$ and all $j\geq 0$. Then combining this isomorphism with \eqref{formula_derad} we obtain
$$
R^j\R(\m)\cong \Ext^j_S(S, \R(\m))=0
$$
for all $j>0$, as required.

Finally, for (iv) we can apply (ii), (iii), and \Cref{proj_dim}(i) again to see that if $\m\in\Mod(R)$ is Gorenstein injective and if $\mathfrak{e}\in\Mod(S)$ has finite injective dimension then $\Ext_{S}^j(\mathfrak{e},\R(\m))\cong \Ext_R^j(\G(\mathfrak{e}), \m)=0$ for $j>0$, and thus $\R(\m)$ is Gorenstein injective.
\end{proof}

Next, we investigate properties of $\Mod(S)$ (resp.\ $\Ho(S)$) viewed as a full subcategory of $\Mod(R)$ (resp.\ $\Ho(R)$) via the functor $\G$ (resp.\ $\Ho(\G))$. We first note that $\Mod(S)$ is not in general closed under subquotients in $\Mod(R)$, and so in particular is not a Serre subcategory, as seen in the following example.

\begin{ex}
When $\G=\Ind_{\He_M}^{\He}$ or $\Coind_{\He_M}^{\He}$ for some proper Levi subgroup $M\neq G$, the essential image of $\G$ is not  closed under subobjects. Indeed, first let $V=\Ind_P^G(\mathbf{1}_M)\in\Rep_k^\infty(G)$. By standard properties of induction for smooth representations, the trivial representation $\mathbf{1}_G$ embeds as a subrepresentation of $V$. By left exactness of the functor $U$ and applying \cite[Theorem 1]{OllVign18}, we see that the trivial character $\mathbf{1}_{\He}$ of $\He$ embeds as a submodule of $\Ind_{\He_M}^{\He}(\mathbf{1}_{\He_M})$. However, there does not exist $\m\in\Mod(\He_M)$ such that $\Ind_{\He_M}^{\He}(\m)\cong\mathbf{1}_{\He}$. This is because, by \cite[Lemmas 3.5 \& 3.6]{Vign5}, we have an isomorphism $\Ind_{\He_M}^{\He}(\m)\cong \m^{\oplus n}$ of $k$-vector spaces, where $n=\abs{W_0}/\abs{W_{0,M}}>1$. So in particular $\dim_k \Ind_{\He_M}^{\He}(\m)>1$ whenever $\m\neq 0$.
\end{ex}


Hence we cannot in some cases of interest form an abelian quotient of $\Mod(R)$ by $\Mod(S)$ via $\G$. We now show how the situation is improved by working at the homotopy level, by making use of the triangulated structures of $\Ho(S)$ and $\Ho(R)$.

Let $\T$ be a triangulated category with shift functor $\Sigma:\T\to\T$. Recall that a subcategory $\Ss$ of $\T$ is called a \emph{triangulated subcategory} if $\Sigma$ restricts to an autoequivalence of $\Ss$, if $\Ss$ is strictly full (meaning any object of $\T$ isomorphic to an object of $\Ss$ lies in $\Ss$) and if whenever $X\to Y\to Z\to\Sigma X$ is a distinguished triangle in $\T$ with two of $X, Y, Z$ lying in $\Ss$, then so does the third one. A subcategory $\Ss$ is called \emph{thick} if it is triangulated and closed under direct summands. If $\T'$ is another triangulated category with shift functor $\Sigma'$, an additive functor $\F:\T\to \T'$ is called \emph{triangulated} if $\F\Sigma\cong \Sigma'\F$ and $\F$ sends distinguished triangles to distinguished triangles. Note that the essential image of a full triangulated functor is a triangulated subcategory. Moreover, if $\Ss$ is a triangulated subcategory then it inherits from $\T$ a triangulated structure and the inclusion $\Ss\to \T$ is a triangulated functor.

Suppose now that $\T$ contains all small coproducts. For example, the homotopy category of any Gorenstein ring has all small coproducts since Gorenstein projective modules are closed under arbitrary direct sums. Then recall that a triangulated subcategory $\Ss$ is called \emph{localising} if it is closed under arbitrary small coproducts. All such subcategories are automatically thick, cf.\ \cite[Proposition 1.6.8]{Neeman}.

In general there is a Verdier quotient $\T/\Ss$, for $\T$ a triangulated category and $\Ss$ a triangulated subcategory, which is itself a triangulated category equipped with a triangulated quotient functor $\pi:\T\to\T/\Ss$, and this construction satisfies a suitable universal property (cf.\ \cite[\S II.2.1-II.2.3]{Verdier}).

We will now see how our setup lifts to the homotopy level. For this we need the notion of a recollement of triangulated categories, which we now recall.

\begin{defn}\label{recollement_defn} A \emph{recollement} of triangulated categories is a diagram of triangulated functors
\[
\xymatrix@C=0.5cm{
\T' \ar[rrr]^{\mathsf{G}} &&& \T \ar[rrr]^{\mathsf{I}}  \ar @/_1.5pc/[lll]_{\mathsf{L}}  \ar
 @/^1.5pc/[lll]^{\mathsf{R}} &&& \T''
\ar @/_1.5pc/[lll]_{\mathsf{Q}} \ar
 @/^1.5pc/[lll]^{\mathsf{P}}
 }
\]
such that the following conditions are satisfied:
\begin{itemize}
\item $\mathsf{Q}$ (resp.\ $\mathsf{P}$) is the left (resp.\ right) adjoint of $\mathsf{I}$;
\item $\mathsf{L}$ (resp.\ $\mathsf{R}$) is the left (resp.\ right) adjoint of $\mathsf{G}$;
\item the functors $\mathsf{G}$, $\mathsf{Q}$ and $\mathsf{P}$ are fully faithful; and
\item the essential image of $\mathsf{G}$ is equal to the kernel $\Ker(\mathsf{I})$.
\end{itemize}
\end{defn}

We briefly describe some properties implied by this definition (cf.\ \cite[\S 4.9-4.13]{Kra10}). First, $\T'$ identifies with a thick subcategory of $\T$ via $\G$ such that $\T''\simeq \T/\T'$. Moreover, the adjunction morphisms give for any $X\in\T$ two functorial distinguished triangles
\begin{equation}\label{triangle_right_adjoints}
\mathsf{G}\mathsf{R}(X)\to X\to \mathsf{P}\mathsf{I}(X)\to\Sigma(\mathsf{G}\mathsf{R}(X))
\end{equation}
and
$$
\mathsf{Q}\mathsf{I}(X)\to X\to \mathsf{G}\mathsf{L}(X)\to\Sigma(\mathsf{Q}\mathsf{I}(X)).
$$
In particular, we see that the smallest triangulated subcategory of $\T$ containing $\T'$ and $\T''$ is $\T$ itself, where $\T''$ is identified as a subcategory of $\T$ via either one of the fully faithful functors $\mathsf{Q}$ and $\mathsf{P}$.

For any category $\C$ and any full subcategory $\D\subseteq \C$, we define two full subcategories $\D^\ddagger$ and $^\ddagger \D$ (as the symbol $\perp$ was already taken) of $\C$ by
$$
\D^\ddagger :=\{Y\in\C\mid \Hom_\C(X,Y)=0 \text{ for all } X\in\D\}
$$
and
$$
^\ddagger \D :=\{X\in\C\mid \Hom_\C(X,Y)=0 \text{ for all } Y\in\D\}.
$$
Then, in the above setting of a recollement, the category $\T''$ is equivalent to both $(\T')^\ddagger$ and $^\ddagger (\T')$ via the functors $\mathsf{P}$ and $\mathsf{Q}$ respectively, where we view $\T'$ as a full subcategory of $\T$ via the functor $\mathsf{G}$. Note that by the adjunctions $\Ll\dashv \G\dashv \R$, we have $(\T')^\ddagger=\Ker(\R)$ and $^\ddagger (\T')=\Ker(\Ll)$. In order to establish the existence of a recollement, we have the following:

\begin{prop}[{\cite[Propositions 4.9.1 \& 4.13.1]{Kra10}}]\label{Krause_criterion} Let $\T$ be a triangulated category and $\Ss$ a thick subcategory of $\T$.
\begin{enumerate}
\item The following are equivalent:
\begin{enumerate}
\item the inclusion $\Ss\xrightarrow{\subseteq}\T$ has a right adjoint;
\item the quotient functor $\T\to\T/\Ss$ has a a right adjoint; and
\item the inclusion $\Ss^\ddagger\xrightarrow{\subseteq}\T$ has a left adjoint and ${^\ddagger}(\Ss^\ddagger)=\Ss$.
\end{enumerate}
\item If the equivalent conditions of (i) and their dual hold, then the inclusion $\Ss\xrightarrow{\subseteq}\T$ is part of a recollement
\[
\xymatrix@C=0.5cm{
\Ss \ar[rrr]^{\subseteq} &&& \T \ar[rrr]  \ar @/_1.5pc/[lll]  \ar
 @/^1.5pc/[lll] &&& \T/\Ss
\ar @/_1.5pc/[lll] \ar
 @/^1.5pc/[lll]
 }
\]
of triangulated categories.
\end{enumerate}
\end{prop}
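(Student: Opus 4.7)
The plan is to establish the equivalences in (i) by proving the cycle (a) $\Rightarrow$ (c) $\Rightarrow$ (b) $\Rightarrow$ (a), and then to assemble the recollement in (ii) by combining (i) with its Verdier-dual statement (obtained by passage to opposite categories). The central construction throughout is that of a \emph{localization triangle}: starting from a right adjoint $\rho$ to $i:\Ss\hookrightarrow\T$, for each $X\in\T$ complete the counit $\varepsilon_X:i\rho X\to X$ into a distinguished triangle
\[
i\rho X\xrightarrow{\varepsilon_X} X\xrightarrow{\eta_X} L(X)\to \Sigma i\rho X.
\]
Applying $\Hom_\T(S,-)$ for $S\in\Ss$, the adjunction gives that $\varepsilon$ induces a bijection on $\Hom$'s out of $\Ss$, forcing $\Hom_\T(S,L(X))=0$, so $L(X)\in\Ss^\dagger$. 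Standard manipulations then show that $L:\T\to\Ss^\dagger$ is left adjoint to the inclusion. Moreover ${}^\dagger(\Ss^\dagger)=\Ss$: any object of ${}^\dagger(\Ss^\dagger)$ satisfies $\eta_X=0$ since $\eta_X\in\Hom_\T(X,L(X))$, hence appears as a direct summand of $i\rho X\in\Ss$, and so lies in $\Ss$ by thickness. This proves (a) $\Rightarrow$ (c).

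For (c) $\Rightarrow$ (b), let $j:\Ss^\dagger\hookrightarrow\T$ denote the inclusion with left adjoint $L$. Then $L$ annihilates ${}^\dagger(\Ss^\dagger)=\Ss=\Ker(\pi)$, so by the universal property of the Verdier quotient $\pi:\T\to\T/\Ss$ it descends to a functor $\overline{L}:\T/\Ss\to\Ss^\dagger$. One checks that $\pi\circ j$ is quasi-inverse to $\overline{L}$, using that morphisms in $\T/\Ss$ between objects of $\Ss^\dagger$ coincide with morphisms in $\T$ (the defining fractions collapse because there are no non-zero maps from $\Ss$ into $\Ss^\dagger$); composing this quasi-inverse with $j$ then yields a right adjoint to $\pi$. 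For (b) $\Rightarrow$ (a), given a right adjoint $\pi^\rho$ to $\pi$, one first verifies that the counit $\pi\pi^\rho\to\id$ is an isomorphism (using the essential surjectivity of $\pi$), so that $\pi^\rho$ is fully faithful by \Cref{ff_counit}. The fibre of the unit $X\to\pi^\rho\pi X$ is then seen to lie in $\Ker(\pi)=\Ss$, and this assignment is functorial and provides the required right adjoint to $i$.

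For part (ii), assuming both (i) and its dual, we obtain simultaneously a right adjoint $\mathsf{R}$ and a left adjoint $\mathsf{L}$ to $\mathsf{G}:=(\subseteq):\Ss\hookrightarrow\T$, as well as a right adjoint $\mathsf{P}$ and a left adjoint $\mathsf{Q}$ to the quotient $\mathsf{I}:=\pi$, yielding the six functors of the diagram. Fully faithfulness of $\mathsf{G}$ is automatic for an inclusion, while fully faithfulness of $\mathsf{P}$ and $\mathsf{Q}$ follows from \Cref{ff_counit} together with the unit/counit isomorphisms established along the way (the counit $\mathsf{I}\mathsf{P}\to\id$ and unit $\id\to\mathsf{I}\mathsf{Q}$ being seen to be iso via the localization triangles). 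The equality $\Im(\mathsf{G})=\Ss=\Ker(\mathsf{I})$ is immediate. The main obstacle is bookkeeping rather than conceptual: one must track the localization triangle and its dual coherently, and verify functoriality of the various adjunctions through careful diagram chases with units and counits, relying throughout on the universal property of the Verdier quotient.
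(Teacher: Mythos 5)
The paper does not give its own proof of this proposition---it is cited verbatim from Krause, [Propositions 4.9.1 \& 4.13.1]{Kra10}. Your proof is correct and reproduces the standard argument found there: the equivalences in (i) are proven via the Bousfield localization triangle $i\rho X\to X\to L(X)\to \Sigma i\rho X$ and the well-known fact that a right (resp.\ left) adjoint to a Verdier quotient functor is automatically fully faithful, and (ii) is assembled by combining (i) with its dual. The only places you wave a hand---functoriality of the cone construction $L$, and the fully faithfulness of $\pi^\rho$---are precisely where Krause supplies the careful justification (uniqueness of the object $L(X)$ subject to $L(X)\in\Ss^\dagger$, and the argument that morphisms in $\T/\Ss$ into objects of $\Ss^\dagger$ simplify to genuine morphisms in $\T$), so no substantive gap remains.
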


We now go back to \Cref{setup1}. We remind the reader that $\Ho(R)$ and $\Ho(S)$ as well as their triangulated structure do not depend on which model structure (Gorenstein projective or Gorenstein injective) we work with, cf.\ \Cref{we_same}. Applying the above, we now get:

\begin{prop}\label{recollement} The functors $\G$, $\Ll$ and $\R$ induce a recollement
\[
\xymatrix@C=0.5cm{
\Ho(S) \ar[rrr]^{\Ho(\G)} &&& \Ho(R) \ar[rrr]^{\mathsf{\pi}} \ar @/_1.5pc/[lll]_{\Ho(\Ll)}  \ar
 @/^1.5pc/[lll]^{\mathbf{R}\R} &&& \Ho(R)/\Ho(S)
\ar @/_1.5pc/[lll] \ar
 @/^1.5pc/[lll]
 }
\]
of triangulated categories, where we identify $\Ho(S)$ with $\Im(\Ho(\G))$ and $\pi:\Ho(R)\to\Ho(R)/\Im(\Ho(\G))$ is the quotient functor.
\end{prop}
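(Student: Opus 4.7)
The plan is to verify directly the four conditions of \Cref{recollement_defn}, using \Cref{Krause_criterion} to produce the fully faithful adjoints of the quotient functor $\pi$.

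First, I assemble the adjoint triple at the homotopy level. By \Cref{ind_tran}(ii), both $(\Ll,\G)$ and $(\G,\R)$ are Quillen adjunctions for the Gorenstein injective model structures on $\Mod(R)$ and $\Mod(S)$, so they induce derived adjunctions $\mathbf{L}\Ll\dashv \mathbf{R}\G$ and $\mathbf{L}\G\dashv \mathbf{R}\R$ on the homotopy categories. By \Cref{derived_nice}(i), $\mathbf{L}\Ll\cong \Ho(\Ll)$ and $\mathbf{L}\G\cong \mathbf{R}\G\cong \Ho(\G)$, so these combine into the adjoint triple $\Ho(\Ll)\dashv \Ho(\G)\dashv \mathbf{R}\R$. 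Moreover $\Ho(\G)$ is fully faithful by \Cref{derived_nice}(ii), and since $\G$ is exact and is both left and right Quillen for the Gorenstein injective model structure, the induced functor $\Ho(\G)$ is triangulated; the same applies to $\Ho(\Ll)$.

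Next, write $\Ss := \Im(\Ho(\G))$, viewed as a strictly full subcategory of $\Ho(R)$. I claim $\Ss$ is thick. Closure under suspension is immediate because $\Ho(\G)$ commutes with $\Sigma$. For closure under cones, take a distinguished triangle $X\to Y\to Z\to \Sigma X$ in $\Ho(R)$ with $X, Y\in \Ss$; lift the morphism $X\to Y$ along $\Ho(\G)$ using full faithfulness, form its cone in $\Ho(S)$, apply $\Ho(\G)$, and use uniqueness of cones to conclude $Z\in \Ss$. For closure under direct summands, \Cref{ff_counit} applied to $\Ho(\Ll)\dashv \Ho(\G)$ shows the counit $\varepsilon:\Ho(\Ll)\Ho(\G)\to \id$ is an isomorphism, and then a triangle identity implies that the unit $\eta_X:X\to \Ho(\G)\Ho(\Ll)(X)$ is an isomorphism for every $X\in \Ss$. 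If $X = A\oplus B\in \Ss$, naturality of $\eta$ with respect to the inclusion and projection of the summand $A$ forces $\eta_A$ to also be an isomorphism, whence $A\cong \Ho(\G)\Ho(\Ll)(A)\in \Ss$, as required.

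With thickness in hand, we apply \Cref{Krause_criterion} to $\Ss\subseteq \Ho(R)$. Using full faithfulness of $\Ho(\G)$, a direct calculation shows that $\iota^L := \Ho(\G)\circ \Ho(\Ll)$ and $\iota^R := \Ho(\G)\circ \mathbf{R}\R$, regarded as functors into $\Ss$, are respectively a left and right adjoint to the inclusion $\iota:\Ss\hookrightarrow \Ho(R)$. By \Cref{Krause_criterion}(i), this gives $\pi$ a right adjoint, and by the dual statement $\pi$ also admits a left adjoint; \Cref{Krause_criterion}(ii) then packages all the data into the desired recollement, with the two adjoints of $\pi$ automatically fully faithful. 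The main difficulty is really the closure of $\Ss$ under direct summands; everything else is a formal consequence of Krause's criterion combined with the adjoint triple and full faithfulness facts already established.
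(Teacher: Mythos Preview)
Your proof is correct and follows the same overall strategy as the paper: assemble the derived adjoint triple $\Ho(\Ll)\dashv \Ho(\G)\dashv \mathbf{R}\R$, verify full faithfulness of $\Ho(\G)$, check that $\Ss=\Im(\Ho(\G))$ is thick, and invoke \Cref{Krause_criterion}(ii). The only substantive difference is the thickness argument. You prove closure under direct summands by hand via the unit of $\Ho(\Ll)\dashv\Ho(\G)$, whereas the paper observes that $\Ho(\G)$, being a left adjoint, commutes with arbitrary coproducts, so $\Ss$ is in fact a \emph{localising} subcategory and hence thick by \cite[Proposition~1.6.8]{Neeman}. Your argument is more self-contained, but the paper's route is shorter and yields the stronger localising property, which is reused later (e.g.\ in the proof of \Cref{many_Levi_rec}). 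What you flagged as ``the main difficulty'' is thus dispatched in the paper by a single sentence.
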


\begin{proof}
We first view our homotopy categories as coming from the Gorenstein injective model structures. Then the existence of the adjoint triple $\Ho(\Ll)\dashv \Ho(\G)\dashv \mathbf{R}\R$ is immediate from \Cref{ind_tran}(ii) and \Cref{derived_nice}(i), and \Cref{derived_nice}(ii) says that $\Ho(\G)$ is fully faithful. We will be done by \Cref{Krause_criterion}(ii) if we show that these functors are triangulated and $\Im(\Ho(\G))$ is a thick subcategory.

For this, unravelling the definitions, recall that the suspension in $\Ho(S)$ of an arbitrary module $\m\in \Mod(S)$ (which is automatically cofibrant for the Gorenstein injective model structure) is given as the cokernel of a monomorphism to an injective module. Being an exact functor with exact left adjoint, the functor $\G$ preserves this construction so that $\Ho(\G)\circ\Sigma\cong \Sigma\circ \Ho(\G)$. Taking left and right adjoints of this identity gives us that all three functors commute with the shift functor. Furthermore, since $\G$ is exact and preserves the construction of $\Sigma$, we see that it in fact preserves the distinguished triangles as given in \eqref{d_triangles}. Thus $\Ho(\G)$ is triangulated. A completely dual argument using fiber sequences and the loop functor shows that $\mathbf{R}\R$ is also triangulated, recalling that $\R$ preserves short exact sequences of Gorenstein injectives by \Cref{right_adjoint_cohomology}. To show that $\Ho(\Ll)$ preserves distinguished triangles, we may argue exactly as above but using the Gorenstein projective model structure instead to explicitly describe the triangles. Indeed, $\Ll$ is exact and preserves the explicit construction of the shift, as the cokernel of a cofibration from a Gorenstein projective to a projective module, given by that model structure so that it preserves the construction in \eqref{d_triangles}.  Finally, $\Im(\Ho(\G))$ is in fact a localising subcategory and hence is thick, since $\Ho(\G)$ commutes with arbitrary direct sums as it is a left adjoint.
\end{proof}

\begin{rem}\label{triangulated_remark} The fact that $\Ho(\Ll)$, $\Ho(\G)$ and $\mathbf{R}\R$ are triangulated also follows immediately from general properties of stable model categories. Indeed, if $(\F,\G)$ is a Quillen adjunction between stable model categories, then the derived functors $\mathbf{L}\F$ and $\mathbf{R}\G$ commute with the suspension by \cite[Corollary 3.1.4]{BR}, noting that the loop functor is the inverse of suspension. Moreover, in any such model category the distinguished triangles in the homotopy category are given by cofiber sequences, and left/right derived functors preserve those (cf.\ \cite[Proposition 6.4.1 \& Theorem 7.1.11]{HovBook}).
\end{rem}

\subsection{Extension to several parabolics} We now extend the constructions from the previous subsection in the case where $R=\He$, $S=\He_M$ for some standard Levi subgroup $M$, and $\G=\Ind_{\He_M}^{\He}$ with left adjoint $\Ll=\LM$ and right adjoint $\R=\RM$. To simplify the notation, we will write $I^G_M$ for $\Ind_{\He_M}^{\He}$. We will denote by $\Ho(\He)_M$ the quotient triangulated category of $\Ho(\He)/\Ho(\He_M)$, and we have an adjoint triple
\[
\xymatrix@C=0.5cm{
\Ho(\He) \ar[rrr]^{\pi^G_M} &&& \Ho(\He)_M \ar @/_1.5pc/[lll]_{\ell^G_M}  \ar
 @/^1.5pc/[lll]^{r^G_M}
 }
\]
with $\ell^G_M$ and $r^G_M$ both fully faithful, as given by the recollement in \Cref{recollement}. The following result will be our main aim in this subsection.

\begin{thm}\label{many_Levi_rec} Let $\M=\{M_1,\ldots, M_n\}$ be a collection of standard Levi subgroups of $G$, and let $\Ss^G_\M$ be the smallest thick subcategory of $\Ho(\He)$ containing $\Im(\Ho(I_{M_1}^G)), \ldots, \Im(\Ho(I_{M_n}^G))$.
\begin{enumerate}
\item $\Ss^G_\M$ is a localising subcategory and the inclusion $i^G_\M:\Ss^G_\M\hookrightarrow\Ho(\He)$ has both a left and a right adjoint, which form part of a recollement
\[
\xymatrix@C=0.8cm{
\Ss^G_\M \ar[rrr]^{i^G_\M} &&& \Ho(\He) \ar[rrr]^{\pi^G_\M} \ar @/_1.5pc/[lll]_{\mathcal{L}^G_\M}  \ar
 @/^1.5pc/[lll]^{\mathcal{R}^G_\M} &&& \Ho(\He)_\M
\ar @/_1.5pc/[lll]_{\ell^G_\M} \ar @/^1.5pc/[lll]^{r^G_\M}
 }
\]
of triangulated categories, where $\Ho(\He)_\M=\Ho(\He)/\Ss^G_\M$.
\item Suppose $n>1$. Let $1\leq i\leq n$ and $\m\in \Ho(\He)$, and write $\M_i=\M\setminus\{M_i\}$. Then there is a distinguished triangle
$$
i^G_{\M_i}\mathcal{R}^G_{\M_i}(\m)\longrightarrow i^G_\M\mathcal{R}^G_\M(\m)\longrightarrow \m_i\longrightarrow \Sigma i^G_{\M_i}\mathcal{R}^G_{\M_i}(\m)
$$
where $\m_i= r^G_{\M_i}\pi^G_{\M_i}\Ho(I^G_{M_i})\mathbf{R}\mathcal{R}^G_{M_i}(\m)$, and similarly there is a distinguished triangle
$$
i^G_{\M_i}\mathcal{L}^G_{\M_i}(\m)\longrightarrow i^G_\M\mathcal{L}^G_\M(\m)\longrightarrow \m_i'\longrightarrow \Sigma i^G_{\M_i}\mathcal{L}^G_{\M_i}(\m)
$$
where $\m_i'=\ell^G_{\M_i}\pi^G_{\M_i}\Ho(I^G_{M_i})\Ho(\mathcal{L}^G_{M_i})(\m)$.
\item We have $\Im(\ell^G_\M)=\bigcap_{i=1}^n\Ker(\Ho(\mathcal{L}^G_{M_i}))$ and $\Im(r^G_\M)=\bigcap_{i=1}^n\Ker(\mathbf{R}\mathcal{R}^G_{M_i})$.
\end{enumerate}
\end{thm}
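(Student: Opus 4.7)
The plan is to prove the three parts in order. Part (i) is the main substantive step, while parts (ii) and (iii) will follow relatively formally from the recollement structure it establishes.

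For part (i), I would argue by induction on $n=|\M|$. The base case $n=1$ is \Cref{recollement}, noting that $\Im(\Ho(I^G_{M_1}))$ is already localising (as the essential image of a left adjoint preserving coproducts) and hence coincides with the smallest thick subcategory $\Ss^G_{\{M_1\}}$. For the inductive step, the essential input is the following claim: whenever $\Ss_1, \Ss_2\subseteq \Ho(\He)$ are thick subcategories each forming the left part of a recollement, the smallest thick subcategory $\Ss_1\vee \Ss_2$ containing them both is also localising and itself fits into a recollement. Applied with $\Ss_1 = \Ss^G_{\M\setminus\{M_n\}}$ (available by induction) and $\Ss_2 = \Im(\Ho(I^G_{M_n}))$ (via \Cref{recollement}), this gives the inductive step.

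For the key claim, I would construct the right adjoint of $\Ss_1\vee\Ss_2 \hookrightarrow \Ho(\He)$ by iterated colocalization: given $\m\in\Ho(\He)$, alternately apply the $\Ss_1$- and $\Ss_2$-colocalizations and combine the resulting triangles via the octahedral axiom to obtain $M\to \m\to N$ with $M\in\Ss_1\vee\Ss_2$. One needs $N\in(\Ss_1\vee\Ss_2)^\ddagger$, and the identity $(\Ss_1\vee\Ss_2)^\ddagger=\Ss_1^\ddagger\cap\Ss_2^\ddagger$ is a formal consequence of the fact that the class of objects right-orthogonal to a fixed object is a thick subcategory. The hard part is stabilising the iteration; this is handled by passing to a homotopy colimit of the iterated colocalizations or, more slickly, by appealing directly to Brown representability, since $\Ho(\He)$ is well-generated as the homotopy category of Hovey's cofibrantly generated model structure from \Cref{GP_R_Mod}. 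The left adjoint is constructed dually. That $\Ss^G_\M$ is then localising follows because $i^G_\M$, being both a left and right adjoint in the recollement, preserves coproducts, so its essential image is closed under small coproducts.

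For part (ii), I would apply the octahedral axiom to the chain of inclusions $\Ss^G_{\M_i}\subseteq \Ss^G_\M\subseteq \Ho(\He)$: the composition of colocalizations produces a canonical map $i^G_{\M_i}\mathcal{R}^G_{\M_i}(\m)\to i^G_\M\mathcal{R}^G_\M(\m)$, whose cone $C$ lies automatically in $\Ss^G_\M\cap(\Ss^G_{\M_i})^\ddagger$. The octahedron also yields a triangle $C\to r^G_{\M_i}\pi^G_{\M_i}(\m)\to r^G_\M\pi^G_\M(\m)$. To identify $C$ with $\m_i$, I would apply $r^G_{\M_i}\pi^G_{\M_i}$ to the single-parabolic recollement triangle $\Ho(I^G_{M_i})\mathbf{R}\mathcal{R}^G_{M_i}(\m)\to\m\to r^G_{M_i}\pi^G_{M_i}(\m)$ from \Cref{recollement} and compare with the previous triangle, using that both $C$ and $\m_i$ are characterised uniquely within $\Ss^G_\M\cap(\Ss^G_{\M_i})^\ddagger$ by their images under $\pi^G_{\M_i}$. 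The second triangle is obtained dually, using localizations in place of colocalizations and $\Ho(\mathcal{L}^G_{M_i})$ in place of $\mathbf{R}\mathcal{R}^G_{M_i}$.

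Part (iii) follows formally from part (i): the recollement identifies $\Im(\ell^G_\M)={}^\ddagger\Ss^G_\M$ and $\Im(r^G_\M)=(\Ss^G_\M)^\ddagger$. Since left/right orthogonality to a thick subcategory reduces to orthogonality against any generating set, we obtain $\Im(r^G_\M)=\bigcap_i \Im(\Ho(I^G_{M_i}))^\ddagger$ and the analogous identity on the left. The adjunctions $\Ho(\mathcal{L}^G_{M_i})\dashv \Ho(I^G_{M_i})\dashv \mathbf{R}\mathcal{R}^G_{M_i}$ together with full faithfulness of $\Ho(I^G_{M_i})$ finally give $\Im(\Ho(I^G_{M_i}))^\ddagger=\Ker(\mathbf{R}\mathcal{R}^G_{M_i})$ and ${}^\ddagger\Im(\Ho(I^G_{M_i}))=\Ker(\Ho(\mathcal{L}^G_{M_i}))$, yielding the claim. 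The principal obstacle throughout is the control of the iterated colocalization in part (i), which is where the well-generated structure of $\Ho(\He)$ becomes essential.
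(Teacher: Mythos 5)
Your strategy for part (i) departs significantly from the paper's and has a genuine gap. The paper does not attempt to prove an abstract statement about joining two ``recollement-forming'' thick subcategories: instead it uses a Mackey-type compatibility, Lemma \ref{ho_comp} (lifting Abe's formula $\RM\circ I^G_L\cong I^M_{M\cap L}\circ\mathcal{R}^L_{M\cap L}$ and its left-adjoint analogue to $\Ho$). This tells you that $\mathbf{R}\mathcal{R}^G_L$ and $\Ho(\mathcal{L}^G_L)$ send $\Ss^G_{\M}$ into $\Ss^L_{\M\cap L}$, so that the adjoint triple $\Ho(\mathcal{L}^G_L)\dashv\Ho(I^G_L)\dashv\mathbf{R}\mathcal{R}^G_L$ descends to an adjoint triple between the quotient categories $\Ho(\He_L)_{\M\cap L}$ and $\Ho(\He)_\M$ (Lemma \ref{triple_fun}, Proposition \ref{double_recollement}). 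The left and right adjoints of $i^G_{\M'}$ are then obtained by composition. Your proposal omits this input entirely.

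The gap in your abstract claim is concrete: for a recollement of $\Ho(\He)$ along $\Ss^G_\M$ you need \emph{both} a left and a right adjoint to the inclusion (Proposition \ref{Krause_criterion} requires the conditions of (i) \emph{and their dual}). Brown representability, given well-generation, yields the right adjoint of a set-generated localising subcategory's inclusion, but it does \emph{not} yield the left adjoint; ``dually'' would require well-cogeneration, which you have not established and which is not automatic. Moreover, your fallback construction---iterate the $\Ss_1$- and $\Ss_2$-colocalization triangles and pass to a homotopy colimit---does not obviously stabilise: the right-orthogonals $\Ss_j^\ddagger$ are closed under products, not under coproducts, so the (sequential) homotopy colimit of a tower with terms alternating between $\Ss_1^\ddagger$ and $\Ss_2^\ddagger$ has no reason to land in $\Ss_1^\ddagger\cap\Ss_2^\ddagger$. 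This is exactly the kind of uniform control that Lemma \ref{ho_comp} supplies and that the purely formal argument lacks. Parts (ii) and (iii) in your proposal are essentially correct in outline (the octahedron in (ii), the orthogonality formalities in (iii)), but both presuppose the recollement from (i), so the gap there is fatal to the whole argument.
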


As a first step, we lift a result of Abe to the homotopy level.

\begin{lem}\label{ho_comp} Suppose that $M$ and $L$ are two standard Levi subgroups. Then we have:
\begin{enumerate}
\item $\Ho(\LM)\circ\Ho(I^G_L)\cong \Ho(I_{M\cap L}^M)\circ \Ho(\mathcal{L}_{M\cap L}^L)$; and
\item $\mathbf{R}\RM\circ\Ho(I^G_L)\cong \Ho(I_{M\cap L}^M)\circ \mathbf{R}\mathcal{R}_{M\cap L}^L$.
\end{enumerate}
\end{lem}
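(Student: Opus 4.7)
The plan is to deduce (i) from a natural isomorphism at the underlying module level, and to obtain (ii) from (i) by taking right adjoints.

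For (i), the key input is a module-level natural isomorphism
\[
\LM \circ I^G_L \;\cong\; I^M_{M\cap L} \circ \mathcal{L}^L_{M\cap L}\colon \Mod(\He_L) \longrightarrow \Mod(\He_M),
\]
a Mackey-type compatibility between parabolic induction and its left adjoint essentially due to Abe. A canonical candidate natural transformation is the mate of the transitivity isomorphism $I^G_L \circ I^L_{M\cap L} \cong I^G_M \circ I^M_{M\cap L}$ (cf.\ \Cref{ind_coind_thm}(ii)), obtained by composing the unit of $\mathcal{L}^L_{M\cap L} \dashv I^L_{M\cap L}$, the transitivity isomorphism, and the counit of $\LM \dashv I^G_M$; note that this last counit is itself an isomorphism by \Cref{ff_counit} and \Cref{fully_faithful}. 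All four functors preserve weak equivalences: $I^G_L$ and $I^M_{M\cap L}$ because they are exact and preserve trivial objects by \Cref{proj_dim}(i), so that \Cref{preserve_reflect_we}(i) applies, and $\LM$ and $\mathcal{L}^L_{M\cap L}$ by \Cref{ind_tran}(iii). Applying $\Ho$ to both sides of the module-level isomorphism therefore yields (i).

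For (ii), I would take right adjoints on both sides of (i). By \Cref{ind_tran} and \Cref{derived_nice}(i), every factor in either composition admits a right adjoint on the homotopy category: the right adjoints of $\Ho(\LM)$ and $\Ho(\mathcal{L}^L_{M\cap L})$ are $\Ho(I^G_M)$ and $\Ho(I^L_{M\cap L})$ respectively, and those of $\Ho(I^G_L)$ and $\Ho(I^M_{M\cap L})$ are $\mathbf{R}\mathcal{R}^G_L$ and $\mathbf{R}\mathcal{R}^M_{M\cap L}$. Uniqueness of right adjoints then converts (i) into
\[
\mathbf{R}\mathcal{R}^G_L \circ \Ho(I^G_M) \;\cong\; \Ho(I^L_{M\cap L}) \circ \mathbf{R}\mathcal{R}^M_{M\cap L};
\]
swapping the roles of $M$ and $L$ (which is permitted since $M \cap L = L \cap M$) produces precisely (ii).

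The main obstacle will be the module-level identity underlying (i). The mate transformation described above is automatically an isomorphism on modules of the form $I^L_{M\cap L}(\mathfrak{n})$, by the triangle identities combined with the full faithfulness of $I^L_{M\cap L}$ and $I^G_M$, but extending this to all of $\Mod(\He_L)$ requires a direct computation, presumably along the lines of \cite[\S4]{Abe19} using the bases $\{T^M_{\tilde{w}}\}$ and the explicit description of $\LM$ via localisation followed by the twist $\iota_0^M$ of \Cref{twist}. The subsequent homotopy-categorical and adjunction manipulations are purely formal.
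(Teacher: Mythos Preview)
Your overall strategy matches the paper's: establish the module-level identity $\LM \circ I^G_L \cong I^M_{M\cap L}\circ \mathcal{L}^L_{M\cap L}$, apply $\Ho$ to get (i), then take right adjoints and swap $M$ and $L$ to get (ii). The only difference is how you obtain the module-level identity, and here the paper avoids the ``main obstacle'' you flag.

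Rather than constructing the mate of the transitivity isomorphism and then verifying by hand that it is an isomorphism on all of $\Mod(\He_L)$, the paper simply cites Abe's result \cite[Lemma~5.2]{Abe19_2}, which gives the companion identity for the \emph{right} adjoints at the module level:
\[
\RM \circ I^G_L \;\cong\; I^M_{M\cap L}\circ \mathcal{R}^L_{M\cap L}.
\]
Taking left adjoints of both sides (and swapping the roles of $M$ and $L$, exactly as you do later for (ii)) produces $\LM \circ I^G_L \cong I^M_{M\cap L}\circ \mathcal{L}^L_{M\cap L}$ for free, with no further computation required. So the ``direct computation along the lines of \cite{Abe19}'' you anticipate is unnecessary: the identity is already in the literature in its right-adjoint form, and one passes to the left-adjoint form by uniqueness of adjoints. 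Note also that the relevant reference is \cite{Abe19_2} rather than \cite{Abe19}.
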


\begin{proof}
It was proved in \cite[Lemma 5.2]{Abe19_2} that $\RM\circ I^G_L\cong I_{M\cap L}^M\circ \mathcal{R}_{M\cap L}^L$. By taking left adjoints, we deduce that $\LM\circ I_L^G\cong I_{M\cap L}^M\circ \mathcal{L}_{M\cap L}^L$. Part (i) is now immediate, and part (ii) then follows by taking right adjoints in (i).
\end{proof}

To prove \Cref{many_Levi_rec} we will argue by induction on $n:=\abs{\M}$, the case $n=1$ being \Cref{recollement}. We will need to establish some preparatory results first, under the assumption that the statement of \Cref{many_Levi_rec}(i) holds for some fixed $n\geq 1$ and all split reductive groups. Assuming this from now on, we let $\M=\{M_1,\ldots, M_n\}$ be a collection of $n$ distinct standard Levi subgroups and we fix another standard Levi subgroup $L$. Then $L_i:=M_i\cap L$ is a standard Levi subgroup of $L$, and we may embed fully faithfully each $\Ho(\He_{L_i})$ inside $\Ho(\He_L)$ via $\Ho(I_{L_i}^L)$. Thus we may consider the smallest thick subcategory $\Ss_{\M\cap L}^L$ of $\Ho(\He_L)$ which contains these essential images, and by assumption we also have a recollement
\[
\xymatrix@C=0.5cm{
\Ss_{\M\cap L}^L \ar[rrr]^{i_{\M\cap L}^L} &&& \Ho(\He_L) \ar[rrr]^{\pi_{\M\cap L}^L} \ar @/_1.5pc/[lll]_{\mathcal{L}_{\M\cap L}^L}  \ar
 @/^1.5pc/[lll]^{\mathcal{R}_{\M\cap L}^L} &&& \Ho(\He_L)_{\M\cap L}
\ar @/_1.5pc/[lll]_{\ell^L_{\M\cap L}} \ar @/^1.5pc/[lll]^{r^L_{\M\cap L}}
 }.
\]
With this notation and assumptions established, we have the following:

\begin{lem}\label{triple_fun} \begin{enumerate}
\item The functors $\Ho(I^G_L)$, $\mathbf{R}\mathcal{R}_L^G$ and $\Ho(\mathcal{L}_L^G)$ induce a triple of triangulated functors
\[
\xymatrix@C=0.5cm{
\Ho(\He_L)_{\M\cap L} \ar[rrr]^{(I^G_L)_\M} &&& \Ho(\He)_\M \ar @/_1.5pc/[lll]_{(\mathcal{L}^G_L)_\M}  \ar
 @/^1.5pc/[lll]^{(\mathcal{R}_L^G)_\M}
 }
\]
such that
\begin{align*}
\pi_\M^G\circ \Ho(I^G_L)&=(I^G_L)_\M\circ\pi_{\M\cap L}^L,\\
\pi_{\M\cap L}^L\circ \Ho(\mathcal{L}_L^G)&=(\mathcal{L}^G_L)_\M\circ\pi_\M^G,\\
\text{and}\quad\quad\pi_{\M\cap L}^L\circ \mathbf{R}\mathcal{R}_L^G &=(\mathcal{R}^G_L)_\M\circ\pi_\M^G.
\end{align*}
\item The functor $(I^G_L)_\M$ is naturally isomorphic to both $\pi_\M^G\circ \Ho(I^G_L)\circ \ell^L_{\M\cap L}$ and $\pi_\M^G\circ \Ho(I^G_L)\circ r^L_{\M\cap L}$. Similarly, we have natural isomorphisms
$$
(\mathcal{L}^G_L)_\M\cong \pi_{\M\cap L}^L\circ \Ho(\mathcal{L}_L^G) \circ \ell_\M^G
$$
and
$$
(\mathcal{R}^G_L)_\M\cong \pi_{\M\cap L}^L\circ \mathbf{R}\mathcal{R}_L^G \circ r_\M^G.
$$
\end{enumerate}
\end{lem}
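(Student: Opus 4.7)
My plan is to deduce \Cref{triple_fun} from the commutation relations of \Cref{ho_comp}, the transitivity of parabolic induction, and the universal property of Verdier quotients. For part (i), the claim reduces to showing that $\Ho(I^G_L)$ sends $\Ss^L_{\M\cap L}$ into $\Ss^G_\M$, while $\Ho(\mathcal{L}^G_L)$ and $\mathbf{R}\mathcal{R}^G_L$ both send $\Ss^G_\M$ into $\Ss^L_{\M\cap L}$; once this is established, the universal property of each Verdier quotient yields a unique induced functor satisfying the stated commutation relation with the quotient functors. Since all three functors in question are triangulated (being derived functors between stable model categories), the preimage of a thick subcategory under any of them is thick, so it suffices to verify the containment on the generators of the source thick subcategory.

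For $\Ho(I^G_L)$, the transitivity of parabolic induction from \Cref{ind_coind_thm}(ii) combined with \Cref{derived_nice}(i) gives
$$\Ho(I^G_L)\circ \Ho(I^L_{L_i}) \cong \Ho(I^G_{L_i}) \cong \Ho(I^G_{M_i})\circ \Ho(I^{M_i}_{L_i})$$
since $L_i = L\cap M_i\subseteq M_i$, so each generator $\Im(\Ho(I^L_{L_i}))$ is sent into $\Im(\Ho(I^G_{M_i}))\subseteq \Ss^G_\M$. For $\Ho(\mathcal{L}^G_L)$ and $\mathbf{R}\mathcal{R}^G_L$, I apply \Cref{ho_comp} with the roles of $M$ and $L$ interchanged to obtain
$$\Ho(\mathcal{L}^G_L)\circ \Ho(I^G_{M_i}) \cong \Ho(I^L_{L_i})\circ \Ho(\mathcal{L}^{M_i}_{L_i})\quad\text{and}\quad \mathbf{R}\mathcal{R}^G_L\circ \Ho(I^G_{M_i}) \cong \Ho(I^L_{L_i})\circ \mathbf{R}\mathcal{R}^{M_i}_{L_i},$$
so each generator $\Im(\Ho(I^G_{M_i}))$ of $\Ss^G_\M$ is sent into $\Im(\Ho(I^L_{L_i}))\subseteq \Ss^L_{\M\cap L}$. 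The adjoint triple structure $\Ho(\mathcal{L}^G_L)\dashv \Ho(I^G_L)\dashv \mathbf{R}\mathcal{R}^G_L$ is then inherited by the induced functors on the quotients: the units and counits are natural transformations that descend through the quotient functors, and one checks directly from the universal property that the resulting triangle identities persist.

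For part (ii), this follows immediately from the commutation relations in (i) together with the fact that $\ell^L_{\M\cap L}, r^L_{\M\cap L}, \ell^G_\M, r^G_\M$ are all fully faithful, which by \Cref{ff_counit} is equivalent to $\pi^L_{\M\cap L}\circ \ell^L_{\M\cap L}\cong \id\cong \pi^L_{\M\cap L}\circ r^L_{\M\cap L}$ and $\pi^G_\M\circ \ell^G_\M\cong \id\cong \pi^G_\M\circ r^G_\M$. For example,
$$(I^G_L)_\M \cong (I^G_L)_\M\circ \pi^L_{\M\cap L}\circ \ell^L_{\M\cap L} = \pi^G_\M\circ \Ho(I^G_L)\circ \ell^L_{\M\cap L},$$
and substituting $r^L_{\M\cap L}$, $\ell^G_\M$, $r^G_\M$ into the other commutation relations gives the remaining three isomorphisms by the same token.

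I expect the main point requiring care to be the descent of the adjunctions to the Verdier quotients. This is a formal fact but the verification that the units and counits still factor through $\pi^G_\M$ and $\pi^L_{\M\cap L}$ in a compatible way needs to be made explicit, in particular to ensure that the resulting adjunctions are not only pointwise but natural. Everything else is a direct consequence of \Cref{ho_comp} and transitivity.
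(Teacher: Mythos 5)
Your argument for parts (i) and (ii) is correct and is essentially the proof the paper gives: check the relevant containments on generators of the thick subcategories (transitivity of induction for $\Ho(I^G_L)$, \Cref{ho_comp} with $M$ and $L$ swapped for the two adjoints), invoke the universal property of the Verdier quotient, then in (ii) pre-compose the commutation relations with $\ell$ or $r$ and use $\pi\ell\cong\id\cong\pi r$.

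One caution, though. The lemma as stated only asserts the existence of a \emph{triple of triangulated functors} satisfying the commutation relations and the factorisations in (ii); it does \emph{not} claim that the triple is an adjoint triple. That claim is deferred to \Cref{double_recollement}, where it is deduced cleanly from the identities of part (ii): $(\mathcal{R}^G_L)_\M\cong \pi^L_{\M\cap L}\circ\mathbf{R}\mathcal{R}^G_L\circ r^G_\M$ is a composite of right adjoints, with left adjoint $\pi^G_\M\circ\Ho(I^G_L)\circ\ell^L_{\M\cap L}\cong(I^G_L)_\M$, and dually for $(\mathcal{L}^G_L)_\M$. Your proposed route — ``the units and counits descend through the quotient functors, and one checks directly that the triangle identities persist'' — is the part you rightly flag as needing care, but as stated it is not justified: adjunctions do not formally descend along Verdier quotients, and making this precise is exactly what the factorisations of part (ii) are for. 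Since the adjunction is not part of the statement, this is not a gap in your proof of the lemma, but it suggests a misreading of what the lemma claims; you would do better to prove (i) and (ii) exactly as you have, and then observe that the adjoint structure falls out of (ii) for free in the next proposition.
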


\begin{proof}
(i) By transitivity of parabolic induction (cf.\  \cite[Proposition 4.3]{Vign5}), for each $1\leq i\leq n$ we have
$$
I^G_L\circ I^L_{L_i}\cong I^G_{L_i}\cong I^G_{M_i}\circ I^{M_i}_{L_i}
$$
and so in particular $I^G_L\circ I^L_{L_i}$ factors through $I^G_{M_i}$. Hence the same holds for the corresponding homotopy functors.  As a result it follows that the composite $\pi_\M^G\circ \Ho(I^G_L): \Ho(\He_L)\to \Ho(\He)_\M$ kills the essential image of each $\Ho(I^L_{L_i})$ and hence kills $\Ss_{\M\cap L}^L$. By the universal property of quotients, this implies that there is an induced triangulated functor
$$
(I^G_L)_\M:\Ho(\He_L)_{\M\cap L}\to \Ho(\He)_\M
$$
with the property that $\pi_\M^G\circ \Ho(I^G_L)=(I^G_L)_\M\circ\pi_{\M\cap L}^L$ as required.

The construction of $(\mathcal{R}^G_L)_\M$ is very similar. By \Cref{ho_comp}(ii), the composite $\mathbf{R}\mathcal{R}_L^G\circ\Ho(I^G_{M_i})$ factors through $\Ho(I^L_{L_i})$ for each $1\leq i\leq n$. Hence it follows that the composite $\pi_{\M\cap L}^L\circ \mathbf{R}\mathcal{R}_L^G$ kills the essential image of $\Ho(I^G_{M_i})$ for each $i$, and hence it kills $\Ss^G_\M$. Again, we then have that $\pi_{\M\cap L}^L\circ \mathbf{R}\mathcal{R}_L^G$ factorises as $(\mathcal{R}^G_L)_\M\circ\pi_\M^G$ for some triangulated functor
$$
(\mathcal{R}^G_L)_\M:\Ho(\He)_\M\to \Ho(\He_L)_{\M\cap L}.
$$
The construction of $(\mathcal{L}^G_L)_\M$ is completely analogous, using \Cref{ho_comp}(i).

(ii) The two functors $r_\M^G$ and $\ell_\M^G$ are both fully faithful and hence we have natural isomorphisms
$$
\pi_\M^G\circ \ell_\M^G\cong \id_{\Ho(\He)_\M}\cong \pi_\M^G\circ r_\M^G
$$
by \Cref{ff_counit}. So we get the identity for $(\mathcal{R}^G_L)_\M$ by pre-composing the functor $\pi_{\M\cap L}^L\circ \mathbf{R}\mathcal{R}_L^G=(\mathcal{R}^G_L)_\M\circ\pi_\M^G$ by $r_\M^G$ and applying the above natural isomorphism. The identities for $(\mathcal{L}^G_L)_\M$ and $(I^G_L)_\M$ are obtained similarly.
\end{proof}

\begin{prop}\label{double_recollement} Let $\M'=\M\cup\{L\}$. The triple of functors from \Cref{triple_fun} is an adjoint triple, and it forms part of a recollement of the form
\begin{equation}\label{d_recollement}
\xymatrix@C=0.5cm{
\Ho(\He_L)_{\M\cap L} \ar[rrr]^{(I^G_L)_\M} &&& \Ho(\He)_\M \ar[rrr]^{\pi^\M_{\M'}} \ar @/_1.5pc/[lll]_{(\mathcal{L}^G_L)_\M}  \ar
 @/^1.5pc/[lll]^{(\mathcal{R}_L^G)_\M} &&& \T_{\M'}
\ar @/_1.5pc/[lll]_{\ell^\M_{\M'}} \ar @/^1.5pc/[lll]^{r^\M_{\M'}}
 }
\end{equation}
where $\T_{\M'}=\Ho(\He)_\M/\Ho(\He_L)_{\M\cap L}$.
\end{prop}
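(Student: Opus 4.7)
The plan is to verify the three conditions of \Cref{recollement_defn}: establish the adjoint triple, prove that $(I^G_L)_\M$ is fully faithful, and identify the Verdier quotient with $\T_{\M'}$.

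First, I would deduce the adjunctions $(\mathcal{L}^G_L)_\M \dashv (I^G_L)_\M \dashv (\mathcal{R}_L^G)_\M$ formally from \Cref{triple_fun}(ii). Writing $(\mathcal{L}^G_L)_\M \cong \pi^L_{\M\cap L}\circ\Ho(\mathcal{L}_L^G)\circ \ell^G_\M$ as a composite of left adjoints for the three adjunctions $\ell^G_\M\dashv\pi^G_\M$, $\Ho(\mathcal{L}_L^G)\dashv\Ho(I^G_L)$ and $\pi^L_{\M\cap L}\dashv r^L_{\M\cap L}$, the composed right adjoint is $\pi^G_\M\circ\Ho(I^G_L)\circ r^L_{\M\cap L}$, which is naturally isomorphic to $(I^G_L)_\M$ again by \Cref{triple_fun}(ii). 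Similarly, expressing $(I^G_L)_\M \cong \pi^G_\M\circ\Ho(I^G_L)\circ \ell^L_{\M\cap L}$ as a composite of left adjoints identifies its right adjoint as $(\mathcal{R}_L^G)_\M$.

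The main step is to show that $(I^G_L)_\M$ is fully faithful, i.e.\ that the counit $\varepsilon:(\mathcal{L}^G_L)_\M(I^G_L)_\M\to\id$ of the first adjunction is an isomorphism. Unwinding the composite adjunction, at $X\in\Ho(\He_L)_{\M\cap L}$ this counit factors as three maps: the counit of $\pi^L_{\M\cap L}\dashv r^L_{\M\cap L}$ at $X$, then $\pi^L_{\M\cap L}$ applied to the counit of $\Ho(\mathcal{L}_L^G)\dashv\Ho(I^G_L)$ at $r^L_{\M\cap L}(X)$, and finally $\pi^L_{\M\cap L}\Ho(\mathcal{L}_L^G)$ applied to the counit $c:\ell^G_\M\pi^G_\M Y\to Y$ of $\ell^G_\M\dashv\pi^G_\M$ at $Y:=\Ho(I^G_L)r^L_{\M\cap L}(X)$. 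The first two maps are isomorphisms since $r^L_{\M\cap L}$ and $\Ho(I^G_L)$ are fully faithful. For the third, I would apply the triangulated functor $\pi^L_{\M\cap L}\Ho(\mathcal{L}_L^G)$ to the distinguished triangle $\ell^G_\M\pi^G_\M Y\xrightarrow{c} Y\to i^G_\M\mathcal{L}^G_\M Y\to \Sigma(\ell^G_\M\pi^G_\M Y)$ coming from the recollement for $\M$, and check that the third term vanishes. Since $i^G_\M\mathcal{L}^G_\M Y$ lies in $\Ss^G_\M$, which by definition is the thick subcategory generated by the $\Im(\Ho(I^G_{M_i}))$, it suffices to verify the vanishing on each such $\Im(\Ho(I^G_{M_i}))$; this in turn is immediate from \Cref{ho_comp}(i), which gives that $\Ho(\mathcal{L}_L^G)$ sends $\Im(\Ho(I^G_{M_i}))$ into $\Im(\Ho(I^L_{L_i}))\subseteq\Ss^L_{\M\cap L}$, and the latter is annihilated by $\pi^L_{\M\cap L}$.

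With fully faithfulness in hand, $\Im((I^G_L)_\M)$ is a full triangulated subcategory of $\Ho(\He)_\M$; it is also closed under arbitrary coproducts since $(I^G_L)_\M$ is a left adjoint, so it is localising and in particular thick. The existence of the right adjoint $(\mathcal{R}_L^G)_\M$ of $(I^G_L)_\M$ translates, via the equivalence $\Ho(\He_L)_{\M\cap L}\simeq\Im((I^G_L)_\M)$, into a right adjoint of the inclusion $\Im((I^G_L)_\M)\hookrightarrow\Ho(\He)_\M$, so \Cref{Krause_criterion}(i) furnishes a right adjoint $r^\M_{\M'}$ of the quotient functor $\pi^\M_{\M'}$; the dual statement applied to $(\mathcal{L}^G_L)_\M$ produces $\ell^\M_{\M'}$, and \Cref{Krause_criterion}(ii) then delivers the recollement \eqref{d_recollement}. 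The main obstacle is the second step: since the counit of $\ell^G_\M\dashv\pi^G_\M$ is not itself an isomorphism, one cannot just chain iso counits, and the argument must rely on killing the obstruction via the compatibility supplied by \Cref{ho_comp}.
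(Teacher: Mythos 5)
Your proof is correct, and the adjoint-triple and localising/thick/Krause parts coincide with the paper's. The full-faithfulness step, however, takes a genuinely different (though closely related) route. You analyse the counit of $(\mathcal{L}^G_L)_\M \dashv (I^G_L)_\M$ directly: decompose it into the three counits of the constituent adjunctions, observe that two are isomorphisms outright, and kill the third by applying $\pi^L_{\M\cap L}\Ho(\mathcal{L}^G_L)$ to the recollement triangle $\ell^G_\M\pi^G_\M Y\to Y\to i^G_\M\mathcal{L}^G_\M Y\to\Sigma$ and noting that $\pi^L_{\M\cap L}\Ho(\mathcal{L}^G_L)$ annihilates $\Ss^G_\M$ (which, as you say, follows from \Cref{ho_comp}(i) plus the fact that the kernel of a triangulated functor is thick). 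The paper instead starts from the dual vanishing $\pi^L_{\M\cap L}\circ\mathbf{R}\mathcal{R}^G_L\circ i^G_\M = 0$ recorded in the proof of \Cref{triple_fun}, passes to left adjoints to obtain $\Im(\Ho(I^G_L)\circ\ell^L_{\M\cap L})\subseteq\Ker(\mathcal{L}^G_\M)=\Im(\ell^G_\M)$, and then observes that $(I^G_L)_\M\cong\pi^G_\M\circ\Ho(I^G_L)\circ\ell^L_{\M\cap L}$ is thereby a composite of fully faithful functors since $\pi^G_\M$ restricted to $\Im(\ell^G_\M)$ is an equivalence. Both approaches hinge on the same vanishing coming from \Cref{ho_comp}; yours is a bit more computational but makes the obstruction to chaining counits transparent, while the paper's ``take adjoints of a zero composite'' trick is shorter and avoids invoking the recollement triangle. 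One cosmetic note: you list the three factors of the counit in the reverse of the order in which they actually compose (the $\pi^L_{\M\cap L}\dashv r^L_{\M\cap L}$ counit is the last, outermost map, not the first), but since the argument only requires each factor to be an isomorphism, this does not affect the validity.
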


\begin{proof}
First, we show that the functors are adjoints and that $(I^G_L)_\M$ is fully faithful. This follows from the identities in \Cref{triple_fun}(ii). Indeed, we see that $(\mathcal{R}_L^G)_\M\cong \pi_{\M\cap L}^L\circ \mathbf{R}\mathcal{R}_L^G \circ r_\M^G$ is a composite of right adjoints so is a right adjoint, with left adjoint $\pi_\M^G\circ \Ho(I^G_L)\circ \ell^L_{\M\cap L}\cong (I^G_L)_\M$. Similarly, $(\mathcal{L}_L^G)_\M\cong \pi_{\M\cap L}^L\circ \Ho(\mathcal{L}_L^G) \circ \ell_\M^G$ is a composite of left adjoints, so is left adjoint to $\pi_\M^G\circ \Ho(I^G_L)\circ r^L_{\M\cap L}\cong (I^G_L)_\M$ as required. Finally, we saw in the proof of \Cref{triple_fun} that $\pi_{\M\cap L}^L\circ \mathbf{R}\mathcal{R}_L^G\circ i^G_\M=0$. After taking left adjoints this gives that
$$
\Im(\Ho(I^G_L)\circ \ell^L_{\M\cap L})\subseteq \Ker(\mathcal{L}^G_\M)=\Im(\ell^G_\M).
$$
As $(\pi_\M^G)|_{\Im(\ell^G_\M)}$ is an equivalence, we deduce that $(I^G_L)_\M\cong \pi_\M^G\circ \Ho(I^G_L)\circ \ell^L_{\M\cap L}$ is a composite of fully faithful functors and so is fully faithful.

Finally, we will be done by \Cref{Krause_criterion}(ii) if we show that the essential image of $(I^G_L)_\M$ is localising (and in particular thick). For this, note that since $\Ho(\He_M)$ and $\Ho(\He)$ have arbitrary direct sums so do their quotient categories $\Ho(\He_L)_{\M\cap L}$ and $\Ho(\He)_\M$ because the quotient functors are left adjoints. Then $(I^G_L)_\M$ preserves these direct sums since it is also a left adjoint.
\end{proof}

\begin{proof}[Proof of \Cref{many_Levi_rec}]
(i) As stated earlier, we work by induction on $n$. Assuming the statements hold for $n\geq 1$, let $M_1,\ldots, M_{n+1}$ be distinct standard Levi subgroups and write $\M=\{M_1, \ldots, M_n\}$ and $\M'=\M\cup\{M_{n+1}\}$ as before. By setting $L=M_{n+1}$ in \Cref{double_recollement}, we see that there is a recollement as in \eqref{d_recollement} and we write again $\T_{\M'}$ for the quotient $\Ho(\He)_\M/\Ho(\He_{M_{n+1}})_{\M\cap M_{n+1}}$.

We then define the functor $\pi^G_{\M'}$ to be the composite $\pi^\M_{\M'}\circ \pi^G_\M:\Ho(\He)\to \T_{\M'}$ and let $\Ss_{\M'}:=\Ker(\pi^G_{\M'})$. Note that $\pi^G_{\M'}$ has fully faithful left and right adjoints, given by $\ell^G_{\M'}:=\ell^G_\M\circ \ell^\M_{\M'}$ and $r^G_{\M'}:= r^G_\M\circ r^\M_{\M'}$ respectively. Therefore it follows from \Cref{Krause_criterion} that the inclusion $\Ss_{\M'}\to \Ho(\He)$ also has both a left and a right adjoint, and that these adjunctions form a recollement. Note also that $\Ss_{\M'}$ is automatically localising, because the functor $\pi^G_{\M'}$ commutes with arbitrary direct sums as it is a left adjoint. We are thus only left to check that $\Ss^G_{\M'}=\Ss_{\M'}$.

For this, we need to show that if $\Ss$ is an arbitrary thick subcategory of $\Ho(\He)$ containing the essential images of $\Ho(I_{M_1}^G),\ldots, \Ho(I_{M_{n+1}}^G)$, then $\Ss$ contains $\Ss_{\M'}$. It suffices to show that the quotient functor $\mathcal{Q}:\Ho(\He)\to \Ho(\He)/\Ss$ factors through $\T_{\M'}$. Certainly, $\Ss$ contains $\Ss^G_\M$ by induction and hence $\mathcal{Q}$ factors through some $\mathcal{Q}':\Ho(\He)_\M\to \Ho(\He)/\Ss$. But \Cref{triple_fun}(ii) then ensures that $\mathcal{Q}'$ kills the essential image of $(I^G_{M_{n+1}})_\M$, and hence it factors through $\T_{\M'}$ as claimed.

For (ii), we assume $n>1$ and fix $1\leq i\leq n$. We denote by $\eta^G_\M$ (resp.\ $\eta^G_{\M_i}$, resp.\ $\eta^{\M_i}_\M$) the unit of the adjunction $\pi^G_{\M}\dashv r^G_{\M}$ (resp.\ $\pi^G_{\M_i}\dashv r^G_{\M_i}$, resp.\ $\pi^{\M_i}_\M\dashv r^{\M_i}_\M$). Then by the recollement in (i) and \eqref{triangle_right_adjoints} we have two distinguished triangles
\begin{align*}
i^G_{\M_i}\mathcal{R}^G_{\M_i}(\m)\longrightarrow &\m \stackrel{\eta^G_{\M_i}}{\longrightarrow}r^G_{\M_i}\pi^G_{\M_i}(\m)\longrightarrow \Sigma i^G_{\M_i}\mathcal{R}^G_{\M_i}(\m)\quad\text{and}\\
i^G_{\M}\mathcal{R}^G_{\M}(\m)\longrightarrow &\m \stackrel{\eta^G_\M}{\longrightarrow}r^G_{\M}\pi^G_{\M}(\m)\longrightarrow \Sigma i^G_{\M}\mathcal{R}^G_{\M}(\m)
\end{align*}
for any $\m\in \Ho(\He)$. Recall that $\pi^G_{\M}=\pi^{\M_i}_\M\circ \pi^G_{\M_i}$ and $r^G_{\M}=r^G_{\M_i}\circ r^{\M_i}_\M$. Next, we consider the triangle \eqref{triangle_right_adjoints} for the object $\overline{\m}:=\pi^G_{\M_i}(\m)$ in the setting of the recollement \eqref{d_recollement} for the collection of Levi subgroups $\M_i$ and $L=M_i$. Applying the triangulated functor $r^G_{\M_i}$ to it, we get
$$
\mathfrak{n}\longrightarrow r^G_{\M_i}(\overline{\m})\stackrel{\varphi}{\longrightarrow}r^G_{\M}\pi^G_{\M}(\m)\longrightarrow \Sigma \mathfrak{n}
$$
where $\mathfrak{n}:=r^G_{\M_i}(I^G_{M_i})_{\M_i}(\mathcal{R}^G_{M_i})_{\M_i}(\overline{\m})$ and $\varphi=r^G_{\M_i}(\eta^{\M_i}_\M)_{\overline{\m}}$. Since we have $\eta^G_\M=\varphi\circ\eta^G_{\M_i}$, we may apply the octahedral axiom to obtain a distinguished triangle
\begin{equation}\label{triangle_R}
i^G_{\M_i}\mathcal{R}^G_{\M_i}(\m)\longrightarrow i^G_{\M}\mathcal{R}^G_{\M}(\m)\longrightarrow \mathfrak{n}\longrightarrow\Sigma i^G_{\M_i}\mathcal{R}^G_{\M_i}(\m),
\end{equation}
cf.\ \cite[0145]{Stacks}. Since $\mathfrak{n}\cong r^G_{\M_i}\pi^G_{\M_i} \Ho(I^G_{M_i})\mathbf{R}\mathcal{R}^G_{M_i}(\m)$, cf.\ \Cref{triple_fun}(i), this is the desired triangle. The construction for the left adjoints is completely dual to this. 

Finally, for (iii) we only prove the formula for the essential image of $r^G_\M$, the proof for $\ell^G_\M$ being completely analogous. We argue again by induction on $n$, the case $n=1$ being immediate from \Cref{recollement}. Assume the equality holds for some $n\geq 1$ and any collection of $n$ standard Levi subgroups. Again, let $M_1,\ldots, M_{n+1}$ be distinct standard Levi subgroups and write $\M=\{M_1, \ldots, M_n\}$ and $\M'=\M\cup\{M_{n+1}\}$. Note first that $\Im(r^G_{\M'})=\Ker(\mathcal{R}^G_{\M'})$ and $\Im(r^G_\M)=\Ker(\mathcal{R}^G_\M)$. So by the induction hypothesis, the inclusion $\bigcap_{i=1}^{n+1}\Ker(\mathbf{R}\mathcal{R}^G_{M_i})\subseteq \Im(r^G_{\M'})$ is automatic from the triangle \eqref{triangle_R} applied to $\M'$ and $i=n+1$. Assuming now that $\m\in\Ker(\mathcal{R}^G_{\M'})$, the same application of \eqref{triangle_R} then implies that $i^G_\M\mathcal{R}^G_\M(\m)$ lies in $\Im(r^G_\M)=\Ker(\mathcal{R}^G_\M)$. But since $i^G_\M$ is fully faithful we may apply \Cref{ff_counit} to obtain
$$
0=\mathcal{R}^G_\M i^G_\M\mathcal{R}^G_\M(\m)\cong \mathcal{R}^G_\M(\m)
$$
and the induction hypothesis gives $\m\in \bigcap_{j=1}^n\Ker(\mathbf{R}\mathcal{R}^G_{M_j})$. Replacing $\M$ by $\M'\setminus\{M_n\}$ and applying \eqref{triangle_R} to $\M'$ and $i=n$ instead, the above argument gives that $\m\in \bigcap_{j\neq n}\Ker(\mathbf{R}\mathcal{R}^G_{M_j})$ as well and hence we obtain $\m\in \bigcap_{j=1}^{n+1}\Ker(\mathbf{R}\mathcal{R}^G_{M_j})$ as claimed.
\end{proof}

\subsection{Supersingular modules and the derived adjoints}

We must check that the recollement obtained in \Cref{many_Levi_rec}(i) is non-trivial, i.e.\ that the quotient categories $\Ho(\He)_\M$ are non-zero. This will follow by considering supersingular modules. In this subsection, we assume that $k$ is algebraically closed. For the next result, we let
$$
c:=\min_{s_\alpha\in S} |\mu_\alpha|
$$
with $\mu_\alpha$ as in \eqref{quadratic_rel}, and note that $c=1$ or $2$.

\begin{prop}\label{exist_ss_inf_proj_dim} Suppose that either $G$ does not have root system of type $A_1\times\cdots\times A_1$ (possibly empty product) or that $r_{ss}>0$ and $q >2c$. Then there exists a simple supersingular $\He$-module of infinite projective dimension.
\end{prop}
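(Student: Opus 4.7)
The plan is to start from a simple $\He$-module of infinite projective dimension provided by Ollivier--Schneider, run it through Abe's classification, and then transfer to a supersingular module.

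By \cite[\S 7.1--7.3]{OS14}, as already invoked in \Cref{torus_finite_global_dim}, our hypotheses guarantee the existence of a simple $\He$-module $\m$ with $\pd_\He(\m)=\infty$. By Abe's classification (cf.\ the discussion around \eqref{defn_simple1}), we may write $\m\cong I_\He(P,\mathfrak{n},Q)$ for a standard simple supersingular triple $(P,\mathfrak{n},Q)$ with $P=MN$ and $\mathfrak{n}$ a simple supersingular $\He_M$-module. If $M=G$, then by definition of such a triple one has $P=Q=G$, whence $\m\cong\mathfrak{n}$ is itself the desired simple supersingular $\He$-module of infinite projective dimension.

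It therefore suffices to rule out the case $M\subsetneq G$. For this, one first shows that in this case $\pd_{\He_M}(\mathfrak{n})=\infty$, extending the style of argument of \cite[Lemma 8.1]{Koz} (whose generalisation to our setting is \Cref{proj_dim}(i)). Concretely, if $\pd_{\He_M}(\mathfrak{n})<\infty$ then the extension $e_{\He_L}(\mathfrak{n})$ to the Levi $L$ of $Q$ also has finite projective dimension (one checks this by unwinding the explicit construction of $e_{\He_L}(\mathfrak{n})$ in \cite{AHV18}, which makes use of central elements analogous to those of \Cref{lambda_pm}). Applying \Cref{proj_dim}(i) to $\Ind_{\He_L}^\He$ then forces $\pd_\He(\Ind_{\He_L}^\He(e_{\He_L}(\mathfrak{n})))<\infty$, and likewise for every direct summand appearing in \eqref{defn_simple1}; a standard homological argument (using that $\He$ is noetherian) then propagates this to the cokernel $\m$, contradicting $\pd_\He(\m)=\infty$.

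At this point we have a simple supersingular $\He_M$-module $\mathfrak{n}$ with $\pd_{\He_M}(\mathfrak{n})=\infty$, for some proper standard Levi $M\subsetneq G$, and it remains to promote $\mathfrak{n}$ to a simple supersingular $\He$-module of infinite projective dimension. For this one invokes Vigneras' classification: $\mathfrak{n}$ is attached to a pair $(\chi_M,\omega_M)$ with $\chi_M$ a supersingular character of $\He_{M,\aff}$ and $\omega_M$ an irreducible representation of its stabilizer inside the length-zero subgroup of $\widetilde{W}_M$. After suitably lifting $\chi_M$ to a supersingular character $\chi$ of $\Hea$ and pairing it with a compatible irreducible representation of $\Omega_\chi$, one obtains a simple supersingular $\He$-module whose infinite projective dimension can be verified via restriction to the self-injective parahoric subalgebras $\He_F$, where projectivity questions reduce to explicit character computations. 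This final lifting step, and especially the control of infinite projective dimension under it, is the principal obstacle. A potentially more direct alternative is to verify that the specific simple $\He$-modules of infinite projective dimension constructed in \cite[\S 7]{OS14} (such as variants of the trivial character) are themselves supersingular: a character $\chi$ of $\He$ satisfying $\chi(T_{\tilde{w}})=0$ for all $\tilde{w}$ of positive length is automatically supersingular, since then the central elements $z_\mathcal{O}$ with $\ell(\mathcal{O})>0$ act by zero on $\chi$.
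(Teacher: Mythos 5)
Your approach is structurally very different from the paper's, and it has genuine gaps. The paper's proof is short and direct: it invokes Koziol's characterization (\cite[Theorem 7.7]{Koz}, recalled in the paper as \Cref{Koziol_ss_thm}) that a simple supersingular $\He$-module $(\chi\otimes V)\otimes_{\He_\chi}\He$ with $\chi=(J,\xi)$ has finite projective dimension if and only if $\Phi$ is of type $A_1\times\cdots\times A_1$ and $S_\xi=S$. In the non-$A_1^n$ case every simple supersingular module therefore has infinite projective dimension, and such modules exist by Vign\'eras' classification. In the $A_1^n$ case with $q>2$, one only needs a character $\xi$ of $T(\mathbb{F}_q)$ with $S_\xi\neq S$, which follows from the elementary fact that the non-trivial characters of the abelian $p'$-group $T(\mathbb{F}_q)$ have trivial common kernel together with $T_s(\mathbb{F}_q)\cong\mathbb{F}_q^\times\neq 1$.

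Your route --- extract a simple supersingular $\He_M$-module $\mathfrak{n}$ of infinite projective dimension from $\m=I_\He(P,\mathfrak{n},Q)$ and then ``promote'' it to $\He$ --- has an essential gap at precisely the step you flag as the principal obstacle. There is no natural lift from a supersingular character of the affine Hecke algebra attached to $M$ to one of $\Hea$: these algebras do not sit inside one another, and even if a lift were produced you would need separate arguments that it remains supersingular (i.e.\ not twisted sign/trivial on each irreducible component of $\Hea$, including components invisible to $M$) and that it still has infinite projective dimension. Your fallback suggestion is also wrong: a character of $\He$ vanishing on all $T_{\tilde w}$ of positive length with $\xi$ trivial is the twisted trivial character of $\Hea$ extended to $\He$, which is by definition \emph{not} supersingular. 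The accompanying claim that $z_\mathcal{O}$ acts by zero does not follow either, since $E_o(\tilde{\lambda})\in T_{\tilde{\lambda}}+\sum_{\tilde v<\tilde{\lambda}}kT_{\tilde v}$ can contain length-zero terms $T_{\tilde v}$ on which such a character need not vanish.

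There is a further gap in the cokernel step of your reduction to $M\subsetneq G$: the individual maps $\Ind_{\He_{L'}}^\He(e_{\He_{L'}}(\mathfrak{n}))\to\Ind_{\He_L}^\He(e_{\He_L}(\mathfrak{n}))$ in \eqref{defn_simple1} are injective, but the map out of the direct sum need not be, and even granting finite projective dimension of all the summands, the image of that map is merely a submodule of $\Ind_{\He_L}^\He(e_{\He_L}(\mathfrak{n}))$ and could itself have infinite projective dimension --- so you cannot conclude the cokernel $\m$ has finite projective dimension by a direct two-out-of-three argument. (The statement you are aiming at is the contrapositive of \Cref{half_Koziol}(i), which the paper establishes by a quite different argument using the derived adjoints, and which is in any case proved \emph{after} the proposition at hand.)
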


\begin{proof}
If $G$ does not have root system of type $A_1\times\cdots\times A_1$ then any simple supersingular $\He$-module has infinite projective dimension by \cite[Theorem 7.7]{Koz}. So assume that $G$ has type $A_1\times\cdots\times A_1$ (non-empty product) and that $q >2c$. By \cite[Theorem 7.7]{Koz} again, our claim amounts to showing that there exists a $k$-valued character $\xi$ of the group $T(\mathbb{F}_q)$ such that $S_\xi\neq S$ (cf.\ \eqref{S_xi}). But since $p$ does not divide the order of $T(\mathbb{F}_q)$ and as $T(\mathbb{F}_q)$ is abelian, the intersection of the kernels of all the non-trivial $k$-valued characters of $T(\mathbb{F}_q)$ is trivial. Let $s=s_\alpha\in S$ be such that $|\mu_\alpha|=c$, and note that the group $\alpha^\vee(\mathbb{F}_q^\times)$ is non-trivial since $q >2c$. Then there must exist a character $\xi$ with $\xi|_{\alpha^\vee(\mathbb{F}_q^\times)}\neq 1$ and so $S_\xi\neq S$.
\end{proof}

\begin{prop}\label{ss_LM_RM_vanish} Suppose that $M$ is a proper Levi subgroup of $G$ and let $\m\in\Mod(\He)$.
\begin{enumerate}
\item If $\m$ is supersingular, then $\Ho(\LM)(\m)=0$ in $\Ho(\He_M)$.
\item If $\m$ is uniformly supersingular, then $\mathbf{R}\RM(\m)=0$ in $\Ho(\He_M)$.
\end{enumerate}
\end{prop}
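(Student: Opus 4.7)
Both parts factor through the equivalence $\iota_0^{w_0(M)}$ (which preserves trivial objects), so it suffices to prove the analogous vanishing for $\Loc_{w_0(M)}$ and $\mathbf{R}\mathcal{R}^G_{w_0(M)}$ in $\Ho(\He_{w_0(M)})$. Since $M$ is proper, the set $\Phi^+\setminus\Phi^+_{w_0(M)}$ is nonempty, and the strict sign condition on $\lambda_0^\mp$ forces $\lambda_0^\mp$ to be non-central in $\widetilde{W}$; its $\widetilde{W}$-conjugacy class $\mathcal{O}^\mp$ therefore has $\ell(\mathcal{O}^\mp)>0$, and the supersingularity hypothesis provides (uniform, in case (ii)) local nilpotence of $z_{\mathcal{O}^\mp}$ on $\m$. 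The heart of the argument is to translate this into (uniform) local nilpotence of the right action on $\m$ of the image of $T^{w_0(M)}_{\lambda_0^\mp}$ in $\He$ (under $j^\mp_{w_0(M)}$), using the relation $E_o(\widetilde{w})\in T_{\widetilde{w}}+\sum_{\widetilde{v}<\widetilde{w}}kT_{\widetilde{v}}$ between the two bases together with the formula $z_{\mathcal{O}^\mp}=\sum_{\widetilde{\mu}\in\mathcal{O}^\mp}E_o(\widetilde{\mu})$; this comparison is the main technical obstacle.

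For (i), $\Loc_{w_0(M)}(\m)=\m\otimes_{\He^-_{w_0(M)}}\He_{w_0(M)}$ is by \Cref{lambda_pm} the Ore localization of $\m$ at the central non-zero-divisor $T^{w_0(M)}_{\lambda_0^-}$, so its vanishing in $\Mod(\He_{w_0(M)})$ (which is strictly stronger than vanishing in the homotopy category) reduces precisely to the local nilpotence just discussed, element by element in $\m$. This gives $\Ho(\LM)(\m)=0$, with ordinary (not uniform) supersingularity sufficing because each element of the localization only requires its own bound.

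For (ii), the same Ore description yields a two-term free resolution of $\He_{w_0(M)}$ as an $\He^+_{w_0(M)}$-module, so that $\mathcal{R}^G_{w_0(M)}(\m)=\varprojlim_n\m$ and $R^1\mathcal{R}^G_{w_0(M)}(\m)=\varprojlim^1_n\m$, with all higher $R^j$ zero; the transition maps are right multiplication by the image of $T^{w_0(M)}_{\lambda_0^+}$. The \emph{uniform} $N$ with $\m\cdot T_{\lambda_0^+}^N=0$ (obtained from uniform supersingularity via the above identification) makes the tower pro-zero, and a direct computation then shows $\varprojlim=\varprojlim^1=0$. To upgrade this underived vanishing to $\mathbf{R}\RM(\m)=0$ in $\Ho(\He_M)$, I take a Gorenstein injective replacement $\m\to Q_f\m$ with cokernel $C$ of finite projective dimension and apply \Cref{right_adjoint_cohomology}(ii) to get $R^j\mathcal{R}^G_{w_0(M)}(Q_f\m)=0$ for $j\geq 1$; the long exact sequence, together with the vanishing of all $R^j\mathcal{R}^G_{w_0(M)}(\m)$, then forces $\mathcal{R}^G_{w_0(M)}(Q_f\m)\cong\mathcal{R}^G_{w_0(M)}(C)$, and the latter has finite projective dimension over $\He_{w_0(M)}$ because $\mathcal{R}^G_{w_0(M)}$ preserves modules of finite projective dimension (exactness plus the dual of \Cref{proj_dim}(i)).
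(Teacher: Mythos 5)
You have located the real difficulty --- a relation between the central elements $z_{\mathcal{O}^\pm}$ (with $\mathcal{O}^\pm=\widetilde{W}\cdot\lambda_0^\pm$) and the images in $\He$ of $T^M_{\lambda_0^+}$ (resp.\ $T^{M,*}_{\lambda_0^-}$) under $j_M^+$ (resp.\ $j_M^-$) --- but you explicitly defer it as ``the main technical obstacle'' and never close it. This is exactly the input the paper relies on: the identity $T_{\lambda_0^+}^{n+1}=z_{\mathcal{O}^+}^{n}T_{\lambda_0^+}$ (quoted from Abe's Lemma 5.17 and Vign\'eras's Lemma 2.18), which immediately yields that $T_{\lambda_0^+}^{n+1}$ annihilates $\m$ once $z_{\mathcal{O}^+}^{n}$ does. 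Gesturing at the $E_o$-to-$T$ change of basis is not a substitute for establishing this; without it (and its $T^*$ analogue for part (i)), neither part of your argument finishes.

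Two further corrections to the surrounding argument. First, your upgrade from $R^j\mathcal{R}^G_{w_0(M)}(\m)=0$ (all $j$) to $\mathbf{R}\RM(\m)=0$ rests on a false claim: $\RM$ (and $\mathcal{R}^G_{w_0(M)}$) is not exact, only left exact --- this is the whole reason \Cref{right_adjoint_cohomology}(i) has content --- and there is no ``dual of \Cref{proj_dim}(i)''; the setup makes only $\Ll$ exact. The repair is already in the paper and is shorter than what you wrote: once $R^j\RM(\m)=0$ for $j\geq 1$, the module $\m$ is $\RM$-acyclic and \Cref{acyclic_total_derived} gives $\mathbf{R}\RM(\m)\cong\RM(\m)=0$. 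Second, $\RM$ is defined directly as $\Hom_{\He_M^+}(\He_M,-)$ and is \emph{not} $\iota_0^{w_0(M)}\circ\mathcal{R}^G_{w_0(M)}$ (that composite is the right adjoint of $\Coind$, not of $\Ind$), so ``both parts factor through $\iota_0^{w_0(M)}$'' is wrong for part (ii); this happens to be harmless only because your $\varprojlim/\varprojlim^1$ computation is formally identical with $\lambda_0^+$ chosen for $M$ itself, which is what you should do. Finally, note that the paper's proof of (ii) takes a noticeably slicker route than your derived-limit computation: using the same key identity it shows that $z_{\mathcal{O}^+}^n$ acts as an \emph{automorphism} of $\RM(\m')$ for every $\m'\in\Mod(\He)$, then observes that by functoriality this automorphism must also equal zero on $\RM(Q_f\m)$ in $\Ho(\He_M)$, since $z_{\mathcal{O}^+}^n$ is zero on $\m\simeq Q_f\m$ in $\Ho(\He)$; hence $\RM(Q_f\m)\cong 0$, with no $\lim^1$ or acyclicity discussion needed.
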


\begin{proof}
These were proved in \cite[Proposition 4.27]{AHV18} (see also \cite[Proposition 5.18]{Abe19_2}) at the abelian level. Therefore (i) follows immediately. For (ii), let $\lambda_0^+$ be as in \Cref{lambda_pm} and let $\mathcal{O}^+=\widetilde{W}\cdot \lambda_0^+$. Write $z:=z_{\mathcal{O}^+}$ and fix $n\geq 1$ such that $\m\cdot z^n=0$.  First,  take an arbitrary $\m'\in\Mod(\He)$.  Since the element $z\in\He$ is central,  multiplication by $z^n$ is $\He$-linear on $\m'$ and hence induces an $\He_M$-linear endomorphism $f\mapsto f\cdot z^n$ of $\RM(\m')=\Hom_{\He_M^+}(\He_M,\m')$ (defined by $(f\cdot z^n)(h)=f(h)\cdot z^n$).  We claim that this is in fact an automorphism of $\RM(\m')$. 

To see this, we use the fact that $T_{\lambda_0^+}^{n+1}=z^nT_{\lambda_0^+}$ (cf.\ \cite[Lemma 5.17]{Abe19_2} and \cite[Lemma 2.18]{Vign5}).  Now let $f\in \RM(\m')$.  Suppose first that $f\cdot z^n=0$.  Then for all $h\in\He_M$,  we have
$$
0=f(h)\cdot z^nT_{\lambda_0^+}=f(h)\cdot T_{\lambda_0^+}^{n+1}=f(h\cdot (T^M_{\lambda_0^+})^{n+1})
$$
by definition of the map $j_M^+$ (cf.\ \eqref{embed_plusminus}).  Since $T^M_{\lambda_0^+}$ is invertible in $\He_M$,  this means that $f=0$.  To get surjectivity,  define $g:\He_M\to \m'$ by $g(h)=f(h\cdot (T^M_{\lambda_0^+})^{-n-1})\cdot T_{\lambda_0^+}$.  This is $\He_{M^+}$-linear because $T^M_{\lambda_0^+}$ is central in $\He_M$,  and we have
\begin{align*}
(g\cdot z^n)(h)&=f(h\cdot (T^M_{\lambda_0^+})^{-n-1})\cdot T_{\lambda_0^+}z^n\\
&=f(h\cdot (T^M_{\lambda_0^+})^{-n-1})\cdot T_{\lambda_0^+}^{n+1}\\
&=f(h)
\end{align*}
for all $h\in\He_M$.

We now specialise to the case where $\m'=Q_f\m$ is any Gorenstein injective replacement of $\m$, which by definition comes equipped with an $\He$-linear weak equivalence $i_{\m}:\m\to Q_f\m$.  Since we may view the commutative square
\[
\begin{tikzcd}
\m\arrow{r}{\cdot z^n} \arrow[swap]{d}{i_{\m}} & \m \arrow{d}{i_{\m}}\\
Q_f\m\arrow{r}{\cdot z^n} & Q_f\m
\end{tikzcd}
\]
as a square in $\Ho(\He)$,  we deduce that multiplication by $z^n$ induces the zero endomorphism of $Q_f\m$ in the homotopy category.  Thus the image of this endomorphism under the additive functor $\mathbf{R}\RM$ is also zero.  As $Q_f\m$ is Gorenstein injective, i.e.\ fibrant, we have an isomorphism 
$$
0=\mathbf{R}\RM(Q_f\m\xrightarrow{\cdot z^n}Q_f\m)\cong\RM(Q_f\m\xrightarrow{\cdot z^n}Q_f\m)
$$
in the arrow category of $\Ho(\He_M)$. As we have seen above, the right hand map is an isomorphism, from which we deduce that $\mathbf{R}\RM(\m)\cong\RM(Q_f\m)\cong 0$ in $\Ho(\He_M)$, as required.
\end{proof}

\begin{rem}\label{abelian_vanish}
Similarly to the above proof, take $\lambda_0^{\pm}$ as in \Cref{lambda_pm} and let $\mathcal{O}^{\pm}=\widetilde{W}\cdot \lambda_0^{\pm}$. Then part (i) of the above result holds more generally whenever every element of $\m$ is killed by a power of $z_{\mathcal{O}^-}$, and part (ii) holds whenever $z_{\mathcal{O}^+}$ acts nilpotently on $\m$. This follows from the above proof and because the abelian analogue of \Cref{ss_LM_RM_vanish} holds in that higher level of generality, cf.\ the proof of \cite[Proposition 4.27]{AHV18}.
\end{rem}

\begin{cor}\label{ind_non_ess}
Suppose that either $G$ does not have root system of type $A_1\times\cdots\times A_1$ (possibly empty product) or that $r_{ss}>0$ and $q >2$. Then for any collection $\M$ of proper standard Levi subgroups, the quotient category $\Ho(\He)_\M$ is non-zero. In particular, for any proper standard Levi subgroup $M$ of $G$, we have that $I^G_M$ is not a Quillen equivalence.
\end{cor}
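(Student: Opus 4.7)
The plan is to combine the existence of a simple supersingular module of infinite projective dimension with the vanishing of the derived adjoints on such modules, and then exploit the characterisation of $\Im(r^G_\M)$ from \Cref{many_Levi_rec}(iii).

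First, under the stated hypotheses, \Cref{exist_ss_inf_proj_dim} furnishes a simple supersingular $\He$-module $\m$ with $\pd_\He(\m) = \infty$. In particular $\m$ is not a trivial object of $\Mod(\He)$, so $\m \not\cong 0$ in $\Ho(\He)$. Since $\m$ has finite length it is uniformly supersingular, so \Cref{ss_LM_RM_vanish}(ii) gives $\mathbf{R}\mathcal{R}^G_{M}(\m) = 0$ in $\Ho(\He_M)$ for every proper standard Levi $M$, in particular for every $M \in \M$.

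Next, by \Cref{many_Levi_rec}(iii) we have $\Im(r^G_\M) = \bigcap_{M \in \M} \Ker(\mathbf{R}\mathcal{R}^G_{M})$, so $\m$ lies in the essential image of $r^G_\M$; write $\m \cong r^G_\M(\mathfrak{n})$ for some $\mathfrak{n} \in \Ho(\He)_\M$. Because $r^G_\M$ is fully faithful (being part of the recollement of \Cref{many_Levi_rec}(i)), \Cref{ff_counit} yields $\mathfrak{n} \cong \pi^G_\M r^G_\M(\mathfrak{n}) \cong \pi^G_\M(\m)$; more importantly, fully faithful functors reflect zero objects, and since $\m \not\cong 0$ we conclude $\mathfrak{n} \not\cong 0$. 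Hence $\Ho(\He)_\M \neq 0$.

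For the ``in particular'' assertion, suppose $M$ is a proper standard Levi and $I^G_M$ were a Quillen equivalence. Then $\Ho(I^G_M)$ would be an equivalence of categories, so its essential image would be all of $\Ho(\He)$; in the notation of \Cref{many_Levi_rec} applied to $\M = \{M\}$ this forces $\Ho(\He)_{\{M\}} = \Ho(\He)/\Im(\Ho(I^G_M)) = 0$, contradicting what we just showed. The argument is essentially formal once \Cref{exist_ss_inf_proj_dim}, \Cref{ss_LM_RM_vanish}(ii) and \Cref{many_Levi_rec}(iii) are in place; the only subtle point is ensuring that the simple supersingular module produced by \Cref{exist_ss_inf_proj_dim} genuinely has infinite projective dimension, which is exactly where the hypothesis on the root system and on $q$ is needed.
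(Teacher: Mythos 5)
Your proof is correct and follows essentially the same route as the paper's: both invoke \Cref{exist_ss_inf_proj_dim} to produce a simple supersingular module $\m$ of infinite projective dimension, then use \Cref{ss_LM_RM_vanish} together with \Cref{many_Levi_rec}(iii) to place $\m$ in the essential image of $r^G_\M$, and conclude that $\Ho(\He)_\M\neq 0$ since it embeds fully faithfully via $r^G_\M$ onto a subcategory containing the nonzero object $\m$. The only cosmetic difference is that you argue solely through the right adjoints and $\Ker(\mathbf{R}\mathcal{R}^G_{M})$ whereas the paper mentions both $\Ker(\mathcal{L}^G_\M)$ and $\Ker(\mathcal{R}^G_\M)$, but one side already suffices, as you observe.
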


\begin{proof}
By \Cref{exist_ss_inf_proj_dim}, \Cref{ss_LM_RM_vanish} and \Cref{many_Levi_rec}(iii) there is a simple module $\m$ which is non-zero in $\Ho(\He)$ and which lies in both $\Ker(\mathcal{L}^G_\M)$ and $\Ker(\mathcal{R}^G_\M)$. By the recollement in \Cref{many_Levi_rec}(i), $\Ho(\He)_\M$ is equivalent to both of these via the functors $\ell^G_\M$ and $r^G_\M$, so we are done.
\end{proof}

As another corollary,  we deduce that uniformly supersingular modules are Ext-orthogonal to parabolically induced modules in large enough degree without assuming any finiteness condition on these modules. This extends to any positive degree for finite length modules (cf.\ \Cref{ss_ext_orthogonal_finite}).

\begin{cor}\label{ss_ext_orthogonal} Suppose $\m\in\Mod(\He)$ is uniformly supersingular and let $\mathfrak{n}\in\Mod(\He_M)$ be any module. Then
$$
\Ext_{\He}^i(\Ind_{\He_M}^{\He}(\mathfrak{n}), \m)=0
$$
for all $i>r_{ss}+r_Z+1$.
\end{cor}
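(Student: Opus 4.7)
The plan is to use a Gorenstein injective replacement of $\m$ together with the acyclicity properties of $\RM$ established in \Cref{right_adjoint_cohomology} to reduce the computation to one of Ext groups over $\He_M$ that vanishes for dimension reasons.

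First, I would fix a fibrant replacement $Q_f\m$ of $\m$ in the Gorenstein injective model structure on $\Mod(\He)$. By definition, this fits into a short exact sequence
$$0\longrightarrow \m\longrightarrow Q_f\m\longrightarrow \mathfrak{z}\longrightarrow 0$$
where $Q_f\m$ is Gorenstein injective and $\mathfrak{z}\in\W$ has finite injective dimension bounded by the selfinjective dimension of $\He$, which is at most $r_{ss}+r_Z$. Taking the long exact sequence of $\Ext^*_\He(\Ind_{\He_M}^\He(\mathfrak{n}),-)$ and using that $\Ext^i_\He(\Ind_{\He_M}^\He(\mathfrak{n}),\mathfrak{z})=0$ for all $i>r_{ss}+r_Z$, I would deduce
$$\Ext^i_\He(\Ind_{\He_M}^\He(\mathfrak{n}),\m)\cong \Ext^i_\He(\Ind_{\He_M}^\He(\mathfrak{n}),Q_f\m)$$
for every $i>r_{ss}+r_Z+1$.

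Next, since $Q_f\m$ is Gorenstein injective, it is $\RM$-acyclic by \Cref{right_adjoint_cohomology}(ii), and then \Cref{right_adjoint_cohomology}(iii) yields a natural isomorphism
$$\Ext^i_\He(\Ind_{\He_M}^\He(\mathfrak{n}),Q_f\m)\cong \Ext^i_{\He_M}(\mathfrak{n},\RM(Q_f\m))$$
for all $i\geq 0$. The uniform supersingularity of $\m$ now enters via \Cref{ss_LM_RM_vanish}(ii), which tells us that $\mathbf{R}\RM(\m)\cong \RM(Q_f\m)$ vanishes in $\Ho(\He_M)$; equivalently, $\RM(Q_f\m)$ has finite injective dimension as an $\He_M$-module.

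Finally, since $\He_M$ is itself a Gorenstein ring whose selfinjective dimension is bounded by $r_{ss,M}+r_{Z,M}=r_{ss}+r_Z$ (the rank of a Levi subgroup coinciding with the rank of $G$), the injective dimension of $\RM(Q_f\m)$ is at most $r_{ss}+r_Z$. Therefore $\Ext^i_{\He_M}(\mathfrak{n},\RM(Q_f\m))=0$ for all $i>r_{ss}+r_Z$, and combining this with the isomorphism of the first step gives the desired vanishing for all $i>r_{ss}+r_Z+1$. The step requiring the most care is the first: one must ensure that the bounds on the injective dimension of $\mathfrak{z}$ and of $\RM(Q_f\m)$ align so that the "$+1$" in the statement of the corollary is really sharp, but this is automatic given the common upper bound $r_{ss}+r_Z$ for both Gorenstein dimensions involved.
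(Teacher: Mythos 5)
Your proof is correct and follows essentially the same route as the paper: fibrant replacement, the short exact sequence $0\to\m\to Q_f\m\to\mathfrak{z}\to 0$, $\RM$-acyclicity of Gorenstein injectives via \Cref{right_adjoint_cohomology}, and \Cref{ss_LM_RM_vanish}(ii) to see that $\RM(Q_f\m)$ is trivial in $\Ho(\He_M)$. The one small divergence is at the end: the paper additionally invokes \Cref{right_adjoint_cohomology}(iv) and \cite[Corollary 8.5]{Hov2} to conclude that $\RM(Q_f\m)$ is in fact injective, so that $\Ext_{\He}^i(I^G_M(\mathfrak{n}),Q_f\m)=0$ for \emph{all} $i>0$; you instead bound its injective dimension by $r_{ss}+r_Z$ using the Gorenstein property of $\He_M$. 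Both suffice for the stated range, and your version avoids the extra inputs at the cost of a marginally looser intermediate bound — a fair trade-off, though the closing comment about sharpness of ``$+1$'' is a bit of a red herring: the statement makes no sharpness claim, and the range simply comes from the one-degree shift in the long exact sequence.
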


\begin{proof}
Let $Q_f\m$ be the fibrant replacement of $\m$.  It follows from \Cref{ss_LM_RM_vanish}(ii) that $\RM(Q_f\m)$  has finite injective dimension. Since $\RM(Q_f\m)$ is also Gorenstein injective by \Cref{right_adjoint_cohomology}(iv), we deduce that it is injective by \cite[Corollary 8.5]{Hov2}. Applying now \Cref{right_adjoint_cohomology}(ii)-(iii), we obtain that
$$
\Ext_{\He}^i(I^G_M(\mathfrak{n}), Q_f\m)\cong\Ext_{\He_M}^i(\mathfrak{n},\RM(Q_f\m))=0
$$
for any $i>0$.  Moreover, by definition there is a short exact sequence $0\to \m\to Q_f\m\to \mathfrak{e}\to 0$ with $\mathfrak{e}$ of finite injective dimension,  bounded above by $r_{ss}+r_Z$.  The long exact sequence on $\Ext$ and the vanishing above then gives us that
$$
\Ext_{\He}^{i-1}(I^G_M(\mathfrak{n}), \mathfrak{e})\cong \Ext_{\He}^i(I^G_M(\mathfrak{n}), \m)
$$
for all $i>1$. The bound on the injective dimension of $\mathfrak{e}$ gives the result.
\end{proof}

\begin{rem}
We note that the proofs of \Cref{ss_LM_RM_vanish} and \Cref{ss_ext_orthogonal} remain valid without the assumption that $k$ is algebraically closed.
\end{rem}

Next, we compute the image of all simple modules under the adjoints of parabolic induction at the homotopy level.  For that we need the corresponding formula at the level of the module categories, due to Abe, which we recall now. We refer the reader back to \eqref{defn_simple1}-\eqref{defn_simple2} for the construction of the simple $\He$-modules. 

\begin{thm}\label{Abe_formula} Let $(P, \mathfrak{n}, Q)$ be a standard supersingular triple for $\He$. Then for $P'=M'N'$ another standard parabolic subgroup of $G$, we have
$$
\mathcal{L}^G_{M'}(I_\He(P, \mathfrak{n}, Q))\cong\begin{cases}
I_{\He_{M'}}(P\cap M', \mathfrak{n}, Q\cap M') &\text{if $P\subseteq P'$ and $\Pi(\mathfrak{n})\subseteq \Pi_Q\cup\Pi_{P'}$,}\\
0 & \text{otherwise,}
\end{cases}
$$
and
$$
\mathcal{R}^G_{M'}(I_\He(P, \mathfrak{n}, Q))\cong\begin{cases}
I_{\He_{M'}}(P\cap M', \mathfrak{n}, Q\cap M') &\text{if $Q\subseteq P'$,}\\
0 & \text{otherwise.}
\end{cases}
$$
\end{thm}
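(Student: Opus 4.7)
The plan is to reduce to the defining cokernel sequence for $I_\He(P,\mathfrak{n},Q)$ and apply a Mackey-style commutation relation between the adjoints $\mathcal{L}^G_{M'}$, $\mathcal{R}^G_{M'}$ and parabolic induction. Specifically, the abelian-level identities $\mathcal{L}^G_{M'} \circ I^G_L \cong I^{M'}_{M'\cap L} \circ \mathcal{L}^L_{M'\cap L}$ and $\mathcal{R}^G_{M'} \circ I^G_L \cong I^{M'}_{M'\cap L} \circ \mathcal{R}^L_{M'\cap L}$ (the latter is \cite[Lemma 5.2]{Abe19_2} and the former its left-adjoint incarnation) reduce the computation to understanding $\mathcal{L}^L_{M'\cap L}(e_{\He_L}(\mathfrak{n}))$ and $\mathcal{R}^L_{M'\cap L}(e_{\He_L}(\mathfrak{n}))$, where $L$ varies over the Levi subgroups appearing in the sum defining $I_\He(P,\mathfrak{n},Q)$.

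First I would tackle $\mathcal{L}^G_{M'}$. Because $\mathcal{L}^G_{M'}$ is exact (being a composition of a localisation with an equivalence), it preserves the cokernel \eqref{defn_simple1}, and commuting it past each $I^G_{L'}$ and $I^G_L$ via the identity above yields a presentation of $\mathcal{L}^G_{M'}(I_\He(P,\mathfrak{n},Q))$ as a cokernel of maps between modules of the form $I^{M'}_{L''\cap M'}(\mathcal{L}^{L''}_{L''\cap M'}(e_{\He_{L''}}(\mathfrak{n})))$ for $L''\in\{L,L'\}$. The next step is a case analysis: using that $e_{\He_{L''}}(\mathfrak{n})$ is characterised by explicit triviality conditions on the $T^{L''}_\lambda$-action along $\Pi_{L''}\setminus\Pi_M$, one shows that $\mathcal{L}^{L''}_{L''\cap M'}(e_{\He_{L''}}(\mathfrak{n}))$ vanishes unless $M\subseteq M'$ (equivalently $P\subseteq P'$), in which case it coincides with the analogous $\He_{L''\cap M'}$-extension $e_{\He_{L''\cap M'}}(\mathfrak{n})$. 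Plugging this back and comparing with the defining presentation of $I_{\He_{M'}}(P\cap M',\mathfrak{n},Q\cap M')$, one checks that the condition $\Pi(\mathfrak{n})\subseteq \Pi_Q\cup\Pi_{P'}$ is exactly what is needed for $P(\mathfrak{n})\cap M'$ to match the index set occurring in the $M'$-level cokernel; otherwise the summands already exhaust the target, forcing the cokernel to be zero.

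For $\mathcal{R}^G_{M'}$ the strategy is parallel but requires additional care since $\mathcal{R}^G_{M'}$ is only left exact. Rather than applying it directly to \eqref{defn_simple1}, I would first use \eqref{compatibility_simples} together with Theorem \ref{ind_coind_thm}(i) to rewrite $I_\He(P,\mathfrak{n},Q)$ as a twisted coinduced module so that the question translates into the statement for $\mathcal{R}^G_{M'}$ applied to the $\Coind$-type cokernel \eqref{defn_simple2}. The commutation $\mathcal{R}^G_{M'}\circ I^G_L \cong I^{M'}_{M'\cap L}\circ \mathcal{R}^L_{M'\cap L}$ together with exactness of $I^{M'}_{M'\cap L}$ then reduces matters to verifying that the derived obstructions $R^i\mathcal{R}^L_{M'\cap L}(e_{\He_L}(\mathfrak{n}))$ do not interfere on the relevant terms, and to identifying $\mathcal{R}^L_{M'\cap L}(e_{\He_L}(\mathfrak{n}))$ analogously to the left-adjoint case. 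The cleaner condition $Q\subseteq P'$ will drop out because the coinduction cokernel is indexed by $Q\subseteq Q'\subseteq P(\mathfrak{n})$ and the right adjoint is sensitive to $Q$ rather than $P$.

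The main obstacle is the combinatorial bookkeeping around the extension functor $e_{\He_L}$ and the set $\Pi(\mathfrak{n})$. One must show that $e$ is compatible with $\mathcal{L}^L_{M'\cap L}$ and $\mathcal{R}^L_{M'\cap L}$ in the sense that extensions restrict to extensions, and translate the algebraic triviality conditions on the central $T^M_\lambda$-actions defining $e$ into the combinatorial parabolic conditions in the statement. The categorical infrastructure (adjoint commutation, exactness, twist equivalence) is standard; the delicate point is pinning down precisely when the cokernel presentation simplifies to $I_{\He_{M'}}(P\cap M',\mathfrak{n},Q\cap M')$ as opposed to $0$.
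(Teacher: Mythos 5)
Your plan is a genuinely different route from the paper's, and it is substantially more laborious than necessary, with real gaps in the hardest places.

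The paper does not reprove the computation from the cokernel presentation. It simply observes that Abe already proved exactly this statement in the coinduction language: \cite[Theorem 5.20]{Abe19_2} computes $\Loc_{w_0(M')}$ and $\mathcal{R}^G_{w_0(M')}$ applied to $CI_\He(w_0(P),\iota_0^M\mathfrak{n},w_0(Q))$, and the only thing left to do is translate that into the $I_\He/\Ind$ formulation using the twist isomorphism \eqref{compatibility_simples}, the identity $\Ind\cong\Coind\circ\iota_0$ from \Cref{ind_coind_thm}(i), the compatibility of the extension functor $e$ with twists from \cite[Lemma 4.22]{AHV18}, and the transitivity of twists \eqref{transitivity_twists}. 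In other words, the proof in the paper is a bookkeeping exercise with $\iota_0$; none of the commutation relations $\mathcal{L}^G_{M'}\circ I^G_L\cong I^{M'}_{M'\cap L}\circ\mathcal{L}^L_{M'\cap L}$ (which only enter the paper later, in \Cref{ho_comp}) are used here, and no new computation of $\mathcal{L}$ or $\mathcal{R}$ on extended modules is carried out.

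Your proposal, by contrast, tries to redo the core of Abe's computation. This is where the gaps lie. For $\mathcal{L}^G_{M'}$, the step ``one shows that $\mathcal{L}^{L''}_{L''\cap M'}(e_{\He_{L''}}(\mathfrak{n}))$ vanishes unless $M\subseteq M'$, in which case it coincides with $e_{\He_{L''\cap M'}}(\mathfrak{n})$'' is precisely the technical heart of \cite[Theorem 5.20]{Abe19_2}, not a routine verification; you cannot leave it at the level of a sketch and still claim a proof. For $\mathcal{R}^G_{M'}$ the situation is worse: since $\mathcal{R}^G_{M'}$ is only left exact, applying it to the cokernel presentation does not commute past the cokernel, and your remark about ``verifying that the derived obstructions $R^i\mathcal{R}^L_{M'\cap L}(e_{\He_L}(\mathfrak{n}))$ do not interfere'' is exactly the issue that needs to be resolved, not a lemma one can wave at. (In the paper, the $\mathcal{R}$-acyclicity of these modules is only established afterwards, in \Cref{finite_dim_acyclic}, and is not an ingredient of the present theorem.) Unless you intend to reproduce Abe's proof in full, the correct and efficient move here is to cite his coinduction-side result and translate via the $w_0$-twist machinery, which is what the paper does.
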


\begin{proof}
The analogous statement was proved in \cite[Theorem 5.20]{Abe19_2} for the adjoints of parabolic coinduction and the simple modules presented in the form $CI_\He(P, \mathfrak{n}, Q)$. We just check that this still holds for the adjoints of parabolic induction and the simple modules in the form $I_\He(P, \mathfrak{n}, Q)$. Write $\sigma$ for the $\He_{M'}$-module $\mathcal{L}^G_{M'}(I_\He(P, \mathfrak{n}, Q))$. First, we have by \eqref{compatibility_simples} and \textit{loc.\ cit.} that
\begin{align*}
\sigma&\cong \iota_0^{w_0(M')}\Loc_{w_0(M')}(CI_\He(w_0(P), \iota_0^M\mathfrak{n}, w_0(Q)))\\
&\cong \begin{cases}
\iota_0^{w_0(M')}CI_{\He_{M'}}(w_0(P\cap M'), \iota_0^M\mathfrak{n}, w_0(Q\cap M')) &\text{if $P\subseteq P'$ and $\Pi(\mathfrak{n})\subseteq \Pi_Q\cup\Pi_{P'}$,}\\
0 & \text{otherwise.}
\end{cases}
\end{align*}
So we may assume that $P\subseteq P'$ and $\Pi(\mathfrak{n})\subseteq \Pi_Q\cup\Pi_{P'}$. Next, for any standard parabolic $Q\cap M'\subseteq Q'\subseteq (P\cap M')(\mathfrak{n})$ of $M'$ with Levi subgroup $L'$ we have $e_{\He_{w_0(L')}}(\iota_0^M\mathfrak{n})\cong \iota_0^{L'}e_{\He_{L'}}(\mathfrak{n})$, cf.\ \cite[Lemma 4.22]{AHV18}. From this and the above we get
\begin{align*}
\iota_0^{w_0(M')}\Coind_{\He_{w_0(L')}}^{\He_{w_0(M')}}\left(e_{\He_{w_0(L')}}(\iota_0^M\mathfrak{n})\right) &\cong \iota_0^{w_0(M')}\Coind_{\He_{w_0(L')}}^{\He_{w_0(M')}}\left(\iota_0^{L'}e_{\He_{L'}}(\mathfrak{n})\right)\\
&\cong \iota_0^{w_0(M')}\Ind_{\He_{w_{0,M'}(w_0(L'))}}^{\He_{w_0(M')}}\left(\iota_{w_0(M')}^{w_0(L')}\iota_0^{L'}e_{\He_{L'}}(\mathfrak{n})\right)\\
&\cong \Ind_{\He_{L'}}^{\He_{M'}}\left(\iota_{0, w_0(M')}^{w_0(L')}\iota_{w_0(M')}^{w_0(L')}\iota_0^{L'}e_{\He_{L'}}(\mathfrak{n})\right)\\
&\cong \Ind_{\He_{L'}}^{\He_{M'}}\left(\iota_0^{w_0(L')}\iota_0^{L'}e_{\He_{L'}}(\mathfrak{n})\right)\\
&\cong \Ind_{\He_{L'}}^{\He_{M'}}\left(e_{\He_{L'}}(\mathfrak{n})\right),
\end{align*}
where the second, third and fourth isomorphisms hold by \Cref{ind_coind_thm}, \cite[Lemma 4.9]{Vign5} and \eqref{transitivity_twists}, respectively. By exactness of $\iota_0^{w_0(M')}$ and from the definitions of both $I_{\He_{M'}}(P\cap M', \mathfrak{n}, Q\cap M')$ and $CI_{\He_{M'}}(w_0(P\cap M'), \iota_0^M\mathfrak{n}, w_0(Q\cap M'))$, it follows that we have an isomorphism
$$
I_{\He_{M'}}(P\cap M', \mathfrak{n}, Q\cap M')\cong \iota_0^{w_0(M')}CI_{\He_{M'}}(w_0(P\cap M'), \iota_0^M\mathfrak{n}, w_0(Q\cap M'))
$$
as required. The proof for the right adjoint is completely analogous.
\end{proof}

We now show that the analogue of the above holds at the homotopy level as well. For this, we need a couple of preliminary results.

\begin{prop}\label{finite_dim_acyclic} Let $\m$ be a finite length $\He$-module. Then $\m$ is $\RM$-acyclic for any standard Levi subgroup $M$ of $G$. In particular, $R^i\RM(I_\He(P, \mathfrak{n}, Q))=0$ for any $i>0$ and any standard supersingular triple $(P, \mathfrak{n}, Q)$.
\end{prop}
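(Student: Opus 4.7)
The plan is to exploit the identification $R^j\RM(\m) \cong \Ext^j_\He(\Ind^\He_{\He_M}(\He_M),\m)$ furnished by \eqref{formula_derad} and to compute the right hand side via a short explicit projective resolution of $\Ind^\He_{\He_M}(\He_M)$ built from the Ore localisation description of $\He_M$ in \Cref{lambda_pm}.

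Writing $T := T^M_{\lambda_0^+}$, the identification $\He_M = \colim_n T^{-n}\He_M^+$ gives a short exact sequence
\[
0 \longrightarrow \bigoplus_{n\geq 0}\He_M^+ \xrightarrow{\phi} \bigoplus_{n\geq 0}\He_M^+ \longrightarrow \He_M \longrightarrow 0
\]
of right $\He_M^+$-modules, where $\phi(e_n)=e_n - e_{n+1}\cdot T$. Since $\Ind^\He_{\He_M}$ is exact by \Cref{ind_coind_thm} and sends the free module $\He_M^+$ to the free right $\He$-module $\He$, applying $-\otimes_{\He_M^+}\He$ produces a length one projective resolution of $\Ind^\He_{\He_M}(\He_M)$ in $\Mod(\He)$. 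Consequently $R^i\RM(\m)=0$ for all $i\geq 2$ and all $\m \in \Mod(\He)$, and applying $\Hom_\He(-,\m)$ to this resolution identifies $R^1\RM(\m)$ with the cokernel of the map $\prod_n\m \to \prod_n\m$ sending $(m_n)_n$ to $(m_n - m_{n+1}\cdot T)_n$; a routine check identifies this cokernel with the derived inverse limit $\varprojlim^1_n\m$ of the tower $\cdots \xrightarrow{\cdot T} \m \xrightarrow{\cdot T} \m$.

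To conclude I would invoke the classification of simple $\He$-modules recalled in \S2.3: each simple is a quotient of $\Ind^\He_{\He_L}(e_{\He_L}(\mathfrak{n}))$ for some simple supersingular $\He_M$-module $\mathfrak{n}$, and since simple supersingulars are finite-dimensional over $k$ (Vign\'eras) and parabolic induction multiplies $k$-dimension by the finite number $\abs{W_0^L}$, every simple, and hence every finite length, $\He$-module $\m$ is finite-dimensional over $k$. The descending chain of $k$-subspaces $\m \supseteq \m\cdot T \supseteq \m\cdot T^2 \supseteq \cdots$ therefore stabilises, the Mittag-Leffler condition holds, and $\varprojlim^1_n\m = 0$. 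Hence $R^1\RM(\m)=0$, $\m$ is $\RM$-acyclic, and the final assertion follows since $I_\He(P,\mathfrak{n},Q)$ is simple.

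The only real obstacle is writing down the length one projective resolution of $\Ind^\He_{\He_M}(\He_M)$ from the Ore localisation; once this is in place, both the vanishing in degrees $\geq 2$ and the identification of $R^1$ with a Mittag-Leffler obstruction on a tower of finite-dimensional $k$-vector spaces are entirely routine.
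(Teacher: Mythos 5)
Your proof is correct, but it follows a genuinely different route from the paper's. The paper simply cites Abe's computation (\cite[Proposition 3.6(2)]{Abe22}), which establishes the adjunction isomorphism $\Ext^j_\He(\Coind_{\He_M}^{\He}(\mathfrak{n}),\m)\cong \Ext^j_{\He_M}(\mathfrak{n},(\RM)'(\m))$ in all degrees for finite length $\m$, and then invokes the characterisation of acyclicity in \Cref{right_adjoint_cohomology}(iii) together with the twist $\iota_0^{w_0(M)}$ to pass from coinduction to induction. You instead exploit \eqref{formula_derad} directly and build a two-term free resolution of $\Ind^\He_{\He_M}(\He_M)$ from the colimit presentation of the Ore localisation at the central non-zero-divisor $T^M_{\lambda_0^+}$. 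This is a more hands-on argument that bypasses the external citation, and it yields a little extra: the cohomological dimension of $\RM$ is $\leq 1$ (sharper than the bound $r_{ss}+r_Z$ recorded in \Cref{right_adjoint_cohomology}(i)), and the obstruction in degree 1 is explicitly a $\varprojlim^1$ of a tower which dies for any module whose descending chain $\m\supseteq \m T\supseteq \m T^2\supseteq\cdots$ stabilises -- in particular for finitely generated, not just finite length, modules. The Mittag--Leffler step, finite-dimensionality of finite length modules, is correct and also appears in the paper (cited from \cite[Lemma 6.9]{OS14}) so you could have appealed to that directly.

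One justificatory step is slightly imprecise: exactness of $\Ind^{\He}_{\He_M}$ is a statement about the functor on $\Mod(\He_M)$, whereas your short exact sequence lives in $\Mod(\He_M^+)$, so it does not directly give exactness of $-\otimes_{\He_M^+}\He$ applied to it. This is easy to repair -- either cite that $\He$ is free as a left $\He_M^+$-module via $j_M^+$ (which follows from the Bruhat decomposition and is the source of the $k$-linear isomorphism $\Ind^\He_{\He_M}(\m)\cong\m^{\oplus\abs{W_0^M}}$), or observe that the rightmost term $\He_M$ is flat over $\He_M^+$, being a central localisation, so $\mathrm{Tor}_1^{\He_M^+}(\He_M,\He)=0$ and the tensored sequence remains short exact. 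With that repair the argument is complete.
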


\begin{proof}
Fix a standard Levi subgroup $M$. It was shown in \cite[Proposition 3.6(2)]{Abe22} that for any $\He_M$-module $\mathfrak{n}$ there are isomorphisms
$$
\Ext^j_\He(\Coind_{\He_M}^{\He}(\mathfrak{n}),\m)\cong \Ext^j_{\He_M}(\mathfrak{n},(\RM)'(\m))
$$
for all $j\geq 0$, where $(\RM)'$ denotes the right adjoint of parabolic coinduction. By applying \Cref{right_adjoint_cohomology}(iii), we deduce that finite length modules are acyclic for $(\RM)'$. Since $(\RM)'\cong \iota_0^{w_0(M)}\circ \mathcal{R}^G_{w_0(M)}$, cf.\ \Cref{ind_coind_thm}(i), and $\iota_0^{w_0(M)}$ is an exact equivalence, we get that finite length modules are acyclic for $\mathcal{R}^G_{w_0(M)}$. As $M$ was arbitrary we are done.
\end{proof}

\begin{rem} It was pointed out to us by one of the referees that this acyclicity result may also be proved directly from the definition of $\RM$ as follows. There is by definition for any $\m\in\Mod(\He)$ a natural isomorphism $\RM(\m)\cong \varprojlim \m$, where the transition maps in the projective system are given by $m\mapsto m\cdot T_{\lambda_0^+}$ (cf.\ \cite[\S 5.1]{Abe19_2}). When $\m$ has finite length, i.e.\ is finite dimensional over $k$, one sees straightforwardly that this projective system satisfies the Mittag-Leffler condition, from which one deduces acyclicity.
\end{rem}

\begin{rem}\label{ss_ext_orthogonal_finite}
Suppose $\m\in\Mod(\He)$ is supersingular and of finite length, and let $\mathfrak{n}\in\Mod(\He_M)$ be any module. Then we have isomorphisms
$$
\Ext_{\He}^i(I_M^G(\mathfrak{n}), \m)\cong \Ext_{\He_M}^i(\mathfrak{n}, \RM(\m))= 0
$$
for all $i\geq 0$, by applying \Cref{finite_dim_acyclic}, \Cref{right_adjoint_cohomology}(iii) and using the fact that $\RM(\m)=0$ by \cite[Proposition 5.18]{Abe19_2}.  This strengthens \Cref{ss_ext_orthogonal} in the finite length case.
\end{rem}

\begin{lem}\label{acyclic_total_derived} Let $M$ be a standard Levi subgroup of $G$ and let $\m\in\Mod(\He)$ be $\RM$-acyclic. Then there is an isomorphism $\mathbf{R}\RM(\m)\cong \RM(\m)$ in $\Ho(\He_M)$.
\end{lem}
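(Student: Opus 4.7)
The plan is to identify $\mathbf{R}\RM(\m)$ with $\RM(Q_f \m)$ for a Gorenstein injective replacement $Q_f\m$, and then compare $\RM(\m)$ directly to $\RM(Q_f\m)$ using the $\RM$-acyclicity hypothesis. By definition of the fibrant replacement in the Gorenstein injective model structure, there is a short exact sequence $0 \to \m \to Q_f\m \to \mathfrak{e} \to 0$ in $\Mod(\He)$ with $\mathfrak{e}$ of finite injective (hence projective) dimension, so the canonical map $\RM(\m) \to \RM(Q_f\m)$ is precisely the morphism we need to exhibit as a weak equivalence in $\Mod(\He_M)^{\text{GI}}$.

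By \Cref{right_adjoint_cohomology}(ii), every Gorenstein injective module is $\RM$-acyclic, so in particular $Q_f\m$ is. Combined with the $\RM$-acyclicity of $\m$, the long exact sequence of derived functors applied to the above short exact sequence collapses to a short exact sequence $0 \to \RM(\m) \to \RM(Q_f\m) \to \RM(\mathfrak{e}) \to 0$ together with the vanishing $R^i\RM(\mathfrak{e}) = 0$ for all $i \geq 1$. It will then suffice to show that $\RM(\mathfrak{e})$ has finite injective dimension: once this is established, the map $\RM(\m) \hookrightarrow \RM(Q_f\m)$ will be a monomorphism with trivial cokernel, hence a trivial cofibration in $\Mod(\He_M)^{\text{GI}}$ (by the characterisation of (trivial) cofibrations in an abelian model category, cf.\ the application of \cite[Lemma 5.8]{Hov2} in the proof of \Cref{preserve_reflect_we}), and therefore a weak equivalence.

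To verify that $\RM(\mathfrak{e})$ has finite injective dimension, choose a finite injective resolution $0 \to \mathfrak{e} \to I^0 \to \cdots \to I^n \to 0$, which exists since $\mathfrak{e}$ is trivial, and apply $\RM$. Because $\RM$ is right adjoint to the exact functor $I_M^G$, it preserves injectives, so each $\RM(I^j)$ is again injective; moreover, since injective modules are in particular $\RM$-acyclic, the cohomology of $\RM(I^\bullet)$ computes $R^j\RM(\mathfrak{e})$, which vanishes for $j\geq 1$ by the preceding step. Hence $0 \to \RM(\mathfrak{e}) \to \RM(I^0) \to \cdots \to \RM(I^n) \to 0$ is a finite injective resolution of $\RM(\mathfrak{e})$ in $\Mod(\He_M)$, completing the proof since $\He_M$ is Gorenstein. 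There is no substantial obstacle here: all the vanishing and acyclicity statements needed are already packaged in \Cref{right_adjoint_cohomology}, and the only mild subtlety is recognising that a monomorphism in $\Mod(\He_M)$ with trivial cokernel yields an isomorphism in $\Ho(\He_M)$.
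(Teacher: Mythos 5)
Your proof is correct and follows essentially the same route as the paper: take the fibrant-replacement short exact sequence $0 \to \m \to Q_f\m \to \mathfrak{e} \to 0$, note that $\m$, $Q_f\m$, and hence $\mathfrak{e}$ are all $\RM$-acyclic, and conclude that $\RM(\m) \to \RM(Q_f\m) \cong \mathbf{R}\RM(\m)$ is a weak equivalence once $\RM(\mathfrak{e})$ is seen to be a trivial object. The only divergence is in that last sub-step: the paper gets finite injective dimension of $\RM(\mathfrak{e})$ by invoking the $\Ext$-isomorphism of \Cref{right_adjoint_cohomology}(iii), whereas you construct an explicit finite injective resolution by applying $\RM$ to one for $\mathfrak{e}$, using that $\RM$ preserves injectives and the vanishing of $R^j\RM(\mathfrak{e})$ to verify exactness. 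Both variants rest on the same acyclicity facts from \Cref{right_adjoint_cohomology} and are equally valid.
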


\begin{proof}
Taking a fibrant replacement of $\m$ in the Gorenstein injective model structure, we get a short exact sequence
$$
0\to \m\to Q_f\m\to \mathfrak{e}\to 0
$$
where $\mathfrak{e}$ has finite injective dimension. As both $\m$ and $Q_f\m$ are $\RM$-acyclic, cf. \Cref{right_adjoint_cohomology}(ii), so is $\mathfrak{e}$. By applying \Cref{right_adjoint_cohomology}(iii), we then get isomorphisms
$$
\Ext^j_\He(I^G_M(\mathfrak{n}),\mathfrak{e})\cong \Ext^j_{\He_M}(\mathfrak{n},\RM(\mathfrak{e}))
$$
for all $\mathfrak{n}\in\Mod(\He_M)$ and all $j\geq 0$. It then follows that $\RM(\mathfrak{e})$ also has finite injective dimension. By acyclicity of $\m$, the sequence
$$
0\to \RM(\m)\to \RM(Q_f\m)\to \RM(\mathfrak{e})\to 0
$$
is exact. The above therefore implies that the map $\RM(\m)\to \RM(Q_f\m)$ is a weak equivalence. As $\mathbf{R}\RM(\m)\cong \RM(Q_f\m)$ in $\Ho(\He_M)$, we are done.
\end{proof}

\begin{prop}\label{Abe_formula_derived} Let $(P, \mathfrak{n}, Q)$ be a standard supersingular triple for $\He$. Then for $P'=M'N'$ another standard parabolic subgroup of $G$, we have
$$
\Ho(\mathcal{L}^G_{M'})(I_\He(P, \mathfrak{n}, Q))\cong\begin{cases}
I_{\He_{M'}}(P\cap M', \mathfrak{n}, Q\cap M') &\text{if $P\subseteq P'$ and $\Pi(\mathfrak{n})\subseteq \Pi_Q\cup\Pi_{P'}$,}\\
0 & \text{otherwise,}
\end{cases}
$$
and
$$
\mathbf{R}\mathcal{R}^G_{M'}(I_\He(P, \mathfrak{n}, Q))\cong\begin{cases}
I_{\He_{M'}}(P\cap M', \mathfrak{n}, Q\cap M') &\text{if $Q\subseteq P'$,}\\
0 & \text{otherwise.}
\end{cases}
$$
\end{prop}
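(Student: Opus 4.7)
The plan is to deduce both statements directly from Abe's abelian formula (\Cref{Abe_formula}) by showing that, when applied to a simple module $I_\He(P,\mathfrak{n},Q)$, the derived functors $\Ho(\mathcal{L}^G_{M'})$ and $\mathbf{R}\mathcal{R}^G_{M'}$ coincide with the ordinary functors $\mathcal{L}^G_{M'}$ and $\mathcal{R}^G_{M'}$ applied to this module. So no serious new computation is needed; the work consists of checking the hypotheses that reduce the derived calculation to the abelian one.

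For the first assertion, I would first note that $\mathcal{L}^G_{M'}=\iota_0^{w_0(M')}\circ\Loc_{w_0(M')}$ is exact, being a composite of a localisation functor and the equivalence from \Cref{twist}. Moreover, by \Cref{proj_dim}(ii) it sends $\He$-modules of finite projective dimension to $\He_{M'}$-modules of finite projective dimension, i.e.\ it preserves trivial objects. Therefore \Cref{preserve_reflect_we}(i) applies, so that $\mathcal{L}^G_{M'}$ preserves all weak equivalences and the total left derived functor is naturally isomorphic to $\Ho(\mathcal{L}^G_{M'})$. Hence $\Ho(\mathcal{L}^G_{M'})(I_\He(P,\mathfrak{n},Q))\cong\mathcal{L}^G_{M'}(I_\He(P,\mathfrak{n},Q))$ in $\Ho(\He_{M'})$, and the first half of \Cref{Abe_formula} produces the claimed description.

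For the second assertion, the key input is that the simple module $I_\He(P,\mathfrak{n},Q)$ has finite length. By \Cref{finite_dim_acyclic}, finite length $\He$-modules are $\mathcal{R}^G_{M'}$-acyclic for every standard Levi $M'$. Consequently \Cref{acyclic_total_derived} applies and yields an isomorphism
\[
\mathbf{R}\mathcal{R}^G_{M'}(I_\He(P,\mathfrak{n},Q))\cong \mathcal{R}^G_{M'}(I_\He(P,\mathfrak{n},Q))
\]
in $\Ho(\He_{M'})$. Combining this with the second half of \Cref{Abe_formula} gives exactly the desired formula on the right-hand side. Note that in the non-vanishing case of that formula the module $I_{\He_{M'}}(P\cap M',\mathfrak{n},Q\cap M')$ is itself a simple $\He_{M'}$-module, so in particular it is nonzero in $\Ho(\He_{M'})$ whenever it has infinite projective dimension, which is consistent with the statement.

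There is no genuine obstacle in this argument: all the heavy lifting has been done by \Cref{Abe_formula}, \Cref{finite_dim_acyclic}, and \Cref{acyclic_total_derived}. The only subtlety worth flagging is the slight asymmetry between the two cases—the left adjoint statement works at the level of arbitrary modules because $\mathcal{L}^G_{M'}$ is exact and preserves trivial objects, while the right adjoint statement really requires the finiteness of length (through acyclicity) since $\mathcal{R}^G_{M'}$ is in general only left exact.
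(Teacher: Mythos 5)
Your proof is correct and follows exactly the same route as the paper: the left-adjoint case reduces to Abe's abelian formula because $\mathcal{L}^G_{M'}$ is exact and preserves trivial objects (so $\Ho(\mathcal{L}^G_{M'})$ is literally $\mathcal{L}^G_{M'}$ on objects), and the right-adjoint case uses \Cref{finite_dim_acyclic} and \Cref{acyclic_total_derived} to identify $\mathbf{R}\mathcal{R}^G_{M'}$ with $\mathcal{R}^G_{M'}$ on the finite length module $I_\He(P,\mathfrak{n},Q)$. You spell out the justifications a bit more explicitly than the paper does, but the chain of lemmas invoked is identical.
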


\begin{proof}
The statement for the left adjoints follows immediately from \Cref{Abe_formula}. The statement for the right adjoints follows additionally from \Cref{finite_dim_acyclic} and \Cref{acyclic_total_derived}.
\end{proof}

It was shown in \cite[Theorem 8.5]{Koz} that, when $p\nmid \abs{\Omega_{\text{tor}}}$, a simple module $I_\He(P,\mathfrak{n},Q)$ has finite projective dimension if and only if $\mathfrak{n}$ has finite projective dimension. We now give a new proof of the `only if' direction  without any assumptions on the prime $p$ and without using any Bruhat-Tits theory. This result is not new, in that the proof of the equivalence in \textit{loc.\ cit.} only uses their assumption on $p$ in the `if' direction, although they never state this explicitly. Using their restriction on $p$, we can also essentially reduce our aim of studying the isomorphism classes of simple modules in $\Ho(\He)$ to the supersingular case.

\begin{cor}\label{half_Koziol} Let $\m=I_\He(P,\mathfrak{n},Q)$ and $\m'=I_\He(P',\mathfrak{n}',Q')$ be two simple $\He$-modules and write $P=MN$ for the Levi decomposition of $P$.
\begin{enumerate}
\item Assume $\m$ has finite projective dimension. Then $\mathfrak{n}\in \Mod(\He_M)$ has finite projective dimension.
\item Assume that $p\nmid \abs{\Omega_{\text{tor}}}$ and that $\pd_{\He}(\m)=\pd_{\He}(\m')=\infty$. If $\m\cong \m'$ in $\Ho(\He)$ then $P=P'$, $Q=Q'$ and $\mathfrak{n}\cong\mathfrak{n}'$ in $\Ho(\He_M)$.
\end{enumerate}
\end{cor}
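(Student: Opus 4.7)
For part (i), suppose $\pd_\He(\m) < \infty$, so that $\m \cong 0$ in $\Ho(\He)$. Let $L$ denote the Levi subgroup of $Q$. Since $Q \subseteq LN_L$, applying $\mathbf{R}\mathcal{R}^G_L$ and invoking \Cref{Abe_formula_derived} gives $I_{\He_L}(P \cap L, \mathfrak{n}, L) \cong 0$ in $\Ho(\He_L)$. Now apply $\Ho(\mathcal{L}^L_M)$; viewed inside the Hecke algebra $\He_L$, the third entry of the relevant triple is the full Levi $L$ itself, so the hypotheses of \Cref{Abe_formula_derived} hold automatically and one obtains $\mathfrak{n} \cong I_{\He_M}(M, \mathfrak{n}, M) \cong 0$ in $\Ho(\He_M)$. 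Hence $\pd_{\He_M}(\mathfrak{n}) < \infty$.

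For (ii), combining the hypothesis $p \nmid \abs{\Omega_{\text{tor}}}$ with the `if' direction of Koziol's theorem forces $\pd_{\He_M}(\mathfrak{n}) = \pd_{\He_{M'}}(\mathfrak{n}') = \infty$, so that $\mathfrak{n}$ and $\mathfrak{n}'$ are nonzero in $\Ho(\He_M)$ and $\Ho(\He_{M'})$ respectively. The plan is to extract $Q$ first. Writing $L$ (resp.\ $L'$) for the Levi subgroup of $Q$ (resp.\ $Q'$), \Cref{Abe_formula_derived} yields for any standard Levi $L''$ of $G$
\[
\mathbf{R}\mathcal{R}^G_{L''}(\m) \cong \begin{cases} I_{\He_{L''}}(P \cap L'', \mathfrak{n}, Q \cap L'') & \text{if } L \subseteq L'', \\ 0 & \text{otherwise.}\end{cases}
\]
When nonzero, the right-hand side has infinite projective dimension over $\He_{L''}$ by the contrapositive of part (i) applied inside $\He_{L''}$, since $\pd_{\He_M}(\mathfrak{n}) = \infty$. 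Thus $\mathbf{R}\mathcal{R}^G_{L''}(\m) \neq 0$ in $\Ho(\He_{L''})$ if and only if $L \subseteq L''$, and the analogous equivalence holds for $\m'$ with $L'$ in place of $L$. Since $\m \cong \m'$ in $\Ho(\He)$, specialising $L'' = L$ and $L'' = L'$ gives $L = L'$, and hence $Q = Q'$.

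To extract $M$ and $\mathfrak{n}$, apply $\mathbf{R}\mathcal{R}^G_L$ to $\m \cong \m'$ and then $\Ho(\mathcal{L}^L_M)$. As in the proof of (i), the image of $\m$ is $\mathfrak{n}$, while \Cref{Abe_formula_derived} identifies the image of $\m'$ with $I_{\He_M}(P' \cap M, \mathfrak{n}', M)$ when $M' \subseteq M$ and with $0$ otherwise. Since $\mathfrak{n} \neq 0$ in $\Ho(\He_M)$, this forces $M' \subseteq M$; the symmetric application of $\Ho(\mathcal{L}^L_{M'})$ yields $M \subseteq M'$, whence $M = M'$, so $P = P'$ and $P' \cap M = M$, reducing the displayed isomorphism to $\mathfrak{n} \cong \mathfrak{n}'$ in $\Ho(\He_M)$. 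The principal technical challenge is that the formulas of \Cref{Abe_formula_derived} are conditional on parabolic inclusions, so $Q$ and $M$ must be peeled off separately using the right and left adjoints of parabolic induction in turn; the nonvanishing of $\mathfrak{n}$ in $\Ho(\He_M)$ supplied by (i) is precisely what prevents the wrong branch of the left adjoint formula from producing a spurious zero.
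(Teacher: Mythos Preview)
Your proof is correct and follows essentially the same approach as the paper: for (i) you successively apply $\mathbf{R}\mathcal{R}^G_L$ and then the left adjoint down to $M$, and for (ii) you use \Cref{Abe_formula_derived} together with the infinite projective dimension of $\mathfrak{n}$ to peel off $Q$ via right adjoints and then $P$ via left adjoints. The paper's argument is organised by contradiction (assume $Q\not\subseteq Q'$, resp.\ $P\not\subseteq P'$, and obtain a zero module that should be nonzero), whereas you phrase the same content as a characterisation of the standard Levis $L''$ for which $\mathbf{R}\mathcal{R}^G_{L''}(\m)$ is nonzero; both are the same idea. One small remark: in your closing sentence you attribute the nonvanishing of $\mathfrak{n}$ in $\Ho(\He_M)$ to part (i), but part (i) is the \emph{only if} direction of Koziol's theorem; the nonvanishing of $\mathfrak{n}$ actually comes from the \emph{if} direction (under $p\nmid|\Omega_{\text{tor}}|$), exactly as you stated correctly at the start of your proof of (ii).
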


\begin{proof}
(i) Let $L$ denote the Levi subgroup of $Q$. Then we have $0=\mathbf{R}\mathcal{R}^G_L(I_\He(P, \mathfrak{n}, Q))\cong I_{\He_L}(P\cap L, \mathfrak{n}, L)$ by \Cref{Abe_formula_derived}, so that we may assume that $Q=G$. Applying the left adjoint instead, we now have
$$
\mathfrak{n}\cong I_{\He_M}(M, \mathfrak{n}, M)\cong\Ho(\LM)(I_\He(P, \mathfrak{n},G))=0
$$
in $\Ho(\He_M)$, as required.

(ii) First suppose that $Q\neq Q'$. Up to swapping them we may assume that $Q \nsubseteq Q'$. Write $L'$ for the Levi subgroup of $Q'$. Then applying $\mathbf{R}\mathcal{R}^G_{L'}$ to $\m\cong \m'$, we obtain $0\cong I_{\He_{L'}}(P'\cap L', \mathfrak{n}', L')$ by \Cref{Abe_formula_derived}, contradicting our assumption that $\m'$ has infinite projective dimension by \cite[Theorem 8.5]{Koz}. Thus $Q=Q'$, and applying $\mathbf{R}\mathcal{R}^G_{L'}$ again we may assume that $Q=Q'=G$ as above. Similarly now, if $P\neq P'$ then without loss of generality we have $P \nsubseteq P'$ and, writing $M'$ for the Levi subgroup of $P'$, we then obtain a contradiction from \Cref{Abe_formula_derived} by applying $\Ho(\mathcal{L}^G_{M'})$.
\end{proof}

Going back to our recollement, we write `all' instead of `$\M$' in the subscripts of all the functors and categories featuring in \Cref{many_Levi_rec}(i) when the collection $\M$ of standard Levi subgroups considered is that of \emph{all} proper standard Levi subgroups. We now characterise precisely which simple modules of infinite projective dimension lie in $\Ho(\He)_{\text{all}}$, viewed as a full subcategory of $\Ho(\He)$ via either $\ell^G_{\text{all}}$ or $r^G_{\text{all}}$.

\begin{cor}\label{simple_ker_adjoints} Suppose that $p\nmid \abs{\Omega_{\text{tor}}}$, and let $\mathfrak{s}=I_\He(P,\mathfrak{n},Q)\in \Mod(\He)$ be a simple module of infinite projective dimension. Then we have:
\begin{enumerate}
\item $\mathfrak{s}\in\Ker(\mathcal{L}^G_{\text{\emph{all}}})$ if and only if $\Pi=\Pi(\mathfrak{n})$ and $P=Q$; and
\item $\mathfrak{s}\in\Ker(\mathcal{R}^G_{\text{\emph{all}}})$ if and only if $Q=P(\mathfrak{n})=G$.
\end{enumerate}
In particular, $\mathfrak{s}\in\Ker(\mathcal{L}^G_{\text{\emph{all}}})\cap \Ker(\mathcal{R}^G_{\text{\emph{all}}})$ if and only if it is supersingular.
\end{cor}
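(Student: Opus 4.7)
The plan is to use \Cref{many_Levi_rec}(iii), which identifies $\Ker(\mathcal{L}^G_{\text{all}})=\bigcap_{M'}\Ker(\Ho(\mathcal{L}^G_{M'}))$ and, dually, $\Ker(\mathcal{R}^G_{\text{all}})=\bigcap_{M'}\Ker(\mathbf{R}\mathcal{R}^G_{M'})$, where $M'$ ranges over proper standard Levi subgroups, and then to evaluate these functors on $\mathfrak{s}$ via \Cref{Abe_formula_derived}. The key preliminary observation I need is that whenever the nonzero branch of \Cref{Abe_formula_derived} is attained, the resulting simple $\He_{M'}$-module $I_{\He_{M'}}(P\cap M',\mathfrak{n},Q\cap M')$ has infinite projective dimension, hence is nonzero in $\Ho(\He_{M'})$. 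Indeed, $\mathfrak{n}$ has infinite projective dimension by \Cref{half_Koziol}(i), and applying the equivalence of \cite[Theorem 8.5]{Koz} to the Gorenstein algebra $\He_{M'}$ then gives the required statement; this is the step where the hypothesis $p\nmid\abs{\Omega_{\text{tor}}}$ enters. So the homotopy-level vanishing reduces exactly to falling into the \emph{otherwise} branch of \Cref{Abe_formula_derived}.

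Part (ii) then follows at once: the condition becomes $Q\not\subseteq P'$ for every proper standard parabolic $P'$, which is equivalent to $Q=G$ (take $P'=Q$ itself when $Q$ is proper); then $Q\subseteq P(\mathfrak{n})$ forces $P(\mathfrak{n})=G$ as well.

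For part (i), the condition reads: for every proper standard parabolic $P'$ containing $P$, one has $\Pi(\mathfrak{n})\not\subseteq \Pi_Q\cup\Pi_{P'}$. The easy implication, from $\Pi=\Pi(\mathfrak{n})$ and $P=Q$ to this condition, is direct since then $\Pi_Q\cup\Pi_{P'}=\Pi_{P'}\subsetneq \Pi$. The converse direction is where the main combinatorial input lies, and I expect this to be the principal step. Assuming $\mathfrak{s}\in\Ker(\mathcal{L}^G_{\text{all}})$ and $P\neq G$, I would argue by contradiction: first, if $P(\mathfrak{n})$ were proper, taking $P'=P(\mathfrak{n})$ gives $P\subseteq P'$ and $\Pi_Q\cup\Pi_{P'}=\Pi(\mathfrak{n})$, contradicting the kernel condition; hence $\Pi(\mathfrak{n})=\Pi$. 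Next, if $P\subsetneq Q$, pick any $\beta\in\Pi_Q\setminus\Pi_P$ and let $P'$ be the proper standard parabolic with $\Pi_{P'}=\Pi\setminus\{\beta\}$; then $P\subseteq P'$ while $\Pi_Q\cup\Pi_{P'}=\Pi=\Pi(\mathfrak{n})$, again a contradiction. Hence $P=Q$. The boundary case $P=G$ is automatic as it forces $Q=G$ and $\Pi(\mathfrak{n})=\Pi$. Finally, combining (i) and (ii) gives $P=Q=G$, which by the classification recalled at the end of \S2.3 is precisely the condition for $\mathfrak{s}$ to be a simple supersingular $\He$-module, proving the last assertion.
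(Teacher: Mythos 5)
Your overall structure matches the paper's proof: reduce via \Cref{many_Levi_rec}(iii) and \Cref{Abe_formula_derived}, observe that the $\He_{M'}$-modules produced by Abe's formula have infinite projective dimension, then run the combinatorics on the condition ``$P\subseteq P'$ and $\Pi(\mathfrak{n})\subseteq\Pi_Q\cup\Pi_{P'}$''. Your combinatorics for part (i) is a mild simplification of the paper's: you take $\Pi_{P'}=\Pi\setminus\{\beta\}$ where the paper takes $\Pi_{P'}=\Pi_P\cup(\Pi\setminus\Pi_Q)$, which lets you avoid the separate treatment of the sub-case $Q=G$; both choices work, and part (ii) and the final supersingularity assertion are handled correctly.

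However, the preliminary ``infinite projective dimension'' step as you have written it does not go through. You claim that $\mathfrak{n}$ has infinite projective dimension by \Cref{half_Koziol}(i). But that statement is the implication $\pd_\He(\mathfrak{s})<\infty \Rightarrow \pd_{\He_M}(\mathfrak{n})<\infty$, whose contrapositive is $\pd_{\He_M}(\mathfrak{n})=\infty \Rightarrow \pd_\He(\mathfrak{s})=\infty$ --- the opposite of what you need. What is actually required is the converse implication $\pd_\He(\mathfrak{s})=\infty \Rightarrow \pd_{\He_M}(\mathfrak{n})=\infty$, i.e.\ the `if' direction of \cite[Theorem 8.5]{Koz}, and it is there --- not in the passage from $\mathfrak{n}$ to $I_{\He_{M'}}(P\cap M',\mathfrak{n},Q\cap M')$ --- that the hypothesis $p\nmid\abs{\Omega_{\text{tor}}}$ first enters. (In fact the second passage, from $\pd_{\He_M}(\mathfrak{n})=\infty$ to the infinite projective dimension of $I_{\He_{M'}}(P\cap M',\mathfrak{n},Q\cap M')$, is precisely what the $M'$-analogue of \Cref{half_Koziol}(i) gives \emph{without} any condition on $p$.) You have the two roles swapped. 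The paper simply invokes the full equivalence of \cite[Theorem 8.5]{Koz} for both passages, using the assumption on $p$; once your citation is corrected accordingly, the rest of the argument is sound.
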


\begin{proof}
First observe that for any standard parabolic subgroup $P'=M'N'$ of $G$, we have that $I_{\He_{M'}}(P, \mathfrak{n}, Q\cap P')$ and $I_{\He_{M'}}(P, \mathfrak{n}, Q)$ also have infinite projective dimension, whenever these modules are well-defined. Indeed, this follows from \cite[Theorem 8.5]{Koz} by our assumption on $p$ and because $\mathfrak{s}$ is assumed to have infinite projective dimension. The statement of (ii) then follows from \Cref{Abe_formula_derived} and \Cref{many_Levi_rec}(iii), and for (i) we get
\begin{equation}\label{left_vanish}
\mathfrak{s}\in \Ker(\mathcal{L}^G_{\text{all}})\iff \text{ for all }P\subseteq P'\subsetneqq G,\; \Pi(\mathfrak{n})\nsubseteq \Pi_Q\cup \Pi_{P'}.
\end{equation}
So we have to check that the right hand side is equivalent to $\Pi=\Pi(\mathfrak{n})$ and $P=Q$. One direction is clear. For the other, note that if $\Pi\neq \Pi(\mathfrak{n})$ then $P'=P(\mathfrak{n})$ makes the right hand side of \eqref{left_vanish} fail. Moreover, if $\Pi=\Pi(\mathfrak{n})$ and $P\neq Q$ then note that when $Q=G$ the right hand side of \eqref{left_vanish} fails for all choices of $P'$. So assume that $\Pi=\Pi(\mathfrak{n})$ and $P\subsetneqq Q\subsetneqq G$, in which case we let $P'$ be the standard parabolic corresponding to $\Pi_P\cup (\Pi\setminus \Pi_Q)\subsetneqq\Pi$. Once again $P'$ makes the right hand side of \eqref{left_vanish} fail. This completes the proof of (i).
\end{proof}

\begin{rem}\label{abelian_rem} The corresponding result at the abelian level, without any assumption on the projective dimension, is also true with the same proof, using \Cref{Abe_formula} instead of its homotopy variant. Hence a simple module, or any finite length module more generally, is supersingular if and only if it is killed by the functors $\mathcal{L}^G_{M}$ and $\mathcal{R}^G_{M}$ for every proper standard Levi subgroup $M$.
\end{rem}

Assuming that the root system of $G$ is irreducible, we can extend the above to all modules of finite length.

\begin{prop}\label{finite_dim_mod_ss} Suppose that $p\nmid \abs{\Omega_{\text{tor}}}$.  When the root system of $G$ is irreducible, the following are equivalent for an $\He$-module $\m$ of finite length:
\begin{enumerate}
\item $\m\in\Ker(\mathcal{L}^G_{\text{\emph{all}}})$;
\item $\m\in\Ker(\mathcal{R}^G_{\text{\emph{all}}})$; and
\item the inclusion $\m_{\text{ss}}\subseteq \m$ is a weak equivalence, where $\m_{\text{ss}}$ denotes the largest supersingular submodule of $\m$. 
\end{enumerate}
\end{prop}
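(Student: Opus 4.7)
The implications (iii)$\Rightarrow$(i) and (iii)$\Rightarrow$(ii) are immediate: if $\m_{\mathrm{ss}}\hookrightarrow\m$ is a weak equivalence then $\m\cong\m_{\mathrm{ss}}$ in $\Ho(\He)$, and since $\m_{\mathrm{ss}}$ is a supersingular module of finite length (hence uniformly supersingular) \Cref{ss_LM_RM_vanish} gives $\Ho(\LM)(\m_{\mathrm{ss}})=0$ and $\mathbf{R}\RM(\m_{\mathrm{ss}})=0$ for every proper standard Levi $M$, placing $\m$ in both $\Ker(\mathcal{L}^G_{\mathrm{all}})$ and $\Ker(\mathcal{R}^G_{\mathrm{all}})$ by \Cref{many_Levi_rec}(iii).

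For the converses I will write out (i)$\Rightarrow$(iii); the argument for (ii)$\Rightarrow$(iii) is entirely symmetric, replacing ${^\ddagger}(\Ss^G_{\mathrm{all}})$ by $(\Ss^G_{\mathrm{all}})^\ddagger$ throughout. Set $\mathfrak{n}=\m/\m_{\mathrm{ss}}$. By the easy direction applied to the finite length supersingular module $\m_{\mathrm{ss}}$ and the thickness of $\Ker(\mathcal{L}^G_{\mathrm{all}})$, we have $\mathfrak{n}\in\Ker(\mathcal{L}^G_{\mathrm{all}})={^\ddagger}(\Ss^G_{\mathrm{all}})$, and $\mathfrak{n}_{\mathrm{ss}}=0$ by construction. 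It suffices to show $\mathfrak{n}\cong 0$ in $\Ho(\He)$, i.e.\ that $\mathfrak{n}$ has finite projective dimension.

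The main technical input is the vanishing $\Ext^i_\He(\mathfrak{t},\mathfrak{s})=0$ for all $i\geq 0$, any simple supersingular $\mathfrak{t}$, and any non-supersingular simple $\mathfrak{s}$. This is obtained by combining $\Loc_M(\mathfrak{t})=0$ (\Cref{ss_LM_RM_vanish}(i)) with the adjunction $\Loc_M\dashv\Coind_{\He_M}^\He$ and the exactness of $\Loc_M$ (so $\Coind_{\He_M}^\He$ preserves injectives, yielding $\Ext^i(\mathfrak{t},\Coind_{\He_M}^\He(\mathfrak{r}))=0$), together with the identification $\Ind_{\He_M}^\He\cong\Coind_{\He_{w_0(M)}}^\He\circ\iota_0^M$ of \Cref{ind_coind_thm}(i) to extend the vanishing to parabolic inductions, and finally the cokernel presentation \eqref{defn_simple1} of $\mathfrak{s}$ combined with the long Ext sequence. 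Using this, if $\mathfrak{n}$ had a supersingular composition factor, we could iteratively swap it downward past non-supersingular factors in a composition series of $\mathfrak{n}$ (each length-$2$ extension of a supersingular simple by a non-supersingular one splits because $\Ext^1=0$), producing a non-zero supersingular submodule of $\mathfrak{n}$ and contradicting $\mathfrak{n}_{\mathrm{ss}}=0$. Hence every composition factor of $\mathfrak{n}$ is non-supersingular.

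Next, every non-supersingular simple $\mathfrak{s}=I_\He(P,\mathfrak{r},Q)$ lies in $\Ss^G_{\mathrm{all}}$, proved by induction on $|P(\mathfrak{r})/Q|$. In the base case $Q=P(\mathfrak{r})$, the sum in \eqref{defn_simple1} is empty and $\mathfrak{s}=\Ind_{\He_L}^\He(e_{\He_L}(\mathfrak{r}))$ with $L$ the Levi of $Q$; if $L$ is proper this lies directly in $\Ss^G_{\mathrm{all}}$, while if $L=G$ then $P\neq G$ and the irreducibility of the root system (cf.\ the argument following \Cref{simple_ker_adjoints}) force $\mathfrak{r}$ to have finite projective dimension, so $\mathfrak{s}\cong 0$ in $\Ho(\He)$. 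For the inductive step, the composition factors of the image $K$ in \eqref{defn_simple1} are the simples $I_\He(P,\mathfrak{r},Q')$ for $Q\subsetneq Q'\subseteq P(\mathfrak{r})$, each lying in $\Ss^G_{\mathrm{all}}$ by induction; closure of $\Ss^G_{\mathrm{all}}$ under extensions places $K$ in $\Ss^G_{\mathrm{all}}$, and then the defining short exact sequence yields $\mathfrak{s}\in\Ss^G_{\mathrm{all}}$. Applying this to the composition factors of $\mathfrak{n}$ and closing under extensions gives $\mathfrak{n}\in\Ss^G_{\mathrm{all}}\cap{^\ddagger}(\Ss^G_{\mathrm{all}})$; since any $X$ in this intersection satisfies $\Hom_{\Ho(\He)}(X,X)=0$, we obtain $X\cong 0$, so $\mathfrak{n}$ has finite projective dimension and (iii) holds. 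The main obstacle is identifying the composition factors of $K$ in the penultimate induction, which relies on Abe's analysis of the structure of parabolic induction.
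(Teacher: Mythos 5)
The implications (iii)$\Rightarrow$(i), (iii)$\Rightarrow$(ii) and the closing step ($\mathfrak{n}\in\Ss^G_{\mathrm{all}}\cap{^\ddagger}\Ss^G_{\mathrm{all}}$ forces $\mathfrak{n}\cong 0$ in $\Ho(\He)$) agree with the paper in substance. Your argument that $\mathfrak{n}\in\Ss^G_{\mathrm{all}}$, via an induction on $\abs{\Pi_{P(\mathfrak{r})}\setminus\Pi_Q}$ using the cokernel presentation \eqref{defn_simple1}, is a slightly more explicit version of the paper's appeal to the irreducibility of parabolic induction when $P(\mathfrak{n})\neq G$, and is fine.

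The real divergence, and the gap, is in how you show that all composition factors of $\mathfrak{n}=\m/\m_{\mathrm{ss}}$ are non-supersingular. The paper's proof does this directly and elementarily: for each proper Levi $M_i$ it picks a central element $z_i^\epsilon$ that acts invertibly on $\m/\m_i^\epsilon$ (hence on every subquotient of that quotient), while acting nilpotently on any supersingular simple; combining these operators shows $\m_{\mathrm{ss}}=(\m^+_{\leq n})^-_{\leq n}$ and that $\m_{\mathrm{non\text{-}ss}}$ has no supersingular simple subquotient. You instead invoke a global vanishing $\Ext^i_\He(\mathfrak{t},\mathfrak{s})=0$ for all $i\geq 0$, $\mathfrak{t}$ supersingular simple and $\mathfrak{s}$ non-supersingular simple, and use the $\Ext^1$ case to split supersingular factors off as submodules. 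Your derivation of this vanishing via adjunction gives $\Ext^i(\mathfrak{t},\Ind_{\He_M}^\He(-))=0$ only for \emph{proper} $M$; feeding this into \eqref{defn_simple1} then covers the case $Q\neq G$. But when $Q=P(\mathfrak{r})=G$ with $P=B$ (the case the irreducibility hypothesis is designed to handle), the Levi of $Q$ is $G$, the sum in \eqref{defn_simple1} is empty, and $\mathfrak{s}=e_\He(\mathfrak{r})$ is not a parabolic induction from a proper Levi, so the adjunction argument is vacuous. That $\mathfrak{s}$ has finite projective/injective dimension (which is what you correctly use for the $\Ss^G_{\mathrm{all}}$ step) only yields $\Ext^i(\mathfrak{t},\mathfrak{s})=0$ for $i$ exceeding the injective dimension of $\mathfrak{s}$, and in particular does not give the needed $\Ext^1$-vanishing. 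Without a separate argument covering the case $\mathfrak{s}=e_\He(\mathfrak{r})$, the swapping argument does not go through, and I would recommend reverting to the paper's central-element argument, which sidesteps this issue entirely.
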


\begin{proof}
From \Cref{ss_LM_RM_vanish}, both (iii)$\Rightarrow$(i) and (iii)$\Rightarrow$(ii) are clear. For the converse, fix an $\He$-module $\m$ of finite length and let $M_1, \ldots, M_n$ denote all the proper Levi subgroups of $G$. Note that $\m$ is finite dimensional over $k$, cf.\ \cite[Lemma 6.9]{OS14}. Let $\epsilon\in\{-, +\}$. For each $1\leq i\leq n$, we let $\lambda_i^{\epsilon}\in\widetilde{\Lambda}$ be chosen as in \Cref{lambda_pm} for the Levi subgroup $M_i$ and we write $\mathcal{O}^{\epsilon}_i=\widetilde{W}\cdot \lambda_i^{\epsilon}$ and $z_i^{\epsilon}:=z_{\mathcal{O}^{\epsilon}_i}$.  Then let $\m_i^{\epsilon}=\{x\in\m\mid (z^{\epsilon}_i)^lx=0 \text{ for some $l>0$}\}$ and $\m^{\epsilon}_{\leq i}:=\bigcap_{j\leq i} \m^{\epsilon}_j=(\m^{\epsilon}_{\leq i-1})^{\epsilon}_i$. Note that $z^{\epsilon}_i$ then acts invertibly on the quotient $\m/\m^{\epsilon}_i$ for any $i$, so that any simple subquotient of $\m/\m^{\epsilon}_i$ is not supersingular. Hence we see iteratively that any simple subquotient of $\m/\m^+_{\leq n}$ is not supersingular and similarly that any simple subquotient of $\m^+_{\leq n}/(\m^+_{\leq n})^-_{\leq n}$ is not supersingular. But we have that $\m_{\text{ss}}=(\m^+_{\leq n})^-_{\leq n}$ by \Cref{abelian_rem} and \Cref{abelian_vanish}. Thus we see that every simple subquotient of $\m_{\text{non-ss}}:=\m/ \m_{\text{ss}}$ is not supersingular.  We will show that either (i) or (ii) implies that $\m_{\text{non-ss}}$ has finite projective dimension, and thus that (iii) holds by \cite[Lemma 5.8]{Hov2}.

Suppose first that $\mathfrak{s}=I_\He(P,\mathfrak{n},Q)$ is a simple subquotient of $\m_{\text{non-ss}}$. If $P(\mathfrak{n})\neq G$ then $\mathfrak{s}$ is in the essential image of $I^G_{M(\mathfrak{n})}$. If instead $P(\mathfrak{n})= G$ then $P=B$ since $P\neq G$ and the root system is irreducible, and thus $\mathfrak{s}$ has finite projective dimension by \cite[Theorem 8.5]{Koz} and the fact that $\He_T$ has finite global dimension (cf. \Cref{torus_finite_global_dim}).  Either way, it follows that $\mathfrak{s}\in\Ss^G_{\text{all}}$ and hence that $\m_{\text{non-ss}}\in \Ss^G_{\text{all}}$.

Next, by applying the functors $\mathcal{L}^G_{\text{all}}$ and $\mathcal{R}^G_{\text{all}}$ to the distinguished triangle $\m_{\text{ss}}\to \m\to\m_{\text{non-ss}}\to \Sigma\m_{\text{ss}}$, we obtain from the above and \Cref{ss_LM_RM_vanish} that
\begin{equation}\label{isos1}
\mathcal{R}^G_{\text{all}}(\m)\cong \mathcal{R}^G_{\text{all}}(\m_{\text{non-ss}})\text{ and }\mathcal{L}^G_{\text{all}}(\m)\cong \mathcal{L}^G_{\text{all}}(\m_{\text{non-ss}}).
\end{equation}
But since $\m_{\text{non-ss}}\in \Ss^G_{\text{all}}$ we see from the recollement in \Cref{many_Levi_rec} that
\begin{equation}\label{isos2}
\m_{\text{non-ss}}\in \Ker(\mathcal{L}^G_{\text{all}})\iff \m_{\text{non-ss}}\in \Ker(\mathcal{R}^G_{\text{all}})\iff \m_{\text{non-ss}}=0,
\end{equation}
where the latter equality is in $\Ho(\He)$. Thus, combining \eqref{isos1} and \eqref{isos2}, we obtain that either one of (i) or (ii) implies that $\m_{\text{non-ss}}$ has finite projective dimension as claimed.
\end{proof}

When the hypotheses of \Cref{finite_dim_mod_ss} are satisfied we tentatively call the quotient category $\Ho(\He)_{\text{all}}$ the \emph{supersingular homotopy category} of $\He$. We will see later that this triangulated category is in fact the homotopy category of a model structure on $\Mod(\He)$.

\begin{rem} If we write $\T$ for $\Ker(\mathcal{L}^G_{\text{all}})$, then \Cref{finite_dim_mod_ss} tells us that the modules of finite length in $\T$ are, up to weak equivalence, the supersingular ones whenever the root system of $G$ is irreducible and $p\nmid \abs{\Omega_{\text{tor}}}$. Moreover we have $\T^\ddagger=\Ss^G_{\text{all}}$ and $\T={^\ddagger}\Ss^G_{\text{all}}$. This is similar to the torsion pair constructed at the abelian level in \cite{OS18} where the authors showed that when $\mathbb{G}=\text{SL}_2$, $p>2$ and $\mathfrak{F}\neq \mathbb{Q}_p$, then the supersingular modules are in a torsion pair with a certain class of modules which they call `non-supersingular' and which contains all parabolically induced modules, cf.\ Theorems 3.26 \& 3.27 in \textit{loc.\ cit}. Since our Hom-orthogonality boils down to Ext-orthogonality between suitable replacements at the abelian level, the two questions are not immediately related. But we hope to study elsewhere the relationship between the Ollivier-Schneider torsion pair and the adjoints of parabolic induction.
\end{rem}

\section{Bousfield localisations from recollements}

\subsection{Constructing model structures from a recollement}

It is natural to ask whether the various quotient categories $\Ho(\He)_\M$ that were constructed in the last section are actually the homotopy category of some model structure on $\Mod(\He)$. To answer this we work in as general a framework as we can.

\begin{set}\label{setup_2} We let $\A$ be a Grothendieck abelian category equipped with a cofibrantly generated, abelian model structure $(\C_\A, \Fc_\A, \W_\A)$. We will work in one of the following two situations:
\begin{enumerate}
\item the model structure on $\A$ is injective, i.e.\ $\C_\A=\A$; or
\item the model structure on $\A$ is projective, i.e.\ $\Fc_\A=\A$.
\end{enumerate}
Recall that the model structure on $\A$ is then automatically hereditary and thus stable, and we let $\Ss$ be a thick subcategory of $\Ho(\A)$. We now assume additionally that in either situation the inclusion $i:\Ss\stackrel{\subseteq}{\longrightarrow}\Ho(\A)$ is part of a recollement
\begin{equation}\label{generic_recollement}
\xymatrix@C=0.5cm{
\Ss\ar[rrr]^{i} &&& \Ho(\A) \ar[rrr]^{\mathsf{\pi_\Ss}} \ar @/_1.5pc/[lll]_{\Ll_{\Ss}}  \ar
 @/^1.5pc/[lll]^{\R_{\Ss}} &&& \Ho(\A)/\Ss
\ar @/_1.5pc/[lll]_{\ell_\Ss} \ar
 @/^1.5pc/[lll]^{r_\Ss}
 }
\end{equation}
where none of the functors are a priori assumed to be the total derived functor of some Quillen functors.
\end{set}

We recall that the class of trivially fibrant objects in an injective abelian model category equals the class $\mathrm{Inj}(\A)$ of injective objects, and dually the class of trivially cofibrant objects in a projective abelian model category equals the class $\mathrm{Proj}(\A)$ projective objects (cf. \cite[Corollary 1.1.9]{Beck14}).

Our aim is to localise the model structure on $\A$ at $\Ss$ in a suitable way, or more precisely to construct an abelian model structure on $\A$ whose homotopy category is the quotient $\Ho(\A)/\Ss$. Our main result is:

\begin{thm}\label{recollement_model} Let $\A$ be a Grothendieck abelian category equipped with a cofibrantly generated, abelian model structure which is either injective or projective. Suppose that $\Ss$ is a thick subcategory of $\Ho(\A)$ satisfying \Cref{setup_2}. Let $\W_\Ss$, resp.\ $\Fc_\Ss$, resp.\ $\C_\Ss$, denote the class of objects of $\A$ whose image in $\Ho(\A)$ lies in $\Ss$, resp.\ $\Ker(\R_{\Ss})$, resp.\ $\Ker(\Ll_{\Ss})$.
\begin{enumerate}
\item If the model structure on $\A$ is injective then $\A^i_\Ss=(\A, \Fc_\A\cap \Fc_\Ss, \W_\Ss)$ defines another cofibrantly generated, injective, abelian model structure on $\A$ such that there is a canonical equivalence $\Ho(\A^i_\Ss)\cong \Ho(\A)/\Ss$, the adjunction $\id:\A\rightleftarrows \A^i_\Ss:\id$ is Quillen and the induced total derived adjunction is naturally isomorphic to the adjoint pair $\pi_\Ss\dashv r_\Ss$.
\item Dually, if the model structure on $\A$ is projective then $\A^p_\Ss=(\C_\A\cap \C_\Ss, \A, \W_\Ss)$ defines another cofibrantly generated, projective, abelian model structure on $\A$ such that there is a canonical equivalence $\Ho(\A^p_\Ss)\cong \Ho(\A)/\Ss$, the adjunction $\id:\A^p_\Ss\rightleftarrows \A:\id$ is Quillen and the induced total derived adjunction is naturally isomorphic to the adjoint pair $\ell_\Ss\dashv \pi_\Ss$.
\end{enumerate}
\end{thm}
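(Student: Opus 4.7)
The strategy is to apply Hovey's correspondence (\Cref{Hovey}) to the triple $(\A,\Fc_\A\cap\Fc_\Ss,\W_\Ss)$ in case (i) and the triple $(\C_\A\cap\C_\Ss,\A,\W_\Ss)$ in case (ii), reducing everything to exhibiting two functorially complete cotorsion pairs in each case. Since the situations are formally dual, I would carry out case (i) and dualise. First I would verify that $\W_\Ss$ is thick: closure under retracts is immediate because $\W_\Ss$ is the preimage of the thick subcategory $\Ss$ under the localisation $\A\to\Ho(\A)$, and the two-out-of-three property on short exact sequences follows by replacing the sequence (using the cotorsion pair on $\A$) by a weakly equivalent short exact sequence of cofibrant--fibrant objects, invoking \eqref{d_triangles} to produce a distinguished triangle in $\Ho(\A)$, and then applying the triangulated two-out-of-three for $\Ss$.

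Next I would identify the two cotorsion pairs. The pair $(\A,\Fc_\A\cap\Fc_\Ss\cap\W_\Ss)$ should coincide with the original $(\A,\Inj(\A))$, i.e.\ $\Fc_\A\cap\Fc_\Ss\cap\W_\Ss=\Inj(\A)$. The inclusion $\supseteq$ is clear since $\Inj(\A)\subseteq\W_\A\subseteq\W_\Ss$ and $0\in\Ker(\R_\Ss)$; for $\subseteq$, if $Y$ lies in the left-hand side, the recollement triangle $i\R_\Ss Y\to Y\to r_\Ss\pi_\Ss Y\to \Sigma i\R_\Ss Y$ has both outer terms zero in $\Ho(\A)$, forcing $Y\cong 0$ in $\Ho(\A)$, so $Y\in\W_\A\cap\Fc_\A=\Inj(\A)$. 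The substantive step is that $(\W_\Ss,\Fc_\A\cap\Fc_\Ss)$ is a cotorsion pair. By \Cref{Hom_Ho}, for $Y\in\Fc_\A$ and any $X$ (which is automatically cofibrant) one has $\Ext^1_\A(X,Y)\cong\Hom_{\Ho(\A)}(X,\Sigma Y)$. Ext-orthogonality then follows from the recollement: for $X\in\W_\Ss$ and $Y\in\Ker(\R_\Ss)$ we have $\Sigma Y\in\Ker(\R_\Ss)=\Im(r_\Ss)$, hence $\Hom_{\Ho(\A)}(X,\Sigma Y)\cong\Hom(\pi_\Ss X,\pi_\Ss\Sigma Y)=0$ since $\pi_\Ss X=0$. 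For the perpendiculars, $Y\in\W_\Ss^\perp$ forces $Y\in\Fc_\A$ (using $\W_\A\subseteq\W_\Ss$ and the original cotorsion pair), and then the Ext-vanishing against shifted representatives of all of $\Ss$ rephrases as $\Hom_{\Ho(\A)}(X,Y)=0$ for every $X\in\Ss$, which is exactly the characterisation of $\Im(r_\Ss)=\Ker(\R_\Ss)$; an analogous calculation handles ${}^\perp(\Fc_\A\cap\Fc_\Ss)=\W_\Ss$.

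For functorial completeness I would invoke a Becker-type construction of abelian model structures on Grothendieck categories: the cofibrant generation of the original structure, combined with the deconstructibility of $\W_\Ss$ (inherited from that of $\W_\A$ together with the recollement), gives a set of generators for the trivial cofibrations, and the small object argument then supplies functorial factorisations. Hovey's theorem then produces the desired model structure $\A^i_\Ss$, which is clearly injective (all objects are cofibrant). Since $\A$ and $\A^i_\Ss$ share the same cofibrations and $\W_\A\subseteq\W_\Ss$, the identity $\id:\A\to\A^i_\Ss$ preserves cofibrations and trivial cofibrations, hence is left Quillen. Its derived left adjoint inverts every morphism in $\W_\Ss$, so it factors through the Verdier quotient $\Ho(\A)/\Ss$ to give a canonical triangulated functor to $\Ho(\A^i_\Ss)$; using \Cref{factor_through} and the equality $\Inj(\A)=\Fc_\A\cap\Fc_\Ss\cap\W_\Ss$, one checks that Hom-sets on both sides match, giving fully faithfulness, while essential surjectivity is immediate since the two categories have the same class of objects. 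The identification of the derived adjunction with $\pi_\Ss\dashv r_\Ss$ is then forced by uniqueness of adjoints.

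The main obstacle will be the functorial completeness of the new cotorsion pair $(\W_\Ss,\Fc_\A\cap\Fc_\Ss)$: the Ext-orthogonality and the computation of perpendicular classes are formal consequences of the recollement together with \Cref{Hom_Ho}, but constructing \emph{functorial} approximations requires input beyond pure category theory. The natural route is to deduce deconstructibility of $\W_\Ss$ from that of $\W_\A$ and the fact that $\Ss$ is generated, as a localising subcategory, by a set inside the image of $\gamma:\A\to\Ho(\A)$, and then to apply the Becker--Hovey machinery for cofibrantly generating abelian model structures on Grothendieck categories.
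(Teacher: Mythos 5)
Your overall strategy — reduce to \Cref{Hovey} by exhibiting the two cotorsion pairs, identify the trivially fibrant objects as $\mathrm{Inj}(\A)$, and use \Cref{Hom_Ho} to translate $\Ext^1$-orthogonality into $\Hom$-vanishing in $\Ho(\A)$ governed by the recollement — is the same as the paper's (\Cref{thick}, \Cref{trivially_fibrant_cofibrant}, and the body of the proof all match what you outline). The identification of the resulting homotopy category with $\Ho(\A)/\Ss$ and the Quillen-adjunction claims are also handled correctly.

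The genuine gap is in the functorial-completeness (equivalently, cofibrant-generation) step. You say you would ``deduce deconstructibility of $\W_\Ss$ from that of $\W_\A$ and the fact that $\Ss$ is generated, as a localising subcategory, by a set inside the image of $\gamma:\A\to\Ho(\A)$,'' but this is precisely the point that needs proof and you give no argument. Every object of $\Ho(\A)$ is trivially in the image of $\gamma$, so that observation carries no content; what you would actually need is that $\Ss$ is generated by a \emph{set} in a way compatible with filtrations in $\A$, and there is no a priori reason the preimage $\W_\Ss$ of a thick subcategory of $\Ho(\A)$ should be deconstructible. Setup~\ref{setup_2} assumes only the existence of the recollement, not that $\Ss$ is compactly or set-generated, so your route is not obviously available.

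The paper avoids deconstructibility entirely and instead produces an explicit cogenerating set. Since the original model structure is cofibrantly generated, $\Ho(\A)$ has a set $\mathscr{S}$ of weak generators (\cite[Theorem 7.3.1]{HovBook}, or \Cref{generators_Ho} for Gorenstein rings). One then forms $\mathcal{G}_2 = \{X_{n,S}\mid n\in\Z,\ S\in\mathscr{S}\}$ with $X_{n,S}\cong \Sigma^{-n} i\Ll_\Ss(S)$ in $\Ho(\A)$, and checks via the adjunctions $\Ll_\Ss \dashv i \dashv \R_\Ss$ and \Cref{Hom_Ho} that, for $Y\in\Fc_\A$, one has $Y\in\mathcal{G}_2^\perp$ iff $i\R_\Ss(Y)=0$ iff $Y\in\Fc_\Ss$; together with a cogenerating set $\mathcal{G}_1$ for $\Fc_\A$, the union $\mathcal{G}=\mathcal{G}_1\cup\mathcal{G}_2$ cogenerates $(\W_\Ss,\Fc_\A\cap\Fc_\Ss)$, and smallness follows. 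This use of weak generators pushed through $i\Ll_\Ss$ is the crucial idea your sketch is missing, and without it the cofibrant generation (and hence functorial completeness) of $\A^i_\Ss$ is not established.
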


\begin{rem} We briefly explain how our results in this section complement the existing literature on abelian model categories and recollements.
\begin{enumerate}
    \item We note that Becker had a similar aim when he constructed the singular projective/injective model structures on categories of modules over a DG-algebra in \cite{Beck14}, see in particular Propositions 1.4.2 \& 1.4.6 in \textit{loc. cit.} The situation there was more specific as Becker assumed the existence of two model structures on $\A$ and localised one with respect to the others, although with weaker assumptions on these model structures and on $\A$ than the ones we have above. Meanwhile, we do not assume that $\Ss$ is the homotopy category of some other model structure on $\A$. In fact, the above result will allow us to apply Becker's constructions, see \Cref{apply_Becker}, and our construction in \Cref{other_proj_inj} generalises Proposition 2.2.1 in \textit{loc. cit.}, which was only concerned with modules over a DG-algebra.
    \item It came to our attention after this paper first appeared that Gillespie proved quite generally in \cite[Theorem A(3)]{Gil16_2} that recollements in homotopy categories of injective/projective abelian model categories do arise from model structures and Quillen adjunctions. This very nice result was also inspired by the aforementioned techniques of Becker, however he never discusses the cofibrant generation of these model structures and the fibrant/cofibrant objects in his cotorsion pairs are only defined as the orthogonal complements of the trivial objects. \Cref{recollement_model} above may be viewed as a refined version of his result in the setting of \Cref{setup_2}.
\end{enumerate}
\end{rem}

To prove the theorem, we need to check that the conditions in \Cref{Hovey} are satisfied. First we have:

\begin{lem}\label{thick} The classes of objects $\C_\Ss$, $\Fc_\Ss$ and $\W_\Ss$ defined in \Cref{recollement_model} are thick.
\end{lem}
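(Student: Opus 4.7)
The strategy is to reduce each of the three claims to the fact that $\Ss$, $\Ker(\R_\Ss)$ and $\Ker(\Ll_\Ss)$ are thick \emph{triangulated} subcategories of $\Ho(\A)$, and then transfer the triangulated 2-out-of-3 property back to short exact sequences in $\A$ via the localisation functor $\gamma:\A\to\Ho(\A)$.

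First I would observe that all three subcategories of $\Ho(\A)$ are thick in the triangulated sense. This is assumed for $\Ss$. For $\Ker(\R_\Ss)$ and $\Ker(\Ll_\Ss)$, I would argue that in any recollement the six functors are triangulated (this is part of the definition of a recollement of triangulated categories, cf.\ \Cref{recollement_defn}); kernels of triangulated functors are automatically thick subcategories.

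Next, I would use that in both cases of \Cref{setup_2} any short exact sequence $0\to X\to Y\to Z\to 0$ in $\A$ yields a distinguished triangle $\gamma X\to \gamma Y\to \gamma Z\to \Sigma\gamma X$ in $\Ho(\A)$. In the injective case this is exactly the construction recalled after \Cref{Hom_Ho}, since every object is cofibrant. In the projective case one invokes the dual construction via fibre sequences and the loop functor, noting that every object is fibrant and $\Omega\cong \Sigma^{-1}$ in a stable model category. Combined with the fact that $\gamma$ is additive and sends split short exact sequences to biproduct diagrams in $\Ho(\A)$, this immediately yields:
\begin{itemize}
\item 2-out-of-3 for short exact sequences in $\A$: if two of $\gamma X,\gamma Y,\gamma Z$ lie in a triangulated thick subcategory of $\Ho(\A)$, so does the third; hence the same holds for $X,Y,Z$ membership in $\W_\Ss$, $\C_\Ss$ or $\Fc_\Ss$.
\item Closure under direct summands: if $X\oplus Y$ lies in one of the three classes then $\gamma X\oplus \gamma Y=\gamma(X\oplus Y)$ lies in the corresponding thick subcategory of $\Ho(\A)$, and thick triangulated subcategories are closed under direct summands, so $\gamma X$ and $\gamma Y$ lie in it as well.
\end{itemize}

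No serious obstacle is expected: the only mildly delicate point is checking that short exact sequences really do give distinguished triangles in the projective case (since the excerpt emphasises the injective variant in the remark following \Cref{Hom_Ho}), but this is handled by the dual fibre sequence construction recorded in that same remark, together with the standing hypothesis that the model structure is either fully injective or fully projective so that every object is automatically either cofibrant or fibrant.
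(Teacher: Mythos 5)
Your proof is correct and takes essentially the same approach as the paper, which argues in one line that since short exact sequences in $\A$ yield distinguished triangles in $\Ho(\A)$, thickness of $\W_\Ss$, $\Fc_\Ss$, $\C_\Ss$ reduces to thickness of $\Ss$, $\Ker(\R_\Ss)$, $\Ker(\Ll_\Ss)$ in the triangulated sense. You simply spell out the details (justifying thickness of the kernels via triangulated adjoints in a recollement, treating both the projective and injective cases of \Cref{setup_2} via the dual cofibre/fibre sequence constructions, and handling direct summands explicitly), but the core mechanism is identical.
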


\begin{proof}
Since distinguished triangles arise from short exact sequences, the result follows immediately from the fact that $\Ss$, $\Ker(\R_{\Ss})$, and $\Ker(\Ll_{\Ss})$ are thick subcategories of the triangulated category $\Ho(\A)$.
\end{proof}

Next, we characterise the trivially fibrant/cofibrant objects in the proposed new model structures.

\begin{lem}\label{trivially_fibrant_cofibrant}
Suppose that we are in \Cref{setup_2}. When $\A$ is injective then $\W_\Ss\cap\Fc_\Ss=\W_\A$, so that $\W_\Ss\cap\Fc_\A\cap\Fc_\Ss=\W_\A\cap \Fc_\A=\mathrm{Inj}(\A)$. Dually, when $\A$ is projective then $\W_\Ss\cap\C_\Ss=\W_\A$, so that $\W_\Ss\cap\C_\A\cap\C_\Ss=\W_\A\cap \C_\A=\mathrm{Proj}(\A)$.
\end{lem}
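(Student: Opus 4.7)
The plan is to translate both equalities into statements inside $\Ho(\A)$ via the localisation $\A \to \Ho(\A)$. By definition, $X \in \W_\A$ iff $X \cong 0$ in $\Ho(\A)$, while $\W_\Ss$, $\Fc_\Ss$, $\C_\Ss$ are defined as the preimages under this localisation of $\Ss$, $\Ker(\R_\Ss)$, $\Ker(\Ll_\Ss)$ respectively. Consequently the first equality $\W_\Ss \cap \Fc_\Ss = \W_\A$ amounts to showing that $\Ss \cap \Ker(\R_\Ss) = 0$ inside $\Ho(\A)$, and the projective case reduces dually to $\Ss \cap \Ker(\Ll_\Ss) = 0$.

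For the injective case I would first identify $\Ker(\R_\Ss)$ with the right orthogonal $\Ss^\ddagger$. Since $i:\Ss \hookrightarrow \Ho(\A)$ is fully faithful with right adjoint $\R_\Ss$, for every $Y \in \Ho(\A)$ and $X \in \Ss$ one has $\Hom_\Ss(X, \R_\Ss Y) \cong \Hom_{\Ho(\A)}(iX, Y)$, so $\R_\Ss Y = 0$ precisely when $Y \in \Ss^\ddagger$; this was already implicit in the discussion following \Cref{recollement_defn}. Then any $Y \in \Ss \cap \Ss^\ddagger$ satisfies $\Hom_{\Ho(\A)}(Y, Y) = 0$, forcing $\id_Y = 0$ and hence $Y \cong 0$. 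The reverse inclusion $\W_\A \subseteq \W_\Ss \cap \Fc_\Ss$ is immediate, since the zero object lies in every strictly full triangulated subcategory and in every kernel of an additive functor. Intersecting with $\Fc_\A$ and invoking the already-noted identification $\W_\A \cap \Fc_\A = \mathrm{Inj}(\A)$ of trivially fibrant objects in an injective abelian model category then yields the second equality.

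The projective case is completely dual: the adjunction $\Ll_\Ss \dashv i$ with $i$ fully faithful gives $\Ker(\Ll_\Ss) = {}^\ddagger\Ss$, the same vanishing-identity argument shows $\Ss \cap {}^\ddagger\Ss = 0$, and the final identification $\W_\A \cap \C_\A = \mathrm{Proj}(\A)$ is the dual description of trivially cofibrant objects in a projective abelian model structure. I do not anticipate any genuine obstacle here; once the three classes $\W_\Ss$, $\Fc_\Ss$, $\C_\Ss$ are read off from the recollement, everything is a formal consequence of the basic fact that a strictly full triangulated subcategory and its one-sided orthogonal in the ambient triangulated category intersect only in the zero object.
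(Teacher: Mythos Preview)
Your proposal is correct and follows essentially the same approach as the paper. The paper's proof also reduces to showing that an object in both $\Ss$ and $\Ker(\R_\Ss)$ must be zero in $\Ho(\A)$; it does this by invoking \Cref{ff_counit} directly (since $i$ is fully faithful, the unit $Y\to\R_\Ss i(Y)$ is an isomorphism, and $\R_\Ss i(Y)\cong\R_\Ss(X)=0$), whereas you route through the identification $\Ker(\R_\Ss)=\Ss^\ddagger$ and then use $\Hom(Y,Y)=0$, which amounts to the same thing.
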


\begin{proof}
We only prove the injective case, the other one being dual. Note that $\W_\A\cap \Fc_\A=\mathrm{Inj}(\A)$ is immediate from the injectivity of the model structure on $\A$, so that the first equality immediately implies the second. For the first equality, we have $\W_\A\subseteq \W_\Ss\cap\Fc_\Ss$ by definition of the right hand side.  Conversely, assume that $X\in \W_\Ss\cap\Fc_\Ss$. Then $X\cong i(Y)$ in $\Ho(\A)$, for some $Y\in\Ss$, and $X\in\Ker(\R_{\Ss})$. But then we obtain from \Cref{ff_counit} that $Y=0$ in $\Ho(\A)$, and hence that $X\in \W_\A$ as required.
\end{proof}

Let $\A$ be a Grothendieck abelian category. Recall that a cotorsion pair $(\D, \E)$ on $\A$ is said to be \emph{cogenerated} by a set $\mathcal{G}\subseteq \D$ if $\E=\mathcal{G}^\perp$. Furthermore, by definition $\A$ has a generating set $\mathscr{S}$ (or equivalently a generator), which is a set of objects of $\A$ such that every object of $\A$ is a quotient of a direct sum of objects in $\mathscr{S}$. The cotorsion pair $(\D, \E)$ is called \emph{small} if it is cogenerated by a set and if $\D$ contains a generating set for $\A$. We note that small cotorsion pairs are automatically functorially complete and that an abelian model structure $(\C, \Fc, \W)$ on $\A$ is cofibrantly generated if and only if both $(\C, \Fc\cap\W)$ and $(\C\cap\W, \Fc)$ are small, cf.\ \cite[Corollary 6.6]{Hov2} and \cite[Proposition 1.2.7]{Beck14}.

We will also need a notion of generating set for a triangulated category $\T$. A set of objects $\mathcal{G}$ of $\T$ will be called a \emph{set of weak generators} if an object $X\in\T$ is zero if and only if $\Hom_\T(G, \Sigma^nX)=0$ for all $n\in\Z$ and all $G\in \mathcal{G}$. The relevance of this notion for us is the result \cite[Theorem 7.3.1]{HovBook}, which in particular implies that the homotopy category $\Ho(\A)$ of a hereditary, abelian, cofibrantly generated model category $\A$ has a set of weak generators given by the cokernels of the generating cofibrations. The proof of \textit{loc.\ cit.} requires some input from the homotopy theory of simplicial sets, so we deemed it worthwhile to provide a more elementary proof in the case where $\A$ is the module category of a Gorenstein ring. In fact we obtain a stronger result than weak generation.

\begin{prop}\label{generators_Ho} If $S$ is a Gorenstein ring, then a module $\m\in \Mod(S)$ has infinite projective dimension if and only if there exists a right ideal $J$ such that $[S/J, \m]_S\neq 0$. In particular, the cyclic modules $\{S/J \mid J \text{ a right ideal of } S\}$ form a set of weak generators for $\Ho(S)$.
\end{prop}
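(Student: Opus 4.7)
The plan is to prove the proposition by passing to a Gorenstein injective replacement of $\m$ in the Gorenstein injective model structure on $\Mod(S)$ and applying Baer's criterion to a Gorenstein injective model for $\Sigma^{-1}\m$. The forward implication is immediate from \Cref{GP_R_Mod}: if $\pd_S(\m)<\infty$ then $\m$ is a trivial object and thus vanishes in $\Ho(S)$, so $[S/J,\m]_S=0$ for every right ideal $J$.

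For the converse, I would assume $\pd_S(\m)=\infty$ and fix a Gorenstein injective fibrant replacement $0\to\m\to M\to E\to 0$ with $E$ of finite injective dimension. Since $S$ is Gorenstein, $\m$ has infinite injective dimension, and then so does $M$ (by the long exact $\Ext$-sequence); in particular, $M$ is not injective. Using the complete injective resolution $\cdots\to I_1\to I_0\to I_{-1}\to\cdots$ witnessing the Gorenstein injectivity of $M$, I would extract the short exact sequence
\[
0\to K\to I_1\to M\to 0,\qquad K=\ker(I_1\to I_0),
\]
where $K$ is itself Gorenstein injective by the shifted resolution and therefore provides a representative of $\Sigma^{-1}M\in\Ho(S)$ lying in $\Fc=\mathrm{GInj}(S)$.

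If $K$ were injective, this sequence would split and exhibit $M$ as a direct summand of an injective, contradicting the fact that $M$ is non-injective. Hence $K$ is not injective, and Baer's criterion produces a right ideal $J$ such that $\Ext^1_S(S/J,K)\neq 0$. Since $S/J$ is cofibrant and $K$ is fibrant in the Gorenstein injective model structure, \Cref{Hom_Ho} (in the case $k=1$) yields
\[
[S/J,\m]_S \cong [S/J,M]_S \cong [S/J,\Sigma K]_S \cong \Ext^1_S(S/J,K)\neq 0.
\]
The final statement on weak generators follows at once, as an object $X\in\Ho(S)$ is zero precisely when it has finite projective dimension, which by what we just proved is equivalent to $[S/J,X]_S=0$ for every right ideal $J$ (a condition even stronger than weak generation, since no suspension is needed).

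The main obstacle is the production of the short exact sequence $0\to K\to I_1\to M\to 0$ with both $I_1$ injective and $K$ Gorenstein injective: this is the only place where one must invoke more than the abstract model category formalism, relying on the existence of complete injective resolutions of Gorenstein injective modules (equivalently, on the Frobenius exact structure on $\mathrm{GInj}(S)$ with the injectives as its projective-injective objects) that implicitly underlies the triangulated structure on $\Ho(S)$.
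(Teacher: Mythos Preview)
Your proof is correct and takes a genuinely different route from the paper's. The paper argues by contrapositive: assuming $[S/J,\m]_S=0$ for all right ideals $J$, it replaces $\m$ by a Gorenstein injective module, shows by induction on the number of generators that $[\mathfrak{n},\m]_S=0$ for every finitely generated $\mathfrak{n}$ (using that short exact sequences give distinguished triangles), and then uses Noetherianity plus \Cref{factor_through} to verify Baer's criterion for $\m$ itself, concluding that $\m$ is injective.

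Your approach is more direct: you pass to the Gorenstein injective replacement $M$, exhibit $K=\Sigma^{-1}M$ as a Gorenstein injective but non-injective module via the complete injective resolution, and apply Baer's criterion to $K$ to produce $J$ with $\Ext^1_S(S/J,K)\neq 0$, which equals $[S/J,\m]_S$ by \Cref{Hom_Ho}. This avoids the inductive step on finitely generated modules entirely. Both arguments ultimately rest on Baer's criterion, but the paper applies it to $M$ (after reducing the hypothesis to the vanishing of $[J,M]_S$), whereas you apply it one loop down to $K$, trading the induction for the explicit construction of $K$ from the totally acyclic complex. Your version is slightly slicker; the paper's version has the minor advantage of making the role of Noetherianity visible (though you use it implicitly, since Gorenstein rings are Noetherian by definition and Baer's criterion holds regardless).
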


\begin{proof}
We view $\Ho(S)$ as the homotopy category of the Gorenstein injective model structure. One direction is clear, so suppose that $\m\in \Mod(S)$ has the property that $[S/J, \m]_S= 0$ for all right ideals $J$. Taking a fibrant replacement if necessary, we may assume $\m$ to be Gorenstein injective. We first claim that if $\mathfrak{n}\in \Mod(S)$ is finitely generated then $[\mathfrak{n}, \m]_S= 0$. This follows by induction on the number $l$ of generators of $\mathfrak{n}$. Indeed, $\mathfrak{n}$ fits into a short exact sequence $0\to \mathfrak{n}'\to \mathfrak{n}\to \mathfrak{n}''\to 0$ where $\mathfrak{n}'$ is generated by $l-1$ elements and $\mathfrak{n}''$ is cyclic, and we may view this exact sequence as a distinguished triangle in $\Ho(S)$ and apply the cohomological functor $[-, \m]_S$ to get the claim.

We now claim that $\m$ is an injective module. Indeed, if $J$ is any right ideal of $S$ and $f: J\to \m$ is any map, then we must show that $f$ extends to $S$. But the Noetherianity of $S$ implies that $J$ is finitely generated and thus, by the above, $f$ must be zero in $\Ho(S)$. This implies that $f$ factors through a map $g:J\to \mathfrak{e}$ for some injective module $\mathfrak{e}$ by \Cref{factor_through}. Since $\mathfrak{e}$ is injective it follows that $g$ extends to $S$, and thus so does $f$ as required.
\end{proof}

\begin{proof}[Proof of \Cref{recollement_model}]
We first note that \Cref{trivially_fibrant_cofibrant} implies that $(\A,\W_\Ss\cap\Fc_\A\cap\Fc_\Ss)=(\A, \mathrm{Inj}(\A))$ in case (i) and $(\W_\Ss\cap\C_\A\cap\C_\Ss, \A)=(\mathrm{Proj}(\A), \A)$ in case (ii) are small cotorsion pairs, cf.\ \cite[Example 1.2.6]{Beck14}. Since $\W_\Ss$ is thick by \Cref{thick}, we are only left to show that $(\W_\Ss, \Fc_\A\cap\Fc_\Ss)$ and $(\C_\A\cap\C_\Ss, \W_\Ss)$ are small cotorsion pairs in order to get our claimed model structures (cf.\ \Cref{Hovey} and the discussion preceding \Cref{generators_Ho}).

(i) We need to construct a set $\mathcal{G}$ of objects of $\A$ such that $\Fc_\A\cap\Fc_\Ss=\mathcal{G}^\perp$ and $\W_\Ss={^\perp}(\mathcal{G}^\perp)$. By assumption on $\A$, we have $\Fc_\A=\mathcal{G}_1^\perp$ for some set $\mathcal{G}_1$ of objects in $\W_\A$. By enlarging $\mathcal{G}_1$ if necessary, we may assume that it contains a generating set for $\A$, since $(\W_\A, \Fc_\A)$ is small. We now let $\mathscr{S}$ be a set of weak generators for $\Ho(\A)$ and, for each $S\in \mathscr{S}$ and each $n\in\Z$, we choose $X_{n,S}\in\A=\C_\A$ such that $X_{n,S}\cong \Sigma^{-n}i\Ll_{\Ss}(S)$ in $\Ho(\A)$ and let $\mathcal{G}_2=\{X_{n,S}\mid S\in\mathscr{S}\text{ and }n\in \Z\}$. Then we claim that $\mathcal{G}=\mathcal{G}_1\cup \mathcal{G}_2$ satisfies our requirements. To see this, first note that if $Y\in \mathcal{G}^\perp$ then in particular $Y\in \mathcal{G}_1^\perp=\Fc_\A$. So in order to obtain $\Fc_\A\cap\Fc_\Ss=\mathcal{G}^\perp$ it suffices to show that for $Y\in \Fc_\A$, we have $Y\in \mathcal{G}_2^\perp$ if and only if $Y\in \Fc_\Ss$. Indeed, by \Cref{Hom_Ho} we have
\begin{align*}
Y\in \mathcal{G}_2^\perp & \iff \Hom_{\Ho(\A)}(\Sigma^{-n}i\Ll_{\Ss}(S), \Sigma Y)=0 \text{ for all } n\in\Z\text{ and }S\in \mathscr{S}\\
& \iff \Hom_{\Ho(\A)}(S, \Sigma^{n+1}i\R_{\Ss} (Y))=0\text{ for all } n\in\Z\text{ and }S\in \mathscr{S}\\
& \iff i\R_{\Ss} (Y)=0 \iff Y\in \Fc_\Ss
\end{align*}
as required. Next, note that every object of $\Fc_\Ss$ is isomorphic in $\Ho(\A)$ to an object of $\Fc_\A\cap\Fc_\Ss$. Thus we see by applying \Cref{Hom_Ho} again that an object $X\in\A$ lies in $^\perp(\Fc_\A\cap\Fc_\Ss)$ if and only if the image of $X$ in $\Ho(\A)$ lies in $^\ddagger\Ker(\R_{\Ss})=\Ss$.

This shows that $(\W_\Ss, \Fc_\A\cap\Fc_\Ss)$ is a small cotorsion pair, and thus we have that $\A^i_\Ss$ defines an injective, cofibrantly generated, abelian model structure as required. By \Cref{trivially_fibrant_cofibrant} we have that the trivial fibrations in $\A$ and in $\A^i_\Ss$ are the same, and thus $\id:\A^i_\Ss\to \A$ is right Quillen. Furthermore, since the trivially fibrant objects in both model structures are the same we have that $\mathbf{R}\id:\Ho(\A^i_\Ss)\to \Ho(\A)$ identifies, after restricting to the cofibrant-fibrant objects, with the canonical embedding $(\Fc_\Ss\cap\Fc_\A)/\mathrm{Inj}(\A)\hookrightarrow \Fc_\A/\mathrm{Inj}(\A)$ with image $\Ker(\R_{\Ss})\simeq\Ho(\A)/\Ss$, where the latter equivalence is induced by $r_\Ss$. Thus, after these identifications, the left adjoint of $\mathbf{R}\id$ is naturally isomorphic to $\pi_\Ss$.

(ii) As above, we let $\mathscr{S}$ be a set of weak generators for $\Ho(\A)$. Then for each $S\in \mathscr{S}$ and each $n\in\Z$, we choose $X_{n,S}\in\C_\A$ such that $X_{n,S}\cong \Sigma^{-n}\ell_\Ss \pi_\Ss(S)$ in $\Ho(\A)$ and let $\mathcal{G}=\{X_{n,S}\mid S\in\mathscr{S}\text{ and }n\in \Z\}$. Since the model structure on $\A$ is projective, there is a projective generator $Z$ for $\A$ and we may apply \cite[Corollary 6.9]{Hov2} to see that $\mathcal{G}$ cogenerates a small cotorsion pair $(\D, \E)$, where $\D$ is the closure of $\mathcal{G}\cup\{Z\}$ under direct summands and certain colimits called transfinite compositions. As $\C_\A$ is closed under those (cf.\ Lemma 6.2 in \textit{loc.\ cit.}), it follows that $\D\subseteq \C_\A$. We first show that $\E=\mathcal{G}^\perp=\W_\Ss$. This is similar to the argument in (i). As every object of $\mathcal{G}$ is cofibrant, we may apply \Cref{Hom_Ho} to see that
\begin{align*}
Y\in \mathcal{G}^\perp & \iff \Hom_{\Ho(\A)}(S, \Sigma^{n+1} r_\Ss \pi_\Ss (Y))=0\text{ for all } n\in\Z\text{ and }S\in \mathscr{S}\\
& \iff r_\Ss \pi_\Ss(Y)=0 \iff Y\in \W_\Ss
\end{align*}
as required, where the last equivalence holds because $\Ker(\pi_\Ss)=\Ss$. Next, since we have $\D\subseteq \C_\A$ it suffices to show that a cofibrant $X\in\C_\A$ lies in $^\perp \W_\Ss$ if and only if it lies in $\C_\Ss$, in order to obtain that $\D=\C_\A\cap\C_\Ss$. This is now analogous to (i): we have by \Cref{Hom_Ho} that $X\in{^\perp} \W_\Ss$ if and only if the image of $X$ in $\Ho(\A)$ lies in $^\ddagger\Ss=\Ker(\Ll_{\Ss})$, i.e.\ if and only if $X\in \C_\Ss$, as required. This concludes the proof that the abelian model structure $\A^p_\Ss$ exists and is cofibrantly generated. The last part is completely dual to (i).
\end{proof}

The construction of these model structures now gives us the input necessary to apply the techniques of Becker in \cite{Beck14}. We then obtain the following result.

\begin{cor}\label{apply_Becker} We keep the same assumptions as in \Cref{recollement_model}.
\begin{enumerate}
\item If the model structure on $\A$ is injective, then $\A_i^\Ss=(\W_\Ss, \Fc_\A, \Fc_\Ss)$ defines a hereditary, cofibrantly generated, abelian model structure on $\A$ whose homotopy category is canonically equivalent to $\Ss$. Furthermore, the adjunction $\id:\A_i^\Ss\rightleftarrows \A:\id$ is Quillen and its total derived adjunction is naturally isomorphic to the adjoint pair $(i,\R_{\Ss})$.
\item If the model structure on $\A$ is projective, then $\A_p^\Ss=(\C_\A, \W_\Ss, \C_\Ss)$ defines a hereditary, cofibrantly generated, abelian model structure on $\A$ whose homotopy category is canonically equivalent to $\Ss$. Furthermore, the adjunction $\id:\A\rightleftarrows \A_p^\Ss:\id$ is Quillen and its total derived adjunction is naturally isomorphic to the adjoint pair $(\Ll_{\Ss},i)$.
\end{enumerate}
\end{cor}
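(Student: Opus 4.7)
My plan is to verify Hovey's criterion (\Cref{Hovey}) for the triple $(\W_\Ss,\Fc_\A,\Fc_\Ss)$ in case (i), mirroring the strategy of the proof of \Cref{recollement_model} but swapping the roles of the trivial class and the cofibrant class. The thickness of $\Fc_\Ss$ is supplied by \Cref{thick}. The crucial preliminary step is to establish the identity $\W_\Ss\cap \Fc_\Ss=\W_\A$. The inclusion $\W_\A\subseteq\W_\Ss\cap \Fc_\Ss$ is immediate because any object trivial in $\A$ has image zero in $\Ho(\A)$, hence lies in both $\Ss$ and $\Ker(\R_{\Ss})$. Conversely, if $X\in\W_\Ss\cap\Fc_\Ss$, then the image of $X$ in $\Ho(\A)$ lies both in $\Ss$ and in $\Ker(\R_{\Ss})=\Ss^\ddagger$, so it vanishes (any object Hom-orthogonal to itself is zero), giving $X\in\W_\A$.

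Granted this identity, the two cotorsion pair conditions required by \Cref{Hovey} come essentially for free. The pair $(\W_\Ss\cap\Fc_\Ss,\Fc_\A)=(\W_\A,\Fc_\A)$ is the trivially cofibrant/fibrant cotorsion pair of the original model structure on $\A$, hence small by hypothesis. The pair $(\W_\Ss,\Fc_\A\cap\Fc_\Ss)$ is exactly the cofibrant/trivially fibrant cotorsion pair of the model structure $\A^i_\Ss$ constructed in \Cref{recollement_model}, which was shown there to be small. So $\A_i^\Ss$ is a well-defined cofibrantly generated abelian model structure. Hereditariness is automatic: $\Fc_\A$ is closed under cokernels of monomorphisms since the original injective model structure is hereditary, and $\W_\Ss$ is closed under kernels of epimorphisms by \Cref{thick}.

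For the Quillen adjunction $\id:\A_i^\Ss\rightleftarrows \A:\id$, I would observe that cofibrations in $\A_i^\Ss$ (monomorphisms with cokernel in $\W_\Ss$) are a fortiori cofibrations in $\A$ (all monomorphisms, since $\A$ is injective), and that the trivial cofibrations of the two model structures coincide thanks to $\W_\Ss\cap\Fc_\Ss=\W_\A$. To identify the derived adjunction, by \cite[Proposition 4.4(5)]{Gil11}, $\Ho(\A_i^\Ss)$ is equivalent to $\W_\Ss\cap\Fc_\A$ modulo $\W_\Ss\cap\Fc_\A\cap\Fc_\Ss=\W_\A\cap\Fc_\A=\mathrm{Inj}(\A)$. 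Under the canonical equivalence $\Ho(\A)\simeq\Fc_\A/\mathrm{Inj}(\A)$, the functor $\mathbf{L}\id$ is then the inclusion of the full subcategory on objects whose image lies in $\Ss$, which is precisely $i$; its right adjoint $\mathbf{R}\id$ is then forced to be naturally isomorphic to $\R_{\Ss}$. Case (ii) is completely dual: one uses the identity $\W_\Ss\cap\C_\Ss=\W_\A$ together with the trivially cofibrant/fibrant pair of the original projective model structure and the cofibrant/trivially fibrant pair of $\A^p_\Ss$ from \Cref{recollement_model}.

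The main obstacle is really just bookkeeping: the reason everything works so cleanly is that the ``difficult'' cotorsion pair has already been constructed inside the proof of \Cref{recollement_model}, and the only genuinely new input needed is the identity $\W_\Ss\cap\Fc_\Ss=\W_\A$ (respectively $\W_\Ss\cap\C_\Ss=\W_\A$), which itself follows directly from the recollement together with \Cref{ff_counit}.
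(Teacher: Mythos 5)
Your argument is correct, and it takes a genuinely different (more self-contained) route than the paper. The paper's proof simply invokes Becker's Propositions 1.4.2 and 1.4.6, which encapsulate exactly the "localisation" phenomenon at play — given the two nested abelian model structures on $\A$ (the original one and $\A^i_\Ss$ from \Cref{recollement_model}), Becker's machinery produces $\A_i^\Ss$, identifies its homotopy category with $\Ss$, and identifies the derived adjunction with $(i,\R_\Ss)$ in one stroke; the only thing the paper verifies separately is smallness of the two cotorsion pairs (the same two you identify). You instead verify Hovey's criterion by hand: the identity $\W_\Ss\cap\Fc_\Ss=\W_\A$ (which is in fact already supplied by \Cref{trivially_fibrant_cofibrant}, so you are re-deriving it), the smallness of $(\W_\A,\Fc_\A)$ from the hypotheses on $\A$, the smallness of $(\W_\Ss,\Fc_\A\cap\Fc_\Ss)$ from the proof of \Cref{recollement_model}, thickness of $\Fc_\Ss$ from \Cref{thick}, and then you identify the derived adjunction by inspecting cofibrant-fibrant objects. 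Your hereditariness argument also fills in a detail that the paper leaves to Becker. The upshot is that your proof makes the argument transparent at the level of cotorsion pairs, at the cost of repeating work already done in the cited reference. One small terminological slip: $(\W_\Ss,\Fc_\A\cap\Fc_\Ss)$ is the \emph{trivially cofibrant}/fibrant cotorsion pair of $\A^i_\Ss$ (i.e.\ the pair $(\C\cap\W,\Fc)$), not the cofibrant/trivially fibrant one; this does not affect the substance of your proof.
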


\begin{proof}
Everything follows immediately from  \Cref{recollement_model} and \cite[Propositions 1.4.2 \& 1.4.6]{Beck14}, except the cofibrant generation for which we need to check that the associated cotorsion pairs are small. In the injective case we have $(\W_\Ss\cap\Fc_\Ss, \Fc_\A)=(\W_\A, \Fc_\A)$ and in the projective case we have $(\C_\A, \W_\Ss\cap\C_\Ss)=(\C_\A, \W_\A)$ , cf.\ \Cref{trivially_fibrant_cofibrant}, and so it follows from our assumptions that these are small. Furthermore, the proof of \Cref{recollement_model} shows that $(\W_\Ss, \Fc_\A\cap\Fc_\Ss)$, in the injective case, and $(\C_\A\cap\C_\Ss, \W_\Ss)$, in the projective case, are small as well.
\end{proof}

We would like to be able to recover the entire recollement in one go, while the above only gives us one half of it in either situation. Furthermore, the model structures constructed in \Cref{apply_Becker} are no longer injective/projective. We now see how to fix this by adding one extra hypothesis.

\begin{prop}\label{other_proj_inj} Suppose that $\A$ is a Grothendieck abelian category equipped with both an injective model structure $(\A, \Fc_\A, \W_\A)$ and a projective model structure $(\C_\A, \A, \W_\A)$ with the same classes of trivial objects, and assume that these are both cofibrantly generated. Let $\Ho(\A)$ denote their common homotopy category and assume $\Ss$ is a thick subcategory of $\Ho(\A)$ satisfying the requirements of \Cref{setup_2}. Let $\W_\Ss$, $\Fc_\Ss$ and $\C_\Ss$ be as in \Cref{recollement_model}.
\begin{enumerate}
\item ${^\ddagger}\A^i_\Ss=(\A, \W_\Ss\cap\Fc_\A, \C_\Ss)$ defines an injective, cofibrantly generated, abelian model structure on $\A$. Furthermore, the identity $\id:{^\ddagger}\A^i_\Ss\to \A_i^\Ss$ is a right Quillen equivalence, the adjunction $\id:\A\rightleftarrows {^\ddagger}\A^i_\Ss:\id$ is Quillen (for the injective model structure on $\A$) and its total derived adjunction is naturally isomorphic to the adjoint pair $(\Ll_{\Ss},i)$.
\item ${^\ddagger}\A^p_\Ss=(\W_\Ss\cap\C_\A, \A, \Fc_\Ss)$ defines a projective, cofibrantly generated, abelian model structure on $\A$. Furthermore, the identity $\id:{^\ddagger}\A^p_\Ss\to \A_p^\Ss$ is a left Quillen equivalence, the adjunction $\id:{^\ddagger}\A^p_\Ss\rightleftarrows \A:\id$ is Quillen (for the projective model structure on $\A$) and its total derived adjunction is naturally isomorphic to the adjoint pair $(i,\R_{\Ss})$.
\end{enumerate}
Finally, the identity $\id:{^\ddagger}\A^i_\Ss\to{^\ddagger}\A^p_\Ss$ is a right Quillen equivalence.
\end{prop}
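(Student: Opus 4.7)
The plan is to first prove (i) by applying Hovey's criterion for abelian model structures (\Cref{Hovey}) to the triple $(\A,\W_\Ss\cap\Fc_\A,\C_\Ss)$, then deduce (ii) dually, and finally obtain the concluding Quillen equivalence by comparing the resulting homotopy categories inside $\Ho(\A)$.

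For the model structure on ${^\ddagger}\A^i_\Ss$, thickness of $\C_\Ss$ is \Cref{thick}, and the ``trivial-fibrant'' cotorsion pair simplifies: any $Y\in\W_\Ss\cap\Fc_\A\cap\C_\Ss$ has image in $\Ho(\A)$ lying in $\Ss\cap\Im(\ell_\Ss)$, which is zero (since $\Ll_\Ss\circ i\cong\id_\Ss$ while $\Ll_\Ss$ annihilates $\Im(\ell_\Ss)$), whence $Y\in\mathrm{Inj}(\A)$; conversely $\mathrm{Inj}(\A)$ is contained in all three classes, so this pair is the small cotorsion pair $(\A,\mathrm{Inj}(\A))$. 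The main work is the ``cofibrant-trivial'' cotorsion pair $(\C_\Ss,\W_\Ss\cap\Fc_\A)$: I would invoke \Cref{Hom_Ho} to translate $\Ext^1_\A(X,Y)\cong\Hom_{\Ho(\A)}(X,\Sigma Y)$ (valid since all objects of $\A$ are cofibrant in the injective model), then apply the identity $({}^\ddagger\Ss)^\ddagger=\Ss$---a direct consequence of the recollement, using that the unit $\ell_\Ss\pi_\Ss X\to X$ is nonzero whenever $\pi_\Ss X\neq 0$---to match the two orthogonals. The main technical obstacle is verifying smallness of this cotorsion pair, for which I would mimic the proof of \Cref{recollement_model}(ii): take a set $\mathscr{S}$ of weak generators of $\Ho(\A)$ and, for each $S\in\mathscr{S}$ and $n\in\Z$, choose a representative $X_{n,S}\in\A$ of $\Sigma^{-n}\ell_\Ss\pi_\Ss(S)$, enlarging the collection to contain a generator of $\A$ (available since $(\W_\A,\Fc_\A)$ is small).

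The remaining claims in (i) are then direct. The Quillen property of $\id:\A\to{^\ddagger}\A^i_\Ss$ uses $\W_\A\subseteq\C_\Ss$ and that cofibrations coincide as monomorphisms; the Quillen property of $\id:{^\ddagger}\A^i_\Ss\to\A_i^\Ss$ uses $\W_\Ss\cap\Fc_\A\subseteq\Fc_\A$ together with $\mathrm{Inj}(\A)\subseteq\Fc_\A\cap\Fc_\Ss$. To identify $\mathbf{L}\id\cong\Ll_\Ss$, I would compute a fibrant replacement $X\hookrightarrow X'$ in ${^\ddagger}\A^i_\Ss$ with $X'\in\W_\Ss\cap\Fc_\A$ and cokernel in $\C_\Ss=\Ker(\Ll_\Ss)$; applying $\Ll_\Ss$ to the corresponding distinguished triangle in $\Ho(\A)$ yields $\Ll_\Ss X\cong X'$, matching the canonical equivalence $\Ho({^\ddagger}\A^i_\Ss)\simeq\Ho(\A)/{}^\ddagger\Ss\simeq\Ss$. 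Passage to right adjoints then gives $\mathbf{R}\id\cong i$, and the right Quillen equivalence with $\A_i^\Ss$ follows because both homotopy categories canonically identify with $\Ss$ via compatible embeddings into $\Ho(\A)$ (cf.\ \Cref{apply_Becker}(i)).

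Part (ii) is entirely dual. For the final Quillen equivalence $\id:{^\ddagger}\A^i_\Ss\to{^\ddagger}\A^p_\Ss$, the Quillen property is immediate from $\mathrm{Inj}(\A)\subseteq\Fc_\Ss$ and $\mathrm{Proj}(\A)\subseteq\C_\Ss$ (both projectives and injectives being zero in $\Ho(\A)$). For the equivalence itself, I would apply the criterion of \cite[Proposition 1.3.13]{HovBook}: given $X\in\W_\Ss\cap\C_\A$ (cofibrant in ${^\ddagger}\A^p_\Ss$) and $Y\in\W_\Ss\cap\Fc_\A$ (fibrant in ${^\ddagger}\A^i_\Ss$), any map $f:X\to Y$ has well-defined images in both $\Ho({^\ddagger}\A^i_\Ss)\simeq\Ss$ and $\Ho({^\ddagger}\A^p_\Ss)\simeq\Ss$, and since $X,Y$ already lie in $\W_\Ss=\Ss$ these images both coincide with the common morphism induced by $f$ in $\Ss$; hence $f$ is a weak equivalence in either model structure iff it is an isomorphism in $\Ss$. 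The essential point is the compatibility of the two identifications with $\Ss$, both arising from the recollement.
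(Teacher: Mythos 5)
Your overall strategy—apply Hovey's criterion to the proposed triple, verify the two cotorsion pairs, identify the derived functors, and then compare the homotopy categories—matches the paper's route, and the portions dealing with the trivial-fibrant pair, the computation ${}^\perp(\W_\Ss\cap\Fc_\A)=\C_\Ss$, the identification of derived functors via distinguished triangles, and the final Quillen equivalence (comparing images in $\Ss$) are all sound, if phrased more abstractly than the paper's explicit kernel analysis.

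However, there is a genuine gap in your construction of a cogenerating set for the cotorsion pair $(\C_\Ss,\W_\Ss\cap\Fc_\A)$. You propose to take $\mathcal{G}=\{X_{n,S}\}\cup\{\text{a generator of }\A\}$ with $X_{n,S}\cong\Sigma^{-n}\ell_\Ss\pi_\Ss(S)$. The set $\{X_{n,S}\}$ cuts out $\W_\Ss$ only \emph{among fibrant objects}: the translation $\Ext^1_\A(X_{n,S},Y)\cong\Hom_{\Ho(\A)}(X_{n,S},\Sigma Y)$ given by \Cref{Hom_Ho} requires $Y\in\Fc_\A$ as well as $X_{n,S}$ cofibrant (a point your parenthetical ``valid since all objects are cofibrant'' elides). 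Adding a single generator of $\A$ does nothing to enforce fibrancy—$\{Z\}^\perp$ for one object $Z\in\W_\A$ is typically much larger than $\Fc_\A$. Without further input, $\mathcal{G}^\perp$ may strictly contain $\W_\Ss\cap\Fc_\A$, and the cotorsion pair cogenerated by $\mathcal{G}$ is then the wrong one. What you need to add is the full set $\mathcal{G}_1\subseteq\W_\A$ with $\mathcal{G}_1^\perp=\Fc_\A$ (available from smallness of $(\W_\A,\Fc_\A)$), so that $\mathcal{G}=\mathcal{G}_1\cup\{X_{n,S}\}$ forces $\mathcal{G}^\perp\subseteq\Fc_\A$ first, after which the \Cref{Hom_Ho} calculation legitimately identifies $\mathcal{G}^\perp=\W_\Ss\cap\Fc_\A$. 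The paper avoids constructing such a set by hand: it simply observes that $\Fc_\A$ and $\W_\Ss$ are each of the form (set)$^\perp$ (the latter uses the \emph{projective} model structure's existence through \Cref{recollement_model}(ii)), so their intersection is too. This is also why ``part (ii) is entirely dual'' is an oversimplification—the paper's proof of (ii) replaces the cogenerating set $\mathcal{G}$ by a set of projective-model cofibrant replacements, an extra step with no counterpart in (i), so the two halves are not formally dual and your sketch leaves this unaddressed.
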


\begin{proof}
(i) By \Cref{trivially_fibrant_cofibrant}, the pair $(\A, \W_\Ss\cap\Fc_\A\cap\C_\Ss)=(\A, \mathrm{Inj}(\A))$ is a small cotorsion pair. Furthermore, by assumption and by \Cref{apply_Becker}(i) we have that both $\W_\Ss$ and $\Fc_\A$ are of the form $\mathcal{G}^\perp$, hence so is their intersection. Thus we only need to show that $\C_\Ss={^\perp}(\W_\Ss\cap\Fc_\A)$ to get a small cotorsion pair. Since every object of $\W_\Ss$ is isomorphic in $\Ho(\A)$ to an object of $\W_\Ss\cap\Fc_\A$ and since those are fibrant in $\A$, we may apply \Cref{Hom_Ho} to deduce that an object lies in ${^\perp}(\W_\Ss\cap\Fc_\A)$ if and only if its image in $\Ho(\A)$ lies in $^\ddagger\Ss=\Ker(\Ll_{\Ss})$ as required.

The functors $\id:{^\ddagger}\A^i_\Ss\to \A_i^\Ss$ and $\id:{^\ddagger}\A^i_\Ss\to \A$ both clearly preserve fibrant and trivially fibrant objects so are right Quillen. In fact, since the trivially fibrant objects in ${^\ddagger}\A^i_\Ss$, $\A_i^\Ss$ and $\A$ are the same one gets, after restricting to cofibrant-fibrant objects, that $\mathbf{R}\id:\Ho({^\ddagger}\A^i_\Ss)\to \Ho(\A_i^\Ss)$ identifies with the identity functor on $\W_\Ss\cap\Fc_\A/\mathrm{Inj(\A)}$ and $\mathbf{R}\id:\Ho({^\ddagger}\A^i_\Ss)\to \Ho(\A)$ identifies with the canonical inclusion $\W_\Ss\cap\Fc_\A/\mathrm{Inj(\A)}\hookrightarrow \Fc_\A/\mathrm{Inj(\A)}$, as required.

(ii) Again, by \Cref{trivially_fibrant_cofibrant} the pair $(\W_\Ss\cap\C_\A\cap\Fc_\Ss, \A)=(\mathrm{Proj}(\A), \A)$ is a small cotorsion pair. Furthermore, by \Cref{apply_Becker}(i) there exists a set $\mathcal{G}\subseteq \W_\Ss$ such that $\mathcal{G}^\perp=\Fc_\A\cap\Fc_\Ss$. Now define a set $\mathcal{G}'\subseteq \W_\Ss\cap\C_\A$ by letting the objects of $\mathcal{G}'$ be a choice of cofibrant replacements in $\A$ for the objects of $\mathcal{G}$. Since these are cofibrant, we may apply \Cref{Hom_Ho} again to deduce that $(\mathcal{G}')^\perp=\Fc_\Ss$. Furthermore, the pair $({^\perp}\Fc_\Ss,\Fc_\Ss)$ is now a small cotorsion pair, such that ${^\perp}\Fc_\Ss$ is the closure of $\mathcal{G}'\cup\{Z\}$ under transfinite compositions and direct summands by \cite[Lemma 6.9]{Hov2}, where $Z$ is a choice of projective generator for $\A$. Since both $\W_\Ss$ and $\C_\A$ are closed under those by Lemma 6.2 in \textit{loc.\ cit.} and \Cref{apply_Becker}(i), we have ${^\perp}\Fc_\Ss\subseteq \W_\Ss\cap\C_\A$. To show equality it therefore suffices to show that an object of $\C_\A$ lies in ${^\perp}\Fc_\Ss$ if and only if it lies in $\W_\Ss$. This is proved via the usual argument using \Cref{Hom_Ho}. The statements on the Quillen functors are proved dually to (i).

Finally, $\id:{^\ddagger}\A^i_\Ss\to{^\ddagger}\A^p_\Ss$ will by definition be a right Quillen equivalence if, for $X\in \W_\Ss\cap\C_\A$ and $Y\in \W_\Ss\cap\Fc_\A$, a map $f:X\to Y$ is a weak equivalence in ${^\ddagger}\A^p_\Ss$ if and only if it is a weak equivalence in ${^\ddagger}\A^i_\Ss$. Note that the trivial cofibrations in ${^\ddagger}\A^p_\Ss$ are precisely the monomorphisms with projective cokernel, and so are still weak equivalences in ${^\ddagger}\A^i_\Ss$. Thus, by factorising $f$ in ${^\ddagger}\A^p_\Ss$ as a trivial cofibration followed by a fibration we may assume that $f$ is a fibration. Then it is a weak equivalence in ${^\ddagger}\A^p_\Ss$ if and only if its kernel lies in $\Fc_\Ss$. But since both $X$ and $Y$ lie in $\W_\Ss$ by assumption and $\W_\Ss$ is thick, this happens if and only if $\ker(f)\in \W_\Ss\cap \Fc_\Ss=\W_\A$, cf. \Cref{trivially_fibrant_cofibrant}. Similarly, $f$ is a weak equivalence in ${^\ddagger}\A^i_\Ss$ if and only if $\ker(f)\in \W_\Ss\cap\C_\Ss=\W_\A$.
\end{proof}

By applying Becker's construction \cite[Propositions 1.4.2 \& 1.4.6]{Beck14} to ${^\ddagger}\A^i_\Ss$ and ${^\ddagger}\A^p_\Ss$, we obtain the last piece of the recollement.

\begin{cor}\label{last_piece}
Keep the same assumptions as in \Cref{other_proj_inj}. Then the triple $(\A_i^\Ss)^\ddagger=(\C_\Ss, \Fc_\A, \W_\Ss)$ defines a hereditary, cofibrantly generated, abelian model structure on $\A$ whose homotopy category is canonically equivalent to $\Ho(\A)/\Ss$, and the adjunction $\id:(\A_i^\Ss)^\ddagger\rightleftarrows \A:\id$ is Quillen (for the injective model structure on $\A$) with total derived adjunction naturally isomorphic to $(\ell_\Ss, \pi_\Ss)$. Dually, the triple $(\A_p^\Ss)^\ddagger=(\C_\A, \Fc_\Ss, \W_\Ss)$ defines a hereditary, cofibrantly generated, abelian model structure on $\A$ whose homotopy category is canonically equivalent to $\Ho(\A)/\Ss$, and the adjunction $\id:\A\rightleftarrows (\A_p^\Ss)^\ddagger:\id$ is Quillen (for the projective model structure on $\A$) with total derived adjunction naturally isomorphic to $(\pi_\Ss, r_\Ss)$.
\end{cor}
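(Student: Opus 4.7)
The plan is to apply \Cref{Hovey} directly, drawing the two required cotorsion pairs from \Cref{other_proj_inj}. For $(\A_i^\Ss)^\ddagger=(\C_\Ss,\Fc_\A,\W_\Ss)$, I first observe that $\C_\Ss\cap\W_\Ss=\W_\A$: if $X\in\W_\Ss$ has image $i(Y)$ in $\Ho(\A)$ for some $Y\in\Ss$, then $X\in\C_\Ss=\Ker(\Ll_\Ss)$ forces $Y\cong\Ll_\Ss i(Y)=0$ by fully faithfulness of $i$ (cf.\ \Cref{ff_counit}), so $X\in\W_\A$; the reverse inclusion is trivial. Consequently the cotorsion pair $(\C_\Ss\cap\W_\Ss,\Fc_\A)=(\W_\A,\Fc_\A)$ is the small cotorsion pair underlying the injective model structure on $\A$, and the other required pair $(\C_\Ss,\W_\Ss\cap\Fc_\A)$ is the small one associated to ${^\ddagger}\A^i_\Ss$ from \Cref{other_proj_inj}(i). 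Since $\W_\Ss$ is thick by \Cref{thick}, \Cref{Hovey} yields the cofibrantly generated abelian model structure $(\A_i^\Ss)^\ddagger$.

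For hereditariness, $\Fc_\A$ is closed under cokernels of monomorphisms since $\A$'s injective structure is hereditary, and $\C_\Ss=\Ker(\Ll_\Ss)$ is closed under kernels of epimorphisms: any short exact sequence in $\A$ descends to a distinguished triangle in $\Ho(\A)$, so the triangulated functor $\Ll_\Ss$ kills the third vertex as well. The trivial class being $\W_\Ss$, the category $\Ho((\A_i^\Ss)^\ddagger)$ is the localisation of $\A$ at the $\W_\Ss$-weak equivalences, which identifies concretely with $(\C_\Ss\cap\Fc_\A)/\mathrm{Inj}(\A)$; via the recollement this equals $\Im(\ell_\Ss)\simeq\Ho(\A)/\Ss$.

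For the Quillen pair $\id:(\A_i^\Ss)^\ddagger\rightleftarrows\A:\id$, every (trivial) cofibration in $(\A_i^\Ss)^\ddagger$ is a monomorphism, with cokernel in $\C_\Ss\cap\W_\Ss=\W_\A$ in the trivial case; injectivity of $\A$'s model structure makes every monomorphism a cofibration, and those with $\W_\A$-cokernel trivial cofibrations, so the identity on the left is left Quillen. The derived functors are then pinned down exactly as in the proof of \Cref{recollement_model}(i): since $\Fc_\A$ is the fibrant class for both model structures, $\mathbf{R}\id$ reduces to the bare localisation $\Ho(\A)\to\Ho(\A)/\Ss$, namely $\pi_\Ss$; a cofibrant replacement in $(\A_i^\Ss)^\ddagger$ lands in $\C_\Ss$, whose image in $\Ho(\A)$ lies in $\Im(\ell_\Ss)$, giving $\mathbf{L}\id\cong\ell_\Ss$.

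The construction of $(\A_p^\Ss)^\ddagger=(\C_\A,\Fc_\Ss,\W_\Ss)$ proceeds by the completely dual argument: the symmetric identity $\W_\Ss\cap\Fc_\Ss=\W_\A$ (obtained by the analogous computation using $\R_\Ss i\cong\id_\Ss$) reduces the pair $(\C_\A,\Fc_\Ss\cap\W_\Ss)$ to the small cotorsion pair $(\C_\A,\W_\A)$ of $\A$'s projective model structure, while $(\C_\A\cap\W_\Ss,\Fc_\Ss)$ is the small cotorsion pair coming from ${^\ddagger}\A^p_\Ss$ in \Cref{other_proj_inj}(ii); the Quillen statement and the identifications $\mathbf{L}\id\cong\pi_\Ss$ and $\mathbf{R}\id\cong r_\Ss$ follow dually. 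I anticipate no serious obstacle: all the nontrivial technical work is already contained in \Cref{other_proj_inj}, and the present corollary is essentially a repackaging.
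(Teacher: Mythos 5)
Your proof is correct, but it takes a genuinely different route from the paper's. The paper disposes of \Cref{last_piece} in one line by invoking Becker's localisation machinery, namely \cite[Propositions 1.4.2 \& 1.4.6]{Beck14} applied to the model structures ${^\ddagger}\A^i_\Ss$ and ${^\ddagger}\A^p_\Ss$ constructed in \Cref{other_proj_inj} --- exactly as the author did in the proof of \Cref{apply_Becker}, where Becker's propositions were applied to $\A^i_\Ss$ and $\A^p_\Ss$. You instead unpack what Becker's result amounts to in this specific instance: you verify Hovey's criterion (\Cref{Hovey}) directly, by reducing the two required small cotorsion pairs to $(\W_\A,\Fc_\A)$ (via the computation $\C_\Ss\cap\W_\Ss=\W_\A$, which is the mirror of \Cref{trivially_fibrant_cofibrant} and could alternatively be cited from there, since both injective and projective structures with the same trivial class are available under the hypotheses of \Cref{other_proj_inj}) and to $(\C_\Ss,\W_\Ss\cap\Fc_\A)$, whose smallness was established in the proof of \Cref{other_proj_inj}(i). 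You then separately argue hereditariness (the new structure is neither projective nor injective, so this is indeed a point to check; thickness of $\C_\Ss$ handles the kernel-of-epi side), identify the homotopy category via $\Gamma=\mathrm{Inj}(\A)$, and read off the derived adjunction. What the paper's route buys is brevity and the fact that Becker's framework simultaneously certifies hereditariness and the identification of the homotopy category; what your route buys is self-containedness and transparency, since the reader sees exactly which cotorsion pairs are being matched and why, without needing to parse Becker's abstract localisation pairs.
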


Hence, sticking with the projective model structures, \Cref{recollement_model}(ii), \Cref{apply_Becker}(ii), \Cref{other_proj_inj}(ii) and \Cref{last_piece} together say that if we are in \Cref{setup_2} in such a way that both an injective and a projective model structure exist in $\A$ with the same class of trivial objects, then by using only the projective model structure we may construct four new abelian model structures on $\A$ such that the four adjunctions from the recollement \eqref{generic_recollement} are obtained as total derived functors of the various identity adjunctions between our new model structures and the projective model structure on $\A$.

In particular, by considering $\A=\Mod(\He)$ with its Gorenstein projective and injective model structures and by letting $\M=\{M_1, \ldots, M_n\}$ be some fixed collection of standard Levi subgroups of $G$, the above tells us that the recollement in \Cref{many_Levi_rec}(i) may be reconstructed from various model structures on $\Mod(\He)$ by only using the Gorenstein projective model structure. If we define
\begin{align*}
\W^G_\M&:=\left\{\m\in\Mod(\He) \mid \m\in\Ss^G_\M\right\},\\
\C^G_\M:&=\left\{\m\in\Mod(\He) \mid \m\in\Ker (\mathcal{L}^G_\M)\right\}\\
&=\left\{\m\in\Mod(\He) \mid \pd_{\He_{M_i}}(\mathcal{L}^G_{M_i}(\m))<\infty \text{ for all } 1\leq i\leq n\right\},\quad \text{and}\\
\Fc^G_\M:&=\left\{\m\in\Mod(\He) \mid \m\in\Ker (\mathcal{R}^G_\M)\right\}\\
&=\left\{\m\in\Mod(\He) \mid \m\in\bigcap_{1\leq i\leq n}\Ker (\mathbf{R}\mathcal{R}^G_{M_i})\right\},
\end{align*}
where the second equalities for $\C^G_\M$ and $\Fc^G_\M$ both follow from \Cref{many_Levi_rec}(iii), then in particular \Cref{recollement_model}(ii) and \Cref{other_proj_inj}(ii) say in this case that there are two cofibrantly generated, projective abelian model structures ${^\ddagger}\Mod(\He)^G_\M$ and $\Mod(\He)^G_\M$ with cofibrant objects $\C^G_\M\cap \mathrm{GProj}(\He)$ and $\W^G_\M\cap \mathrm{GProj}(\He)$, respectively, and trivial objects $\W^G_\M$ and $\Fc^G_\M$, respectively, so that $\Ho({^\ddagger}\Mod(\He)^G_\M)\simeq \Ho(\He)_\M$ and $\Ho(\Mod(\He)^G_\M)\simeq \Ss^G_\M$. 


Next, fix a standard parabolic $P=MN$ of $G$. Recall that there is an adjunction
$$
F_M:\Mod(\He_M)\rightleftarrows\Rep_k^\infty(M):U_M
$$
where $F_M(\m):=\XX_M\otimes_{\He_M}\m$ and $U_M(V):=\Hom_M(\XX_M,V)\cong V^{I_M}$ for any $\m\in\Mod(\He_M)$ and $V\in\Rep_k^\infty(M)$. When $M=G$, we drop the subscript and write $F$ and $U$ for the corresponding functors. On the smooth representations side, the parabolic induction functor $\Ind_P^G:\Rep_k^\infty(M)\to\Rep_k^\infty(G)$ is defined by first inflating smooth representations of $M\cong P/N$ to $P$ and then applying the smooth induction functor. This functor has both a left and right adjoint (cf.\ \cite[Proposition 4.3]{Vign16}), and its left adjoint can be described explicitly as the Jacquet functor $V\mapsto J_P^G(V):=V_N$ where $V_N$ denotes the $N$-coinvariants of $V$. This construction is compatible with the parabolic induction for $\He$-modules by a result of Ollivier-Vign\'eras.

\begin{thm}[{\cite[Theorem 1]{OllVign18}}]\label{com_diag} Let $M$ be a standard Levi subgroup of $G$. Then we have a natural isomorphism of functors $U\circ \Ind_P^G\cong \Ind_{\He_M}^{\He}\circ U_M$.
\end{thm}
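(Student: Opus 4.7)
The plan is to construct an explicit natural transformation $\Phi_V\colon \Ind_{\He_M}^{\He} U_M(V) \to U(\Ind_P^G V)$ and verify it is an isomorphism by exhibiting matching direct sum decompositions on both sides indexed by $W_0^M$. By the tensor-hom adjunction it suffices to produce a natural $\He_M^+$-linear map $\psi_V\colon V^{I_M} \to (\Ind_P^G V)^I$ and then extend $\He$-linearly.

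To construct $\psi_V$, given $v \in V^{I_M}$ I would define $\tilde v\colon G \to V$ to be the function vanishing outside $PI$ and satisfying $\tilde v(pi) = p \cdot v$ for $p \in P$ and $i \in I$. This is well-defined because $P \cap I = I_M(N \cap I)$ and both $I_M$ stabilises $v$ and $N$ acts trivially on the inflation of $V$ to $P$. By construction $\tilde v \in (\Ind_P^G V)^I$, and $v \mapsto \tilde v$ is clearly $k$-linear and natural in $V$.

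To verify $\He_M^+$-linearity, I would use that $j_M^+(T^M_{\tilde w}) = T_{\tilde w}$ for $M$-positive $\tilde w \in \widetilde{W}_M$, and compute both $(v \cdot T^M_{\tilde w})^{\sim}$ and $\tilde v \cdot T_{\tilde w}$ using the standard convolution formula for the Hecke action on $I$-invariants. The $M$-positivity hypothesis is exactly what is needed for the coset decomposition $I \tilde w I = \bigsqcup_u u \tilde w I$ inside $G$ to restrict to the corresponding decomposition $I_M \tilde w I_M = \bigsqcup_u u \tilde w I_M$ inside $M$, so that the two actions match term by term. This yields $\Phi_V$, and to show it is an isomorphism I would invoke the Iwasawa-type decomposition $G = \bigsqcup_{w \in W_0^M} P \hat w^{-1} I$ to obtain $(\Ind_P^G V)^I \cong \bigoplus_{w \in W_0^M} V^{I_M}$ via $f \mapsto (f(\hat w^{-1}))_w$, and on the Hecke side use that $\He$ is free as a right $\He_M^+$-module with a basis indexed by $W_0^M$ (cf.\ \cite[Theorem 1.4]{Vign5}) to get $V^{I_M} \otimes_{\He_M^+} \He \cong \bigoplus_{w \in W_0^M} V^{I_M}$. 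It then remains to check that $\Phi_V$ sends the distinguished basis element of the $w$-summand on the Hecke side to the corresponding element on the representation side, which is a direct calculation.

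The main obstacle I anticipate is the $\He_M^+$-linearity check: it requires careful bookkeeping of double coset decompositions together with the convolution formula for the Hecke action on $I$-invariants, and is the place where $M$-positivity enters in an essential way. Without $M$-positivity, the decomposition of $I \tilde w I$ into right $I$-cosets is no longer indexed in a manner compatible with the analogous decomposition for $I_M \tilde w I_M$, and the identification of Hecke actions fails, which is precisely why one must pass through $\He_M^+$ rather than attempt a direct $\He_M$-linear construction. Matching the combinatorics of these cosets with the Iwasawa decomposition used in the final step is the principal combinatorial input of the argument.
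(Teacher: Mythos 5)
The paper does not prove this theorem; it is cited from \cite[Theorem 1]{OllVign18} and used as a black box. There is therefore no argument in this paper for me to compare your proposal against.

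Your reconstruction captures the right shape of the Ollivier--Vign\'eras argument: the construction and well-definedness of $\psi_V$ via $P\cap I = I_M(N\cap I)$ are correct, and the strategy of passing through adjunction and then matching $W_0^M$-indexed direct sum decompositions on both sides is the expected one.

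A few points are worth flagging. First, $V^{I_M}\otimes_{\He_M^+}\He\cong\bigoplus_{W_0^M}V^{I_M}$ requires $\He$ to be free as a \emph{left} $\He_M^+$-module, not right, and the convenient basis $\{T_{\hat v}\}_{v\in W_0^M}$ (as used in the proof of \Cref{fully_faithful}) is the left $\He_M^-$-basis suited to $\Coind$; for $\Ind$ one works with the $T^*$-basis or passes through the twist of \Cref{ind_coind_thm}. Second, the phrase ``the two actions match term by term'' is slightly misleading: for a strictly $M$-positive $\tilde w$ one has $\abs{I\backslash I\tilde w I} = q^{\ell(\tilde w)} > q^{\ell_M(\tilde w)} = \abs{I_M\backslash I_M\tilde w I_M}$, so the two Hecke-operator sums have genuinely different lengths. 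What actually happens is that the extra $G$-side summands are annihilated by $\tilde v$ because they land outside $PI$, and the surviving ones are in canonical bijection with the $M$-side cosets; this uses the Iwahori decomposition of $I$ with respect to $P$ together with the contraction $\tilde w^{-1}(I\cap\bar N)\tilde w\subseteq I\cap\bar N$ valid for $M$-positive $\tilde w$, which is where $M$-positivity really enters. Finally, the concluding identification of basis elements is typically a triangularity statement with respect to a Bruhat-type order on $W_0^M$ rather than a literal diagonal match, since $\tilde v\cdot h$ can have support meeting several double cosets $P\hat u^{-1}I$. None of this invalidates your strategy, but these coset computations are where the real content of the argument lies, and your sketch somewhat understates them.
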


We note in passing that the above immediately implies the analogue of \Cref{ind_tran}(i) on the smooth representations side.

\begin{cor}\label{ind_tran_G} The $I$-Gorenstein projective model structure on $\Rep_k^\infty(M)$ is the right transfer of the $I$-Gorenstein projective model structure on $\Rep_k^\infty(G)$ along $\Ind_P^G$.
\end{cor}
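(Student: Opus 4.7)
The plan is to show that the candidate fibrations and weak equivalences of the right transfer of the $I$-Gorenstein projective model structure on $\Rep_k^\infty(G)$ along $\Ind_P^G$ coincide with the fibrations and weak equivalences of the already-known $I_M$-Gorenstein projective model structure on $\Rep_k^\infty(M)$. Since a model structure is determined by its classes of fibrations and weak equivalences, this identification both establishes that the right transfer is well-defined and yields the stated equality of model structures.

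More concretely, by definition of the right transfer along $\Ind_P^G$, a morphism $f$ in $\Rep_k^\infty(M)$ is a (trivial) fibration in the right-transferred model structure if and only if $\Ind_P^G(f)$ is a (trivial) fibration in $\Rep_k^\infty(G)$. By the definition of the $I$-Gorenstein projective model structure on $\Rep_k^\infty(G)$ as the right transfer of the Gorenstein projective model structure on $\Mod(\He)$ along $U$, this amounts to $U(\Ind_P^G(f))$ being a (trivial) fibration in $\Mod(\He)$. Using the natural isomorphism $U\circ\Ind_P^G\cong\Ind_{\He_M}^{\He}\circ U_M$ from \Cref{com_diag}, this is in turn equivalent to $\Ind_{\He_M}^{\He}(U_M(f))$ being a (trivial) fibration in $\Mod(\He)$. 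Finally, \Cref{ind_tran}(i) (applied with $S=\He_M$, $R=\He$, and $\G=\Ind_{\He_M}^{\He}$) identifies the Gorenstein projective model structure on $\Mod(\He_M)$ as the right transfer along $\Ind_{\He_M}^{\He}$, so this last condition is equivalent to $U_M(f)$ being a (trivial) fibration in $\Mod(\He_M)$, i.e., to $f$ being a (trivial) fibration in the $I_M$-Gorenstein projective model structure on $\Rep_k^\infty(M)$. The argument for weak equivalences is identical.

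There is essentially no obstacle beyond carefully chaining these three equivalences. Because we are establishing existence of a right transfer by exhibiting it as an already-constructed model structure, the usual difficulties involved in verifying factorization and lifting axioms for a right transfer are bypassed automatically. The only real ingredients are the compatibility result \Cref{com_diag} of Ollivier--Vign\'eras and the previously established fact \Cref{ind_tran}(i) that parabolic induction for Hecke modules is itself a right transfer.
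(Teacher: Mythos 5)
Your proof is correct and takes essentially the same approach as the paper: both reduce the characterization of fibrations and weak equivalences in the candidate right transfer to the corresponding notions on the Hecke-module side via $U\circ\Ind_P^G\cong\Ind_{\He_M}^{\He}\circ U_M$ (\Cref{com_diag}) and then invoke \Cref{ind_tran}(i). The paper's proof states this in two compressed sentences; you unpack the chain of equivalences more explicitly and spell out the (correct) observation that existence of the transfer is automatic once it is identified with the already-known $I_M$-Gorenstein projective model structure.
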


\begin{proof}
We need to show that for a morphism $f$ in $\Rep_k^\infty(M)$, $f$ is a fibration (resp.\ weak equivalence) if and only if $\Ind_P^G(f)$ is a fibration (resp.\ weak equivalence). But these notions are by definition completely characterised by passing to pro-$p$ Iwahori invariants. So the result follows immediately from \Cref{com_diag} and \Cref{ind_tran}(i).
\end{proof}

We finish by right transferring our new model structures on $\Mod(\He)$ to $\Rep_k^\infty(G)$. 

\begin{prop}\label{last_right_transfer} For any collection $\M$ of standard Levi subgroups of $G$, the right transfer of the model structures $\Mod(\He)^G_\M$ and ${^\ddagger}\Mod(\He)^G_\M$ to $\Rep_k^\infty(G)$ along $U$ both exist. 
\end{prop}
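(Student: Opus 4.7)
The plan is to apply a standard right transfer theorem (in the spirit of Kan) and reduce to the corresponding result \cite[Proposition 4.10]{KD21} for the Gorenstein projective model structure $\Mod(\He)^{GP}$. Recall that to right transfer a cofibrantly generated abelian model structure on $\Mod(\He)$ with generating (trivial) cofibrations $I$ (resp.\ $J$) along $U$, it suffices that $\Rep_k^\infty(G)$ be bicomplete, that $F(I)$ and $F(J)$ permit the small object argument there, and that $U$ send every relative $F(J)$-cell complex to a weak equivalence in $\Mod(\He)$. As $\Rep_k^\infty(G)$ is a Grothendieck abelian category (hence locally presentable, with all objects small), the first two requirements are handled exactly as in \cite[Proposition 4.10]{KD21}; the crux of the proof lies in the third condition.

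The main observation is that in both $\Mod(\He)^G_\M$ and ${^\ddagger}\Mod(\He)^G_\M$, the class of trivially cofibrant objects coincides with the class $\mathrm{Proj}(\Mod(\He))$ of projective $\He$-modules. Indeed, by (the proofs of) \Cref{recollement_model}(ii) and \Cref{other_proj_inj}(ii), we have
\[
(\W^G_\M \cap \mathrm{GProj}(\He)) \cap \Fc^G_\M \;=\; \mathrm{Proj}(\Mod(\He)) \;=\; (\C^G_\M \cap \mathrm{GProj}(\He)) \cap \W^G_\M,
\]
and this is also the trivially cofibrant class of $\Mod(\He)^{GP}$. Since all three are projective abelian model structures, their trivial cofibrations all coincide with the class of monomorphisms with projective cokernel, and a single common set $J$ of generating trivial cofibrations may be chosen for all three.

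Second, one verifies that every weak equivalence in $\Mod(\He)^{GP}$ is also a weak equivalence in both $\Mod(\He)^G_\M$ and ${^\ddagger}\Mod(\He)^G_\M$. By \Cref{preserve_reflect_we}(i), applied to the exact identity functor $\Mod(\He)\to\Mod(\He)$, it suffices to observe that each trivial object of $\Mod(\He)^{GP}$ remains trivial in the new structures. Any $\He$-module of finite projective dimension is zero in $\Ho(\He)$ and therefore lies in both $\Fc^G_\M=\Ker(\mathcal{R}^G_\M)$ and $\W^G_\M=\Ss^G_\M$, so the required containment of trivial classes holds.

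Combining the two observations, for either of the two new model structures the relative $F(J)$-cell complexes may be chosen to be the same as those for $\Mod(\He)^{GP}$, and their $U$-images are already weak equivalences in $\Mod(\He)^{GP}$ by \cite[Proposition 4.10]{KD21}; they are therefore weak equivalences in the two new (coarser) model structures as well. The transfer condition is verified, and the right transfers exist. Beyond identifying the trivially cofibrant class with $\mathrm{Proj}(\Mod(\He))$ and observing the inclusion of trivial classes, I do not anticipate any serious obstacle.
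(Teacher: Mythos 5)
Your proof is correct, but it takes a genuinely different route from the paper's. The paper observes that the identity functors $\id:\Mod(\He)^{\text{GP}}\to \Mod(\He)^G_\M$ and $\id:\Mod(\He)^{\text{GP}}\to {^\ddagger}\Mod(\He)^G_\M$ are right Quillen between projective abelian model structures (so all objects are fibrant) and therefore preserve path objects; it then invokes the path-object formulation of the transfer criterion and asserts that the explicit path-object construction of \cite[Proposition 4.10]{KD21} works verbatim. You instead argue via the Kan acyclicity criterion, exploiting a structural fact you correctly identify: all three projective abelian model structures on $\Mod(\He)$ have the same trivially cofibrant class $\mathrm{Proj}(\Mod(\He))$, hence the same trivial cofibrations (monomorphisms with projective cokernel) and the same fibrations (epimorphisms), hence a common generating set $J$, so the relative $F(J)$-cell complexes in $\Rep_k^\infty(G)$ are literally the same maps for all three transfers; the acyclicity condition is then inherited from the Gorenstein projective case because GP-weak equivalences are weak equivalences for the two coarser structures. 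Your version localises the reduction in a single observation about the weak factorization system and is arguably more self-contained, whereas the paper's is a one-line appeal to the preservation of the specific path objects built in \cite{KD21}. One point worth making explicit rather than implicit: when you cite \cite[Proposition 4.10]{KD21} for the fact that $U$ sends relative $F(J)$-cell complexes to GP-weak equivalences, this is a consequence of the GP transfer existing (not a hypothesis it verifies directly) — $F$ is left Quillen for that transfer, so relative $F(J)$-cell complexes are trivial cofibrations there, and $U$ of those are weak equivalences by definition of the transferred weak equivalences. This is easy but should be said; it is not a gap.
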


\begin{proof}
Since $\id:\Mod(\He)^{\text{GP}}\to \Mod(\He)^G_\M$ and $\id:\Mod(\He)^{\text{GP}}\to {^\ddagger}\Mod(\He)^G_\M$ are right Quillen, they in particular preserve path objects. Hence path objects in $\Mod(\He)^G_\M$ and ${^\ddagger}\Mod(\He)^G_\M$ can be computed exactly as in the Gorenstein projective model structure. Therefore, the arguments of \cite[Proposition 4.10]{KD21} apply mutatis mutandis to yield the result.
\end{proof}

It is unclear to us at this stage whether the two other model structures constructed in \Cref{apply_Becker}(ii) and \Cref{last_piece}, applied to $\Mod(\He)$, can be right transferred to $\Rep_k^\infty(G)$ as well. We note that these model structures are not projective and thus the standard techniques used in \cite{KD21} in order to establish the existence of right transfers now require one to construct a fibrant replacement functor as well as path objects for these right transfers. An in-depth study of this question and of the right transfers obtained in \Cref{last_right_transfer} and their relationship to the derived adjunction $(\mathbf{L}J_P^G,\mathbf{R}\Ind_P^G)$ will be done elsewhere.

\subsection{Bousfield localisation}

We finish this section by explaining how the projective model structures constructed in the previous subsection are Bousfield localisations of the projective model structure on $\A$. Our approach here is very similar to \cite[\S 1.5]{Beck14}, and in fact Proposition 1.5.3 in \textit{loc.\ cit}.\ shows that this is automatically true for the model structures constructed in \Cref{apply_Becker} and \Cref{last_piece}. We therefore focus on the model structures constructed in \Cref{recollement_model}(ii) and \Cref{other_proj_inj}(ii). First we recall the notion of Bousfield localisation from \cite{Hirsch}. We only state the definition in the context of pointed model categories to make the presentation simpler, but this notion can be defined for all model categories.

\begin{defn} Let $\mathscr{M}$ be a pointed model category and $\mathrm{S}$ be a class of morphisms in $\mathscr{M}$.
\begin{itemize}
\item An object $X\in \mathscr{M}$ is called an \emph{$\mathrm{S}$-colocal object} if it is cofibrant and for every morphism $f: A\to B$ in $\mathrm{S}$, the induced map $\Hom_{\Ho(\mathscr{M})}(\Sigma^n X, A)\to \Hom_{\Ho(\mathscr{M})}(\Sigma^n X, B)$ is a bijection for all $n\geq 0$.
\item A morphism $f:A \to B$ in $\mathscr{M}$ is called an \emph{$\mathrm{S}$-colocal equivalence}  if for every $\mathrm{S}$-colocal object $X$, the induced map $\Hom_{\Ho(\mathscr{M})}(\Sigma^n X, A)\to \Hom_{\Ho(\mathscr{M})}(\Sigma^n X, B)$ is a bijection for all $n\geq 0$.
\end{itemize} 
The notions of \emph{$\mathrm{S}$-local object} and \emph{$\mathrm{S}$-local equivalence} are defined dually to the above.
\end{defn}

We note that (co)local objects and equivalences are usually defined in terms of homotopy function complexes in the literature (cf.\ \cite[Definition 3.1.4]{Hirsch}). The above definition is equivalent to that because for pointed model categories the homotopy groups of these function complexes are precisely the $\Hom$ sets we considered above, see \cite[Lemma 6.1.2]{HovBook}. With this notion at hand, we have the following definition of Bousfield localisation.

\begin{defn}[{\cite[Definition 3.3.1]{Hirsch}}] Let $\mathscr{M}$ be a pointed model category and $\mathrm{S}$ be a class of morphisms in $\mathscr{M}$. The \emph{right Bousfield localisation} of $\mathscr{M}$ with respect to $\mathrm{S}$ is, if it exists, the model structure $\mathscr{R}_\mathrm{S}\mathscr{M}$ on the same underlying category as $\mathscr{M}$, with fibrations the same as in $\mathscr{M}$, weak equivalences the $\mathrm{S}$-colocal equivalences, and cofibrations the maps satisfying the left lifting property with respect to the trivial fibrations. The notion of \emph{left Bousfield localisation} of $\mathscr{M}$ with respect to $\mathrm{S}$ is defined dually.
\end{defn}

\begin{lem}\label{colocal_injective} Let $\A$ be a hereditary abelian model category, $X$ an object of $\A$, and $f$ a morphism in $\A$.
\begin{enumerate}
\item In the arrow category of $\Ho(\A)$, $f$ is isomorphic to a fibration between fibrant objects.
\item If $f$ is a fibration between fibrant objects, then $X$ is $\{f\}$-colocal if and only if
$$
\Hom_{\Ho(\A)}(\Sigma^n X, \ker(f))=0
$$
for all $n\geq 0$.
\end{enumerate}
\end{lem}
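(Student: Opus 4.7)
For part (i), my plan is a standard functorial replacement argument. Starting from $f \colon A \to B$, I would first take a fibrant replacement $j_B \colon B \to B'$, i.e.\ a trivial cofibration to a fibrant object, and then factor the composite $j_B \circ f$ as a trivial cofibration $j_A \colon A \to A'$ followed by a fibration $f' \colon A' \to B'$. Since fibrations are closed under composition and $B' \to *$ is a fibration, $A'$ will be fibrant as well. The resulting commutative square
\[
\begin{tikzcd}
A \arrow[r, "f"] \arrow[d, "j_A"'] & B \arrow[d, "j_B"] \\
A' \arrow[r, "f'"] & B'
\end{tikzcd}
\]
has its two vertical arrows being weak equivalences (in fact trivial cofibrations), so it exhibits $f$ as isomorphic to $f'$ in the arrow category of $\Ho(\A)$, with $f'$ a fibration between fibrant objects.

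For part (ii), the starting observation is that in an abelian model category the fibrations are the epimorphisms with fibrant kernel, so $K := \Ker(f)$ is automatically fibrant. Hence $0 \to K \to A \to B \to 0$ is a short exact sequence of fibrant objects, and by the dual (fiber sequence) of the cofiber sequence construction recalled in the excerpt, it should yield a distinguished triangle $K \to A \to B \to \Sigma K$ in the triangulated category $\Ho(\A)$. Since an $f$-colocal object $X$ is by definition cofibrant, I would then apply the cohomological functor $\Hom_{\Ho(\A)}(\Sigma^n X, -)$ to this triangle to obtain, for each $n \geq 0$, a long exact sequence
\[
\cdots \to \Hom_{\Ho(\A)}(\Sigma^{n+1} X, B) \to \Hom_{\Ho(\A)}(\Sigma^n X, K) \to \Hom_{\Ho(\A)}(\Sigma^n X, A) \xrightarrow{f_*} \Hom_{\Ho(\A)}(\Sigma^n X, B) \to \cdots
\]
of abelian groups. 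The equivalence should then come from chasing this LES: bijectivity of $f_*$ at all levels $n \geq 0$ forces $\Hom_{\Ho(\A)}(\Sigma^n X, K) = 0$ by exactness, while conversely the vanishing of these groups yields bijectivity of $f_*$ at every level $n \geq 1$ together with injectivity at $n = 0$.

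The hard part will be the remaining surjectivity of $f_*$ at $n = 0$, which sits at the boundary of the LES. My plan here is to extract it from the connecting map $\Hom_{\Ho(\A)}(X, B) \to \Hom_{\Ho(\A)}(X, \Sigma K)$, whose target is identified via \Cref{Hom_Ho} with $\Ext^1_\A(X, K)$; concretely, for a given $g \colon X \to B$ the image of $g$ under the connecting map classifies the pullback extension $0 \to K \to X \times_B A \to X \to 0$, so showing this class vanishes in $\Ho(\A)$ will produce a section of the pullback and hence the desired lift of $g$ to $A$. Once this boundary case is nailed down, the equivalence stated in (ii) will follow by assembling the conclusions of the LES.
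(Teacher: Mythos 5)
Part (i) is correct and matches the standard argument (and the one in Becker's proof, which the paper cites): take a fibrant replacement of the target, then factor the composite as a trivial cofibration followed by a fibration, and use that fibrations into fibrant objects have fibrant source.

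Part (ii) contains a genuine gap, and you have in fact flagged exactly where it is but not closed it. The long-exact-sequence chase gives bijectivity of $f_*$ at levels $n\geq 1$ and injectivity at $n=0$ from $\Hom_{\Ho(\A)}(\Sigma^n X, K)=0$, $n\geq 0$, where $K=\ker(f)$. The surjectivity at $n=0$ is precisely the boundary term and cannot be extracted from the hypothesis by the LES alone: the obstruction to lifting $g\colon X\to B$ lives in $\Hom_{\Ho(\A)}(X,\Sigma K)\cong\Hom_{\Ho(\A)}(\Sigma^{-1}X,K)$, which sits at $n=-1$ and is not controlled by the hypothesis $n\geq 0$. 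Your proposal "showing this class vanishes in $\Ho(\A)$ will produce a section" merely restates the surjectivity you want; no argument is given for why the pullback extension class actually vanishes, and the naive expectation that $\Hom_{\Ho(\A)}(X,\Sigma K)$ itself is zero is both unavailable and generally false. This is the nontrivial content of Becker's step and requires a different input than the bare LES — e.g.\ an analysis of the homotopy function complex $\operatorname{map}(X,A)\to\operatorname{map}(X,B)$ as a Kan fibration of simplicial abelian groups, or a more careful chase using that $f$ is a fibration in the model-category sense. As it stands the hardest point of the lemma is left open.

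A second, smaller issue: you invoke \Cref{Hom_Ho} to identify $\Hom_{\Ho(\A)}(X,\Sigma K)$ with $\Ext^1_\A(X,K)$, but that result is stated only for abelian model categories that are projective or injective, whereas the present lemma is formulated for a general hereditary abelian model category. The paper's own proof simply cites steps (1)--(3) of the proof of Becker's Proposition 1.5.3 (for injective model structures) and asserts that the argument carries over to the hereditary case; to produce a self-contained proof you would need to reproduce or adapt that argument rather than rely on \Cref{Hom_Ho}.
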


\begin{proof}
These statements are proved in steps (1)-(3) of the proof of \cite[Proposition 1.5.3]{Beck14} for injective model structures and the proof carries through mutatis mutandis in our more general setting.
\end{proof}

\begin{prop}\label{Bousfield_loc_abelian} Let $\A$ be an abelian category equipped with a projective abelian model structure $(\C, \A, \W)$, and let $\Ss$ be a thick subcategory of $\Ho(\A)$. Let $\W_\Ss$ denote the class of objects of $\A$ whose image in $\Ho(\A)$ lies in $\Ss$.
\begin{enumerate}
\item Suppose that the inclusion $\Ss\subset \Ho(\A)$ has a left adjoint $\Ll_{\Ss}$. Let $\C_\Ss$ denote the class of objects of $\A$ whose image in $\Ho(\A)$ lies in $\Ker(\Ll_{\Ss})$, and assume that $(\C_\Ss\cap\C, \A, \W_\Ss)$ defines an abelian model structure on $\A$. Then this model structure is the right Bousfield localisation of $\A$ with respect to $\mathrm{S}:=\{0\to X \mid X\in\W_\Ss\}$; and
\item Suppose that the inclusion $\Ss\subset \Ho(\A)$ has a right adjoint $\R_{\Ss}$. Let $\Fc_\Ss$ denote the class of objects of $\A$ whose image in $\Ho(\A)$ lies in $\Ker(\R_{\Ss})$, and assume that $(\C\cap\W_\Ss, \A, \Fc_\Ss)$ defines an abelian model structure on $\A$. Then this model structure is the right Bousfield localisation of $\A$ with respect to $\mathrm{S}':=\{0\to X \mid X\in\Fc_\Ss\}$.
\end{enumerate}
\end{prop}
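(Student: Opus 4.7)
The plan is to show the proposed model structures coincide with the right Bousfield localizations at the level of fibrations and weak equivalences; the cofibrations then agree automatically since both are determined by the left lifting property against trivial fibrations. I describe the argument for (i) in detail; part (ii) follows by analogous arguments using the adjunction $i\dashv\R_\Ss$ in place of $\Ll_\Ss\dashv i$. The fibration condition is immediate, as in both $(\C_\Ss\cap\C,\A,\W_\Ss)$ and $(\C,\A,\W)$ the fibrations are precisely the epimorphisms in $\A$.

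Next I would identify the $\mathrm{S}$-colocal objects as exactly the cofibrant objects $\C\cap\C_\Ss$ of the proposed model structure. Since $i$ is fully faithful, $\Ll_\Ss\circ i\cong\id_\Ss$, and $\Ll_\Ss$ is triangulated as the left adjoint of a triangulated inclusion. For $X\in\C$ and $Y\in\W_\Ss$, writing $Y\cong iZ$ in $\Ho(\A)$ for some $Z\in\Ss$, the adjunction yields
$$
\Hom_{\Ho(\A)}(\Sigma^n X,Y)\cong\Hom_\Ss(\Sigma^n\Ll_\Ss X,Z),
$$
so the colocal vanishing over all $n\geq 0$ and $Y$ is equivalent to $\Ll_\Ss X=0$, i.e., $X\in\C\cap\C_\Ss$.

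The central step is to match the weak equivalences of the proposed structure with the $\mathrm{S}$-colocal equivalences. In the easy direction, factor a weak equivalence of the new structure as a trivial cofibration followed by a trivial fibration; each factor sits in a short exact sequence whose third term lies in $\W_\Ss$, and applying $\Hom_{\Ho(\A)}(\Sigma^n X,-)$ to the associated distinguished triangle for colocal $X$ together with the vanishing above shows that each factor induces bijections, as does their composite. Conversely, given an $\mathrm{S}$-colocal equivalence $f$, factor it in $\mathscr{P}$ as a trivial cofibration followed by a fibration; since trivial cofibrations of $\mathscr{P}$ are isomorphisms in $\Ho(\A)$ and lie in $\W_\Ss$ (as $\W\subseteq\W_\Ss$), 2-out-of-3 reduces to the case where $f$ is an epimorphism. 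Setting $K=\ker(f)$, \Cref{colocal_injective}(ii) then translates the colocal equivalence condition into $\Hom_{\Ho(\A)}(\Sigma^n X,K)=0$ for all colocal $X$ and $n\geq 0$.

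Finally, take a cofibrant replacement $0\to W\to Q_cK\to K\to 0$ of $K$ in the proposed structure, so that $Q_cK\in\C\cap\C_\Ss$ is colocal and $W\in\W_\Ss$. Applying $\Hom_{\Ho(\A)}(Q_cK,-)$ to the distinguished triangle $W\to Q_cK\to K\to\Sigma W$ and using $\Hom_{\Ho(\A)}(Q_cK,K)=0$ forces $\id_{Q_cK}$ to factor through $W$, exhibiting $Q_cK$ as a retract of $W$ in $\Ho(\A)$. Since idempotents split in $\Ho(\A)$ and $\W_\Ss$ is closed under direct summands (being the preimage of the thick subcategory $\Ss$), $Q_cK\in\W_\Ss$; combined with $\Ll_\Ss Q_cK=0$ and $\Ll_\Ss\circ i\cong\id_\Ss$ this forces $Q_cK\cong 0$ in $\Ho(\A)$, so the triangle gives $K\cong\Sigma W$, whence $K\in\W_\Ss$ by thickness and $f$ is a trivial fibration in the new structure. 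The main technical point warranting care is the passage from retract to direct summand, relying on the standard fact that idempotents split in the homotopy category of a cofibrantly generated abelian model category.
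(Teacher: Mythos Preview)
Your argument is largely sound and parallels the paper's approach, but the final step introduces a hypothesis not present in the statement. You conclude that $Q_cK\in\W_\Ss$ by passing from retract to direct summand via idempotent splitting in $\Ho(\A)$, which you justify through cofibrant generation; however, the proposition does not assume the model structure is cofibrantly generated. This detour is unnecessary anyway: since $Q_cK\in\C_\Ss$ and $W\in\W_\Ss$ (so $W\cong iZ$ in $\Ho(\A)$ for some $Z\in\Ss$), the adjunction $\Ll_\Ss\dashv i$ gives $\Hom_{\Ho(\A)}(Q_cK,W)\cong\Hom_\Ss(\Ll_\Ss Q_cK,Z)=0$ directly. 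Hence the factorisation of $\id_{Q_cK}$ through $W$ is zero, so $Q_cK\cong 0$ in $\Ho(\A)$ without any appeal to idempotent completeness, and the triangle yields $K\cong\Sigma W\in\W_\Ss$.

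The paper's route is somewhat more economical. Rather than handling the two inclusions of weak-equivalence classes separately, it invokes the full duality $\Ss=({}^\ddagger\Ss)^\ddagger$ from Krause's criterion together with \Cref{colocal_injective} to characterise $\mathrm{S}$-colocal equivalences in one stroke as exactly those morphisms isomorphic, in the arrow category of $\Ho(\A)$, to an epimorphism with kernel in $\W_\Ss$. Both directions then fall out by noting that the trivial cofibrations in $\A$ and in $\A_\Ss$ coincide (because $\C_\Ss\cap\C\cap\W_\Ss=\mathrm{Proj}(\A)=\C\cap\W$) and that $\id:\A\to\A_\Ss$ preserves weak equivalences, so the characterisation transports to $\Ho(\A_\Ss)$. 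Your direct triangle arguments achieve the same end but with a bit more bookkeeping.
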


\begin{proof}
We only prove (i), the proof of (ii) being entirely analogous. By the dual of \Cref{Krause_criterion}(i), we have $^\ddagger\Ss=\Ker (\Ll_{\Ss})$ and $\Ss=\Ker (\Ll_{\Ss})^\ddagger$. By definition of $\mathrm{S}$, the class of $\mathrm{S}$-colocal objects is that of those cofibrant objects $X$ whose image in $\Ho(\A)$ lies in $^\ddagger\Ss=\Ker (\Ll_{\Ss})$, i.e.\ the class of $\mathrm{S}$-colocal objects is $\C_\Ss\cap\C$. Then \Cref{colocal_injective} implies that an $\mathrm{S}$-colocal equivalence is a morphism which, in the arrow category of $\Ho(\A)$, is isomorphic to an epimorphism with kernel in $\W_\Ss$.

Now assume that $(\C_\Ss\cap\C, \A, \W_\Ss)$ defines an abelian model structure, which we denote by $\A_\Ss$. Then we must show that the weak equivalences in $\A_\Ss$ are the $\mathrm{S}$-colocal equivalences. For this, note first that $\id:\A\to\A_\Ss$ is right Quillen since $\W\subseteq \W_\Ss$, and we get a functor $\Ho(\id):\Ho(\A)\to \Ho(\A_\Ss)$ by \Cref{preserve_reflect_we}. Hence one gets that an $\mathrm{S}$-colocal equivalence is also isomorphic to an epimorphism with kernel in $\W_\Ss$ in the arrow category of $\Ho(\A_\Ss)$, and hence is a weak equivalence in $\A_\Ss$. For the converse, suppose that $f$ is a weak equivalence in $\A_\Ss$. Note first that $\C_\Ss\cap\C\cap \W_\Ss=\mathrm{Proj}(\A)=\C\cap\W$, so that the trivial cofibrations in $\A$ and in $\A_\Ss$ coincide. By factorising $f$ in $\A_\Ss$ as a composite of a trivial cofibration and a trivial fibration, we therefore see that $f$ is isomorphic in the arrow category of $\Ho(\A)$ to a trivial fibration in $\A_\Ss$, i.e.\ an epimorphism with kernel in $\W_\Ss$, as required.
\end{proof}

\begin{cor} The projective model structures constructed in \Cref{recollement_model} and \Cref{other_proj_inj} are right Bousfield localisations.
\end{cor}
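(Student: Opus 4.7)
The plan is to invoke \Cref{Bousfield_loc_abelian} directly, since the two projective model structures named in the corollary match precisely the shapes of model structures treated in its two parts. Specifically, the model structure $\A^p_\Ss=(\C_\A\cap \C_\Ss, \A, \W_\Ss)$ from \Cref{recollement_model}(ii) is of the form to which \Cref{Bousfield_loc_abelian}(i) applies, and the model structure ${^\ddagger}\A^p_\Ss=(\W_\Ss\cap\C_\A, \A, \Fc_\Ss)$ from \Cref{other_proj_inj}(ii) is of the form to which \Cref{Bousfield_loc_abelian}(ii) applies.

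First I would verify the hypotheses for applying \Cref{Bousfield_loc_abelian}(i) to $\A^p_\Ss$. The recollement \eqref{generic_recollement} in \Cref{setup_2} provides the required left adjoint $\Ll_{\Ss}$ to the inclusion $i:\Ss\hookrightarrow\Ho(\A)$, and the class $\C_\Ss$ is by definition the preimage under $\Ho(\A)\to \Ho(\A)$ (via localisation) of $\Ker(\Ll_{\Ss})$, matching the notation of \Cref{Bousfield_loc_abelian}. The fact that $(\C_\A\cap \C_\Ss, \A, \W_\Ss)$ is a genuine projective abelian model structure is precisely the content of \Cref{recollement_model}(ii). Thus \Cref{Bousfield_loc_abelian}(i) identifies $\A^p_\Ss$ as the right Bousfield localisation of the projective model structure on $\A$ with respect to $\mathrm{S}=\{0\to X\mid X\in\W_\Ss\}$.

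Next, for ${^\ddagger}\A^p_\Ss$, the recollement furnishes a right adjoint $\R_{\Ss}$ to the inclusion $i:\Ss\hookrightarrow\Ho(\A)$, and by construction $\Fc_\Ss$ is the class of objects whose image in $\Ho(\A)$ lies in $\Ker(\R_{\Ss})$. The existence of the projective abelian model structure $(\W_\Ss\cap\C_\A, \A, \Fc_\Ss)$ was established in \Cref{other_proj_inj}(ii). Applying \Cref{Bousfield_loc_abelian}(ii) then identifies ${^\ddagger}\A^p_\Ss$ as the right Bousfield localisation of the projective model structure on $\A$ with respect to $\mathrm{S}'=\{0\to X\mid X\in\Fc_\Ss\}$.

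There is no real technical difficulty here beyond confirming that the notational setup aligns: the classes $\C_\Ss$, $\Fc_\Ss$, $\W_\Ss$ defined in \Cref{recollement_model} agree with those in \Cref{Bousfield_loc_abelian}, and both adjoints $\Ll_\Ss$ and $\R_\Ss$ required by the proposition are available from \Cref{setup_2}. The only thing to watch is that in \Cref{other_proj_inj} we additionally assume the existence of a compatible injective model structure, but \Cref{Bousfield_loc_abelian}(ii) only uses the projective one, so this extra hypothesis is harmless and the cited model structure is still obtained as a right Bousfield localisation of $(\C_\A,\A,\W_\A)$.
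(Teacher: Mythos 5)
Your proposal is correct and matches the intended (implicit) proof: the corollary follows immediately by applying \Cref{Bousfield_loc_abelian}(i) to $\A^p_\Ss$ from \Cref{recollement_model}(ii) and \Cref{Bousfield_loc_abelian}(ii) to ${^\ddagger}\A^p_\Ss$ from \Cref{other_proj_inj}(ii), exactly as you do. Your observation that the extra hypothesis of a compatible injective model structure in \Cref{other_proj_inj} is harmless here is correct and worth noting, since \Cref{Bousfield_loc_abelian}(ii) only requires the projective structure on $\A$.
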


Of course the above has completely dual analogues. In particular, the injective model structures constructed in \Cref{recollement_model} and \Cref{other_proj_inj} are left Bousfield localisations.

\section{Simple supersingular Hecke modules in the homotopy category}

For the remainder of this paper, we assume that $k$ is algebraically closed. Our main goal is to study the isomorphism classes of simple $\He$-modules in $\Ho(\He)$. In light of \Cref{half_Koziol}(ii), we focus our attention on the case of simple supersingular modules.

\subsection{The affine Hecke algebra and simple supersingular $\He$-modules}

We first recall some properties of the affine Hecke algebra $\Hea$, the classification of simple characters over it, and the classification of simple supersingular $\He$-modules.

Recall that we fixed a set $\{\hat{s}\mid s\in S\}\subseteq \widetilde{W}_\aff$ of lifts of the simple affine reflections defined as in \cite[\S 4.8]{OS14}. Then $\Hea$ is generated as a $k$-algebra by $\{T_t\mid t\in T(\mathbb{F}_q)\}$ and $\{T_{\hat{s}}\mid s\in S\}$. The ring structure of $\Hea$ is determined by the braid relations and the quadratic relations in \eqref{quadratic_rel}. In addition, each irreducible component $\Phi'$ of $\Phi$ gives rise to a subset $S'\subseteq S$ and a subalgebra of $\Hea$ generated by $k[T(\mathbb{F}_q)]$ and $\{T_{\hat{s}}\mid s\in S'\}$. We will refer to these subalgebras as the irreducible components of $\Hea$. We will also need a notion of irreducible components of the finite Hecke algebra $\He_F$ associated to a face $F\subseteq \overline{C}$. First, since $S$ is in bijection with the nodes of the affine Dynkin diagram of $G$, we may associate to the set $S_F$ the subgraph of the affine Dynkin diagram spanned by the corresponding nodes. Each connected component of that subgraph corresponds under the aforementioned bijection to a subset $S_{F'}$ of $S_F$, itself corresponding to a face $F'$ such that $F\subseteq \overline{F'}$ by \eqref{bijection_faces}. We call the associated subalgebras $\He_{F'}$ of $\He_F$ arising in this way the irreducible components of $\He_F$. 

Next, we recall the Ollivier-Schneider resolutions for $\He$-modules.  For each $0\leq i\leq d=r_{ss}$, we write $\mathscr{F}_i$ for the set of $i$-dimensional faces $F$ of $\mathscr{X}$ which are contained in $\overline{C}$. Note that by \cite[Lemma 1.2]{Koh} this is a set of representatives for the $G_\aff$-orbits of the $i$-dimensional faces in $\mathscr{X}$. We also fix for each $i$ a set $\mathscr{F}_{(i)}\subseteq \mathscr{F}_i$ of representatives for the $G$-orbits of the $i$-dimensional faces in $\mathscr{X}$. Then \cite[Theorem 3.12]{OS14} implies that for every $\m\in\Mod(\He)$ there is an exact sequence of $\He$-modules of the form
\begin{equation}\label{res_Oll_Sch}
0\longrightarrow \bigoplus_{F\in\mathscr{F}_{(d)}} \Res^\He_{\He^\dagger_F}(\m)\otimes_{\He^\dagger_F}(\epsilon_F)\He\longrightarrow\cdots\longrightarrow\bigoplus_{F\in\mathscr{F}_{(0)}} \Res^\He_{\He^\dagger_F}(\m)\otimes_{\He^\dagger_F}(\epsilon_F)\He\longrightarrow \m\longrightarrow 0,
\end{equation}
where we note that our Hecke algebras are opposite to the ones in \textit{loc.\ cit.} and hence our tensors products are reversed. Here, for a face $F\subseteq \overline{C}$, the twist $(\epsilon_F)\He$ by the orientation character $\epsilon_F$ is the one defined in \textit{loc.\ cit.}, \S 3.3.1. We now see that there is a similar resolution for $\Hea$-modules, cf.\ \cite[Remark 4.1]{Koz}.
\begin{prop}
There is a resolution of $(\Hea, \Hea)$-bimodules of the form
\begin{equation}\label{res_Haff2}
0\longrightarrow \bigoplus_{F\in\mathscr{F}_d} \Hea\otimes_{\He_F}\Hea\longrightarrow\cdots\longrightarrow\bigoplus_{F\in\mathscr{F}_0} \Hea\otimes_{\He_F}\Hea\longrightarrow \Hea\longrightarrow 0.
\end{equation}
This gives a free resolution of $\Hea$ as a left and as a right $\Hea$-module.
\end{prop}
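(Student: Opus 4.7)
The plan is to adapt the construction of the Ollivier-Schneider resolution \eqref{res_Oll_Sch} verbatim, replacing $G$ by $G_\aff$ throughout, and then to apply the resulting construction to $\m=\Hea$ in order to extract a bimodule resolution. Recall that \cite[Theorem 3.12]{OS14} arises from the augmented oriented chain complex of the semisimple Bruhat-Tits building $\mathscr{X}$, which is an acyclic complex of smooth $G$-representations resolving $\mathbf{1}$. Restricting the action to the subgroup $G_\aff$ produces an analogous acyclic complex of smooth $G_\aff$-representations, and combining it with the standard Frobenius reciprocity argument and then applying $\End_{G_\aff}(-)$ should yield the desired $(\Hea,\Hea)$-bimodule resolution.

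The key computational step will be to correctly identify the terms in this resolution. For each $i$, the $i$-dimensional faces of $\mathscr{X}$ form a set of $G_\aff$-orbits parameterised by $\mathscr{F}_i$ (rather than the smaller set $\mathscr{F}_{(i)}$ which parameterises the $G$-orbits). More importantly, for a face $F\in\mathscr{F}_i$ I would show that the $G_\aff$-stabiliser of $F$ is precisely the parahoric $P_F$, in contrast to the full $G$-stabiliser $P_F^\dagger=P_F\cdot\Omega_F$ that appears in the $G$-case; informally, this is because the non-trivial elements of $\Omega$ must non-trivially permute the set of chambers (or types of simplices) and therefore cannot belong to $G_\aff$. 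Since $P_F$ acts trivially on $F$, this simultaneously forces the orientation characters $\epsilon_F$ from the $G$-version to become trivial in the $G_\aff$-version. Together, these two simplifications will turn the $i$-th term of the resolution into $\bigoplus_{F\in\mathscr{F}_i}\Hea\otimes_{\He_F}\Hea$, matching the claimed form.

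Exactness will then follow in the same way as in \textit{loc.\ cit.}, using acyclicity of the chain complex of $\mathscr{X}$ together with the exactness of induction from parahorics and of passage to $I$-invariants. Finally, the freeness of the resolution as both a left and a right $\Hea$-module is immediate from the fact, recalled in the preceding discussion, that $\Hea$ is free as a left and as a right $\He_F$-module for each $F\subseteq\overline{C}$.

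The hard part, I expect, will be nothing conceptually new but rather the careful bookkeeping needed to transfer every step of \cite{OS14} to the $G_\aff$-setting, in particular verifying the simultaneous simplifications $\He_F^\dagger\leadsto\He_F$ and $\epsilon_F\leadsto\mathbf{1}$ throughout the construction.
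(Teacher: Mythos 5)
Your plan correctly identifies the structural parallelism with \cite[Theorem 3.12]{OS14} and the two bookkeeping simplifications (the $G_\aff$-stabiliser of $F\subseteq\overline{C}$ is $P_F$ rather than $P_F^\dagger$, and the orientation character becomes trivial because $P_F$ fixes $F$ pointwise), and the freeness claim is handled correctly. However, there is a gap in the argument for exactness. You write that ``exactness will follow in the same way as in \emph{loc.~cit.}, using acyclicity of the chain complex of $\mathscr{X}$ together with the exactness of induction from parahorics and of passage to $I$-invariants.'' But the functor of $I$-invariants on $\Rep_k^\infty(G_\aff)$ is \emph{not} exact in general; the exactness of the augmented oriented chain complex after passing to $I$-invariants is precisely the content of \cite[Theorem 3.4]{OS14}, a substantive theorem whose proof does not obviously carry over verbatim when $G$ is replaced by $G_\aff$. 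Proposing to ``transfer every step of \cite{OS14} to the $G_\aff$-setting'' is therefore not mere bookkeeping: you would have to re-verify that the key exactness result holds for $\XX_\aff$, which is where the real work lies.

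The paper avoids this entirely with a clever retract argument that your proposal misses. Extension by zero gives a $G_\aff$-equivariant \emph{split} embedding $\XX_\aff\hookrightarrow\XX$, with splitting given by restriction of functions to $G_\aff$. Consequently the augmented complex $0\to\C^{or}_c(\mathscr{X}_{(\bullet)},\uu{\XX_\aff})\to\XX_\aff\to 0$ is a $G_\aff$-equivariant retract of $0\to\C^{or}_c(\mathscr{X}_{(\bullet)},\uu{\XX})\to\XX\to 0$, and exactness on $I$-invariants of the latter (which is \cite[Theorem 3.4]{OS14}) transfers automatically to the former, since retracts of exact complexes are exact. This sidesteps any need to reprove the hard step for $G_\aff$. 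If you want your proof to be complete and robust, you should either incorporate this retract trick or else explicitly verify that the proof of \cite[Theorem 3.4]{OS14} goes through with $G$ replaced by $G_\aff$, rather than asserting it as bookkeeping. One further minor imprecision: the identification of the terms is obtained by taking $I$-invariants of $\ind_{P_F}^{G_\aff}(\XX_\aff^{I_F})$ and applying the analogue of \cite[Proposition 4.25]{OS14}, not by ``applying $\End_{G_\aff}(-)$''.
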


\begin{proof}
The proof is mostly analogous to that of \cite[Theorem 3.12]{OS14}, we only sketch how it goes through for $\Hea$ instead of $\He$. First, recall that to any $V\in \Rep_k^\infty(G)$, we may functorially associate a certain $G$-equivariant coefficient system $\uu{V}$ as in \cite[Example 2.2]{Koh}. We refer the reader to \S 2.1 of \textit{loc.\ cit.}\ for the relevant definitions, we only point out that this construction also applies, with the same definitions, to smooth representations of $G_\aff$ and that there is a functorial oriented chain complex $\C^{or}_c(\mathscr{X}_{(\bullet)}, \uu{V})\to V$ for $V$ a smooth representation of one of $G$, $G_\aff$ (cf.\ equations (2.1)-(2.2) of \textit{loc.\ cit.}). If $\XX$ and $\XX_\aff$ are the representations from \S2.2, then note that extending functions by zero gives a $G_\aff$-equivariant split embedding $\XX_\aff\hookrightarrow\XX$ (with splitting given by restricting functions to $G_\aff$). Thus we see that the complex $0\to \C^{or}_c(\mathscr{X}_{(\bullet)},\uu{\XX_\aff})\to \XX_\aff\to 0$ is a $G_\aff$-equivariant retract of $0\to \C^{or}_c(\mathscr{X}_{(\bullet)},\uu{\XX})\to \XX\to 0$. Since the latter is exact on $I$-invariants by \cite[Theorem 3.4]{OS14} it follows that the same holds for the former. We are therefore left to show that the $\Hea$-bimodule $\C^{or}_c(\mathscr{X}_{(i)},\uu{\XX_\aff})^I$ is as given in the statement for each $0\leq i\leq d$.

This is done by arguing exactly as in \cite{OS14}. The proof that
$$
\C^{or}_c(\mathscr{X}_{(i)},\uu{\XX_\aff})\cong\bigoplus_{F\in\mathscr{F}_i}\ind_{P_F}^{G_\aff}(\XX_\aff^{I_F})
$$
is done exactly as in \S 3.3.2 of \textit{loc.\ cit.} We only point out that the orientation character $\epsilon_F$ doesn't appear here because each parahoric $P_F$ fixes $F$ pointwise and thus also fixes any chosen orientation on $F$, and that $\mathscr{F}_i$ is a set of representatives for the $G_\aff$-orbits of the $i$-dimensional faces in $\mathscr{X}$. The proof of Proposition 4.25 in \textit{loc.\ cit.\ }then adapts to give an isomorphism $\Hea \otimes_{\He_F}\XX_F\cong \XX_\aff^{I_F}$ of left $k[P_F]\otimes_k\Hea$-modules. Therefore, one obtains
$$
\ind_{P_F}^{G_\aff}(\XX_\aff^{I_F})^I\cong\ind_{P_F}^{G_\aff}(\Hea \otimes_{\He_F}\XX_F)^I\cong\Hea\otimes_{\He_F}\Hea
$$
exactly as in Proposition 3.10 of \textit{loc.\ cit}.
\end{proof}

The methods of \cite[\S 1]{OS14} then show that $\Hea$ is $n$-Gorenstein for some $n\leq d$. Thus the category $\Mod(\Hea)$ has a Gorenstein projective model structure and the homotopy category $\Ho(\Hea)$ is well-defined. Furthermore, we obtain by tensoring with \eqref{res_Haff2} an exact resolution
\begin{equation}\label{res_Haff3}
0\longrightarrow \bigoplus_{F\in\mathscr{F}_d} \Res^\Hea_{\He_F}(\m)\otimes_{\He_F}\Hea\longrightarrow\cdots\longrightarrow\bigoplus_{F\in\mathscr{F}_0} \Res^\Hea_{\He_F}(\m)\otimes_{\He_F}\Hea\longrightarrow \m\longrightarrow 0
\end{equation}
for any $\m\in\Mod(\Hea)$.

Recall from \cite[Proposition 2.2]{Vign3} that the characters of $\Hea$ are parametrized by pairs $(J, \xi)$ where $\xi$ is a character of $k[T(\mathbb{F}_q)]$ and $J\subseteq S_\xi$ where $S_\xi$ is as in \eqref{S_xi}. The character $\chi$ corresponding to such a pair is defined by
\begin{align*}
\chi(T_t)&=\xi(t)\quad\text{for all $t\in T(\mathbb{F}_q)$}\\
\chi(T_{\hat{s}})&=\begin{cases}
0 & \text{if $s\notin J$}\\
-1 & \text{if $s\in J$}
\end{cases}.
\end{align*}
We will sometimes write $\chi=(J, \xi)$ to mean that $\chi$ is defined as above. When $S_\xi=S$, the character $\chi$ will be called a \emph{twisted trivial character}, resp.\ \emph{twisted sign character}, if it additionally satisfies $J=\emptyset$, resp.\ $J=S$. There is a notion of supersingularity for characters of $\Hea$, and we have that a character $\chi$ is supersingular if and only if its restriction to each irreducible component of $\Hea$ does not equal the restriction of a twisted sign or twisted trivial character (cf.\ Theorem 6.15 in \textit{loc.\ cit}). We will use this result as a definition of supersingularity for characters of $\Hea$. By convention, if $G=T$ then every character as above is supersingular.

Next, the group $\widetilde{\Omega}$ acts on the characters of $\Hea$ as follows. Given a character $\chi$ of $\Hea$ and $\tilde{\omega}\in\widetilde{\Omega}$, we let $\chi^{\tilde{\omega}}(h):=\chi(T_{\tilde{\omega}}hT_{\tilde{\omega}^{-1}})$ for $h\in\Hea$. We denote by $\widetilde{\Omega}_\chi$ the stabiliser of $\chi$ under this action. This is a subgroup of finite index in $\widetilde{\Omega}$, containing $T(\mathbb{F}_q)$. We also let $\He_\chi$ denote the subalgebra of $\He$ generated by $\Hea$ and $\{T_{\tilde{\omega}}\mid \tilde{\omega}\in \widetilde{\Omega}_\chi\}$. It follows from the braid relations that $\He_\chi$ is free over $\Hea$ and that $\He$ is free over $\He_\chi$, both as left and right modules. For future use, we record here the following:

\begin{lem}\label{Hchi_Gor} For any character $\chi$ of $\Hea$, the algebra $\He_\chi$ is Gorenstein.
\end{lem}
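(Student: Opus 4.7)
The plan is to establish the two defining conditions of a Gorenstein ring separately: that $\He_\chi$ is Noetherian on both sides, and that it has finite selfinjective dimension on both sides. The crucial input is that $\widetilde{\Omega}_\chi$ has finite index in $\widetilde{\Omega}$, combined with the braid relations: these together imply that $\He$ has a $k$-basis indexed by cosets of $\widetilde{W}_\aff \widetilde{\Omega}_\chi$ in $\widetilde{W}$, with the result that $\He$ is finitely generated free over $\He_\chi$ on both sides. Fixing a set of coset representatives in $\widetilde{\Omega}$ that contains $1$, we obtain in particular that $\He_\chi$ is a direct summand of $\He$ both as a right and as a left $\He_\chi$-module, with $\He \cong \He_\chi^{[\widetilde{\Omega}:\widetilde{\Omega}_\chi]}$ as a one-sided $\He_\chi$-module.

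For Noetherianity, I would argue as follows. If $J_1 \subseteq J_2 \subseteq \cdots$ is an ascending chain of right ideals of $\He_\chi$, then $\{J_n \He\}$ is an ascending chain of right ideals of $\He$ which stabilises, because $\He$ is Noetherian (being Gorenstein). Since $\He$ is free over $\He_\chi$ with $1$ part of a basis, one has $J_n \He \cap \He_\chi = J_n$, so $\{J_n\}$ stabilises as well. The argument for left ideals is identical.

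For finite selfinjective dimension, the strategy is to transfer a known injective resolution of $\He$ down to $\He_\chi$. By Ollivier--Schneider, $\He$ is $m$-Gorenstein for some $m \leq r_{ss} + r_Z$, so choose an injective resolution $0 \to \He \to I^\bullet \to 0$ of length $m$ of $\He$ as a right $\He$-module. Because $\He$ is flat (in fact free) as a left $\He_\chi$-module, the induction functor $- \otimes_{\He_\chi} \He : \Mod(\He_\chi) \to \Mod(\He)$ is exact, and hence its right adjoint $\Res^\He_{\He_\chi}$ preserves injectives. Thus each $I^j$ remains injective when regarded as a right $\He_\chi$-module, so the same resolution witnesses $\id_{\He_\chi}(\He) \leq m$ as a right $\He_\chi$-module. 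Since $\He_\chi$ is a direct summand of $\He$ in this category, we deduce $\id_{\He_\chi}(\He_\chi) \leq m$. A symmetric argument, using that $\He$ is free (hence flat) as a right $\He_\chi$-module, gives the analogous bound on the left selfinjective dimension of $\He_\chi$, completing the proof.

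The proof is essentially formal once the correct bridge is identified; there is no real obstacle, beyond recognising that one should not try to transfer Gorensteinness directly from $\Hea$ (where the step from $\Hea$ to $\He_\chi$ would force us to handle a non-finite group extension by $\Omega_\chi$), but instead exploit that $\He_\chi$ sits inside the already Gorenstein ring $\He$ as a finitely generated free sub-bimodule on both sides.
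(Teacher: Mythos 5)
Your proof is correct. The argument for finite selfinjective dimension is essentially the paper's argument re-expressed: the paper computes $\Ext^i_{\He_\chi}(\mathfrak{n}, \Res^\He_{\He_\chi}(\He)) \cong \Ext^i_\He(\mathfrak{n}\otimes_{\He_\chi} \He, \He)$ from the fact that $\He\otimes_{\He_\chi}(-)$ is exact and preserves projectives, while you observe dually that $\Res^\He_{\He_\chi}$ preserves injectives and restrict an injective resolution; by the tensor-hom adjunction these are the same argument. The genuinely different part is Noetherianity. The paper works upward from $\Hea$: it decomposes $\Omega_\chi$ into free and torsion parts, exhibits the subalgebra $\He_\chi^{\text{free}}$ as an iterated skew Laurent polynomial ring over $\Hea$ (hence Noetherian by a theorem of McConnell--Robson), and observes that $\He_\chi$ is finite over it. You instead work downward from $\He$: the ascending chain condition for right ideals of $\He_\chi$ descends from $\He$ because $J\He\cap\He_\chi = J$, which follows from the direct-sum decomposition $\He=\bigoplus_\omega \He_\chi T_\omega$ with $T_1=1$. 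Your route is arguably cleaner since it avoids the detour through skew Laurent rings and instead leans on the Noetherianity of $\He$ that is already part of the Ollivier--Schneider Gorenstein statement; the paper's route has the minor virtue of not needing $\He$ to be Noetherian a priori (it only cites Noetherianity of $\Hea$). One small caveat to note, though it causes no problem: your ACC-descent argument does not in fact need $\He$ to be \emph{finitely generated} free over $\He_\chi$, only that $1$ is part of a basis so that $J\He\cap\He_\chi=J$, and that $\He$ itself is Noetherian.
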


\begin{proof}
Since $\He$ is free over $\He_\chi$, the functor $\He\otimes_{\He_\chi}(-)$ is exact and preserves projectives. Since $\He$ is $(r_{ss}+r_Z)$-Gorenstein it follows that for each $\mathfrak{n}\in\Mod(\He_\chi)$ and for any $i>r_{ss}+r_Z$ we have
$$
\Ext^i_{\He_\chi}(\mathfrak{n}, \Res^\He_{\He_\chi}(\He))\cong \Ext^i_\He(\mathfrak{n}\otimes_{\He_\chi} \He, \He)=0.
$$
Thus the injective dimension of $\Res^\He_{\He_\chi}(\He)$ is bounded above by $r_{ss}+r_Z$. Since $\He_\chi$ is a direct summand of $\Res^\He_{\He_\chi}(\He)$, it follows that it has finite selfinjective dimension as a right module over itself. An analogous argument shows the corresponding statement for left modules. 

It remains to show that $\He_\chi$ is (left and right) Noetherian. Write $\Omega_\chi$ for the image of $\widetilde{\Omega}_\chi$ in $\Omega$. This group decomposes as $\Omega_\chi=\Omega_\chi^{\text{free}}\times \Omega_\chi^{\text{tor}}$ where $\Omega_\chi^{\text{free}}$ is a free abelian group on some generators $\omega_1, \ldots, \omega_{r_Z}$ and $\Omega_\chi^{\text{tor}}$ is finite. If we let $\He_\chi^{\text{free}}$ denote the subalgebra of $\He_\chi$ generated by $\Hea$ and all $T_{\hat{\omega}_i^{\pm 1}}$, then the braid relations show that $\He_\chi^{\text{free}}$ is an iterated skew Laurent polynomial algebra over $\Hea$, and hence is Noetherian because $\Hea$ is Noetherian (cf.\ \cite[Theorem 5.1]{Vign3} and \cite[Theorem 1.4.5]{MCR01}). Since $\He_\chi$ is finite over $\He_\chi^{\text{free}}$, it must be Noetherian as well.
\end{proof}

We now suppose that $\chi=(J, \xi)$ is a supersingular character of $\Hea$. Given a finite dimensional, irreducible (right) representation $(V,\rho)$ of $\widetilde{\Omega}_\chi$ on which $T(\mathbb{F}_q)$ acts by the character $\xi$, we let $\chi\otimes V$ be the $\He_\chi$-module whose underlying vector space is $V$ and on which $\Hea$ acts by $\chi$ and $\widetilde{\Omega}_\chi$ acts via $\rho$. Then every simple supersingular $\He$-module is isomorphic to one of the form $(\chi\otimes V)\otimes_{\He_\chi}\He$, and furthermore two pairs $(\chi, V)$ and $(\chi', V')$ give rise to isomorphic simple supersingular $\He$-modules if and only if the two pairs are $\widetilde{\Omega}$-conjugate (cf.\ \cite[Theorem 6.18]{Vign3}). Here we say that $(\chi, V)$ and $(\chi', V')$ are $\widetilde{\Omega}$-conjugate if $\chi'=\chi^{\tilde{\omega}}$ for some $\tilde{\omega}\in\widetilde{\Omega}$ and $V'\cong V^{\tilde{\omega}}$, where $V^{\tilde{\omega}}$ is the representation of $\widetilde{\Omega}_{\chi^{\tilde{\omega}}}=\tilde{\omega}^{-1}\widetilde{\Omega}_\chi\tilde{\omega}$ with underlying vector space $V$ obtained by letting $v \cdot\tilde{\omega}^{-1}\tilde{\omega}'\tilde{\omega}:=v\cdot\tilde{\omega}'$ for all $v\in V$ and $\tilde{\omega}'\in\widetilde{\Omega}_\chi$.

\begin{rem}\label{normal_subgroups} Suppose that $\tilde{\omega}, \tilde{\omega}'\in \widetilde{\Omega}$. Since $\Omega$ is abelian, we have that $\tilde{\omega}\tilde{\omega}'\tilde{\omega}^{-1}=t\tilde{\omega}'$ for some $t\in T(\mathbb{F}_q)$. In particular, we deduce that every subgroup of $\widetilde{\Omega}$ which contains $T(\mathbb{F}_q)$ is automatically normal. This applies in particular to the stabiliser subgroups $\widetilde{\Omega}_\chi$ as above, so that we have $\widetilde{\Omega}_\chi=\widetilde{\Omega}_{\chi^{\tilde{\omega}}}$ and $\He_\chi=\He_{\chi^{\tilde{\omega}}}$ for all $\tilde{\omega}\in\widetilde{\Omega}$.
\end{rem}

We next recall a theorem of Koziol, which describes when simple supersingular modules are trivial in the Gorenstein projective model structure. The statement in \textit{loc.\ cit.} doesn't actually contain (ii) as below, but their proof actually shows the statement as written here.

\begin{thm}[{\cite[Theorem 7.7]{Koz}}]\label{Koziol_ss_thm} Suppose that $\m:=(\chi\otimes V)\otimes_{\He_\chi}\He$ is a simple supersingular $\He$-module as above, and let $\chi$ be parametrized by $(J, \xi)$. Then the following are equivalent:
\begin{enumerate}
\item $\pd_{\He}(\m)<\infty$;
\item $\pd_{\Hea}(\chi)<\infty$; and
\item $\Phi$ is of type $A_1\times\cdots \times A_1$ (possibly empty product) and $S_\xi=S$.
\end{enumerate}
\end{thm}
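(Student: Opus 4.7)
The plan is to prove the equivalences in the order $(i)\iff(ii)$ and then $(ii)\iff(iii)$. The first half is a clean base-change argument, while the second is the essential content of Koziol's theorem in~\cite{Koz}.

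For $(i)\iff(ii)$, I will work through the tower $\Hea\subseteq\He_\chi\subseteq\He$, in which both inclusions give free ring extensions (see the discussion preceding \Cref{Hchi_Gor}). Induction along such an extension is exact and preserves projectives, yielding
\[
\pd_\He(\m)\le \pd_{\He_\chi}(\chi\otimes V)\le \pd_{\Hea}(\chi),
\]
where the second inequality uses the isomorphism $\Res^{\He_\chi}_{\Hea}(\chi\otimes V)\cong \chi^{\oplus\dim V}$. For the reverse direction, I will use the coset decomposition $\He=\bigoplus_{\tilde\omega\in\widetilde\Omega/\widetilde\Omega_\chi}T_{\tilde\omega}\He_\chi$, together with the normality of $\widetilde\Omega_\chi$ in $\widetilde\Omega$ (\Cref{normal_subgroups}), to obtain an isomorphism
\[
\Res^\He_{\He_\chi}(\m)\cong \bigoplus_{\tilde\omega\in\widetilde\Omega/\widetilde\Omega_\chi}(\chi\otimes V)^{\tilde\omega}
\]
of $\He_\chi$-modules. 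Each twist $(\chi\otimes V)^{\tilde\omega}$ has the same projective dimension as $\chi\otimes V$ because conjugation by $T_{\tilde\omega}$ is an algebra automorphism of $\He_\chi$. Since restriction along a free extension of Gorenstein rings is exact, faithful and preserves projectives, finiteness of $\pd_\He(\m)$ forces finiteness of $\pd_{\He_\chi}(\chi\otimes V)$; the same argument applied to $\He_\chi/\Hea$ gives $\pd_\Hea(\chi)<\infty$.

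For $(ii)\iff(iii)$, the main tool is the resolution \eqref{res_Haff3} applied to $\chi$, whose terms are the induced modules $\Res^\Hea_{\He_F}(\chi)\otimes_{\He_F}\Hea$ indexed by faces $F\subseteq\overline{C}$. Since every $\He_F$ is self-injective, all $\He_F$-modules are Gorenstein projective, so finiteness of $\pd_\Hea(\chi)$ can be detected by projectivity of the local restrictions $\Res^\Hea_{\He_F}(\chi)$ over each $\He_F$. The key computation is to identify which characters of $\He_F$, parametrised by $(J\cap S_F,\xi)$, are projective: unwinding the quadratic relation \eqref{quadratic_rel} component-by-component across the irreducible factors of $\He_F$ shows that this happens precisely when $\xi|_{T_s(\mathbb{F}_q)}=1$ for every $s\in S_F$ and $J\cap S_F$ sits in only the twisted trivial or twisted sign position on each irreducible component. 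For $(iii)\Longrightarrow(ii)$ the local projectivity can be globalised into a finite projective resolution of $\chi$ using the tensor-product decomposition of $\Hea$ coming from the $A_1$-factorisation of $\Phi$; for $(ii)\Longrightarrow(iii)$ one argues by contrapositive, showing that the failure of either the root-system or the $S_\xi$ condition produces a face $F$ at which the restriction $\Res^\Hea_{\He_F}(\chi)$ is non-projective and hence, via the resolution \eqref{res_Haff3}, obstructs finiteness of $\pd_\Hea(\chi)$.

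The main obstacle is the last step, namely pinning down the precise combinatorial conditions on $(J,\xi)$ under which all local restrictions $\Res^\Hea_{\He_F}(\chi)$ are simultaneously projective, and then patching these local conditions into the global criterion of $(iii)$. In particular one has to handle carefully the affine reflections $s_{(\alpha_i,1)}$ attached to the highest root of each irreducible component of $\Phi$, since this is where the contrast between affine pieces of type $A_1$ and pieces of higher rank makes the criterion $S_\xi=S$ fail outside of type $A_1\times\cdots\times A_1$.
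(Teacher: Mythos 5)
The paper does not actually give its own proof of this theorem: it is stated as a direct citation to Koziol (\cite[Theorem 7.7]{Koz}), with only the remark that Koziol's proof also yields the equivalence with (ii). So there is no in-paper argument to compare against; I can only assess your outline on its own terms.

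Your strategy for $(i)\Rightarrow(ii)$ and for $(ii)\Leftrightarrow(iii)$ is sound and tracks Koziol's actual route. The restriction argument via the tower $\Hea\subseteq\He_\chi\subseteq\He$, using that $\chi\otimes V$ is a direct summand of $\Res^\He_{\He_\chi}(\m)$ and that restriction along a free extension of Gorenstein rings is exact and preserves projectives, cleanly gives $(i)\Rightarrow(ii)$. The local analysis via the resolution \eqref{res_Haff3} and projectivity of characters at each $\He_F$ is exactly the mechanism behind $(ii)\Leftrightarrow(iii)$; the combinatorial core is \Cref{Koziol_projective_face}, which you could invoke directly rather than re-derive from the quadratic relation. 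So far so good.

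The genuine gap is the forward implication $(ii)\Rightarrow(i)$, and specifically the inequality $\pd_{\He_\chi}(\chi\otimes V)\le\pd_\Hea(\chi)$. You attribute this to induction being exact and projective-preserving, and invoke the restriction isomorphism $\Res^{\He_\chi}_\Hea(\chi\otimes V)\cong\chi^{\oplus\dim V}$; but restriction gives the inequality in the \emph{opposite} direction, namely $\pd_\Hea(\chi)\le\pd_{\He_\chi}(\chi\otimes V)$. Induction from $\Hea$ to $\He_\chi$ does bound $\pd_{\He_\chi}(\Ind^{\He_\chi}_\Hea\chi)$, but $\chi\otimes V$ is not visibly a direct summand of $\Ind^{\He_\chi}_\Hea\chi$: the image $\Omega_\chi$ of $\widetilde\Omega_\chi$ in $\Omega$ is an infinite (and possibly $p$-torsion-laden) abelian group, so there is no Maschke-type splitting of the counit $\Ind\Res(\chi\otimes V)\to\chi\otimes V$. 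To make the step work you need a genuine transfer-of-projective-dimension result across the extension $\Hea\subseteq\He_\chi$ (a Frobenius-type argument, or an $\widetilde\Omega_\chi$-equivariant version of the resolution \eqref{res_Haff3}), which is not supplied. A more economical fix is to bypass $\He_\chi$ entirely on this side: from $\pd_\Hea(\chi)<\infty$ deduce via \Cref{facts_H_F}(iii) that $\Res^\Hea_{\He_F}(\chi^{\tilde\omega})$ is projective for every $\tilde\omega\in\widetilde\Omega$ and $F\subseteq\overline C$ (conjugation by $T_{\tilde\omega}$ is an automorphism of $\Hea$ permuting the $\He_F$), hence that $\Res^\He_{\He_F}(\m)$ is projective for all $F$ by \eqref{n_decomp}, and then appeal to Koziol's local-to-global criterion for $\pd_\He(\m)<\infty$ in terms of projectivity of the restrictions to the parahoric Hecke algebras. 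Even there one must be careful with the passage from $\He_F$ to $\He_F^\dagger$, which is exactly where the technical work in \cite{Koz} lies.
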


We now describe $\Res^\He_{\He_\chi}(\m)$, where $\m=(\chi\otimes V)\otimes_{\He_\chi}\He$ is a simple supersingular $\He$-module. Observe first that for any $\mathfrak{n}\in\Mod(\He_\chi)$ we obtain from the braid relations a direct sum decomposition
$$
\mathfrak{n}\otimes_{\He_\chi}\He= \bigoplus_{\tilde{\omega}\in\widetilde{\Omega}_\chi\backslash \widetilde{\Omega}} \mathfrak{n}\otimes T_{\tilde{\omega}}.
$$
This is in fact a decomposition as an $\He_\chi$-module. Indeed, for all $h\in\Hea$ and all $x\in \mathfrak{n}$ we have $(x\otimes T_{\tilde{\omega}})\cdot h=x(T_{\tilde{\omega}}hT_{\tilde{\omega}}^{-1})\otimes T_{\tilde{\omega}}$. Furthermore, the group $\widetilde{\Omega}_\chi=\widetilde{\Omega}_{\chi^{\tilde{\omega}}}=\tilde{\omega}^{-1}\widetilde{\Omega}_\chi\tilde{\omega}$ naturally acts on $\mathfrak{n}\otimes T_{\tilde{\omega}}$ (cf.\ \Cref{normal_subgroups}). Note that these actions do not depend, up to isomorphism, on the choice of coset representatives. When $\mathfrak{n}=\chi\otimes V$, the summand $\mathfrak{n}\otimes T_{\tilde{\omega}}$ naturally identifies with $\chi^{\tilde{\omega}}\otimes (V')^{\tilde{\omega}}$ as an $\He_\chi$-module. We therefore get a decomposition
\begin{equation}\label{n_decomp}
\Res^\He_{\He_\chi}(\m)\cong \bigoplus_{\tilde{\omega}\in\widetilde{\Omega}_\chi\backslash \widetilde{\Omega}} \chi^{\tilde{\omega}}\otimes V^{\tilde{\omega}}.
\end{equation}
We finish by recalling that to any character $\xi:T(\mathbb{F}_q)\to k^\times$ we may attach the idempotent
\begin{equation}\label{central_idem}
e_\xi=\abs{T(\mathbb{F}_q)}^{-1}\sum_{t\in T(\mathbb{F}_q)}\xi(t)T_{t^{-1}}\in k[T(\mathbb{F}_q)].
\end{equation}
Then we get that $e_\xi T_{\hat{s}}=T_{\hat{s}}e_{s\cdot \xi}$ for all $s\in S$, where $s\cdot \xi$ denotes the lift to $W$ of the $W_0$-action on characters of $T(\mathbb{F}_q)$ by conjugation of the argument. If we let $S_\xi$ be as in \eqref{S_xi}, then we have $s_\alpha\cdot \xi=\xi$ for all $s_\alpha\in S_\xi$ by \cite[Lemma 2.9]{OS19}, and it follows straightforwardly from \eqref{quadratic_rel} that
\begin{equation}\label{quad_exi}
(e_\xi T_{\hat{s}})^2=\begin{cases}-e_\xi T_{\hat{s}} & \text{if $s\in S_\xi$}\\
0 & \text{otherwise}
\end{cases}.
\end{equation}
For later use we record the following immediate consequence of the above:

\begin{lem}\label{exi_commute}
Suppose that $\xi:T(\mathbb{F}_q)\to k^\times$ is a character and let $F\subseteq \overline{C}$ be a face such that $S_F\subseteq S_\xi$. Then $e_\xi$ is central in $\He_F$ and $(e_\xi T_{\hat{s}})^2=-e_\xi T_{\hat{s}}$ for all $s\in S_F$.
\end{lem}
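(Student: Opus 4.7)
The plan is to leverage the two facts that the excerpt sets up immediately before the statement. First I would recall that $\He_F$ is generated as a $k$-algebra by $k[T(\mathbb{F}_q)]$ together with $\{T_{\hat{s}} \mid s\in S_F\}$; this is the finite Hecke algebra analogue of the generation result described for $\Hea$ at the start of \S5.1, applied to the set of simple reflections $S_F$ fixing $F$. Hence to show that $e_\xi$ is central in $\He_F$ it suffices to check it commutes with each of these generators.

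Since $e_\xi\in k[T(\mathbb{F}_q)]$ and the latter is commutative, $e_\xi$ automatically commutes with every element of $k[T(\mathbb{F}_q)]$. For the generators $T_{\hat{s}}$ with $s\in S_F$, I would invoke the identity $e_\xi T_{\hat{s}} = T_{\hat{s}} e_{s\cdot \xi}$ recalled just before the lemma. The hypothesis $S_F\subseteq S_\xi$ guarantees that each such $s$ lies in $S_\xi$, and it has already been observed in the excerpt that $s\cdot\xi=\xi$ for $s\in S_\xi$. Therefore $e_\xi T_{\hat{s}} = T_{\hat{s}} e_\xi$ for all $s\in S_F$, which gives the centrality claim.

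The second assertion is then immediate from equation \eqref{quad_exi}: the first case of that formula applies precisely when $s\in S_\xi$, which holds for every $s\in S_F$ by the hypothesis $S_F\subseteq S_\xi$, and yields $(e_\xi T_{\hat{s}})^2=-e_\xi T_{\hat{s}}$.

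There is no real obstacle here; the lemma is a direct bookkeeping consequence of the commutation rule $e_\xi T_{\hat{s}} = T_{\hat{s}} e_{s\cdot \xi}$ and the quadratic relation \eqref{quad_exi}, both of which have already been established. The only mild subtlety is being explicit that $\He_F$ is generated by $k[T(\mathbb{F}_q)]$ and the $T_{\hat{s}}$ for $s\in S_F$, so that checking the commutation on these generators is enough.
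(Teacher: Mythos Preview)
Your proposal is correct and follows exactly the reasoning the paper intends: the lemma is recorded as an ``immediate consequence'' of the commutation rule $e_\xi T_{\hat{s}} = T_{\hat{s}} e_{s\cdot\xi}$ together with $s\cdot\xi=\xi$ for $s\in S_\xi$ and equation \eqref{quad_exi}, and you have simply spelled this out. The only additional ingredient you make explicit is the generation of $\He_F$ by $k[T(\mathbb{F}_q)]$ and the $T_{\hat{s}}$ for $s\in S_F$, which is indeed what reduces the centrality check to these generators.
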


\subsection{Supersingular characters in $\Ho(\Hea)$}  Our first aim is to understand morphisms between supersingular characters in the homotopy category $\Ho(\Hea)$.  Fix a face $F\subseteq \overline{C}$. Recall that $\He_F$ is a Frobenius algebra and in particular selfinjective (cf.\ \cite[Example 4.1(ii)]{KD21}), and thus the Gorenstein projective model structure on $\Mod(\He_F)$ has all objects cofibrant and fibrant. In particular, we see from \Cref{factor_through} that
\begin{equation}\label{Hom_HoF}
[\m, \mathfrak{n}]_{\He_F}=\Hom_{\Ho(\He_F)}(\m, \mathfrak{n})\cong \Hom_{\He_F}(\m, \mathfrak{n})/{\sim}
\end{equation}
where two morphisms $f$, $g$ are equivalent under $\sim$ if and only if $f-g$ factors through a projective $\He_F$-module. We collect the facts we need about the relations between the Gorenstein projective model structures on the categories $\Mod(\He_F)$, $F\subseteq\overline{C}$, and $\Mod(\Hea)$ below:

\begin{lem}\label{facts_H_F}\begin{enumerate}
\item Let $F\subseteq \overline{C}$. If $\m, \mathfrak{n}\in\Mod(\He_F)$ are simple then
$$
[\m, \mathfrak{n}]_{\He_F}\cong\begin{cases} 0 & \text{if $\m$ is projective}\\ 
\Hom_{\He_F}(\m, \mathfrak{n}) & \text{otherwise}.
\end{cases}
$$
\item The functors $\Res^{\Hea}_{\He_F}$ and $\Res^{\He_{F'}}_{\He_F}$ (for $F'\subseteq \overline{F}\subseteq\overline{C}$) are right Quillen. They and their left adjoints preserve weak equivalences.
\item The `diagonal' functor
$$
\Delta:=(\Res^{\Hea}_{\He_F})_{F\subseteq \overline{C}}: \Mod(\Hea)\to \prod_{F\subseteq \overline{C}}\Mod(\He_F)
$$
is right Quillen. Moreover, the Gorenstein projective model structure on $\Mod(\Hea)$ is the right transfer of the product model structure on $\prod_{F\subseteq \overline{C}}\Mod(\He_F)$ (i.e.\ with all classes of morphisms defined termwise) along $\Delta$. In particular, $\pd_{\Hea}(\m)<\infty$ if and only if $\Res^{\Hea}_{\He_F}(\m)$ is projective for all $F\subseteq \overline{C}$.
\end{enumerate}
\end{lem}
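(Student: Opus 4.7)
My plan is to prove the three parts in order, exploiting self-injectivity of each $\He_F$ and freeness of the relevant inclusions of parahoric Hecke subalgebras. For part (i), since $\He_F$ is self-injective, every $\He_F$-module is both cofibrant and fibrant, and \Cref{factor_through} identifies $[\m,\mathfrak{n}]_{\He_F}$ with the quotient of $\Hom_{\He_F}(\m,\mathfrak{n})$ by the relation of factoring through a projective. When $\m$ is projective, any $f:\m\to\mathfrak{n}$ equals $f\circ\id_\m$ and thus factors through the projective $\m$ itself, giving $[\m,\mathfrak{n}]_{\He_F}=0$. When $\m$ is simple and non-projective, suppose some $f$ factors as $\m\xrightarrow{g}\mathfrak{p}\xrightarrow{h}\mathfrak{n}$ with $\mathfrak{p}$ projective; then $g$ is either zero (forcing $f=0$) or injective by simplicity, and in the injective case self-injectivity of $\He_F$ makes $\mathfrak{p}$ injective, so $g$ splits and $\m$ becomes a summand of $\mathfrak{p}$, contradicting non-projectivity.

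Part (ii) follows by a direct application of \Cref{free_adjunction}: the algebras $\He_F$ are Gorenstein by self-injectivity, $\Hea$ is Gorenstein by the $n$-Gorenstein bound $n\leq d$ obtained from \eqref{res_Haff2} via the arguments of \cite[\S 1]{OS14}, and $\Hea$ is free as a left $\He_F$-module by the discussion preceding \S 2.3 in the excerpt. The lemma then yields that $(-)\otimes_{\He_F}\Hea\dashv \Res^\Hea_{\He_F}$ is Quillen with both total derived functors naturally isomorphic to their naive $\Ho$-counterparts, hence both preserve weak equivalences; the identical reasoning applies to the inclusion $\He_F\subseteq\He_{F'}$ whenever $F'\subseteq\overline{F}$, since $\He_{F'}$ is then free as a left $\He_F$-module.

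For part (iii), right Quillenness of $\Delta$ is immediate from part (ii) since fibrations and trivial fibrations in the product model structure are defined componentwise. To establish the right transfer claim it suffices to verify that the fibrations and weak equivalences of the Gorenstein projective model structure on $\Mod(\Hea)$ coincide with those induced by $\Delta$. Fibrations are the epimorphisms in both structures, and $f$ is epi in $\Mod(\Hea)$ if and only if each $\Res^\Hea_{\He_F}(f)$ is epi, since restriction is exact and faithful. For weak equivalences, \Cref{preserve_reflect_we} implies that the exact right Quillen functor $\Delta$ preserves (respectively reflects) weak equivalences as soon as it preserves (respectively reflects) trivial objects, which reduces the matching of weak equivalences to the ``in particular'' statement.

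This ``in particular'' statement is the main obstacle. The forward direction is straightforward: a finite projective resolution of $\m$ over $\Hea$ restricts to a resolution of $\Res^\Hea_{\He_F}(\m)$ whose terms are projective over $\He_F$ (because $\Hea$ is free as a left $\He_F$-module), giving $\pd_{\He_F}(\Res^\Hea_{\He_F}(\m))<\infty$, which self-injectivity of $\He_F$ upgrades to projectivity. The converse is the technical core and relies on the Ollivier--Schneider-type resolution \eqref{res_Haff3}: if each $\Res^\Hea_{\He_F}(\m)$ is a projective right $\He_F$-module, then tensoring with $\Hea$ (free as a left $\He_F$-module) keeps each term $\Res^\Hea_{\He_F}(\m)\otimes_{\He_F}\Hea$ projective as a right $\Hea$-module, so \eqref{res_Haff3} becomes a projective resolution of $\m$ of length at most $d=r_{ss}$, giving $\pd_\Hea(\m)<\infty$.
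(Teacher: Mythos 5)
Your proof is correct and follows essentially the paper's route, with two modest differences worth flagging. For part (i) the paper reduces to the case $\m\cong\mathfrak{n}$ and appeals to Schur's lemma (one-dimensional $\Hom$ means the homotopy relation is either trivial or everything is zero), then checks that $[\m,\m]=0$ forces $\m$ projective; you instead argue directly that for $\m$ simple and non-projective, any factorisation of a nonzero map through a projective $\mathfrak{p}$ would make $\m$ a summand of the (by self-injectivity also injective) $\mathfrak{p}$, a contradiction. Both arguments hinge on the same fact — a simple module that embeds split into a projective is projective — and yours is marginally more elementary since it avoids Schur, though in practice one still silently uses that a projective over a self-injective ring is injective. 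For part (iii), the paper quotes \cite[Lemma 4.2]{Koz} for the reverse implication of the ``in particular'' statement, whereas you reprove it directly from the resolution \eqref{res_Haff3}: this is essentially inlining Koziol's argument rather than taking a different route, but it does make the proof self-contained and correctly uses that tensoring a projective right $\He_F$-module with the left-free $\Hea$ yields a projective right $\Hea$-module. Part (ii) is identical to the paper's via \Cref{free_adjunction}. One small point to keep honest: in part (ii) you also use that $\He_{F'}$ is free as a left $\He_F$-module for $F'\subseteq\overline{F}$, which the paper does not state explicitly but is a standard consequence of the fact that $W_{F'}$ is a finite Coxeter group with $W_F$ a standard parabolic subgroup, so distinguished coset representatives give a free basis of $\He_{F'}$ over $\He_F$ via the braid relations.
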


\begin{proof}
For (i), we have that $[\m, \mathfrak{n}]_{\He_F}=0=\Hom_{\He_F}(\m, \mathfrak{n})$ if the simple modules $\m$ and $\mathfrak{n}$ are not isomorphic (cf.\ \eqref{Hom_HoF}). If on the other hand $\m\cong \mathfrak{n}$, then by Schur's lemma we have that $\Hom_{\He_F}(\m,\m)\cong k$ is one-dimensional and hence either its quotient $[\m, \m]_{\He_F}$ is zero or the homotopy relation is trivial. But if $[\m, \m]_{\He_F}=0$ then $\id:\m\to\m$ factors through a projective, from which we deduce that $\m$ is projective. We thus get our desired isomorphism.

Part (ii) is immediate from \Cref{free_adjunction}. For (iii), we obtain as a consequence of (ii) that $\Delta$ preserves fibrations and weak equivalences as those are defined termwise in the target category. Since $\Res^{\Hea}_{\He_F}$ is also faithful for each $F\subseteq\overline{C}$ and all objects are fibrant, we in fact get that $\Delta$ reflects fibrations. We are therefore left to show that $\Delta$ has a left adjoint and reflects weak equivalences. For the former, the left adjoint is given by $(\m_F)_{F\subseteq\overline{C}}\mapsto \bigoplus_{F\subseteq\overline{C}}\m_F\otimes_{\He_F}\Hea$. For the latter, by \Cref{preserve_reflect_we} it suffices to show that $\Delta$ reflects trivial objects, i.e.\ that if $\m\in\Mod(\Hea)$ is such that $\Res^{\Hea}_{\He_F}(\m)$ is projective for all $F\subseteq\overline{C}$ then $\pd_{\Hea}(\m)<\infty$. But this is the reverse implication shown in \cite[Lemma 4.2]{Koz}.
\end{proof}

\Cref{facts_H_F}(i) has the following immediate consequence which will be helpful later.

\begin{cor}\label{basic_htpy_lemma}
If $\m, \mathfrak{n}\in\Mod(\He_F)$ are semisimple such that each simple summand of $\m$ is non-projective, then $[\m, \mathfrak{n}]_{\He_F}=\Hom_{\He_F}(\m, \mathfrak{n})$, i.e.\ the homotopy relation on $\Hom_{\He_F}(\m, \mathfrak{n})$ is trivial.
\end{cor}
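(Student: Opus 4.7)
The plan is to reduce the semisimple case directly to the simple case of \Cref{facts_H_F}(i) by decomposing $\m$ and $\mathfrak{n}$ into their simple constituents and exploiting the additivity of $\Hom$ together with the compatibility of ``factoring through a projective'' with pre- and post-composition. Explicitly, I would write $\m=\bigoplus_{i\in I}\m_i$ and $\mathfrak{n}=\bigoplus_{j\in J}\mathfrak{n}_j$ with all $\m_i$, $\mathfrak{n}_j$ simple $\He_F$-modules, and fix the associated canonical inclusions $\iota_i\colon\m_i\hookrightarrow \m$ and projections $\pi_j\colon\mathfrak{n}\twoheadrightarrow \mathfrak{n}_j$.

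Given $f \in \Hom_{\He_F}(\m,\mathfrak{n})$ that factors as $f=h\circ g$ through a projective $\He_F$-module $\mathfrak{p}$, each component $f_{ij}:=\pi_j\circ f\circ \iota_i=(\pi_j\circ h)\circ(g\circ\iota_i)\colon \m_i\to\mathfrak{n}_j$ automatically factors through $\mathfrak{p}$ as well. Since $\m_i$ is simple and, by hypothesis, non-projective, \Cref{facts_H_F}(i) applied to the pair $(\m_i,\mathfrak{n}_j)$ forces $f_{ij}=0$ in $\Hom_{\He_F}(\m_i,\mathfrak{n}_j)$ for every pair $(i,j)$.

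To conclude, I would invoke the fact that a morphism out of the direct sum $\m$ is determined by its restrictions to the summands $\m_i$, while a morphism into $\mathfrak{n}=\bigoplus_j\mathfrak{n}_j$ is determined by the family of its projections onto the summands $\mathfrak{n}_j$ (the canonical map $\mathfrak{n}\to\prod_j\mathfrak{n}_j$ being injective). Hence the vanishing of every $f_{ij}$ forces $f=0$, so that the only map $\m\to\mathfrak{n}$ factoring through a projective is the zero map. In view of \eqref{Hom_HoF}, this exactly says that the homotopy relation on $\Hom_{\He_F}(\m,\mathfrak{n})$ is trivial and yields the identification $[\m,\mathfrak{n}]_{\He_F}=\Hom_{\He_F}(\m,\mathfrak{n})$.

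There is no real obstacle here: each step is either a direct application of \Cref{facts_H_F}(i) or a formal manipulation of direct-sum decompositions. The only minor point is to make sure the argument does not implicitly assume finiteness of the index sets $I$ and $J$, which is taken care of by the universal property of the coproduct $\bigoplus_i\m_i$ and the injectivity of $\mathfrak{n}\hookrightarrow\prod_j\mathfrak{n}_j$, both of which hold regardless of cardinality.
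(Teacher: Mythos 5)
Your proof is correct, and it is exactly the argument the paper has in mind: the paper states this corollary as an ``immediate consequence'' of \Cref{facts_H_F}(i) without writing it out, and your decomposition into simple constituents, componentwise application of \Cref{facts_H_F}(i) to see each $f_{ij}=0$, and reassembly via the universal properties of the (co)product is precisely that implicit argument made explicit.
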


A priori, in order to determine the morphism sets between two supersingular characters $\chi\neq\chi'$ in $\Ho(\Hea)$ we would be required to compute some explicit Gorenstein projective/injective replacements of these characters. However, there are no known techniques to do this that we are aware of. We will instead compute the image of these morphisms under the functor $\Ho(\Delta)$. Our aim is to show the following:

\begin{prop}\label{characters_Haff_Ho} Let $\chi\neq\chi'$ be two supersingular characters of $\Hea$ of infinite projective dimension, given by the pairs $(J,\xi)$ and $(J', \xi')$ respectively. Let $\Phi=\sqcup_{i=1}^r \Phi_i$ be the decomposition of the root system $\Phi$ into irreducible components, ordered such that $\mathrm{rk}(\Phi_1)\geq \cdots\geq \mathrm{rk}(\Phi_r)$.
\begin{enumerate}
\item Suppose that $\Phi_1$ is of rank two and $\Phi_i$ is of rank 1 for all $i\geq 2$, $\xi=\xi'$, $S=S_{\xi}$ and $J\cap S_i=J'\cap S_i$ for all $i\geq 2$. Writing $S_1=\{s, s', s''\}$, assume further that, up to permuting the elements of $S_1$ and $\chi$ and $\chi'$, we have
\begin{align*}
(\chi(T_{\hat{s}}), \chi(T_{\hat{s'}}), \chi(T_{\hat{s''}}))&=(-1, -1, 0), \\
(\chi'(T_{\hat{s}}), \chi'(T_{\hat{s'}}), \chi'(T_{\hat{s''}}))&=(0, -1, 0)
\end{align*}
and $s', s''$ are adjacent in the affine Dynkin diagram of $\Phi$. Then $\Ho(\Delta)([\chi, \chi']_\Hea)$ has dimension 1 over $k$ and doesn't contain any isomorphisms.
\item In every other case $\Ho(\Delta)([\chi, \chi']_\Hea)=0$.
\end{enumerate}
\end{prop}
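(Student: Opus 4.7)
The approach is to use the functor $\Ho(\Delta)\colon \Ho(\Hea) \to \prod_{F \subseteq \overline{C}} \Ho(\He_F)$ of \Cref{facts_H_F} to reduce the computation to a face-by-face analysis. By \Cref{facts_H_F}(i) each target $[\chi|_F, \chi'|_F]_{\He_F}$ is at most one-dimensional, being the quotient of $\Hom_{\He_F}(\chi|_F, \chi'|_F)$ by morphisms factoring through projectives; it is non-zero precisely when $\chi|_F = \chi'|_F$ as characters (equivalently $\xi = \xi'$ and $J \cap S_F = J' \cap S_F$) and $\chi|_F$ is non-projective over $\He_F$.

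The first step will be to dispose of the case $\xi \neq \xi'$: since $k[T(\mathbb{F}_q)] \subseteq \He_F$ for every $F$, every target vanishes and $\Ho(\Delta)([\chi,\chi']_\Hea) = 0$ immediately. Assuming $\xi = \xi'$, I would next use the central idempotent $e_\xi$ (cf.\ \Cref{exi_commute}) to reduce projectivity questions over $\He_F$ to the block $e_\xi \He_F$, which factors along the irreducible components of $S_F \cap S_\xi$. By \eqref{quad_exi} any $A_1$ block of $e_\xi \He_F$ is $2$-dimensional and semisimple, so all its characters are projective; hence $\chi|_F$ can only be non-projective if $S_F$ contains two or more elements of the same irreducible component $\Phi_i$ of $\Phi$.

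Next comes a combinatorial case analysis, organised by how the sets $J$ and $J'$ differ within each $\Phi_i$. If the locus of disagreement meets some $\Phi_i$ in more than a single element, or meets two distinct components, or lies in a component of rank $\geq 3$, one shows no face $F$ can simultaneously satisfy $J \cap S_F = J' \cap S_F$ and yield $\chi|_F$ non-projective. If the disagreement lies entirely in a rank-one component $\Phi_i$, one instead needs a face $F$ with $|S_F \cap \Phi_j| \geq 2$ for some $j \neq i$ while $J \cap S_F = J' \cap S_F$ on every rank-one component; inspection of the affine Dynkin diagrams shows this is possible only in the configuration of (i). All other configurations yield part (ii).

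The main obstacles will arise in treating case (i) itself: showing that the image is exactly one-dimensional and contains no isomorphism. For the former, my plan is to produce a generator explicitly — for instance by lifting a single cohomology class through the Ollivier-Schneider-type resolution \eqref{res_Haff3} — and then to verify that for any element of $[\chi,\chi']_\Hea$ its restrictions to the several eligible faces are all proportional, so that the image is indeed one-dimensional. For the latter, I would exhibit a face $F'$ with $s \in S_{F'}$ and $|S_{F'} \cap S_1| = 2$: there $\chi|_{F'}$ and $\chi'|_{F'}$ are non-projective and non-isomorphic as $\He_{F'}$-modules, hence non-isomorphic in $\Ho(\He_{F'})$; but the image component at $F'$ is forced to be zero since $J \cap S_{F'} \neq J' \cap S_{F'}$, and no tuple with a zero entry between two non-isomorphic non-trivial objects can be an isomorphism in the product category.
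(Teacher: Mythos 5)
The central idea — studying $\Ho(\Delta)([\chi,\chi']_\Hea)$ face by face, using \Cref{facts_H_F}(i) to identify each target $[\chi,\chi']_{\He_F}$ with $k$ or $0$, and disposing of $\xi\neq\xi'$ immediately — is the right one and matches the paper. Your treatment of "no isomorphisms" in case (i) is also essentially what the paper does. But there is a genuine gap in the vanishing argument for case (ii).

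You claim that in case (ii) "no face $F$ can simultaneously satisfy $J\cap S_F=J'\cap S_F$ and yield $\chi|_F$ non-projective," so that every per-face target is already zero. This is false. Take $\Phi$ irreducible of type $A_3$, with $S=\{s_0,s_1,s_2,s_3\}$ forming a $4$-cycle, $S_\xi=S$, $\chi=(-1,-1,0,0)$ and $\chi'=(0,-1,0,0)$. The disagreement is $\{s_0\}$, so by your classification this is part (ii). Yet for $S_F=\{s_1,s_2\}$ one has $J\cap S_F=J'\cap S_F=\{s_1\}$ and, since $\chi(T_{\hat{s_1}})\neq \chi(T_{\hat{s_2}})$ with $s_1,s_2$ adjacent, \Cref{Koziol_projective_face}(ii) gives that $\chi|_F$ is not projective, so $[\chi,\chi']_{\He_F}\cong k\neq 0$. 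The individual target does \emph{not} vanish. What forces the image to be zero at this $F$ is not the per-face $\Hom$ calculation but the \emph{compatibility} of the tuple $(f_F)$ across inclusions of faces: the $f_F$ must agree under restriction, and \Cref{presheaf_morphism}(iii) (applied with $F_3=\{s_1,s_2\}$, $F_1=F$, $F_2=\{s_0,s_1,s_2\}$) propagates the vanishing of $f_{F_2}$ (forced because $\chi|_{F_2}\neq\chi'|_{F_2}$) to $f_{F_1}$. Your proposal never invokes this compatibility, which is the engine of the paper's Lemmas on vanishing; without it the case analysis cannot close.

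Two further points. First, your reduction to the block $e_\xi\He_F$ via \Cref{exi_commute} requires $S_F\subseteq S_\xi$; when $S_F\nsubseteq S_\xi$ the idempotent $e_\xi$ is not central in $\He_F$ and the restriction is \emph{automatically} non-projective by \Cref{Koziol_projective_face}(i), so your statement that $\chi|_F$ "can only be non-projective if $S_F$ contains two or more elements of the same irreducible component" is wrong in that regime (and those faces matter for the compatibility chains). Second, your combinatorial classification covers "disagreement of size $\geq 2$ or in a rank-$\geq 3$ component" and "disagreement entirely in a rank-one component," but omits the remaining possibility of a single element of disagreement lying in a rank-two component — which is precisely the configuration of case (i) — and in any event that possibility cannot be decided by a per-face analysis alone.
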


\begin{rem}\label{char_not_conj} Later we will consider simple supersingular modules for the whole pro-$p$ Iwahori-Hecke algebra $\He$, for which the action of $\widetilde{\Omega}$ on supersingular characters comes into play. Suppose that $\omega\in\Omega$ and fix a lift $\hat{\omega}\in\widetilde{\Omega}$. Recalling that the group $\Omega$ normalises $S$, if $\chi$ is any character of $\Hea$ corresponding to a pair $(J, \xi)$ then $\chi^{\hat{\omega}}$ corresponds to the pair $(J^{\hat{\omega}}, \xi^{\hat{\omega}})$ where $J^{\hat{\omega}}=\omega^{-1} J \omega$ and $\xi^{\hat{\omega}}(t)=\xi(\hat{\omega}t\hat{\omega}^{-1})$ ($t\in T(\mathbb{F}_q)$). It follows in particular that $\abs{J^{\hat{\omega}}} =\abs{J}$. We therefore see that the two supersingular characters of $\Hea$ from \Cref{characters_Haff_Ho}(i) are not conjugate under the $\widetilde{\Omega}$-action. 
\end{rem}

Before proving the Proposition, we first highlight that it immediately implies the following:

\begin{cor}\label{classification_affine} Suppose that $\chi\neq\chi'$ are two supersingular characters of $\Hea$ of infinite projective dimension. Then they are not isomorphic in $\Ho(\Hea)$. In particular, if $\mathbb{G}$ is semisimple and simply-connected then two supersingular $\He$-modules are isomorphic in $\Ho(\He)$ if and only if they are isomorphic in $\Mod(\He)$.
\end{cor}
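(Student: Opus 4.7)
The plan is to deduce both assertions from \Cref{characters_Haff_Ho}, using the fact that the functor $\Ho(\Delta)$ carries isomorphisms in $\Ho(\Hea)$ to isomorphisms in $\prod_{F\subseteq\overline{C}}\Ho(\He_F)$ by functoriality, and hence reflects the non-existence of isomorphisms between two given objects.

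For the first part, I would take an arbitrary morphism $f\in[\chi,\chi']_\Hea$ and examine its components $f_F=\Ho(\Delta)(f)_F$. In case (ii) of \Cref{characters_Haff_Ho}, $\Ho(\Delta)(f)=0$. Since $\chi$ has infinite projective dimension, \Cref{facts_H_F}(iii) produces a face $F\subseteq\overline{C}$ for which $\Res^\Hea_{\He_F}(\chi)$ is not projective, so $\chi$ is non-zero in $\Ho(\He_F)$. The zero morphism $\chi\to\chi'$ in $\Ho(\He_F)$ cannot then be an isomorphism, and hence $f$ itself cannot be an isomorphism in $\Ho(\Hea)$. In case (i), \Cref{characters_Haff_Ho} asserts outright that no element of $\Ho(\Delta)([\chi,\chi']_\Hea)$ is an isomorphism, so the same conclusion holds.

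For the second part, I would invoke the standard fact that when $\mathbb{G}$ is semisimple and simply connected the group $\Omega$ is trivial. Consequently $\widetilde{W}=\widetilde{W}_\aff$ so that $\He=\Hea$, and $\widetilde{\Omega}=T(\mathbb{F}_q)$. For every supersingular character $\chi$ of $\Hea$ this forces $\widetilde{\Omega}_\chi=T(\mathbb{F}_q)$ and $\He_\chi=\Hea$, while the irreducible representation $V$ appearing in the Vign\'eras classification must be one-dimensional with $T(\mathbb{F}_q)$ acting by $\xi$. Thus every simple supersingular $\He$-module is just a supersingular character of $\Hea=\He$, and the claim reduces to the first assertion, applied under the standing infinite projective dimension hypothesis coming from the context of \Cref{second_main_thm}.

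The only subtle point is the case-(ii) verification that the zero map $\chi\to\chi'$ fails to be an isomorphism in some $\Ho(\He_F)$, and this is precisely what \Cref{facts_H_F}(iii) supplies; all the genuine work lies in \Cref{characters_Haff_Ho} itself, so there is no further obstacle to overcome.
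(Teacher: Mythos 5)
Your proof is correct and follows essentially the same route as the paper: apply $\Ho(\Delta)$ and invoke \Cref{characters_Haff_Ho}, then reduce the second assertion to the first via $\He=\Hea$. You spell out two details the paper elides — that in case (ii) the zero map fails to be an isomorphism precisely because \Cref{facts_H_F}(iii) gives a face $F$ with $\Res^\Hea_{\He_F}(\chi)$ non-projective, and that $\Omega$ trivial forces $\widetilde{\Omega}_\chi=T(\mathbb{F}_q)$, $\He_\chi=\Hea$ and $V$ one-dimensional so simple supersingular modules are exactly supersingular characters — but these are exactly the steps the paper's terse phrasing relies on.
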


\begin{proof}
Any isomorphism $f:\chi\to\chi'$ in $\Ho(\Hea)$ would induce an isomorphism $\Ho(\Delta(f))$ in $\prod_{F\subseteq\overline{C}}\Ho(\He_F)$. But it follows immediately from \Cref{characters_Haff_Ho} that this cannot occur. The last part follows immediately because $\He=\Hea$ if $\mathbb{G}$ is semisimple and simply-connected.
\end{proof}

We start preparing for the proof. Suppose that $\chi$ and $\chi'$ are as given in \Cref{characters_Haff_Ho}. Given any $f\in[\chi,\chi']_\Hea$ we obtain a collection $(f_F)\in\prod_{F\subseteq\overline{C}} \Hom_{\He_F}(\chi, \chi')$ by letting each $f_F$ be a choice of representative of $\Ho(\Delta)(f)_F=\Ho(\Res^{\Hea}_{\He_F})(f)$, and this satisfies the property that $\Res^{\He_{F'}}_{\He_F}(f_{F'})\sim f_{F}$ whenever $F'\subseteq\overline{F}$. Note that either $f_F\sim 0$ or otherwise the choice of $f_F$ must be unique, cf.\ \Cref{facts_H_F}(i). Our strategy will be to show that any such collection of morphisms must in fact be zero, up to homotopy, unless we are in the specific circumstances of part (i) of the Proposition. We therefore first need a method for determining whether some $f_F$ arising as above is homotopic to zero. To that end, we have the following elementary but key result. 

\begin{lem}\label{presheaf_morphism} Let $\chi\neq\chi'$ be two characters of $\Hea$, and suppose that we are given $(f_F)\in\prod_{F\subseteq\overline{C}} \Hom_{\He_F}(\chi, \chi')$ such that $\Res^{\He_{F'}}_{\He_F}(f_{F'})\sim f_{F}$ whenever $F'\subseteq\overline{F}$.
\begin{enumerate}
\item If $F'\subseteq\overline{F}$ and $f_{F'}\sim 0$ then $f_F\sim 0$.
\item If $F'\subseteq\overline{F}$ and $f_F\sim 0$ then either $f_{F'}\sim 0$ or $\Res^{\Hea}_{\He_F}(\chi)$ is projective.
\item Assume we have three faces $F_1, F_2, F_3$ with $F_i\subseteq \overline{F_3}$ for $i=1,2$ such that $\Res^{\Hea}_{\He_{F_3}}(\chi)$ is not projective. Then $f_{F_1}\sim 0$ if and only if $f_{F_2}\sim 0$.
\end{enumerate}
\end{lem}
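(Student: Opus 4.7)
The plan is to exploit the very rigid structure of morphism spaces between one-dimensional modules at each face $F$. For any face $F \subseteq \overline{C}$, write $\bar\chi_F := \Res^{\Hea}_{\He_F}(\chi)$ and $\bar\chi'_F := \Res^{\Hea}_{\He_F}(\chi')$. These are one-dimensional $\He_F$-modules, so $\Hom_{\He_F}(\chi, \chi')$ is either zero (when $\bar\chi_F \ncong \bar\chi'_F$) or a one-dimensional $k$-vector space (by Schur's lemma), and in the latter case any $\He_F$-linear map is multiplication by a scalar on the underlying vector spaces. By \Cref{facts_H_F}(i), either $\bar\chi_F$ is projective over $\He_F$ and $[\chi, \chi']_{\He_F} = 0$, or else the canonical surjection $\Hom_{\He_F}(\chi, \chi') \twoheadrightarrow [\chi, \chi']_{\He_F}$ is an isomorphism. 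The crucial upshot is that whenever $\bar\chi_F$ is not projective, the homotopy relation $\sim$ on $\Hom_{\He_F}(\chi, \chi')$ is trivial.

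For (i), I would simply appeal to \Cref{facts_H_F}(ii), which asserts that the restriction functor $\Res^{\He_{F'}}_{\He_F}$ is right Quillen and preserves weak equivalences. Hence $f_{F'} \sim 0$ gives $\Res^{\He_{F'}}_{\He_F}(f_{F'}) \sim 0$, and combining with the compatibility hypothesis $\Res^{\He_{F'}}_{\He_F}(f_{F'}) \sim f_F$ yields $f_F \sim 0$.

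For (ii), I would argue by contrapositive: assume $\bar\chi_F$ is not projective over $\He_F$ and deduce $f_{F'} \sim 0$ directly. By the preliminary observation the homotopy relation is trivial on $\Hom_{\He_F}(\chi, \chi')$, so $f_F \sim 0$ forces the honest equality $f_F = 0$; the compatibility condition then upgrades $\Res^{\He_{F'}}_{\He_F}(f_{F'}) \sim f_F$ to $\Res^{\He_{F'}}_{\He_F}(f_{F'}) = 0$. Since restriction of scalars preserves the underlying $k$-linear map, $f_{F'}$ itself must be zero as a $k$-linear map, hence $f_{F'} \sim 0$ trivially.

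Finally (iii) is a purely formal consequence of (i) and (ii): applying (i) along $F_1 \subseteq \overline{F_3}$ shows that $f_{F_1} \sim 0$ implies $f_{F_3} \sim 0$, whereupon (ii) applied along $F_2 \subseteq \overline{F_3}$ with the non-projectivity assumption on $\bar\chi_{F_3}$ yields $f_{F_2} \sim 0$, and the converse is obtained by swapping the roles of $F_1$ and $F_2$. I do not anticipate any real obstacle — the whole lemma is essentially bookkeeping with Schur's lemma and the elementary observation that restriction of scalars leaves the underlying vector-space map unchanged, with \Cref{facts_H_F} providing the decisive input.
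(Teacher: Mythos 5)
Your proof is correct and follows essentially the same approach as the paper: parts (i) and (iii) coincide exactly, and part (ii) uses the same \Cref{facts_H_F}(i) input with a slightly repackaged contrapositive (you deduce $f_{F'}=0$ directly from faithfulness of restriction of scalars on underlying maps, where the paper first identifies $f_{F'}=\lambda\id$ and restricts that). The substance is identical.
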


\begin{proof}
Part (i) is immediate from \Cref{facts_H_F}(ii) because right Quillen functors preserve right homotopies. For part (ii), assume that $f_F\sim 0$ and $f_{F'}\not\sim 0$. Then by \Cref{facts_H_F}(i) we must have $\Res^{\Hea}_{\He_{F'}}(\chi)=\Res^{\Hea}_{\He_{F'}}(\chi')$ and $f_{F'}=\lambda\id_\chi$ for some $\lambda\in k^\times$. Then we have $\Res^{\He_{F'}}_{\He_F}(\lambda\id_\chi)\sim f_F\sim 0$ from which we deduce that $\Res^{\Hea}_{\He_F}(\chi)$ is projective. For (iii), the condition that $f_{F_1}\sim 0$ implies by (i) that $f_{F_3}\sim 0$. Since $\Res^{\Hea}_{\He_{F_3}}(\chi)$ is not projective it then follows from (ii) that $f_{F_2}\sim 0$ as required. We get the converse by swapping $F_1$ and $F_2$.
\end{proof}

In order to apply \Cref{presheaf_morphism}, we need some criterion to establish when the restriction of a character of $\Hea$ to some $\He_F$ is not projective. Note that projectivity always holds when $F=C$ since $\He_C=k[T(\mathbb{F}_q)]$ is semisimple. In general, we have the next result below which we gathered from the arguments of Koziol in \cite[\S 7]{Koz}. To prepare for its proof, we recall that the \emph{$0$-Hecke algebra} over $k$ associated to a finite Coxeter group $W_{\text{gen}}$ with set of Coxeter generators $S_{\text{gen}}$ is the $k$-algebra with generating set $\{H_s\mid s\in S_{\text{gen}}\}$ such that the braid relations hold and $H_s^2=-H_s$ for all $s\in S_{\text{gen}}$ (cf.\ \cite[Definition 1.1]{Nor79}). The simple modules for such algebras are all characters and are indexed by the subsets $S_{\text{gen}}$, where for $L\subseteq S_{\text{gen}}$ the associated character $\lambda_L$ is defined by $\lambda_L(H_s)=-1$ if $s\in L$ and $\lambda_L(H_s)=0$ otherwise (cf.\ \cite[\S 3]{Nor79}). When $L=\emptyset$, resp.\ $L=S_{\text{gen}}$, we call $\lambda_L$ the trivial, resp.\ sign, character.

\begin{lem}\label{Koziol_projective_face} Let $F\subseteq \overline{C}$ be a face, and let $\chi=(J, \xi)$ be a character of $\Hea$.
\begin{enumerate}
\item If $S_F\nsubseteq S_\xi$ then $\Res^\Hea_{\He_F}(\chi)$ is not projective.
\item Assuming instead that $S_F\subseteq S_\xi$, then the following are equivalent:
\begin{enumerate}
\item $\Res^\Hea_{\He_F}(\chi)$ is not projective;
\item there exists an irreducible component $\He_{F'}$ of $\He_F$ such that $\Res^\Hea_{\He_{F'}}(\chi)$ is not equal to a twisted sign or twisted trivial character; and
\item there exist $s, s'\in S_F$ corresponding to adjacent nodes in the affine Dynkin diagram such that $\chi(T_{\hat{s}})\neq \chi(T_{\hat{s'}})$.
\end{enumerate}
\end{enumerate}
\end{lem}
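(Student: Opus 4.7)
The overall strategy is to exploit the idempotents $e_\xi$ defined in \eqref{central_idem} to reduce questions about projectivity of characters over $\He_F$ to well-studied structures: an explicit rank-one projective cover computation for part (i), and the representation theory of the 0-Hecke algebra introduced in the excerpt for part (ii).

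For part (i), I would first reduce to the rank-one case by choosing $s_0 \in S_F \setminus S_\xi$ and letting $F' \subseteq \overline{F}$ be the vertex with $S_{F'} = \{s_0\}$. Since $\He_F$ is free as a right $\He_{F'}$-module, the restriction of any projective right $\He_F$-module remains projective over $\He_{F'}$, so it suffices to show that $\Res^\Hea_{\He_{F'}}(\chi)$ is not projective. I would do this by constructing its projective cover explicitly: the right ideal $e_\xi \He_{F'}$ is a $2$-dimensional direct summand of the regular $\He_{F'}$-module with $k$-basis $\{e_\xi, e_\xi T_{\hat{s}_0}\}$. Since $\xi|_{T_{s_0}(\mathbb{F}_q)}$ is a non-trivial character of $\mathbb{F}_q^\times$, the sum $\sum_{t\in T_{s_0}(\mathbb{F}_q)}\xi(t)$ vanishes in $k$, and the quadratic relation \eqref{quadratic_rel} yields $(e_\xi T_{\hat{s}_0})^2 = 0$. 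Inspecting the $T(\mathbb{F}_q)$-isotypic decomposition then shows $e_\xi \He_{F'}$ is uniserial with top isomorphic to $\Res^\Hea_{\He_{F'}}(\chi)$ (using $\chi(T_{\hat{s}_0})=0$, since $s_0\notin S_\xi\supseteq J$), so the latter is a proper quotient of its projective cover and hence not projective.

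For part (ii), assume $S_F\subseteq S_\xi$, so by \Cref{exi_commute} the idempotent $e_\xi$ is central in $\He_F$ and $(e_\xi T_{\hat{s}})^2 = -e_\xi T_{\hat{s}}$ for every $s\in S_F$. The character $\chi$ factors through the algebra direct summand $e_\xi \He_F$, so projectivity over $\He_F$ is equivalent to projectivity over $e_\xi \He_F$. These relations together with a dimension count ($\dim_k e_\xi \He_F = |W_F|$) identify $e_\xi \He_F$ with the 0-Hecke algebra of $(W_F, S_F)$ introduced in the excerpt, and this algebra decomposes as a tensor product of the 0-Hecke algebras of the irreducible components of $(W_F, S_F)$; these components correspond bijectively to the irreducible subalgebras $\He_{F'}$ of $\He_F$. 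A simple character of a tensor product of algebras is projective iff each tensor factor is. The equivalence (a) $\iff$ (b) then follows from the classical representation theory of irreducible 0-Hecke algebras: in rank one the algebra is semisimple (so every simple character is projective, and the two simples are automatically $\lambda_\emptyset$ or $\lambda_{S_{F'}}$, i.e.\ twisted trivial or twisted sign), while in rank at least two only $\lambda_\emptyset$ and $\lambda_{S_{F'}}$ are projective among the simple characters. Finally, (b) $\iff$ (c) is a graph-theoretic translation: by connectedness of the Coxeter subgraph of each irreducible component, the values $\chi(T_{\hat{s}})$ for $s\in S_{F'}$ are all equal (i.e.\ $\chi|_{\He_{F'}}$ corresponds to $\lambda_\emptyset$ or $\lambda_{S_{F'}}$) iff no two adjacent nodes take different values.

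The main obstacle is the classical input on projective simple 0-Hecke modules in irreducible rank $\geq 2$ Coxeter systems — this stems from Norton's work \cite{Nor79} but typically requires invoking the PIM structure via descent algebras or ribbon-tableaux combinatorics, and is the genuine non-trivial representation-theoretic content beyond the purely structural identifications. A secondary technical point is verifying that the identification of $e_\xi \He_F$ with the 0-Hecke algebra of $(W_F, S_F)$ respects the description of $\He_F$-characters by pairs $(J\cap S_F, \xi)$ and the parametrisation of 0-Hecke simples by subsets of $S_{F'}$, so that ``twisted trivial or twisted sign'' matches $\lambda_\emptyset$ or $\lambda_{S_{F'}}$ under the correspondence.
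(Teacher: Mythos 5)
Your proof is correct and follows essentially the same route as the paper: for (i), reduce to a rank-one vertex $F'$ with $S_{F'}=\{s_0\}$ and show that $e_\xi\He_{F'}$ is a two-dimensional, non-split, indecomposable projective with top $\chi|_{\He_{F'}}$; and for (ii), factor $\chi$ through the central idempotent $e_\xi$, identify $e_\xi\He_F$ with the 0-Hecke algebra of $(W_F, S_F)$, decompose it as a tensor product over the irreducible components, and invoke Norton's classification of projective simples. The only differences are cosmetic: in (i) you spell out the computation that the paper delegates to \cite[Lemma~7.5]{Koz}, and in (ii) you prove the two biconditionals (a)$\iff$(b) and (b)$\iff$(c) directly rather than the paper's cycle (a)$\Rightarrow$(b)$\Rightarrow$(c)$\Rightarrow$(a).
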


\begin{proof}
To prove non-projectivity, it suffices to find $F\subseteq\overline{F''}\subseteq\overline{C}$ such that $\Res^\Hea_{\He_{F''}}(\chi)$ is not projective, because $\He_F$ is free over $\He_{F''}$. For (i), we pick $F''$ to be given by $S_{F''}=\{s\}$, where $s\in S_F\setminus S_\xi$. That $\Res^\Hea_{\He_{F''}}(\chi)$ is not projective is then the content of the proof of \cite[Lemma 7.5]{Koz}. For (ii), we consider the idempotent $e_\xi$ (cf.\ \eqref{central_idem}). First note that $e_\xi$ is central in $\He_{F}$ and that the algebra $e_\xi\He_F$ is the $0$-Hecke algebra associated to the finite subgroup $W_F\leq W$ generated by $S_F$, cf.\ \Cref{exi_commute}. Also, from $\He_F=e_\xi\He_F\times (1-e_\xi)\He_F$ we see that $\Res^\Hea_{\He_{F}}(\chi)$ factors through the algebra $e_\xi\He_F$. Hence, to show that $\Res^\Hea_{\He_F}(\chi)$ is projective is equivalent to showing that $\Res^\Hea_{e_\xi\He_F}(\chi)$ is projective. By the braid relations, $e_\xi\He_F$ is the tensor product of the $0$-Hecke algebras $e_\xi \He_{F'}$ corresponding to the irreducible components of $\He_F$, and $\Res^\Hea_{e_\xi\He_F}(\chi)$ decomposes similarly as the tensor product of its restrictions to each component.

Now, suppose $\mathcal{C}$ is the 0-Hecke algebra of an irreducible Coxeter system $(W_{\text{gen}}, S_{\text{gen}})$. For a character $\lambda_L$ as above, we denote by $P_L$ its indecomposable projective cover. It is shown in \cite[Theorem 5.1 \& Theorem 5.2]{Nor79} that the trivial and sign characters of $\mathcal{C}$ are projective, and that for any $L\neq \emptyset$, $S_{\text{gen}}$ there is $L'\neq L$ such that $\lambda_L$ is a composition factor of $P_{L'}$. In particular, $\lambda_L$ is never projective for $L\neq \emptyset$, $S_{\text{gen}}$. Going back to our situation, if the restriction of $\chi$ to each irreducible component of $\He_F$ is a twisted sign or trivial character it then follows that $\Res^\Hea_{e_\xi\He_{F}}(\chi)$ is projective, since it is a tensor product of projectives. This shows (a)$\Rightarrow$(b). Moreover, (b)$\Rightarrow$(c) is clear by definition.

Finally, suppose that there exist $s, s'\in S_F$ adjacent in the affine Dynkin diagram with $\chi(T_{\hat{s}})\neq \chi(T_{\hat{s'}})$. Then define $F''$ by $S_{F''}=\{s, s'\}$, and once again note that $\Res^\Hea_{\He_{F''}}(\chi)$ factors through the algebra $e_\xi\He_{F''}$. The latter is isomorphic to a 0-Hecke algebra of type $A_2,$ $B_2$ or $G_2$, and hence it follows from \textit{loc.\ cit.}\ that $\Res^\Hea_{e_\xi\He_{F''}}(\chi)$ is not projective, since it is neither a sign nor a trivial character by assumption. This gives (c)$\Rightarrow$(a).
\end{proof}

Suppose that we have a collection of morphisms as in \Cref{presheaf_morphism} and fix a face $F\subseteq\overline{C}$. Our aim is to show that unless we are in the setting of \Cref{characters_Haff_Ho}(i) and $F$ is defined by $S_F=\{s', s''\}$ (with $s', s''$ as in the statement of the Proposition), then $f_F\sim 0$. We will do this in a sequence of reductions where we will impose more conditions on $\chi$, $\chi'$, $F$ and $\Phi$ as we go, and show that $f_F\sim 0$ whenever those conditions fail. For this, first note that if $\Res^{\Hea}_{\He_F}(\chi)$ is projective then $f_F\sim 0$. Similarly, if there is a face $F'\subseteq \overline{F}$ such that $\Res^{\Hea}_{\He_{F'}}(\chi)\neq\Res^{\Hea}_{\He_{F'}}(\chi')$ then $f_F\sim 0$ by \Cref{facts_H_F}(i) and \Cref{presheaf_morphism}(i). To compute $(f_F)_{F\subseteq \overline{C}}$, we are thus left to consider a face $F$ such that $\Res^{\Hea}_{\He_F}(\chi)$ is not projective and $\Res^{\Hea}_{\He_{F'}}(\chi)=\Res^{\Hea}_{\He_{F'}}(\chi')$ for all $F'\subseteq\overline{F}$. In particular, note that this last condition implies that $\xi=\xi'$ since $k[T(\mathbb{F}_q)]=\He_C\subseteq \He_F$.

Assume from now on that $\xi=\xi'$ and that $F$ is as above. Then the set 
$$
S(\chi, \chi'):=\{s\in S\mid \chi(T_{\hat{s}})\neq \chi'(T_{\hat{s}})\}
$$
must be non-empty since $\chi\neq\chi'$ and $\xi=\xi'$, and we denote its elements by $s_1,\ldots, s_n$. For each $1\leq j\leq n$, we then write $S_{i_j}$ for the irreducible component containing $s_j$. Observe further that our assumptions on $F$, $\chi$ and $\chi'$ imply that for each $j$ there is no face $F'\subseteq\overline{F}$ with $S_{F'}=S_F\cup\{s_j\}$. In other words, we must have $S_F\cap S_{i_j}=S_{i_j}\setminus\{s_j\}$ for every $1\leq j\leq n$ by \eqref{bijection_faces}. In particular, the corresponding components $S_{i_1}, \ldots, S_{i_n}$ are all distinct.

We now begin our proof of \Cref{characters_Haff_Ho} by first showing that $f_F\sim 0$ whenever $S_F\not\subseteq S_\xi$.

\begin{lem}\label{claim1} Suppose $\chi\neq\chi'$ are supersingular characters of $\Hea$ and that $\underline{f}:=(f_F)\in\prod_{F\subseteq\overline{C}} \Hom_{\He_F}(\chi, \chi')$ is as in \Cref{presheaf_morphism}. Assume $F\subseteq \overline{C}$ is a face such that $\Res^{\Hea}_{\He_F}(\chi)$ is not projective and $\Res^{\Hea}_{\He_{F'}}(\chi)=\Res^{\Hea}_{\He_{F'}}(\chi')$ whenever $F'\subseteq \overline{F}$. With the above notation, we have:
\begin{enumerate}
\item for each $1\leq j\leq n$, the component $\Phi_{i_j}$ is not of type $A_1$; and
\item if $S_F\not\subseteq S_\xi$ then $f_F\sim 0$.
\end{enumerate}
\end{lem}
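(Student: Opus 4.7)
The plan is a case analysis: I prove (i) via a direct supersingularity argument that exploits a peculiarity of type-$A_1$ components, and (ii) by exhibiting an explicit factorisation of $f_F$ through a projective $\He_F$-module using the quadratic relation \eqref{quadratic_rel} for $s \in S_F \setminus S_\xi$.

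For (i), I would argue by contradiction. Suppose $\Phi_{i_j}$ is of type $A_1$; then $S_{i_j} = \{s_j, s_j'\}$ with $s_j' \in S_F$, as the relation $S_F \cap S_{i_j} = S_{i_j} \setminus \{s_j\}$ singles out $s_j' \in S_F$. The crucial observation is that in the affine Dynkin diagram of type $\widetilde{A}_1$ both simple affine reflections correspond to the same finite root, so $T_{s_j}(\mathbb{F}_q) = T_{s_j'}(\mathbb{F}_q)$ as subtori of $T(\mathbb{F}_q)$, and therefore $s_j \in S_\xi$ if and only if $s_j' \in S_\xi$. Since $\chi(T_{\hat{s_j}})$ and $\chi'(T_{\hat{s_j}})$ both lie in $\{0, -1\}$ and disagree, at least one is $-1$; without loss of generality $\chi(T_{\hat{s_j}}) = -1$, so $s_j \in J \subseteq S_\xi$, whence $s_j' \in S_\xi$ as well and $S_{i_j} \subseteq S_\xi$. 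Using $\chi|_{\He_F} = \chi'|_{\He_F}$ together with $s_j' \in S_F$, the common value $a := \chi(T_{\hat{s_j'}}) = \chi'(T_{\hat{s_j'}})$ lies in $\{0, -1\}$. The restrictions of $\chi$ and $\chi'$ to the irreducible component of $\Hea$ corresponding to $S_{i_j}$ then read $(-1, a)$ and $(0, a)$ respectively: if $a = 0$, then $\chi'$'s restriction equals $(0, 0)$, which (since $S_{i_j} \subseteq S_\xi$) is the restriction of a twisted trivial character of $\Hea$; if $a = -1$, then $\chi$'s restriction equals $(-1, -1)$, the restriction of a twisted sign character. Either case contradicts the supersingularity of the corresponding character.

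For (ii), I would assume $S_F \not\subseteq S_\xi$ and fix $s \in S_F \setminus S_\xi$. Since all elements of $S(\chi, \chi')$ lie outside $S_F$ by the relation $S_F \cap S_{i_k} = S_{i_k} \setminus \{s_k\}$, we have $s \notin S(\chi, \chi')$, so $\chi(T_{\hat{s}}) = \chi'(T_{\hat{s}}) = 0$ (the common value being $0$ because $s \notin S_\xi$ forces $s \notin J$). Consequently $\chi|_{\He_F} = \chi'|_{\He_F}$ and $f_F = \lambda \cdot \id$ for some $\lambda \in k$; by \Cref{facts_H_F}(i) together with the non-projectivity of $\Res^\Hea_{\He_F}(\chi)$, we have $f_F \sim 0$ if and only if $\lambda = 0$. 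The strategy is to show $f_F$ factors through a projective $\He_F$-module, which via \Cref{factor_through} forces $\lambda = 0$. The key algebraic input is that $\xi|_{T_s(\mathbb{F}_q)}$ is a non-trivial character of the cyclic group $T_s(\mathbb{F}_q) \cong \mathbb{F}_q^\times$ of order coprime to $p$, so character orthogonality yields $\sum_{t \in T_s(\mathbb{F}_q)} \xi(t) = 0$; combined with the quadratic relation \eqref{quadratic_rel} this gives $(e_\xi T_{\hat{s}})^2 = 0$ via \eqref{quad_exi}. Working inside the projective direct summand $e_\xi \He_F$ of $\He_F$ arising from the orthogonal idempotent decomposition $1 = \sum_{\xi'} e_{\xi'}$ of $k[T(\mathbb{F}_q)]$, this square-zero relation allows one to position $\chi|_{\He_F}$ inside a short exact sequence with projective middle term constructed from $e_\xi T_{\hat{s}}$, yielding the required factorisation.

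The main obstacle is part (ii): while the identity $(e_\xi T_{\hat{s}})^2 = 0$ is immediate from character orthogonality, producing the explicit factorisation requires careful bookkeeping with the block decomposition of $\He_F$ relative to the $W_F$-orbits of characters of $T(\mathbb{F}_q)$, tracking how $e_\xi$ and its translates $e_{s\cdot\xi}$ interact with $T_{\hat{s}}$, and verifying that the projective module arising from the orbit containing $\xi$ admits both the injection from $\chi|_{\He_F}$ and the surjection onto $\chi'|_{\He_F}$ whose composition matches $f_F$.
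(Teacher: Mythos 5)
Your argument for (i) is correct and essentially the same as the paper's: both proofs use that the two affine reflections in a rank-one component share a coroot (so $s_j\in S_\xi$ iff $s'_j\in S_\xi$) together with the fact that supersingularity excludes twisted sign or trivial restrictions on that component. The paper derives the values of $\chi, \chi'$ at $T_{\hat{s'_j}}$ from supersingularity first and then contradicts equality on $\He_F$; you impose equality on $\He_F$ first and then contradict supersingularity of one of the characters. These are the same argument with the deductions reorganised.

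Part (ii) has a genuine gap, and the strategy you propose cannot be completed. You correctly reduce to $f_F=\lambda\cdot\id$ on the one-dimensional non-projective module $\chi|_{\He_F}$ and, by \Cref{facts_H_F}(i), to showing $\lambda=0$. But you then propose to show $\lambda=0$ by exhibiting a factorisation of $f_F$ through a projective $\He_F$-module. This is circular: by \Cref{factor_through}, such a factorisation is precisely the definition of $f_F\sim 0$, and no such factorisation can exist when $\lambda\neq 0$, since any diagram $\chi\to P\to\chi$ with $P$ projective and composite $\lambda\id$ would split the first map and realise $\chi|_{\He_F}$ as a direct summand of $P$, contradicting the hypothesis that $\chi|_{\He_F}$ is not projective. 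No manipulation of the idempotents $e_\xi$ or the square-zero element $e_\xi T_{\hat{s}}$ can circumvent this obstruction. The idea you are missing is that $\lambda=0$ cannot be forced by looking at the single algebra $\He_F$; it is forced by the compatibility of $f_F$ with the other components of the family $\underline{f}$. The paper uses \Cref{presheaf_morphism}(iii): taking $F_1=F$, $S_{F_2}=\{s_1,s'\}$ (this face exists by (i), as $s_1$ and $s'$ lie in different irreducible components), and $S_{F_3}=\{s'\}$, one gets $f_{F_2}\sim 0$ because $\chi|_{\He_{F_2}}\neq\chi'|_{\He_{F_2}}$, while $\Res^\Hea_{\He_{F_3}}(\chi)$ is non-projective by \Cref{Koziol_projective_face}(i); since $F_1,F_2\subseteq\overline{F_3}$, \Cref{presheaf_morphism}(iii) then yields $f_F\sim 0$.
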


\begin{proof}
(i) As explained above we have $S_F\cap S_{i_j}=S_{i_j}\setminus\{s_j\}$ for all $1\leq j\leq n$. Suppose $\Phi_{i_j}$ is of type $A_1$ for some $j$. Then $S_{i_j}$ consists of two simple affine reflections $s_j$ and $s'_j$, say, with the same associated coroot. Since $\chi(T_{\hat{s_j}})\neq \chi'(T_{\hat{s_j}})$ we must have $s_j\in S_\xi$ and so $S_{i_j}\subseteq S_\xi$ by definition of $S_\xi$. Without loss of generality, we assume $\chi(T_{\hat{s_j}})=-1$ and $\chi'(T_{\hat{s_j}})=0$. Then by supersingularity of $\chi$ and $\chi'$ we must have $\chi(T_{\hat{s'_j}})=0$ and $\chi'(T_{\hat{s'_j}})=-1$. But we also have that $s'_j\in S_F$ so that $\Res^{\Hea}_{\He_{F}}(\chi)\neq\Res^{\Hea}_{\He_{F}}(\chi')$, contradicting our assumptions.

(ii) Let $s'\in S_F\setminus S_\xi$ and note as above that we necessarily have $s'\neq s_j$ for any $1\leq j\leq n$. We can now define $F_1, F_2\subseteq \overline{F_3}$ by $F=F_1$, $S_{F_2}=\{s_1, s'\}$ and $S_{F_3}=\{s'\}$. Note that such a face $F_2$ exists because of (i). \Cref{Koziol_projective_face}(i) then tells us that $\Res^{\Hea}_{\He_{F_3}}(\chi)$ is not projective. As $\Res^{\Hea}_{\He_{F_2}}(\chi)\neq \Res^{\Hea}_{\He_{F_2}}(\chi')$, \Cref{presheaf_morphism}(iii) gives us that $f_F\sim 0$ as required.
\end{proof}

Next, in the case where instead $S_F\subseteq S_\xi$, we reduce quite thoroughly the possibilities for $\chi$ and $\Phi$ in order to avoid $f_F\sim 0$.

\begin{lem}\label{claim2} Suppose that $\chi$, $\chi'$, $\underline{f}$ and $F\subseteq \overline{C}$ are given as in \Cref{claim1}, and assume further that $S_F\subseteq S_\xi$.
\begin{enumerate}
\item There are $s', s''\in S_F$, adjacent in the affine Dynkin diagram of the root system $\Phi$, such that $\chi(T_{\hat{s'}})\neq \chi(T_{\hat{s''}})$.
\item Assume that one of the following holds:
\begin{enumerate}
\item there are $s', s''\in S_F$ as in (i) such that $s', s''\notin S_{i_j}$ for any $1\leq j\leq n$;
\item $\abs{S_{i_j}}>3$ for some $1\leq j\leq n$; or
\item $n=\abs{S(\chi, \chi')}>1$.
\end{enumerate}
Then $f_F\sim 0$.
\end{enumerate} 
\end{lem}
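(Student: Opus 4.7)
For part (i), the argument is immediate from \Cref{Koziol_projective_face}(ii). The hypotheses inherited from \Cref{claim1} include that $\Res^\Hea_{\He_F}(\chi)$ is not projective, and the present additional assumption $S_F \subseteq S_\xi$ places us in the setting of that result, so the equivalence (a)$\Leftrightarrow$(c) immediately yields the desired adjacent pair $s', s'' \in S_F$ with $\chi(T_{\hat{s'}}) \neq \chi(T_{\hat{s''}})$.

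For part (ii), the common strategy across the three subcases is to apply \Cref{presheaf_morphism}(iii) with $F_2 := F$, with a face $F_3$ defined by $S_{F_3} := \{s', s''\}$ for a suitable adjacent pair $s', s''$ in $S_F$ with $\chi(T_{\hat{s'}}) \neq \chi(T_{\hat{s''}})$, and with a face $F_1$ defined by $S_{F_1} := \{s', s'', s_\ell\}$ for a suitable $s_\ell \in S(\chi, \chi')$. With these choices, \Cref{Koziol_projective_face}(ii)(c) gives that $\Res^\Hea_{\He_{F_3}}(\chi)$ is not projective, while the inclusion $s_\ell \in S_{F_1}$ forces $\Res^\Hea_{\He_{F_1}}(\chi) \neq \Res^\Hea_{\He_{F_1}}(\chi')$. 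The two restrictions then being one-dimensional and non-isomorphic, Schur's lemma gives $\Hom_{\He_{F_1}}(\chi,\chi') = 0$, whence $f_{F_1} \sim 0$ and so $f_F \sim 0$ by \Cref{presheaf_morphism}(iii).

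The remaining work is to verify, in each subcase, that $S_{F_3}$ and $S_{F_1}$ correspond to genuine faces under the bijection \eqref{bijection_faces}, i.e.\ that $S_{F_\bullet} \cap S_i \subsetneq S_i$ for every irreducible component $S_i$. Recall from \Cref{claim1}(i) that $|S_{i_j}| \geq 3$ for each $j$. In case (a), $s', s''$ are given in a component $S_k$ with $k \neq i_j$ for all $j$, so setting $\ell := 1$ places $\{s', s''\}$ and $\{s_1\}$ in distinct components and validity is automatic. In cases (b) and (c), we apply part (i) to obtain $s', s''$ in some component $S_k$; if $k \neq i_j$ for all $j$ we reduce to case (a), so we may assume $k = i_j$ for some $j$. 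In case (c), the hypothesis $n > 1$ allows us to choose $\ell \neq j$, placing $s_\ell$ in a component distinct from $S_k$ and yielding validity directly. In case (b), fix $j_0$ with $|S_{i_{j_0}}| > 3$: if $j \neq j_0$ we take $\ell = j_0$ and validity is clear, while if $j = j_0$ we also take $\ell = j_0$, and the inequality $|S_{i_{j_0}}| > 3$ is precisely what is needed to secure $\{s', s'', s_{j_0}\} \subsetneq S_{i_{j_0}}$.

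The main obstacle is purely combinatorial bookkeeping across these subcases; the bound $|S_{i_j}| \geq 3$ from \Cref{claim1}(i), together with the specific extra hypothesis in (a), (b), or (c), provides exactly the freedom to construct valid faces $F_1$ and $F_3$ in every configuration. No deeper obstruction is expected.
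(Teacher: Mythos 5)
Your proof is correct and follows essentially the same route as the paper's argument. The only difference is a cosmetic relabelling: you set $F_2 := F$ and build an auxiliary face $F_1$, whereas the paper sets $F_1 := F$ and builds an auxiliary $F_2$; since \Cref{presheaf_morphism}(iii) is symmetric in $F_1$ and $F_2$, this is immaterial. Your three-way case analysis to guarantee that $S_{F_1} = \{s',s'',s_\ell\}$ is a genuine face under the bijection \eqref{bijection_faces} matches the paper's choice of $j$ in each subcase, and your appeal to Schur's lemma for the vanishing of $\Hom_{\He_{F_1}}(\chi,\chi')$ is equivalent to the paper's citation of \Cref{facts_H_F}(i).
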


\begin{proof}
The existence of $s', s''\in S_F$ as in (i) follows from \Cref{Koziol_projective_face}(ii) because $\Res^{\Hea}_{\He_{F}}(\chi)$ is not projective by assumption. For (ii), we let $F_1, F_2\subseteq \overline{F_3}$ with $F=F_1$, $S_{F_2}=\{s_j, s', s''\}$ and $S_{F_3}=\{s', s''\}$, for a suitably chosen $1\leq j\leq n$ so that such an $F_2$ exists. In case (a), any choice of $j$ works. In case (b), we pick $j$ such that $\abs{S_{i_j}}>3$. In case (c), we pick $j$ such that $s', s''\notin S_{i_j}$. Either way, as $\Res^{\Hea}_{\He_{F_2}}(\chi)\neq\Res^{\Hea}_{\He_{F_2}}(\chi')$, it follows from \Cref{facts_H_F}(i) and \Cref{presheaf_morphism}(iii) that we only need to show that $\Res^{\Hea}_{\He_{F_3}}(\chi)$ is not projective, which itself holds by \Cref{Koziol_projective_face}(ii).
\end{proof}

We now strengthen further our assumptions on $\chi$, $\chi'$ and the root system. We write $\Phi=\sqcup_{i=1}^r\Phi_i$ as before and assume that $S_1=\{s, s', s''\}\subseteq S_\xi$, where $\chi(T_{\hat{s'}})\neq\chi(T_{\hat{s''}})$ and $s', s''$ are adjacent in the affine Dynkin diagram of $\Phi$. Furthermore, we assume that $s\in S_1$ is the unique simple affine reflection for which $\chi(T_{\hat{s}})\neq\chi'(T_{\hat{s}})$. By \Cref{claim1} and \Cref{claim2}(ii), we have $f_F\sim 0$ for every $F\subseteq\overline{C}$ unless these assumptions hold. With these strengthened assumptions, we now finally show that $f_F\sim 0$ for every $F\subseteq\overline{C}$ unless the hypotheses of \Cref{characters_Haff_Ho}(i) are satisfied. 

\begin{lem}\label{claim3} Let $\chi$, $\chi'$, $\underline{f}$ be given as in \Cref{claim2}, and suppose that $\Phi$, $\chi$ and $\chi'$ satisfy the assumptions above. Let $F\subseteq \overline{C}$ be any face with $S_F\cap S_1=\{s', s''\}$. 
\begin{enumerate}
\item If $S_\xi\neq S$ then $f_F\sim 0$.
\item If $r>1$ and $\Phi_i$ is not of type $A_1$ for some $i\geq 2$, then $f_F\sim 0$.
\end{enumerate} 
\end{lem}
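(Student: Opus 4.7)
The plan is to handle both parts by the same template: applying \Cref{presheaf_morphism}(iii) with $F_1=F$, an auxiliary face $F_2$ for which $f_{F_2}\sim 0$ is established via \Cref{claim1}(ii) or \Cref{claim2}(ii)(a), and a third face $F_3$ lying in the closure of both $F$ and $F_2$ for which $\Res^\Hea_{\He_{F_3}}(\chi)$ is non-projective. A natural choice throughout will be the face with $S_{F_3}=\{s',s''\}$: the inclusion $S_{F_3}\subseteq S_F$ is automatic from the assumption $S_F\cap S_1=\{s',s''\}$, and \Cref{Koziol_projective_face}(ii)(c) supplies the non-projectivity of $\Res^\Hea_{\He_{F_3}}(\chi)$ because $s'$ and $s''$ are adjacent with $\chi(T_{\hat{s'}})\neq\chi(T_{\hat{s''}})$. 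Producing an appropriate $F_2$ containing $\{s',s''\}$ will therefore finish each part.

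For (i), the first step is to pick $s'''\in S\setminus S_\xi$; since $S_1\subseteq S_\xi$ under our standing assumptions, $s'''$ must lie in some component $S_j$ with $j\geq 2$. I would then take $F_2$ to be the face with $S_{F_2}=\{s',s'',s'''\}$ and verify that it satisfies the hypotheses of \Cref{claim1}. The key points will be that $\Res^\Hea_{\He_{F_2}}(\chi)$ is non-projective by \Cref{Koziol_projective_face}(i), and that $\Res^\Hea_{\He_{F'}}(\chi)=\Res^\Hea_{\He_{F'}}(\chi')$ for every $F'\subseteq\overline{F_2}$, because no such $F'$ can contain $s$ (otherwise $S_{F'}\cap S_1=S_1$, violating \eqref{bijection_faces}) and $s$ is the unique simple affine reflection where $\chi$ and $\chi'$ differ. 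Then \Cref{claim1}(ii) yields $f_{F_2}\sim 0$, and \Cref{presheaf_morphism}(iii) applied to $F$, $F_2$ and $F_3$ concludes.

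For (ii), I would choose $j\geq 2$ with $\Phi_j$ not of type $A_1$, so $|S_j|\geq 3$, and then produce adjacent $t_1,t_2\in S_j$ with $\chi(T_{\hat{t_1}})\neq\chi(T_{\hat{t_2}})$. Such a pair exists because supersingularity forbids $\chi$ from restricting to a twisted trivial or twisted sign character on the $S_j$-component, so $\chi$ takes two different values on $S_j$, and a short path argument along the (connected) affine Dynkin diagram of $S_j$ then produces an adjacent pair with distinct values. Setting $S_{F_2}=\{s',s'',t_1,t_2\}$, the same verification as in (i) shows that $F_2$ meets the hypotheses of \Cref{claim1}, with non-projectivity of $\Res^\Hea_{\He_{F_2}}(\chi)$ now coming from \Cref{Koziol_projective_face}(ii)(c). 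If $S_{F_2}\nsubseteq S_\xi$ I would conclude by \Cref{claim1}(ii); otherwise $S_{F_2}\subseteq S_\xi$ and I would invoke \Cref{claim2}(ii)(a), whose hypotheses are met because $S(\chi,\chi')=\{s\}$ forces the sole relevant component to be $S_{i_1}=S_1$, and the pair $(t_1,t_2)$ lies in $S_j$ with $j\neq 1$. In both situations $f_{F_2}\sim 0$, and \Cref{presheaf_morphism}(iii) closes the argument. The only mildly delicate step is the extraction of $(t_1,t_2)$, but this amounts to a short combinatorial observation on the affine Dynkin diagram.
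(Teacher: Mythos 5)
Your treatment of part (i) is correct and essentially follows the paper's argument, merely packaging the inner reduction (which the paper carries out by hand with the auxiliary faces $\{t,s,s'\}$ and $\{t,s'\}$) as a citation to \Cref{claim1}(ii). The only blemish is the parenthetical ``otherwise $S_{F'}\cap S_1=S_1$, violating \eqref{bijection_faces}'': the correct reason that no $F'\subseteq\overline{F_2}$ contains $s$ is simply that $S_{F'}\subseteq S_{F_2}=\{s',s'',s'''\}$ and $s\notin S_{F_2}$, not the bijection argument you sketch. This does not affect the validity of the conclusion.

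There is, however, a genuine gap in your proof of part (ii). You claim that supersingularity of $\chi$ forces it to take two different values on $S_j$, and hence yields an adjacent pair $t_1, t_2 \in S_j$ with $\chi(T_{\hat{t_1}})\neq\chi(T_{\hat{t_2}})$. This is only true when $S_j\subseteq S_\xi$. If $S_j\nsubseteq S_\xi$, the supersingularity condition on component $j$ is vacuously satisfied even when $\chi$ vanishes on all of $S_j$ (indeed one can have $J\cap S_j=\emptyset$ with $S_j\nsubseteq S_\xi$, which is neither a twisted trivial nor a twisted sign restriction), and then no such pair exists, so your construction cannot even begin. The paper sidesteps this precisely by first invoking part (i): since any failure of $S_\xi=S$ is already covered by (i), one may assume $S_\xi=S$ in (ii), which forces $S_j\subseteq S_\xi$ for every $j$ and validates your extraction of $(t_1,t_2)$. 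Your subsequent case split on whether $S_{F_2}\subseteq S_\xi$ is correct as far as it goes, but it comes too late to rescue the missing pair. The fix is a one-line reduction at the start of (ii): by part (i), assume $S_\xi=S$.
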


\begin{proof}
For (i), pick $t\in S\setminus S_\xi$ and note that by assumption $t\notin S_1$. Define $F_1, F_2\subseteq \overline{F_3}$ by $F_1=F$,  $S_{F_2}=\{t,s', s''\}$ and $S_{F_3}=\{s', s''\}$. By \Cref{Koziol_projective_face}(ii), $\Res^{\Hea}_{\He_{F_3}}(\chi)$ is not projective so that by applying \Cref{presheaf_morphism}(iii) we are reduced to showing that $f_{F_2}\sim 0$. For this, define $F'_1,  F'_2\subseteq \overline{F'_3}$ by $F'_1=F_2$, $S_{F'_2}=\{t, s, s'\}$ and $S_{F'_3}=\{t, s'\}$. \Cref{Koziol_projective_face}(i) gives that $\Res^{\Hea}_{\He_{F'_3}}(\chi)$ is not projective. Since $\Res^{\Hea}_{\He_{F'_2}}(\chi)\neq\Res^{\Hea}_{\He_{F'_2}}(\chi')$,  \Cref{facts_H_F}(i) and another application of \Cref{presheaf_morphism}(iii) imply $f_{F_2}=f_{F'_1}\sim 0$ as required.

For (ii), we may therefore assume that $S_\xi=S$. Suppose that $\Phi_i$ is not of type $A_1$ for some $i\geq 2$. Since $\chi$ is supersingular we may find $t',t''\in S_i$, adjacent in the affine Dynkin diagram, such that $\chi(T_{\hat{t'}})\neq\chi(T_{\hat{t''}})$. Again appealing to \Cref{Koziol_projective_face}(ii), we have that $\Res^{\Hea}_{\He_{F_3}}(\chi)$ is not projective where $S_{F_3}=\{t', t''\}$. Define $F_1, F_2\subseteq \overline{F_3}$ by $S_{F_1}=\{s', s'', t', t''\}$ and $S_{F_2}=\{s, t', t''\}$. Once again \Cref{presheaf_morphism}(iii) gives that $f_{F_1}\sim 0$ since $\Res^{\Hea}_{\He_{F_2}}(\chi)\neq\Res^{\Hea}_{\He_{F_2}}(\chi')$. Hence $f_{\tilde{F}}\sim 0$ by \Cref{presheaf_morphism}(i), where $S_{\tilde{F}}=\{s', s''\}$. We already saw that $\Res^{\Hea}_{\He_{\tilde{F}}}(\chi)$ is not projective and thus \Cref{presheaf_morphism}(ii) implies that $f_F\sim 0$.
\end{proof}

Finally, in order to deal with part (i) of \Cref{characters_Haff_Ho} and for later use we will need the following two lemmas.

\begin{lem}\label{tensor_lemma}
Suppose that $R_1, R_2$ are two Gorenstein $k$-algebras such that the tensor product algebra $R=R_1\otimes_k R_2$ is also Gorenstein. Let $\m_1, \m_1'\in \Mod(R_1)$ and let $\m_2\in \Mod(R_2)$ with $\pd_{R_2}(\m_2)<\infty$. If $\m_1\cong \m_1'$ in $\Ho(R_1)$, then $\m_1\otimes_k\m_2\cong \m'_1\otimes_k\m_2$ in $\Ho(R)$.
\end{lem}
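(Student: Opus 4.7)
The plan is to show that the functor $F := (-)\otimes_k \m_2 : \Mod(R_1) \to \Mod(R)$ preserves weak equivalences in the Gorenstein projective model structures. Once this is established, $F$ induces a functor $\Ho(F) : \Ho(R_1) \to \Ho(R)$, and since any functor preserves isomorphisms, the conclusion $\m_1\otimes_k \m_2 \cong \m_1'\otimes_k \m_2$ in $\Ho(R)$ will follow from $\m_1 \cong \m_1'$ in $\Ho(R_1)$.

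To show that $F$ preserves weak equivalences, I would appeal to \Cref{preserve_reflect_we}(i). First note that $F$ is exact, since $k$ is a field and so tensoring over $k$ is exact. Therefore it suffices to verify that $F$ preserves trivial objects, i.e.\ that if $\mathfrak{m}\in\Mod(R_1)$ has finite projective dimension then so does $F(\mathfrak{m}) = \mathfrak{m}\otimes_k \m_2$ as an $R$-module.

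For this, fix finite projective resolutions $P_\bullet \to \mathfrak{m}$ over $R_1$ and $Q_\bullet \to \m_2$ over $R_2$ of lengths $n$ and $m$ respectively (the latter exists by hypothesis). Consider the double complex $P_\bullet \otimes_k Q_\bullet$ of $R$-modules. Since $k$ is a field, both $P_\bullet \otimes_k \m_2$ and $\mathfrak{m}\otimes_k Q_\bullet$ are exact away from degree zero, so the standard spectral sequence argument (or direct inspection via the Künneth formula) shows that the total complex of $P_\bullet \otimes_k Q_\bullet$ is a resolution of $\mathfrak{m}\otimes_k \m_2$ of length $n+m$. It remains to check that each $P_i \otimes_k Q_j$ is projective over $R$. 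Writing $P_i$ as a direct summand of a free module $R_1^{(I)}$ and $Q_j$ as a direct summand of $R_2^{(J)}$, we obtain that $P_i \otimes_k Q_j$ is a direct summand of
\[
R_1^{(I)} \otimes_k R_2^{(J)} \cong (R_1 \otimes_k R_2)^{(I\times J)} = R^{(I\times J)},
\]
hence is projective over $R$. Thus $\pd_R(\mathfrak{m}\otimes_k \m_2) \leq n+m < \infty$ as required.

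I do not expect any serious obstacle here: the exactness of $F$ is free from working over a field, and the key projectivity verification is a standard consequence of the fact that $R = R_1 \otimes_k R_2$. The only mild subtlety is the need to invoke the hypothesis that $R$ is Gorenstein, which is used implicitly so that the notion of ``trivial object'' in $\Mod(R)$ coincides with the class of modules of finite projective (equivalently, injective) dimension, allowing the application of \Cref{preserve_reflect_we}(i).
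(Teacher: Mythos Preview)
Your proof is correct and follows essentially the same approach as the paper: reduce to showing $(-)\otimes_k \m_2$ preserves trivial objects via \Cref{preserve_reflect_we}(i), then use $\pd_{R_2}(\m_2)<\infty$. The paper's proof simply declares the last step ``clear'', whereas you spell out the construction of the finite projective resolution via the total complex of $P_\bullet\otimes_k Q_\bullet$; this additional detail is accurate and welcome.
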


\begin{proof}
It suffices to show that the exact functor $-\otimes_k \m_2:\Mod(R_1)\to\Mod(R)$ preserves weak equivalences, since then $\Ho(-\otimes_k \m_2):\Ho(R_1)\to \Ho(R)$ is well-defined and preserves isomorphisms. By \Cref{preserve_reflect_we}(i), it suffices to show that this functor preserves trivial objects, i.e.\ that if $\mathfrak{e}\in\Mod(R_1)$ has finite projective dimension over $R_1$ then $\mathfrak{e}\otimes_k \m_2$ has finite projective dimension over $R$. But since $\pd_{R_2}(\m_2)<\infty$ by assumption, this is clear.
%
\end{proof}

\begin{lem}\label{Oll_Schn_rank2} Suppose that $\Phi$ is irreducible of rank 2 and let $\chi=(\xi, J)$ be a supersingular character with $S_\xi=S$.
\begin{enumerate}
\item The augmentation map $\bigoplus_{F\in\mathscr{F}_0} \chi\otimes_{\He_F}\Hea\to \chi$ in \eqref{res_Haff3} is a trivial fibration.
\item Let $\m\in\Mod(\He)$ be a simple supersingular module such that $\Res^\He_{\Hea}(\m)$ contains $\chi$. If $p\nmid|\Omega_{\text{tor}}|$ then the augmentation $\bigoplus_{F\in\mathscr{F}_{(0)}} \Res^\He_{\He^\dagger_F}(\m)\otimes_{\He^\dagger_F}(\epsilon_F)\He\to \m$ from \eqref{res_Oll_Sch} is a trivial fibration.
\end{enumerate}
\end{lem}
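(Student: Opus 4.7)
My plan is to show in both parts that the augmentation map is an epimorphism whose kernel has finite projective dimension, which immediately implies that it is a trivial fibration in the Gorenstein projective model structure. In both cases, the augmentation is surjective by exactness of the respective resolution, so it is automatically a fibration. To bound the projective dimension of its kernel, I observe that this kernel fits into a short exact sequence coming from the two remaining higher-degree terms of the resolution; thus it suffices in each case to show that those two terms have finite projective dimension over the ambient algebra.

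For part (i), each higher term has the form $\bigoplus_{F \in \mathscr{F}_i} \chi \otimes_{\He_F} \Hea$ for $i \in \{1, 2\}$, and since $\Hea$ is free over $\He_F$, such a summand is projective over $\Hea$ whenever $\Res^\Hea_{\He_F}(\chi)$ is projective over $\He_F$. For $F = C$ (the unique element of $\mathscr{F}_2$) this is automatic as $\He_C = k[T(\mathbb{F}_q)]$ is semisimple. For $F \in \mathscr{F}_1$ an edge, $S_F = \{s\}$ consists of a single element and so $\He_F$ has a unique irreducible component (itself); under the assumption $S_\xi = S$ the restricted character $\Res^\Hea_{\He_F}(\chi)$ is a twisted trivial or twisted sign character according to whether $\chi(T_{\hat{s}})$ equals $0$ or $-1$, hence projective by \Cref{Koziol_projective_face}(ii).

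For part (ii), since $\He$ is free over $\He_F^\dagger$, the same reduction shows that it suffices to prove $\Res^\He_{\He_F^\dagger}(\m)$ has finite projective dimension over $\He_F^\dagger$ for every $F \in \mathscr{F}_{(1)} \cup \mathscr{F}_{(2)}$. For $F = C$, the algebra $\He_C^\dagger$ identifies with the group algebra $k[\widetilde{\Omega}]$; under the assumption $p \nmid \abs{\Omega_{\text{tor}}}$, together with the fact that $p \nmid \abs{T(\mathbb{F}_q)}$, the torsion subgroup of $\widetilde{\Omega}$ has order coprime to $p$, so $k[\widetilde{\Omega}]$ is a finite skew Laurent polynomial extension of the semisimple algebra $k[\widetilde{\Omega}_{\text{tor}}]$, in the spirit of the proof of \Cref{Hchi_Gor}, and thus has finite global dimension.

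The main obstacle is the edge case in part (ii). My plan is first to use the decomposition $\Res^\He_\Hea(\m) \cong \bigoplus_{\tilde{\omega} \in \widetilde{\Omega}_\chi \backslash \widetilde{\Omega}} \chi^{\tilde{\omega}} \otimes V^{\tilde{\omega}}$ combined with the observation that each $\chi^{\tilde{\omega}}$ still satisfies $S_{\xi^{\tilde{\omega}}} = S$ (since $\Omega$ normalises $S$) to apply the argument of part (i) and conclude that $\Res^\He_{\He_F}(\m)$ is projective over $\He_F$. Then I will exploit the description of $\He_F^\dagger$ as a twisted crossed product of $\He_F$ by the stabiliser $\Omega_F$ of $F$ in $\Omega$: since $\Omega_F$ is finitely generated abelian with torsion subgroup of order dividing $\abs{\Omega_{\text{tor}}}$, hence coprime to $p$, finite $\He_F$-projective dimension propagates to finite $\He_F^\dagger$-projective dimension via a Maschke-type argument for the torsion part of $\Omega_F$ combined with the standard iterated skew Laurent polynomial argument for its free part.
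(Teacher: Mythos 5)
Your overall strategy is the same as the paper's: reduce to showing that the higher-degree terms of the relevant Ollivier--Schneider resolution have finite projective dimension, and obtain this from the fact that each such term is induced (over a free extension) from a module with finite projective dimension over the smaller algebra. Part (i) is essentially identical to the paper's argument; the paper phrases it as ``$|S_F| \le 1$ for positive-dimensional $F$'' which covers the vertex $C$ and the edges in one stroke.

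The substantive difference is in part (ii). The paper proves that $\Res^\He_{\He_F}(\m)$ is actually \emph{projective} over $\He_F$ for all positive-dimensional $F$ (for $F=C$ this is automatic since $\He_C = k[T(\mathbb{F}_q)]$ is semisimple, so no separate case split is needed), and then simply invokes \cite[Lemma 3.4]{Koz}, which says precisely that under $p \nmid |\Omega_{\mathrm{tor}}|$, projectivity over $\He_F$ and over $\He_F^\dagger$ coincide. Your proposal instead re-derives the content of that lemma in two pieces (a group-algebra argument for $F=C$ and a crossed-product/Maschke argument for the edges), which is a valid but noticeably longer route. One point you should make explicit if you write this out: the terms of \eqref{res_Oll_Sch} are $\Res^\He_{\He^\dagger_F}(\m)\otimes_{\He^\dagger_F}(\epsilon_F)\He$ with the orientation twist, and you need $(\epsilon_F)\He$ to be free as a left $\He_F^\dagger$-module (this is \cite[Lemma 3.5]{Koz}), not just $\He$ itself; as written you silently elide the twist. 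Also, when invoking the crossed-product decomposition $\He_F^\dagger = \He_F * \Omega_F$, it is cleaner to first adjoin the free part of $\Omega_F$ (iterated skew Laurent, using the standard two-term resolution $0 \to N \otimes_A A[x^{\pm 1};\sigma] \to N \otimes_A A[x^{\pm 1};\sigma] \to N \to 0$) and only then the torsion part (Maschke, using $\Omega_{F,\mathrm{tor}} \subseteq \Omega_{\mathrm{tor}}$ to get $p$-coprimality); your sketch mentions both ingredients but not the order, which matters for keeping the intermediate algebras well-defined.
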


\begin{proof}
It suffices to show that all the terms of higher degrees in the relevant complexes have finite projective dimension. For (i), since $\Phi$ has rank 2 we have that $\Res^{\Hea}_{\He_{F}}(\chi)$ is projective for any face $F$ of positive dimension because $S=S_\xi$ and $|S_F|\leq 1$, cf.\ \Cref{Koziol_projective_face}(ii). For (ii), we similarly get that $\Res^{\Hea}_{\He_{F}}(\chi^\omega)$ is projective for any face $F$ of positive dimension and any $\omega\in\Omega$. Thus we deduce from \eqref{n_decomp} that $\Res^{\He}_{\He_{F}}(\m)$ is projective for all such $F$ as well. By our condition on $p$, this is equivalent to saying that $\Res^{\He}_{\He^\dagger_{F}}(\m)$ has finite projective dimension for all $F$ of positive dimension by \cite[Lemma 3.4]{Koz} and we are done since $(\epsilon_F)\He$ is free for all $F\subseteq \overline{C}$ (cf.\ \cite[Lemma 3.5]{Koz}).
\end{proof}

\begin{proof}[Proof of \Cref{characters_Haff_Ho}] 
Given a map map $f:\chi\to \chi'$ in $\Ho(\Hea)$, set $f_F$ to be any lift of $\Ho(\Delta)(f)_F=\Ho(\Res^{\Hea}_{\He_F})(f)$ to $\Hom_{\He_F}(\chi, \chi')$. \Cref{claim1}, \Cref{claim2} and \Cref{claim3} imply that if we're not in the precise situation of (i), then $f_F\sim 0$ for all $F\subseteq\overline{C}$. This shows (ii). In the setting of (i), we still have that $f_{F'}\sim 0$ for all faces $F'$ for which $S_{F'}\cap S_1\neq\{ s', s''\}$. If on the other hand $S_{F'}\cap S_1=\{ s', s''\}$ then $F'\subseteq\overline{F_1}$, where $S_{F_1}=\{s', s''\}$, so that $\Res^{\He_{F'}}_{\He_{F_1}}(f_{F'})\sim f_{F_1}$. As $\Res^{\Hea}_{\He_{F_1}}(\chi)=\Res^{\Hea}_{\He_{F_1}}(\chi')$ is not projective, cf.\ \Cref{Koziol_projective_face}(ii), it follows from \Cref{facts_H_F}(i) that $\Res^{\He_{F'}}_{\He_{F_1}}(f_{F'})=f_{F_1}$ and thus that $\Ho(\Delta)(f)$ is uniquely determined by $f_{F_1}$. This gives that $\dim_k\Ho(\Delta)([\chi, \chi']_\Hea)\leq 1$ by \Cref{facts_H_F}(i) again. Moreover, we see that $\Ho(\Delta)(f)$ is never an isomorphism. Indeed, swapping $s'$ and $s''$ if necessary, we may assume that $s$ and $s'$ correspond to adjacent nodes in the affine Dynkin diagram and we let $F''\subseteq \overline{C}$ be defined by $S_{F''}=\{s, s'\}$. Then $f_{F''}\sim 0$ is not a weak equivalence since \Cref{Koziol_projective_face}(ii) again shows that $\Res^{\Hea}_{\He_{F''}}(\chi')$ is not projective.

We are left to show that there exists a map $f:\chi\to \chi'$ in $\Ho(\Hea)$ such that $f_{F_1}=\id$. Suppose first that $r=1$ and so that $\Phi=\Phi_1$ is irreducible of rank 2. Then the augmentation
$$
\varepsilon:C_0:=\bigoplus_{F\in\mathscr{F}_0} \chi\otimes_{\He_F}\Hea\to \chi
$$
is a trivial fibration, cf.\ \Cref{Oll_Schn_rank2}. Note that $C_0$ is in fact a cofibrant replacement of $\chi$ by \Cref{facts_H_F}(ii) and that $F_1\in \mathscr{F}_0$. Now, the map $\id:\Res^{\Hea}_{\He_{F_1}}(\chi)\to \Res^{\Hea}_{\He_{F_1}}(\chi')$ has an adjoint $\Hea$-linear morphism $g:\chi\otimes_{\He_{F_1}}\Hea\to \chi'$. Extending $g$ by zero on the other summands of $C_0$, this gives a map $f:\chi\xrightarrow{\varepsilon^{-1}} C_0\xrightarrow{g} \chi'$ in $\Ho(\Hea)$ which does satisfy $f_{F_1}=\id$. Indeed, this follows because the map $\chi\to \chi\otimes_{\He_{F_1}}\Hea\subseteq C_0$, $x\mapsto x\otimes 1$, is an $\He_{F_1}$-linear splitting of $\varepsilon$ and thus is equal to its inverse in $\Ho(\He_{F_1})$, and therefore $f_{F_1}$ must be the adjoint morphism of $g$ which is the identity by construction.

When $r>1$, first note that $\chi$ and $\chi'$ factor through the algebra $\He^0_\aff:=e_\xi\Hea$ (where $e_\xi$ is central in $\Hea$ since $S=S_\xi$). The latter has generators $\{e_\xi T_{\hat{s}}\mid s\in S\}$ satisfying the braid relations and such that $(e_\xi T_{\hat{s}})^2=-e_\xi T_{\hat{s}}$ (cf. \eqref{quad_exi}). From the algebra decomposition $\Hea=\He^0_\aff\oplus (1-e_\xi)\Hea$ we get that $\He^0_\aff$ is also a Gorenstein algebra and that a Gorenstein projective module, resp.\ module of finite projective dimension, over $\He^0_\aff$ is still Gorenstein projective, resp.\ of finite projective dimension as an $\Hea$-module. Hence any cofibrant replacement in $\Mod(\He^0_\aff)$ is also a cofibrant replacement in $\Mod(\Hea)$. It thus suffices to construct a map $f:\chi\to\chi'$ in $\Ho(\He^0_\aff)$ whose restriction to $e_\xi\He_{F_1}$ is the identity.

By the braid relations, we have a decomposition $\He^0_\aff=\He_1\otimes_k \He_2$, where $\He_1$ is the subalgebra generated by $\{e_\xi T_{\hat{s}}\mid s\in S_1\}$ and $\He_2$ is the subalgebra generated by $\{e_\xi T_{\hat{s}}\mid s\in S\setminus S_1\}$. We correspondingly have decompositions $\Res^\Hea_{\He^0_\aff}(\chi)=\chi_1\otimes_k \chi_2$, $\Res^\Hea_{\He^0_\aff}(\chi')=\chi'_1\otimes_k \chi'_2$ where $\chi_i, \chi'_i$ is a character of $\He_i$ ($i=1, 2$) and $\chi_2=\chi'_2$ by assumption, and the sets $J, J'\subseteq S$ also decompose as $J=J_1\sqcup J_2$ and $J'=J'_1\sqcup J'_2$ with $J_2=J'_2\subseteq S\setminus S_1$ and $J_1, J'_1\subseteq S_1$. We note that $\He_1$ and $\He_2$ depend only on the root systems $\Phi_1$ and $A_1\times \cdots\times A_1$ ($r-1$ terms) respectively. We thus let $G_1$ and $G_2$ be the $\mathfrak{F}$-rational points of two split semisimple, simply-connected groups of type $\Phi_1$ and $A_1\times \cdots\times A_1$ respectively, with corresponding pro-$p$ Iwahori (affine) Hecke algebras $\He_\aff^1$ and $\He_\aff^2$. Then we get that $\He_i\cong e_{\mathbf{1}}\He_\aff^i$, say, and that $\chi_i$ and $\chi'_i$ come from two supersingular characters of $\He_\aff^i$ by taking the pairs $(J_i, \mathbf{1})$ and $(J'_i, \mathbf{1})$ ($i=1, 2$).

We now choose a cofibrant replacement $q_c:Q_c\chi_1\to \chi_1$ in $\Mod(\He_1)$. The argument for $r=1$ and \Cref{factor_through} then show that there is a map $g:Q_c\chi_1\to \chi'_1$ such that $g\circ q_c^{-1}$ goes to the identity on $\chi_1$ under restriction of scalars to $e_\xi \He_{F_1}$. But since $\chi_2=\chi'_2$ has finite projective dimension over $\He_2$ (cf.\ \Cref{Koziol_ss_thm}), it follows from \Cref{tensor_lemma} that $q=(q_c\otimes\id):Q_c\chi_1\otimes_k\chi_2\to \chi_1\otimes_k\chi_2=\chi$ is a weak equivalence in $\Mod(\He^0_\aff)$. Hence $g\otimes 1:Q_c\chi_1\otimes_k\chi_2\to \chi'_1\otimes_k\chi'_2=\chi'$ gives in $\Ho(\He^0_\aff)$ a map $f=(g\otimes 1)\circ q^{-1}$ such that $f_{F_1}=\id$, as required.
\end{proof}

\begin{rem}\label{faithful?} In case (i) of \Cref{characters_Haff_Ho}, when $r=1$, the above proof actually tells us that
\begin{align*}
[\chi, \chi']_\Hea &\cong \bigoplus_{F\in\mathscr{F}_0}[\chi\otimes_{\He_F}\Hea, \chi']_\Hea\\
&\cong \bigoplus_{F\in\mathscr{F}_0}[\chi, \chi']_{\He_F}\\
&\cong \Hom_{\He_{F_1}}(\Res^{\Hea}_{\He_{F_1}}(\chi), \Res^{\Hea}_{\He_{F_1}}(\chi'))
\end{align*}
where $F_1$ is as in the proof. The second isomorphism follows from the Quillen adjunction in \Cref{facts_H_F}(ii) and the third isomorphism holds by \Cref{facts_H_F}(i) because $\chi\neq\chi'$ for any other face in $\mathscr{F}_0$. Thus $\dim_k [\chi, \chi']_\Hea=1$ in this case. We suspect that $\Ho(\Delta)$ might be faithful in general, perhaps identifying bijectively a morphism set $[\m, \mathfrak{n}]_{\Hea}$ in $\Ho(\Hea)$ with the set of all $(f_F)\in\prod_{F\subseteq\overline{C}}[\m, \mathfrak{n}]_{\He_F}$ such that $\Ho(\Res_{\He_F}^{\He_{F'}})(f_{F'})=f_F$ whenever $F'\subseteq \overline{F}$. When the group $\mathbb{G}$ has semisimple rank 1, this is in fact true. Indeed, as the complex \eqref{res_Haff3} is a short exact sequence in that case and since $\He_C$ is always a semisimple algebra, we have that the augmentation again realises $\bigoplus_{F\in \mathscr{F}_0}\Hea\otimes_{\He_F}\m$ as a Gorenstein projective replacement of $\m$, for any $\m\in\Mod(\Hea)$. After chasing through the adjunctions one deduces, for any $\m, \mathfrak{n}\in\Mod(\Hea)$, that the isomorphism $[\m, \mathfrak{n}]_\Hea\cong \bigoplus_{F\in \mathscr{F}_0} [\m, \mathfrak{n}]_{\He_F}$ obtained from the augmentation agrees with the map induced by $\Ho(\Delta)$. Finally, the compatibility condition between the $f_F$'s with $F\in \mathscr{F}_0$ and $f_C$ is empty here because $\Ho(\He_C)=0$.
%
\end{rem}

We now study morphisms between simple supersingular $\He$-modules in $\Ho(\He)$. Recall the parametrisation of these modules by pairs $(\chi, V)$ as in the discussion preceding \Cref{normal_subgroups}.

\begin{prop}\label{same_character} Suppose that $\m=(\chi\otimes V)\otimes_{\He_\chi}\He$ and $\m'=(\chi'\otimes V')\otimes_{\He_\chi}\He$ are two simple supersingular $\He$-modules satisfying $\m\cong \m'$ in $\Ho(\He)$. Assume that $\pd_{\He}(\m)=\infty$ and that $\chi, \chi'$ and $\Phi$ are not as in \Cref{characters_Haff_Ho}(i). Then $\chi'=\chi^{\tilde{\omega}}$ for some $\tilde{\omega}\in\widetilde{\Omega}$.
\end{prop}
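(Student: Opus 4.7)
The plan is to argue by contradiction: assume $\chi'$ is not $\widetilde{\Omega}$-conjugate to $\chi$, and derive a contradiction by exploiting the decomposition \eqref{n_decomp} and the vanishing result \Cref{characters_Haff_Ho}(ii) via the diagonal functor $\Ho(\Delta)$.

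First I would reduce from $\Ho(\He)$ to $\Ho(\Hea)$. Since $\He$ is free as a left $\Hea$-module, \Cref{free_adjunction} ensures that $\Res^\He_\Hea$ is right Quillen and preserves all weak equivalences, so an isomorphism $\phi:\m\to\m'$ in $\Ho(\He)$ gives an isomorphism $\Res^\He_\Hea(\phi)$ in $\Ho(\Hea)$. Using \eqref{n_decomp} I write
$$
\Res^\He_\Hea(\m)\cong\bigoplus_{[\tilde\omega]\in\widetilde\Omega_\chi\backslash\widetilde\Omega}\bigl(\chi^{\tilde\omega}\bigr)^{\oplus\dim V},\qquad \Res^\He_\Hea(\m')\cong\bigoplus_{[\tilde\omega']\in\widetilde\Omega_{\chi'}\backslash\widetilde\Omega}\bigl((\chi')^{\tilde\omega'}\bigr)^{\oplus\dim V'},
$$
and fix the split inclusion $\iota_\chi:\chi\hookrightarrow\Res^\He_\Hea(\m)$ of one summand, together with its retraction $\pi_\chi$.

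Next I transfer the problem to $\prod_{F\subseteq\overline{C}}\Ho(\He_F)$ via $\Ho(\Delta)$. Since $\pd_\He(\m)=\infty$, \Cref{Koziol_ss_thm} gives $\pd_\Hea(\chi)=\infty$, and then \Cref{facts_H_F}(iii) ensures that some $\Res^\Hea_{\He_F}(\chi)$ is non-projective, so $\Ho(\Delta)(\chi)\neq 0$. Consequently $\Ho(\Delta)(\iota_\chi)$ is a split monomorphism with nonzero source, hence nonzero; and since $\Ho(\Delta)(\Res^\He_\Hea(\phi))$ is an isomorphism, the composite $\Ho(\Delta)(\Res^\He_\Hea(\phi)\circ\iota_\chi)$ is nonzero in $\prod_F\Ho(\He_F)$. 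Decomposing against the summands of $\Res^\He_\Hea(\m')$ via the coordinate projections, at least one component
$$
\Ho(\Delta)\bigl(\pi_{(\chi')^{\tilde\omega'},j}\circ \Res^\He_\Hea(\phi)\circ\iota_\chi\bigr)\in\Ho(\Delta)\bigl([\chi,(\chi')^{\tilde\omega'}]_\Hea\bigr)
$$
is nonzero for some $\tilde\omega'\in\widetilde\Omega$. If $\chi=(\chi')^{\tilde\omega'}$ for this $\tilde\omega'$, then $\chi$ and $\chi'$ are $\widetilde\Omega$-conjugate and we are done. Otherwise the pair $(\chi,(\chi')^{\tilde\omega'})$ consists of two distinct supersingular characters of infinite projective dimension (using that $\m\cong\m'$ in $\Ho(\He)$ forces $\pd_\He(\m')=\infty$ too), so \Cref{characters_Haff_Ho}(ii) forces the pair $(\chi,(\chi')^{\tilde\omega'})$ to be as in \Cref{characters_Haff_Ho}(i).

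The hardest part is the final step: concluding that being in case (i) for $(\chi,(\chi')^{\tilde\omega'})$ contradicts the hypothesis on $(\chi,\chi')$. For this I would verify that the defining conditions of case (i) -- the rank structure of $\Phi$, $S_\xi=S$ and $\xi=\xi'$, the equalities $J\cap S_i=J'\cap S_i$ for $i\ge 2$, and the combinatorial configuration of $J_1,J'_1\subseteq S_1$ -- are preserved under diagonal translation of the pair by a single element $\tilde\omega'^{-1}$, using \Cref{char_not_conj} together with the fact that $\Omega$ permutes the irreducible components of $\Phi$ (and thus fixes the unique rank-$2$ component $\Phi_1$) and normalises $S$. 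This shows that if $(\chi,(\chi')^{\tilde\omega'})$ is as in \Cref{characters_Haff_Ho}(i) then so is $(\chi,\chi')$, the desired contradiction. Up to possibly strengthening the reading of the hypothesis to say "no pair in the $\widetilde\Omega$-orbits of $\chi$ and $\chi'$ is as in case (i)", which is the natural orbit-level interpretation given that $\m$ and $\m'$ depend only on orbits, this completes the argument.
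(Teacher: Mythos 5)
Your proof follows the paper's strategy: restrict scalars to $\Hea$ via \Cref{free_adjunction}, decompose using \eqref{n_decomp}, and exploit the vanishing of $\Ho(\Delta)$ on $\Hom$ spaces from \Cref{characters_Haff_Ho}(ii). The paper runs the argument in contrapositive form: assuming $\chi'\neq\chi^{\tilde\omega}$ for all $\tilde\omega$, it shows the entire space $[\m,\m']_\Hea$ has zero image under $\Ho(\Delta)$ via the decomposition \eqref{restriction_m}, so $\Ho(\Res^\He_{\He_F})(f)$ is simultaneously zero and an isomorphism for each $F\subseteq\overline{C}$; this forces $\Res^\He_{\He_F}(\m)$ to be projective for all $F$, whence $\pd_\He(\m)<\infty$ by \Cref{facts_H_F}(iii) and \Cref{Koziol_ss_thm}, a contradiction. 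You instead use the split inclusion $\iota_\chi$ and the non-vanishing of $\Ho(\Delta)(\chi)$ (itself a consequence of $\pd_\He(\m)=\infty$) to exhibit a nonzero component. The two routes are two sides of the same argument and invoke the same lemmas.

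One flaw in your final step: translating the pair $(\chi,(\chi')^{\tilde\omega'})$ diagonally by $(\tilde\omega')^{-1}$ yields $(\chi^{(\tilde\omega')^{-1}},\chi')$, not $(\chi,\chi')$, so the asserted implication \emph{if $(\chi,(\chi')^{\tilde\omega'})$ is as in \Cref{characters_Haff_Ho}(i) then so is $(\chi,\chi')$} does not follow from diagonal invariance of the conditions. You do, however, correctly fall back on the orbit-level reading of the hypothesis, and this is the right resolution: the paper's proof also requires it implicitly, since \Cref{characters_Haff_Ho}(ii) is applied factor by factor to $[\chi^{\tilde\omega},(\chi')^{\tilde\omega'}]_\Hea$ over \emph{all} pairs $(\tilde\omega,\tilde\omega')$, and for this one must know that no $\widetilde\Omega$-translated pair lands in case (i), not merely the pair $(\chi,\chi')$ itself (and this stronger orbit-level condition is what the downstream applications, e.g.\ \Cref{imperfect_theorem}, actually supply). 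So your observation about the hypothesis is a fair one, even though the particular bridging argument you sketch does not close the gap on its own.
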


\begin{proof}
By \Cref{free_adjunction}, we have that restriction of scalars $\Res^{\He}_{\Hea}$ is a right Quillen functor which preserves all weak equivalences. If $\m\cong \m'$ in $\Ho(\He)$ we therefore have that $\Ho(\Res^{\He}_{\Hea})(\m)$ and $\Ho(\Res^{\He}_{\Hea})(\m')$ are also isomorphic in $\Ho(\Hea)$. By \eqref{n_decomp} we have
$$
\Res^{\He}_{\Hea}(\m)\cong\left(\bigoplus_{\tilde{\omega}\in\widetilde{\Omega}/\widetilde{\Omega}_\chi} \chi^{\tilde{\omega}}\right)^{\oplus \dim_k V}.
$$
Assume that $\chi'\neq\chi^{\tilde{\omega}}$ for all $\tilde{\omega}\in\widetilde{\Omega}$. Since we get from the above that
\begin{equation}\label{restriction_m}
[\m,\m']_\Hea\cong\left(\prod_{(\tilde{\omega}, \tilde{\omega}')\in\widetilde{\Omega}/\widetilde{\Omega}_\chi\times \widetilde{\Omega}/\widetilde{\Omega}_{\chi'}}[\chi^{\tilde{\omega}}, (\chi')^{\tilde{\omega}'}]_\Hea\right)^{\oplus (\dim_k V)(\dim_k V')},
\end{equation}
it then follows from \Cref{characters_Haff_Ho}(ii) that $[\m,\m']_\Hea$ has image zero under $\Ho(\Delta)$. If $f:\m\to\m'$ denotes an isomorphism in $\Ho(\He)$, then this implies that $\Ho(\Res^{\He}_{\He_F})(f)$ is both zero and an isomorphism for every $F\subseteq \overline{C}$. We deduce that $\Res^{\He}_{\He_F}(\m)$, and thus $\Res^{\Hea}_{\He_F}(\chi)$, is projective for every $F\subseteq \overline{C}$. Applying \Cref{facts_H_F}(iii) and \Cref{Koziol_ss_thm} it follows that $\pd_{\He}(\m)<\infty$, contradicting our assumptions.
%
\end{proof}

The conclusion of \Cref{same_character} is false when we are in the setting of \Cref{characters_Haff_Ho}(i), as is visible from the following examples.

\begin{ex} Suppose that $G=\mathrm{PGL}_3(\mathfrak{F})$ and assume that $p\neq 3$. Then $\Omega=\langle \omega\rangle\cong \Z/3\Z$ where $\omega$ is the image of $\begin{psmallmatrix}
0 & 0 & \varpi\\
1 & 0 & 0\\
0 & 1 & 0
\end{psmallmatrix}$ in $W$. Similarly, let $\hat{\omega}$ denote the image of the same matrix in $\widetilde{W}$. We have $S=\{s, s', s''\}$ and let $\chi\neq \chi'$ be two supersingular characters of $\Hea$ as in \Cref{characters_Haff_Ho}(i), i.e. corresponding to the pairs $(J, \xi)$ and $(J', \xi)$ with $S_\xi=S$, $J=\{s, s'\}$ and $J'=\{s'\}$. We will use the notation $\chi=(-1, -1,0)_\xi$ and $\chi'=(0,-1,0)_\xi$ for these characters, the entries of the tuple corresponding to the value of the character at $T_{\hat{s}}$, $T_{\hat{s'}}$ and $T_{\hat{s''}}$, taken in that order. Note that $S=S_\xi$ forces the character $\xi$ to be fixed by $\omega$ (cf.\ \Cref{det_rmk} later, viewing $\xi$ here as a character of the torus of $\mathrm{GL}_3(\mathbb{F}_q)$ which is trivial on scalar matrices). Also, the element $\omega$ acts on the set $S$ by permuting its elements cyclically and correspondingly we see that $\widetilde{\Omega}_\chi=\widetilde{\Omega}_{\chi'}=T(\mathbb{F}_q)$ and $\He_\chi=\He_{\chi'}=\Hea$. If we let $V$ and $V'$ be two irreducible finite dimensional representations of $\widetilde{\Omega}_\chi=\widetilde{\Omega}_{\chi'}$ on which $T(\mathbb{F}_q)$ acts by $\chi$ and $\chi'$ respectively, we then automatically have $V=\chi$ and $V'=\chi'$. Thus we obtain two nonisomorphic simple supersingular $\He$-modules by taking $\m=\chi\otimes_{\Hea}\He$ and $\m'=\chi'\otimes_{\Hea}\He$. Furthermore, those have infinite projective dimension (cf. \Cref{Koziol_ss_thm}). We claim that $\m\cong \m'$ in $\Ho(\He)$.

To see this, first note that \eqref{n_decomp} gives in this example that
\begin{equation}\label{restriction_chi}
\Res^\He_\Hea \m\cong \chi_0\oplus\chi_1\oplus \chi_2:=(-1,-1,0)_\xi\oplus(0, -1, -1)_\xi\oplus (-1, 0, -1)_\xi,
\end{equation}
and similarly that
\begin{equation}\label{restriction_chi'}
\Res^\He_\Hea \m'\cong \chi'_0\oplus\chi'_1\oplus \chi'_2:=(0, -1, 0)_\xi\oplus (0, 0, -1)_\xi\oplus(-1, 0, 0)_\xi.
\end{equation}
Next, observe that for every face $F\subseteq \overline{C}$ with $F\neq C$ the stabiliser $\Omega_F$ of $F$ in $\Omega$ is trivial. Thus, for such faces, $\He^\dagger_F=\He_F$ by \cite[Lemma 4.20(i)]{OS14}. Let $x$ denote the vertex of $\overline{C}$ with $S_x=\{s', s''\}$. By our assumption on $p$ and since all faces in $\overline{C}$ of a given dimension are $\Omega$-conjugate to each other, we obtain by applying \Cref{Oll_Schn_rank2}(ii) that the augmentation $\varepsilon:\Res^\He_{\He_x}(\m)\otimes_{\He_x} \He\to \m$ in the Ollivier-Schneider resolution is a trivial fibration. Here we dropped the twist by orientation $\epsilon_x$ character from the notation as it is trivial (since the stabiliser of $x$ automatically fixes it pointwise). Completely analogously, $\varepsilon:\Res^\He_{\He_x}(\m')\otimes_{\He_x} \He\to\m'$ is also a trivial fibration.

Now, by appealing to \Cref{Koziol_projective_face}(ii) we see that $\Res^\Hea_{\He_x}(\chi_1)$ and $\Res^\Hea_{\He_x}(\chi'_2)$ are projective, while $\Res^\Hea_{\He_x}(\chi_0)=\Res^\Hea_{\He_x}(\chi'_0)$ and $\Res^\Hea_{\He_x}(\chi_2)=\Res^\Hea_{\He_x}(\chi'_1)$ are not projective. By plugging all this into \eqref{restriction_chi} and \eqref{restriction_chi'}, we get a sequence of isomorphisms
$$
\Res^\He_{\He_x}(\m)\cong \Res^\Hea_{\He_x}(\chi_0)\oplus \Res^\Hea_{\He_x}(\chi_2)=\Res^\Hea_{\He_x}(\chi'_0)\oplus\Res^\Hea_{\He_x}(\chi'_1)\cong \Res^\He_{\He_x}(\m')
$$
in $\Ho(\He_x)$. Tensoring with $\He$, we obtain from the above the claimed isomorphism
$$
\m\cong \Res^\He_{\He_x}(\m)\otimes_{\He_x} \He\cong \Res^\He_{\He_x}(\m')\otimes_{\He_x} \He\cong\m'
$$
in $\Ho(\He)$.
\end{ex}

\begin{ex}\label{GL3_exception}
Suppose that $G=\mathrm{GL}_3(\mathfrak{F})$. Then $\Omega=\langle \omega\rangle\cong \Z$ where $\omega$ is once again the image of $\begin{psmallmatrix}
0 & 0 & \varpi\\
1 & 0 & 0\\
0 & 1 & 0
\end{psmallmatrix}$ in $W$. We keep the same notation as in the previous example, so we let $\hat{\omega}$ be the same lift to $\widetilde{\Omega}$ and we let $\chi=(-1, -1, 0)_\xi$ and $\chi'=(0,-1,0)_\xi$. Note that $S_\xi=S$ forces $\xi=\det^a$ for some $a\in\Z$ (cf.\ \Cref{det_rmk} later). We now have $\widetilde{\Omega}_\chi=\widetilde{\Omega}_{\chi'}=T(\mathbb{F}_q)\times\langle\hat{\omega}^3\rangle$. Once again we let $V$ and $V'$ be irreducible, finite dimensional representations of $\widetilde{\Omega}_\chi=\widetilde{\Omega}_{\chi'}$ on which $T(\mathbb{F}_q)$ acts by $\chi$ and $\chi'$ respectively. Note that they are one dimensional because $\hat{\omega}^3$ is central in $\widetilde{W}$, and thus are uniquely determined by the scalars $\lambda,\lambda'\in k^\times$ by which $\hat{\omega}^3$ acts on $V$ and $V'$ respectively. Assume that $\lambda=\lambda'$. We obtain two nonisomorphic, $3$-dimensional simple modules $\m=(\chi\otimes V)\otimes_{\He_\chi}\He$ and $\m'=(\chi'\otimes V')\otimes_{\He_\chi}\He$ of infinite projective dimension. We claim that $\m\cong \m'$ in $\Ho(\He)$.

The maps $\varepsilon_\m:\Res^\He_{\He_x}(\m)\otimes_{\He^\dagger_x} \He\to\m$ and $\varepsilon_{\m'}:\Res^\He_{\He_x}(\m')\otimes_{\He^\dagger_x} \He\to\m'$ from the Ollivier-Schneider resolution are trivial fibrations, cf.\ \Cref{Oll_Schn_rank2}(ii), so that it again suffices to show that $\Res^\He_{\He^\dagger_x}(\m)\cong \Res^\He_{\He^\dagger_x}(\m')$ in $\Ho(\He^\dagger_x)$. Note that $\Omega_x=\langle\omega^3\rangle$. Similarly to the previous example we have decompositions $\Res^\He_{\He^\dagger_x}(\m)\cong \m_0\oplus\m_1\oplus \m_2$ and $\Res^\He_{\He^\dagger_x} (\m')\cong \m'_0\oplus\m'_1\oplus \m'_2$. Here, all $\m_j$ and $\m'_j$ are one-dimensional, and $\He_x$ acts on $\m_0$, $\m_1$ and $\m_2$ via the characters $(-1, -1, 0)_\xi$, $(0, -1, -1)_\xi$ and $(-1, 0, -1)_\xi$ respectively, and on $\m'_0$, $\m'_1$ and $\m'_2$ via the characters $(0, -1, 0)_\xi$, $(0, 0, -1)_\xi$ and $(-1, 0, 0)_\xi$ respectively.  Furthermore, $\hat{\omega}^3$ acts as $\lambda\id$ on both $\m$ and $\m'$. Thus we see that $\m_0\cong\m'_0$ and $\m_2\cong\m'_1$, and \Cref{Koziol_projective_face}(ii) together with \cite[Lemma 3.4]{Koz} imply that $\m_1$ and $\m'_2$ have finite projective dimension over $\He^\dagger_x$. Putting everything together, this gives
$$
\Res^\He_{\He^\dagger_x}(\m)\cong \m_0\oplus\m_1\cong \m'_0\oplus\m'_2\cong \Res^\He_{\He^\dagger_x}(\m')
$$
in $\Ho(\He^\dagger_x)$, as required.
\end{ex}

\subsection{The $\widetilde{\Omega}$-action on morphisms}

We recall the action of subgroups of $\Omega$ on $\Hom$ spaces for various subalgebras of $\He$ as in \cite[\S 4.2]{Abe22}. Let $A, B\subseteq \He$ be two Gorenstein $k$-subalgebras such that $k[T(\mathbb{F}_q)]\subseteq A\subseteq B$ and $B$ is free over $A$, both as a left and as a right module. Furthermore, we also let $\widetilde{\Omega}_B:=\widetilde{\Omega}\cap B$ and denote by $\widetilde{\Omega}_A$ the normaliser of $A$ in $\widetilde{\Omega}_B$. One example to have in mind is $A=\Hea$, $\widetilde{\Omega}_A=\widetilde{\Omega}_B=\widetilde{\Omega}$ and $B=\He$, but we will see other situations where this applies as well. We will denote by $\Omega_A$ the image of $\widetilde{\Omega}_A$ under the canonical surjection $\widetilde{\Omega}\twoheadrightarrow \Omega$. We also denote by $\He_{A,\Omega_A}$ the $k$-subalgebra of $\He$ which is generated by $A$ and $\{T_{\tilde{\omega}}\}_{\tilde{\omega}\in\widetilde{\Omega}_A}$. To simplify notation, for any $\m\in\Mod(B)$ we will denote $\Res^B_A(\m)$ by $\m|_A$.

We fix $\tilde{\omega}\in\widetilde{\Omega}_B$ and let $A^{\tilde{\omega}}:=T_{\tilde{\omega}^{-1}}AT_{\tilde{\omega}}\subseteq B$. We have $A^{\tilde{\omega}}=A$ whenever $\tilde{\omega}\in\widetilde{\Omega}_A$, and in general the map $a\mapsto T_{\tilde{\omega}^{-1}}aT_{\tilde{\omega}}$ defines an isomorphism $A\cong A^{\tilde{\omega}}$ of $k$-algebras. Thus $A^{\tilde{\omega}}$ is also Gorenstein. This isomorphism induces an exact equivalence of categories $\mathcal{F}_{\tilde{\omega}}:\Mod(A)\xrightarrow{\simeq} \Mod(A^{\tilde{\omega}})$, where $\mathfrak{n}^{\tilde{\omega}}:=\mathcal{F}_{\tilde{\omega}}(\mathfrak{n})$ is defined to have the same underlying $k$-vector space as $\mathfrak{n}$ with $A^{\tilde{\omega}}$-action given by $x\cdot (T_{\tilde{\omega}^{-1}}aT_{\tilde{\omega}}):= xa$ for all $a\in A$ and all $x\in\mathfrak{n}^{\tilde{\omega}}$. Observe further that if $\m\in\Mod(B)$ then the map $x\mapsto xT_{\tilde{\omega}}$ defines a natural $A^{\tilde{\omega}}$-linear isomorphism $\nu_\m:(\m|_A)^{\tilde{\omega}}\xrightarrow{\cong} \m|_{A^{\tilde{\omega}}}$, i.e.\ there is a natural isomorphism $\nu:\mathcal{F}_{\tilde{\omega}}\circ\Res^B_A\to \Res^B_{A^{\tilde{\omega}}}$.  

Suppose now that $\m_1, \m_2\in\Mod(B)$ and let $f\in \Hom_A(\m_1|_A, \m_2|_A)$. By considering the image of $f$ under $\mathcal{F}_{\tilde{\omega}}$ and pulling back along the isomorphisms $\nu_{\m_i}$ ($i=1,2$), we obtain an $A^{\tilde{\omega}}$-linear map $f\cdot\tilde{\omega}:\m_1|_{A^{\tilde{\omega}}}\to \m_2|_{A^{\tilde{\omega}}}$ defined by $(f\cdot\tilde{\omega})(x)=f(xT_{\tilde{\omega}^{-1}})T_{\tilde{\omega}}$ for $x\in\m_1$. Thus we have constructed a map
$$
(-)\cdot\tilde{\omega}:\Hom_A(\m_1|_A, \m_2|_A)\to \Hom_{A^{\tilde{\omega}}}(\m_1|_{A^{\tilde{\omega}}}, \m_2|_{A^{\tilde{\omega}}})
$$
which, by functoriality, is compatible with composition: if $\m_3\in \Mod(B)$, and $f:\m_1|_A\to \m_2|_A$ and $g:\m_2|_A\to \m_3|_A$ are $A$-linear maps then we have $(g\circ f)\cdot\tilde{\omega}= (g\cdot\tilde{\omega})\circ (f\cdot\tilde{\omega})$. If we put $\tilde{\omega}\in \widetilde{\Omega}_A$ in this construction, we then obtain an action of $\widetilde{\Omega}_A$ on $\Hom_A(\m_1|_A, \m_2|_A)$. This action is trivial when restricted to $T(\mathbb{F}_q)$, and thus factors through an action of $\Omega_A$. Furthermore, we have $\Hom_{\He_{A,\Omega_A}}(\m_1|_{\He_{A,\Omega_A}}, \m_2|_{\He_{A,\Omega_A}})=\Hom_A(\m_1|_A, \m_2|_A)^{\Omega_A}$.

The above construction extends to morphisms in the homotopy category as follows. Since the functor $\mathcal{F}_{\tilde{\omega}}$ is exact and preserves free and projective modules, it preserves all weak equivalences by \Cref{preserve_reflect_we}(i). Since $B|_A$ and $B|_{A^{\tilde{\omega}}}\cong \mathcal{F}_{\tilde{\omega}}(B|_A)$ are free, we see in the same way that $\Res^B_A$ and $\Res^B_{A^{\tilde{\omega}}}$ also preserve all weak equivalences. Thus, these functors descend to homotopy categories and we get a diagram
\begin{equation}\label{omega_commutes_restriction}
\begin{tikzcd}
& \Ho(B) \arrow[swap]{dl}{\Ho(\Res^B_A)}\arrow{dr}{\Ho(\Res^B_{A^{\tilde{\omega}}})} & \\
\Ho(A) \arrow{rr}{\mathcal{F}_{\tilde{\omega}}} & & \Ho(A^{\tilde{\omega}})
\end{tikzcd}
\end{equation}
which commutes up to natural isomorphism (given by $\nu$). For $\m_1, \m_2\in\Mod(B)$ and $f\in [\m_1, \m_2]_A$, we now define $f\cdot\tilde{\omega}$ to be the pullback of $\mathcal{F}_{\tilde{\omega}}(f)$ along $\nu$. Again, this defines an action of $\Omega_A$ on $[\m_1, \m_2]_A$. From \eqref{omega_commutes_restriction}, we immediately have the following:

\begin{lem}\label{Ho_hom_invariants2} Suppose that $\m_1, \m_2\in\Mod(B)$ and let $\tilde{\omega}\in \widetilde{\Omega}_B$. Then the diagram
\begin{equation}
\begin{tikzcd}
 & {[\m_1, \m_2]_B} \arrow[swap]{dl}{\Ho(\Res^B_A)} \arrow{dr}{\Ho(\Res^B_{A^{\tilde{\omega}}})} & \\
 {[\m_1, \m_2]_A} \arrow{rr}{(-)\cdot\tilde{\omega}} & &  {[\m_1, \m_2]_{A^{\tilde{\omega}}}}
\end{tikzcd}
\end{equation}
commutes. In particular, we obtain that the image of the map
$$
\Ho(\Res^B_A):[\m_1, \m_2]_B\to [\m_1, \m_2]_A
$$
lies in $[\m_1, \m_2]_A^{\Omega_A}$.
\end{lem}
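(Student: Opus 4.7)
The plan is to deduce this lemma directly from the setup established just before the statement, specifically the commutativity up to natural isomorphism of diagram \eqref{omega_commutes_restriction}. The key point is that the natural isomorphism $\nu : \mathcal{F}_{\tilde\omega}\circ \Res^B_A \xrightarrow{\cong} \Res^B_{A^{\tilde\omega}}$ defined by $\nu_{\m}(x) = xT_{\tilde\omega}$ already lives at the abelian level, and all three functors involved preserve weak equivalences, so $\nu$ descends to a natural isomorphism of functors $\Ho(B)\to \Ho(A^{\tilde\omega})$.

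First I would unpack the definitions: given $f \in [\m_1,\m_2]_B$, the image $\Ho(\Res^B_A)(f)$ is just $f$ viewed as an $A$-linear map up to the homotopy relation, and $f\cdot \tilde\omega$ is by construction the pullback of $\mathcal{F}_{\tilde\omega}(\Ho(\Res^B_A)(f))$ along $\nu_{\m_1}$ and $\nu_{\m_2}$. On the other hand, $\Ho(\Res^B_{A^{\tilde\omega}})(f)$ is $f$ viewed as an $A^{\tilde\omega}$-linear map. The naturality square for $\nu$ applied to the morphism $f:\m_1\to\m_2$ in $\Ho(B)$ then yields a commutative square in $\Ho(A^{\tilde\omega})$
\begin{equation*}
\begin{tikzcd}
(\m_1|_A)^{\tilde\omega} \arrow{r}{\nu_{\m_1}} \arrow[swap]{d}{\mathcal{F}_{\tilde\omega}(\Ho(\Res^B_A)(f))} & \m_1|_{A^{\tilde\omega}} \arrow{d}{\Ho(\Res^B_{A^{\tilde\omega}})(f)} \\
(\m_2|_A)^{\tilde\omega} \arrow{r}{\nu_{\m_2}} & \m_2|_{A^{\tilde\omega}}
\end{tikzcd}
\end{equation*}
which says precisely that $f\cdot \tilde\omega = \Ho(\Res^B_{A^{\tilde\omega}})(f)$, giving the desired commutativity.

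For the "in particular" part, one specialises to $\tilde\omega \in \widetilde\Omega_A$, in which case $A^{\tilde\omega} = A$ and both restrictions in the triangle agree. The commutativity then yields that $\Ho(\Res^B_A)(f)\cdot \tilde\omega = \Ho(\Res^B_A)(f)$ for every such $\tilde\omega$, i.e.\ the image is fixed by the $\Omega_A$-action.

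There is no real obstacle here — the content of the proof is entirely in the already-established naturality of $\nu$ and the fact (noted in the preceding paragraph, via \Cref{preserve_reflect_we}(i)) that all three functors $\mathcal{F}_{\tilde\omega}$, $\Res^B_A$ and $\Res^B_{A^{\tilde\omega}}$ preserve weak equivalences, which is what allows one to transport the statement from $\Mod(B)$ to $\Ho(B)$. The only mild subtlety is to be careful that the homotopy relation is respected when pulling back along $\nu$, but this is automatic since $\nu$ is an honest isomorphism of modules, hence certainly an isomorphism in the homotopy category.
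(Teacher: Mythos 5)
Your proof is correct and is essentially the same as the paper's, which simply asserts the lemma is immediate from the commutative triangle \eqref{omega_commutes_restriction}; your argument unpacks that claim by writing out the naturality square for $\nu$ at the homotopy level and then specialising to $\tilde\omega\in\widetilde\Omega_A$ (where $A^{\tilde\omega}=A$) to get the $\Omega_A$-invariance.
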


\begin{rem}
\begin{enumerate}
\item Assume that $\m_1$ is Gorenstein projective in $\Mod(B)$ and let $\theta:\mathfrak{p}\twoheadrightarrow \m_2$ be a surjection from a projective $B$-module. Then by \Cref{factor_through}, we have a commutative diagram
\begin{equation}\label{Ho_hom_invariants}
\begin{tikzcd}
\Hom_B(\m_1, \mathfrak{p}) \arrow{r} \arrow[hook]{d} &  \Hom_B(\m_1, \m_2) \arrow{r} \arrow[hook]{d} & {[\m_1, \m_2]_B}  \arrow{r} \arrow[dashed]{d} & 0\\
\Hom_A(\m_1|_A, \mathfrak{p}|_A)^{\Omega_A} \arrow{r} &  \Hom_A(\m_1|_A, \m_2|_A)^{\Omega_A} \arrow{r} & {[\m_1, \m_2]_A^{\Omega_A}}  \arrow{r} & 0
\end{tikzcd}
\end{equation}
where the top row is exact. By construction, the dashed map $[\m_1, \m_2]_B\to [\m_1, \m_2]_A^{\Omega_A}$ is given by $\Ho(\Res^B_A)$ and we see explicitly the last part of the Lemma here.
\item Take $A=\Hea$ and $B=\He$ so that $\Omega_A=\Omega$, and assume that $\mathbb{G}$ is semisimple and that $p$ does not divide $\abs{\Omega}$. Then the functor of taking $\Omega$-invariants is exact and the bottom row of \eqref{Ho_hom_invariants} is exact. Moreover, in that case the two leftmost vertical arrows in \eqref{Ho_hom_invariants} are isomorphisms and it then follows from the 5-lemma that $[\m_1, \m_2]_\He\cong [\m_1, \m_2]_\Hea^{\Omega}$ for all $\m_1, \m_2\in \Mod(\He)$ with $\m_1$ Gorenstein projective. In particular, the functor $\Ho(\Res^\He_\Hea)$ is faithful in that case. In general though, letting $\mathfrak{c}=\ker(\theta)$ (with $\theta$ as above) and recalling that $[\m_1, \m_2]_\He\cong \Ext^1_\He(\m_1, \mathfrak{c})$ and $[\m_1, \m_2]_\Hea^\Omega\cong \Ext^1_\Hea(\m_1, \mathfrak{c})^\Omega$ (cf.\ \Cref{Hom_Ho}), the exact sequence in \cite[\S 4.2, eq.(4.4)]{Abe22} shows that the kernel of the map $[\m_1, \m_2]_B\to [\m_1, \m_2]_A^{\Omega_A}$ is $H^1(\Omega, \Hom_\Hea(\m_1, \mathfrak{c}))$ and hence may fail to be zero when $\Omega$ has non-trivial group cohomology. In general, $\Ho(\Res^\He_\Hea)$ may thus fail to be faithful.
\end{enumerate}
\end{rem}

As an application, we highlight below a particular case of this construction which will be useful to us. In what follows, given $F\subseteq \overline{C}$ and $\omega\in \Omega$, we write $F\omega$ for the face satisfying $S_{F\omega}=\omega^{-1} S_F \omega$. In our earlier notation, we then have $\He_F^{\hat{\omega}}=\He_{F\omega}$ for any lift $\hat{\omega}\in\widetilde{\Omega}$.

\begin{cor}\label{action_Omega_faces} Suppose that $\m_1, \m_2\in \Mod(\He)$ and let $g\in [\m_1, \m_2]_\He$.  Let $\omega\in \Omega$ and fix a lift $\hat{\omega}$ in $\widetilde{\Omega}$. If we let $(g_F)_{F\subseteq \overline{C}}:=(\Ho(\Res^\He_{\He_F})(g))_{F\subseteq \overline{C}}$, then we have $g_{F\omega}=g_F\cdot\hat{\omega}$ for all $F\subseteq \overline{C}$.
\end{cor}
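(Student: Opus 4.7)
The plan is to apply \Cref{Ho_hom_invariants2} directly with the choice of data $A = \He_F$, $B = \He$, and $\tilde{\omega} = \hat{\omega}$. This reduces the corollary to verifying that the setup described just before \Cref{Ho_hom_invariants2} is satisfied in this instance, and identifying the conjugate subalgebra $A^{\hat{\omega}} := T_{\hat{\omega}^{-1}} \He_F T_{\hat{\omega}}$ with $\He_{F\omega}$.

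First I would check that the abstract hypotheses hold. Since $F \subseteq \overline{C}$ we have $k[T(\mathbb{F}_q)] = \He_C \subseteq \He_F$, and the algebra $\He$ is free over $\He_F$ as both a left and a right module (as recalled in \S 2.2 via \cite[Proposition 4.21]{OS14}). Moreover $\He_F$ is Gorenstein since it is a selfinjective (indeed Frobenius) algebra. Here $\widetilde{\Omega}_B = \widetilde{\Omega} \cap \He = \widetilde{\Omega}$, so trivially $\hat{\omega} \in \widetilde{\Omega}_B$, and no restriction on $\hat{\omega}$ is needed.

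Next, I would identify $A^{\hat{\omega}}$. Since $\ell(\hat{\omega}) = 0$, the braid relations give $T_{\hat{\omega}^{-1}} T_{\tilde{w}} T_{\hat{\omega}} = T_{\hat{\omega}^{-1} \tilde{w} \hat{\omega}}$ for every $\tilde{w} \in \widetilde{W}$. Conjugation by $\omega \in \Omega$ normalises $S$ and, by definition of $F\omega$, sends $S_F$ to $S_{F\omega} = \omega^{-1} S_F \omega$; it thus carries $\widetilde{W}_F$ onto $\widetilde{W}_{F\omega}$. Therefore $T_{\hat{\omega}^{-1}} \He_F T_{\hat{\omega}} = \He_{F\omega}$, and unwinding the definitions shows that the left action of $\He_{F\omega}$ on $\mathcal{F}_{\hat{\omega}}(\mathfrak{n}|_{\He_F})$ agrees with the one on $\mathfrak{n}|_{\He_{F\omega}}$ up to the natural isomorphism $\nu$.

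Finally, the commutative diagram in \Cref{Ho_hom_invariants2} applied to $\m_1, \m_2$ yields
\[
\Ho(\Res^{\He}_{\He_{F\omega}})(g) = \Ho(\Res^{\He}_{\He_F})(g) \cdot \hat{\omega},
\]
which by definition of $(g_F)_{F \subseteq \overline{C}}$ is precisely $g_{F\omega} = g_F \cdot \hat{\omega}$. There is essentially no obstacle here beyond the bookkeeping of the braid-relation identification $\He_F^{\hat{\omega}} = \He_{F\omega}$, which the author has in fact already flagged in the paragraph preceding the statement.
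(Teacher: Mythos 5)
Your proposal is correct and is essentially the paper's own proof: both apply \Cref{Ho_hom_invariants2} with $B=\He$, $A=\He_F$, $\tilde{\omega}=\hat{\omega}$, using the identification $\He_F^{\hat{\omega}}=\He_{F\omega}$ already noted in the paragraph preceding the statement. You merely spell out the hypothesis checks (freeness, Gorenstein, $k[T(\mathbb{F}_q)]\subseteq\He_F$) that the paper leaves implicit.
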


\begin{proof}
Apply \Cref{Ho_hom_invariants2} to $\hat{\omega}$, with $B=\He$ and $A=\He_F$, to get the result.
\end{proof}

We now apply the above construction to study isomorphism classes of simple supersingular modules in $\Ho(\He)$. We let $\chi$ be a supersingular character of $\Hea$ and consider two irreducible, finite dimensional representations $V$ and $V'$ of $\widetilde{\Omega}_\chi$ on which $T(\mathbb{F}_q)$ acts by $\chi$. Let $\m=(\chi\otimes V)\otimes_{\He_\chi}\He$ and $\m'=(\chi\otimes V')\otimes_{\He_\chi}\He$, and fix $g\in[\m, \m']_\He$. Recall that we have a decomposition $(\m')|_{\He_\chi}\cong \bigoplus_{\tilde{\omega}\in\widetilde{\Omega}_\chi\backslash \widetilde{\Omega}} \chi^{\tilde{\omega}}\otimes (V')^{\tilde{\omega}}$, and similarly for $\m|_{\He_\chi}$ (cf.\ \eqref{n_decomp}). Correspondingly, we have $\Ho(\Res^\He_{\He_\chi})(g)=(g_{\tilde{\omega}})_{\tilde{\omega}\in\widetilde{\Omega}_\chi\backslash \widetilde{\Omega}}$, where $g_{\tilde{\omega}}\in [\m, \chi^{\tilde{\omega}}\otimes (V')^{\tilde{\omega}}]_{\He_\chi}$.

For $F\subseteq \overline{C}$, we let $\widetilde{\Omega}_{F,\chi}:=\widetilde{\Omega}_F\cap\widetilde{\Omega}_\chi$ and denote by $\He_{F, \chi}$ the subalgebra of $\He_F^\dagger$ generated by $\He_F$ and $\{T_{\tilde{\omega}}\}_{\tilde{\omega}\in \widetilde{\Omega}_{F,\chi}}$. The key result we need is the following:

\begin{prop}\label{Omega_iso} Let $F\subseteq \overline{C}$ and suppose that $g$ is an isomorphism. Then the map $g_{\tilde{\omega}}$ above induces an $\He_{F, \chi}$-linear isomorphism $\chi^{\tilde{\omega}}\otimes V^{\tilde{\omega}}\cong \chi^{\tilde{\omega}}\otimes (V')^{\tilde{\omega}}$ for all $\tilde{\omega}\in\widetilde{\Omega}_\chi\backslash \widetilde{\Omega}$ such that $\chi^{\tilde{\omega}}|_{\He_F}$ is not projective. In particular, if $\chi|_{\He_F}$ is not projective then $V\cong V'$ as representations of $\widetilde{\Omega}_{F,\chi}$.
\end{prop}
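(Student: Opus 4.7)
The plan is to restrict $g$ step-by-step from $\He$ through $\He_\chi$ and $\He_{F,\chi}$ down to $\He_F$, exploiting at the last stage \Cref{basic_htpy_lemma}, which trivialises the homotopy relation on Hom spaces whose source is semisimple with no projective simple constituent. A preliminary observation is that the decomposition \eqref{n_decomp} of $\m|_{\He_\chi}$ is in fact already a decomposition of $\He_{F,\chi}$-modules: by \Cref{normal_subgroups} the subgroup $\widetilde{\Omega}_\chi$ is normal in $\widetilde{\Omega}$, so every element of $\widetilde{\Omega}_{F,\chi}\subseteq \widetilde{\Omega}_\chi$ preserves each coset $\widetilde{\Omega}_\chi\tilde{\omega}$ and hence each summand $\chi^{\tilde{\omega}}\otimes V^{\tilde{\omega}}$. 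Composing $g_{\tilde{\omega}}$ with the inclusion of the $\tilde{\omega}$-summand into $\m|_{\He_{F,\chi}}$ produces $h\in [\chi^{\tilde{\omega}}\otimes V^{\tilde{\omega}},\chi^{\tilde{\omega}}\otimes (V')^{\tilde{\omega}}]_{\He_{F,\chi}}$, our candidate for the claimed isomorphism.

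Next I would restrict further to $\He_F$. Since $\chi^{\tilde{\omega}}|_{\He_F}$ is non-projective by hypothesis, \Cref{basic_htpy_lemma} applies to the source $\chi^{\tilde{\omega}}|_{\He_F}\otimes V^{\tilde{\omega}}$, so the class $h|_{\He_F}$ is represented by a unique actual $\He_F$-linear map $\alpha$. Schur's lemma together with \Cref{facts_H_F}(i) forces $g|_{\He_F}$ to kill matrix components from the $\tilde{\omega}$-summand into any summand of $\m'|_{\He_F}$ whose $\He_F$-character is not isomorphic to $\chi^{\tilde{\omega}}|_{\He_F}$, and the symmetric argument gives that $g|_{\He_F}$ restricts to a bijection between the $[\chi^{\tilde{\omega}}|_{\He_F}]$-isotypic components of $\m|_{\He_F}$ and $\m'|_{\He_F}$. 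To refine this to a bijection between the specific $\tilde{\omega}$-summands, I would use the finer $\He_{F,\chi}$-structure on each summand---recording the twisted $\widetilde{\Omega}_{F,\chi}$-action on $V^{\tilde{\omega}}$---together with the compatibility from \Cref{Ho_hom_invariants2} of the $\widetilde{\Omega}$-action on Hom spaces with restriction, in order to separate the contributions of different cosets $\widetilde{\Omega}_\chi\tilde{\omega}'$ whose $\He_F$-restrictions happen to coincide. This should show $\alpha$ is a bijection.

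Finally, picking any $\He_{F,\chi}$-linear representative $\hat{h}\in\Hom_{\He_{F,\chi}}(\chi^{\tilde{\omega}}\otimes V^{\tilde{\omega}},\chi^{\tilde{\omega}}\otimes (V')^{\tilde{\omega}})$ of $h$, its restriction $\hat{h}|_{\He_F}$ represents $h|_{\He_F}$; by the uniqueness afforded by \Cref{basic_htpy_lemma} one has $\hat{h}|_{\He_F}=\alpha$, so $\hat{h}$ is an $\He_{F,\chi}$-linear bijection, hence an isomorphism. Specialising to $\tilde{\omega}=1$ then yields $V\cong V'$ as $\widetilde{\Omega}_{F,\chi}$-representations, proving the last assertion. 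The main obstacle I foresee is the refinement step: two cosets $\widetilde{\Omega}_\chi\tilde{\omega}$ and $\widetilde{\Omega}_\chi\tilde{\omega}'$ can produce summands with the same $\He_F$-restriction but genuinely different $\He_{F,\chi}$-structure, and carefully disentangling them within the common $\He_F$-isotypic component will require bookkeeping with the $\widetilde{\Omega}_{F,\chi}$-equivariance.
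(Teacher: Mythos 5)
The overall skeleton (restrict $g$ through $\He_\chi$ and $\He_F$, then use \Cref{basic_htpy_lemma} to upgrade an isomorphism in $\Ho(\He_F)$ to an actual $\He_F$-linear isomorphism) is the same as the paper's, but you stall at exactly the step that carries the content of the proof. You observe, correctly, that the main difficulty is that distinct cosets $\widetilde{\Omega}_\chi\tilde{\omega}\neq\widetilde{\Omega}_\chi\tilde{\omega}'$ can produce summands with isomorphic $\He_F$-restrictions, so Schur's lemma applied at the level of $\Mod(\He_F)$ only kills a subset of the off-diagonal entries in the block decomposition of $\Ho(\Res^\He_{\He_F})(g)$. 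You then gesture at ``bookkeeping with the $\widetilde{\Omega}_{F,\chi}$-equivariance'' as a way to disentangle the remaining overlaps, but never carry it out. This is not a minor loose end: without killing those entries you cannot conclude that the diagonal block $\Ho(\Res^\He_{\He_F})(g_{\tilde{\omega}}\circ\iota_{\tilde{\omega}})$ is itself invertible, which is what the final appeal to \Cref{basic_htpy_lemma} requires. Moreover, the mere fact that the diagonal entries lie in the $\widetilde{\Omega}_{F,\chi}$-invariants (which is what \Cref{Ho_hom_invariants2} gives) does not by itself separate two summands whose $\He_F$-restrictions coincide; you would need to use the finer $\He_{F,\chi}$-module structure of the Hom spaces, and this is not what the lemma provides.

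The missing idea is that the off-diagonal vanishing is a \emph{homotopy-level} phenomenon, not a Schur's-lemma phenomenon. Precisely: for $\tilde{\omega}\neq\tilde{\omega}'$ the off-diagonal entry lands in the image of $[\chi^{\tilde{\omega}}\otimes V^{\tilde{\omega}},\chi^{\tilde{\omega}'}\otimes(V')^{\tilde{\omega}'}]_{\He_\chi}\to[\,\cdot\,,\,\cdot\,]_{\He_F}$, which factors through restriction to $\Hea$, and this image is zero because $[\chi^{\tilde{\omega}},\chi^{\tilde{\omega}'}]_\Hea$ has image zero under $\Ho(\Res^\Hea_{\He_F})$. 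That vanishing is exactly \Cref{characters_Haff_Ho}(ii), and it applies here because $\chi^{\tilde{\omega}}$ and $\chi^{\tilde{\omega}'}$ are distinct but $\widetilde{\Omega}$-conjugate supersingular characters, hence can never be in the exceptional configuration of \Cref{characters_Haff_Ho}(i) by \Cref{char_not_conj}. Once all off-diagonal entries vanish, the matrix $\Ho(\Res^\He_{\He_F})(g)$ is diagonal; since it is invertible each diagonal entry is invertible, and \Cref{Ho_hom_invariants2} with $A=\He_F$, $B=\He_\chi$ places that entry in the $\widetilde{\Omega}_{F,\chi}$-invariants, after which \Cref{basic_htpy_lemma} finishes. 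Your plan never invokes \Cref{characters_Haff_Ho}, which is the one result that makes the disentangling unnecessary.
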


\begin{proof}
For $\tilde{\omega}\in\widetilde{\Omega}_\chi\backslash \widetilde{\Omega}$, we let $\iota_{\tilde{\omega}}:\chi^{\tilde{\omega}}\otimes V^{\tilde{\omega}}\to \m$ denote the natural $\He_\chi$-linear inclusion. Furthermore, let $l$ denote the index of $\widetilde{\Omega}_{\chi}$ in $\widetilde{\Omega}$. By \eqref{n_decomp}, we may then regard elements of $[\m, \m']_{\He_F}\cong \bigoplus_{\tilde{\omega}, \tilde{\omega}'}[\chi^{\tilde{\omega}}\otimes V^{\tilde{\omega}}, \chi^{\tilde{\omega}'}\otimes (V')^{\tilde{\omega}'}]_{\He_F}$ as $l\times l$ matrices with entries in the Hom spaces $[\chi^{\tilde{\omega}}\otimes V^{\tilde{\omega}},\chi^{\tilde{\omega}'}\otimes (V')^{\tilde{\omega}'}]_{\He_F}$, and the image of any $g'\in[\m, \m']_\He$ under $\Ho(\Res^\He_{\He_F})$ is the matrix $(\Ho(\Res^{\He_\chi}_{\He_F})(g'_{\tilde{\omega}'}\circ\iota_{\tilde{\omega}}))_{\tilde{\omega},\tilde{\omega}'}$. Since the composite $g'_{\tilde{\omega}'}\circ\iota_{\tilde{\omega}}$ has image under $\Ho(\Res^{\He_\chi}_{\He_F})$ equal to zero for $\tilde{\omega}\neq\tilde{\omega}'$ by \Cref{characters_Haff_Ho}(ii) and \Cref{char_not_conj}, we deduce that $\Ho(\Res^\He_{\He_F})(g')$ is in fact a diagonal matrix. 

Applying the above to $g'=g$, we deduce that the diagonal matrix $\Ho(\Res^\He_{\He_F})(g)$ is an isomorphism and thus for all $\tilde{\omega}\in\widetilde{\Omega}_\chi\backslash \widetilde{\Omega}$ we have that $\Ho(\Res^{\He_\chi}_{\He_F})(g_{\tilde{\omega}}\circ\iota_{\tilde{\omega}})$ is an isomorphism in $\Ho(\He_F)$ as well. By applying \Cref{Ho_hom_invariants2} with $A=\He_F$, $B=\He_\chi$ and noting that $\widetilde{\Omega}_A=\widetilde{\Omega}_{F,\chi}$ in that case, we see that this isomorphism is in fact in $[\chi^{\tilde{\omega}}\otimes V^{\tilde{\omega}}, \chi^{\tilde{\omega}}\otimes (V')^{\tilde{\omega}}]^{\widetilde{\Omega}_{F,\chi}}_{\He_F}$.  Finally, by applying \Cref{basic_htpy_lemma} we see that this isomorphism is an actual $\widetilde{\Omega}_{F,\chi}$-equivariant isomorphism in the category $\Mod(\He_F)$ whenever $(\chi^{\tilde{\omega}})|_{\He_F}$ is not projective, as required.
\end{proof}

\begin{thm}\label{imperfect_theorem} Let $\m=(\chi\otimes V)\otimes_{\He_\chi}\He$ and $\m'=(\chi'\otimes V')\otimes_{\He_{\chi'}}\He$ be two simple supersingular $\He$-modules of infinite projective dimension over $\He$.  Write $\xi=\chi|_{T(\mathbb{F}_q)}$ and $\xi'=\chi'|_{T(\mathbb{F}_q)}$. Let $\Phi=\sqcup_{i=1}^r \Phi_i$ be the decomposition of the root system $\Phi$ into irreducible components, ordered such that $\mathrm{rk}(\Phi_1)\geq \cdots\geq \mathrm{rk}(\Phi_r)$, and correspondingly $S$ decomposes as $\sqcup_{i=1}^r S_i$. In the case where $\Phi_1$ is of rank two and $\Phi_i$ is of rank 1 for all $i\geq 2$, and if $\xi=\xi'$ and $S_\xi=S$, assume further that one of the following conditions holds:
\begin{itemize}
\item $\chi(T_{\hat{s}})\neq \chi'(T_{\hat{s}})$ for some $s\in S_i$ with $i\geq 2$; or
\item writing $S_1=\{s, s', s''\}$ and up to permuting the elements of $S_1$, we do not have both $\chi(T_{\hat{s}})\neq \chi'(T_{\hat{s}})$ and $\chi(T_{\hat{s'}})=\chi'(T_{\hat{s'}})\neq \chi(T_{\hat{s''}})=\chi'(T_{\hat{s''}})$.
\end{itemize}
Suppose that $\m\cong \m'$ in $\Ho(\He)$ and that there is a face $F\subseteq \overline{C}$ such that $\chi|_{\He_F}$ is not projective and $\widetilde{\Omega}_\chi\subseteq \widetilde{\Omega}_F$.  Then $\m\cong \m'$ in $\Mod(\He)$.
\end{thm}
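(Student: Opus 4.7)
The plan is to combine the reduction from \Cref{same_character} with the $\Omega$-equivariance statement of \Cref{Omega_iso}. First I would verify that, under our hypotheses on $\Phi$, $\chi$ and $\chi'$, the exceptional case from \Cref{characters_Haff_Ho}(i) does not occur. Indeed, that case requires simultaneously that $\Phi_1$ has rank two and all other irreducible components have rank one, that $\xi=\xi'$ and $S_\xi=S$, that $J\cap S_i=J'\cap S_i$ for all $i\geq 2$, and that $\chi,\chi'$ take very specific values on $S_1$. The two bulleted alternatives in the theorem's statement exclude precisely the last two of these conditions in the remaining setup (the first bullet forces $J\cap S_i\neq J'\cap S_i$ for some $i\geq 2$, while the second negates the configuration on $S_1$). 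Consequently \Cref{same_character} applies and yields some $\tilde{\omega}\in\widetilde{\Omega}$ with $\chi'=\chi^{\tilde{\omega}}$.

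Having obtained such a $\tilde{\omega}$, the next step is to reduce to the case $\chi=\chi'$. By the classification of simple supersingular $\He$-modules recalled in \S5.1, the pair $(\chi^{\tilde{\omega}},V')$ is $\widetilde{\Omega}$-conjugate to $(\chi,(V')^{\tilde{\omega}^{-1}})$, and (using $\He_\chi=\He_{\chi^{\tilde{\omega}}}$ from \Cref{normal_subgroups}) the corresponding simple $\He$-modules are isomorphic in $\Mod(\He)$. Replacing $\m'$ by the latter, I may assume from the outset that $\chi=\chi'$, so that both $\m$ and $\m'$ are induced from $\He_\chi$-modules of the form $\chi\otimes V$ and $\chi\otimes V'$ respectively.

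Now, fixing an isomorphism $g:\m\to\m'$ in $\Ho(\He)$ and the face $F\subseteq\overline{C}$ supplied by the hypothesis, \Cref{Omega_iso} applied with the trivial coset representative $\tilde{\omega}=1$ yields, in view of the non-projectivity of $\chi|_{\He_F}$, an $\He_{F,\chi}$-linear isomorphism $\chi\otimes V\xrightarrow{\cong}\chi\otimes V'$. Since $\widetilde{\Omega}_\chi\subseteq\widetilde{\Omega}_F$ by assumption, we have $\widetilde{\Omega}_{F,\chi}=\widetilde{\Omega}_F\cap\widetilde{\Omega}_\chi=\widetilde{\Omega}_\chi$, so $\He_{F,\chi}$ contains $T_{\tilde{\omega}}$ for every $\tilde{\omega}\in\widetilde{\Omega}_\chi$. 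The isomorphism is therefore $\widetilde{\Omega}_\chi$-equivariant; as $T(\mathbb{F}_q)$ already acts via $\xi$ on both sides, its underlying linear map provides a $\widetilde{\Omega}_\chi$-equivariant isomorphism $V\xrightarrow{\cong}V'$. By the classification the pairs $(\chi,V)$ and $(\chi,V')$ are then $\widetilde{\Omega}$-conjugate (via the identity), so $\m\cong\m'$ in $\Mod(\He)$.

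The main technical subtlety, already handled inside \Cref{Omega_iso}, is that a homotopy-class isomorphism in $[\chi\otimes V,\chi\otimes V']_{\He_F}$ lifts to an honest $\He_F$-linear isomorphism precisely because $\chi|_{\He_F}$ is not projective, via \Cref{basic_htpy_lemma}; without this, one would only recover an isomorphism of $\widetilde{\Omega}_\chi$-representations in the homotopy category, which is insufficient to apply the classification. Once this hurdle is cleared, the argument is essentially an assembly of \Cref{same_character} and \Cref{Omega_iso} with a careful bookkeeping of the exceptional root-system configurations excluded by the hypotheses.
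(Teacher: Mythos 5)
Your proposal is correct and follows essentially the same route as the paper's proof: the hypotheses are used to rule out the exceptional configuration of \Cref{characters_Haff_Ho}(i), \Cref{same_character} is invoked to obtain $\widetilde{\Omega}$-conjugacy of $\chi$ and $\chi'$, the problem is reduced to $\chi=\chi'$ by replacing $(\chi',V')$ by a conjugate pair, and \Cref{Omega_iso} applied to the given face $F$ (with $\widetilde{\Omega}_{F,\chi}=\widetilde{\Omega}_\chi$) then forces $V\cong V'$ as $\widetilde{\Omega}_\chi$-representations, hence $\m\cong\m'$ in $\Mod(\He)$. Your write-up is more explicit about the bookkeeping of excluded cases and the use of \Cref{basic_htpy_lemma} to upgrade a homotopy-level isomorphism to an honest one, but the argument is the same.
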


\begin{proof}
Our assumptions say precisely that we are not in the setting of \Cref{characters_Haff_Ho}(i). We may thus apply \Cref{same_character} to deduce that $\chi$ and $\chi'$ are $\widetilde{\Omega}$-conjugate. By conjugating the pair $(\chi', V')$, we may therefore assume that $\chi=\chi'$. We can then apply \Cref{Omega_iso} to the face $F\subseteq \overline{C}$ given in the statement to deduce that $V\cong V'$ as representations of $\widetilde{\Omega}_{F,\chi}=\widetilde{\Omega}_\chi$. This shows that $\m\cong \m'$ in $\Mod(\He)$.
\end{proof}

In general, it may be that $\widetilde{\Omega}_{F,\chi}\neq \widetilde{\Omega}_\chi$ whenever $\chi|_{\He_F}$ is not projective so that the above Theorem does not apply (see e.g.\ \Cref{not_apply}). However, our techniques still apply in many situations. To illustrate this, we finish this paper by classifying isomorphism classes of simple supersingular $\He$-modules in $\Ho(\He)$ for $G$ an arbitrary finite product of general linear groups.

\subsection{Products of $\mathrm{GL}_n$'s} In this subsection we assume that $\mathbb{G}=\mathrm{GL}_{n_1}\times \cdots\times \mathrm{GL}_{n_r}\times \mathbb{T}'$, where $n_1\geq\ldots\geq n_r\geq 2$ and $\mathbb{T}'\cong \mathbb{G}_m^l$ is a split torus. Our motivation for considering this case is that all standard Levi subgroups of $\mathrm{GL}_n$ are of this form.

For $1\leq i\leq r$, write $T_i(\mathbb{F}_q)$ for the $\mathbb{F}_q$-rational points of the diagonal matrices in $\mathrm{GL}_{n_i}$. In any $T_i(\mathbb{F}_q)$ and for $t\in\mathbb{F}_q$, write $\diag(t)_j$ for the diagonal matrix with a $t$ in the $j$-th diagonal entry and 1's in every other diagonal entry. Since $\mathbb{G}$ is a product of split reductive groups, we correspondingly have $\Omega=\Omega_1\times\cdots\times\Omega_r\times\Omega_{T'}$ where $\Omega_{T'}\cong\Z^l$ is central in $W$ and, for each $1\leq i\leq r$, $\Omega_i= \langle\omega_i\rangle\cong\Z$ where $\omega_i$ is the image of the matrix $\begin{psmallmatrix}
0 & \cdots & \cdots & \varpi\\
1 & \ddots &  & \vdots \\
 & \ddots  & \ddots & \vdots\\
 & & 1 & 0
\end{psmallmatrix}\in\mathrm{GL}_{n_i}(F)$ in $W$. We write $\hat{\omega}_i$ for the image of that same matrix in $\widetilde{W}$. We also have a decomposition $S=S_1\sqcup \ldots\sqcup S_r$. We now let $\widetilde{\Omega}_{T'}\cong \Omega_{T'}\times T(\mathbb{F}_q)$ denote the pre-image of $\Omega_{T'}$ in $\widetilde{W}$, where here $\Omega_{T'}$ identifies with a central subgroup of $\widetilde{W}$. Note that $\hat{\omega}_i$ acts on $T_i(\mathbb{F}_q)$ by conjugation, with $\hat{\omega}_i^{-1}\diag(t_1,\ldots, t_{n_i})\hat{\omega}_i=\diag(t_2,\ldots, t_{n_i}, t_1)$, so that $\widetilde{\Omega}\cong \widetilde{\Omega}_{T'}\rtimes (\langle \hat{\omega}_1\rangle\times\cdots\times \langle \hat{\omega}_r\rangle)$.

We make a few preliminary remarks.  In order to study simple supersingular $\He$-modules, we need to consider the stabiliser $\widetilde{\Omega}_\chi$ of a supersingular character.

\begin{lem}\label{stab_chi} Let $\chi$ be a character of $\Hea$.  Then we have $\widetilde{\Omega}_\chi=\widetilde{\Omega}_{T'}\rtimes (\langle \hat{\omega}_1^{d_1}\rangle\times\cdots\times \langle \hat{\omega}_r^{d_r}\rangle)$ for some $d_1, \ldots, d_r\geq 1$ such that $d_i$ divides $n_i$ for all $1\leq i\leq r$. 
\end{lem}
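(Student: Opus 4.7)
The plan is to exploit the semi-direct product decomposition $\widetilde{\Omega}\cong \widetilde{\Omega}_{T'}\rtimes(\langle\hat{\omega}_1\rangle\times\cdots\times\langle\hat{\omega}_r\rangle)$ together with the product structure of $\mathbb{G}$ to reduce the claim to identifying the stabiliser of each ``component'' of $\chi$ under the cyclic action of the corresponding $\langle\hat{\omega}_i\rangle$.

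First, I would observe that $\widetilde{\Omega}_{T'}\subseteq \widetilde{\Omega}_\chi$. Indeed, $\widetilde{\Omega}_{T'}$ is central in $\widetilde{W}$ (as recalled just before the statement), so its conjugation action on $\Hea$ is trivial and hence it stabilises every character of $\Hea$. Together with the semi-direct product description of $\widetilde{\Omega}$, this shows that $\widetilde{\Omega}_\chi=\widetilde{\Omega}_{T'}\rtimes K$, where $K:=\widetilde{\Omega}_\chi\cap(\langle\hat{\omega}_1\rangle\times\cdots\times\langle\hat{\omega}_r\rangle)$.

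Next, I would decompose $\chi$ according to the product structure. Writing the parametrising pair as $J=J_1\sqcup\cdots\sqcup J_r$ (with $J_i\subseteq S_i$) and $\xi=\xi_1\otimes\cdots\otimes\xi_r\otimes\xi_{T'}$ (with $\xi_i$ a character of $T_i(\mathbb{F}_q)$), I would invoke \Cref{char_not_conj} to describe $\chi^{\tilde{\omega}}$ explicitly. By the explicit matrix formula for $\hat{\omega}_i$, its conjugation action restricts to the identity on $S_j$ and $T_j(\mathbb{F}_q)$ for $j\neq i$, and to a cyclic permutation of $S_i$ and of $T_i(\mathbb{F}_q)$ respectively. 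It follows that $\hat{\omega}_1^{k_1}\cdots\hat{\omega}_r^{k_r}$ stabilises $\chi$ if and only if each $\hat{\omega}_i^{k_i}$ stabilises the pair $(J_i,\xi_i)$. Hence $K=K_1\times\cdots\times K_r$, where $K_i\subseteq\langle\hat{\omega}_i\rangle$ is the stabiliser of $(J_i,\xi_i)$.

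Finally, each $K_i$ is a subgroup of $\langle\hat{\omega}_i\rangle\cong\mathbb{Z}$, so it is either trivial or of the form $\langle\hat{\omega}_i^{d_i}\rangle$ for some $d_i\geq 1$. I would then show that $\hat{\omega}_i^{n_i}\in K_i$: by a direct matrix calculation, $\omega_i^{n_i}$ is the image in $W$ of the scalar matrix $\varpi\cdot I_{n_i}$, which lies in the centre of $\mathrm{GL}_{n_i}(\mathfrak{F})$ and thus is central in $\widetilde{W}$; in particular, $\hat{\omega}_i^{n_i}$ acts trivially on $\Hea$ by conjugation. Hence $\langle\hat{\omega}_i^{n_i}\rangle\subseteq K_i$, so $K_i=\langle\hat{\omega}_i^{d_i}\rangle$ for some $d_i\mid n_i$ with $d_i\geq 1$, completing the proof. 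The main (very mild) obstacle is simply checking the compatibility of $\widetilde{\Omega}_\chi$ with the semi-direct product decomposition and verifying centrality of $\hat{\omega}_i^{n_i}$; both reduce to routine explicit computations with the given matrix representative of $\omega_i$.
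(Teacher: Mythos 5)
Your approach is essentially the same as the paper's: decompose $\chi$ (equivalently, its stabiliser) according to the product structure $\Hea\cong\He_{\aff,1}\otimes_k\cdots\otimes_k\He_{\aff,r}$, show that on each factor the $\widetilde{\Omega}$-action filters through $\langle\hat\omega_i\rangle$, and then use centrality of $\hat\omega_i^{n_i}$ (as a scalar matrix) to bound each stabiliser. The paper phrases this as $\widetilde{\Omega}_\chi=\bigcap_i\widetilde{\Omega}_{\chi_i}$, while you phrase it as $K=K_1\times\cdots\times K_r$; these are the same argument.

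One small inaccuracy in your write-up: you assert that $\widetilde{\Omega}_{T'}$ is central in $\widetilde{W}$ and hence acts trivially on $\Hea$ by conjugation. In fact only the lift $\Omega_{T'}$ is central; the other factor $T(\mathbb{F}_q)\subseteq\widetilde{\Omega}_{T'}$ is generally \emph{not} central, and the conjugation action of $t\in T(\mathbb{F}_q)$ on the generators $T_{\hat{s}}$ is nontrivial (one has $T_tT_{\hat{s}}T_{t^{-1}}=T_{t\cdot{}^{\hat{s}}t^{-1}}T_{\hat{s}}$). The conclusion $\widetilde{\Omega}_{T'}\subseteq\widetilde{\Omega}_\chi$ is nevertheless correct, but for two separate reasons: $\Omega_{T'}$ by centrality, and $T(\mathbb{F}_q)\subseteq\widetilde{\Omega}_\chi$ as already recorded in the paragraph defining the $\widetilde{\Omega}$-action (this uses that $t\cdot{}^{\hat s}t^{-1}\in T_s(\mathbb{F}_q)$ and $\xi$ vanishes there whenever $s\in J\subseteq S_\xi$). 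With this correction the rest of your argument goes through unchanged.
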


\begin{proof}
For each $1\leq i\leq r$, we write $\He_{\aff, i}$ for the $k$-subalgebra of $\Hea$ generated by $\{T_{\hat{s}}\mid s\in S_i\}$ and $T_i(\mathbb{F}_q)$. Note that $\Hea\cong\He_{\aff, 1}\otimes_k \ldots\otimes_k\He_{\aff, r}$ as $k$-algebras, and that $\widetilde{\Omega}$ normalises each $\He_{\aff,i}$. Correspondingly, $\chi\cong \chi_1\otimes_k\cdots\otimes_k\chi_r$ where $\chi_i:=\chi|_{\He_{\aff, i}}$ ($1\leq i\leq r$).  We fix some $1\leq i\leq r$.  The action of $\widetilde{\Omega}$ on characters of $\He_{\aff,i}$ factors through $\Omega_i=\langle \omega_i\rangle\cong\Z$. Since $\hat{\omega}_i^{n_i}=\diag(\varpi,\ldots,\varpi)$ is central in $\widetilde{W}$, we deduce that the stabiliser $\widetilde{\Omega}_{\chi_i}$ equals $\widetilde{\Omega}_{T'}\rtimes\langle \hat{\omega}_i^{d_i}\rangle\times\prod_{j\neq i}\langle \hat{\omega}_j\rangle$ for some $d_i|n_i$. Since $\widetilde{\Omega}_{\chi}=\bigcap_{1\leq i\leq r}\widetilde{\Omega}_{\chi_i}$, we deduce the claimed result.
\end{proof}

We continue to consider a character $\chi$ of $\Hea$. If $V$ is any finite dimensional representation of $\widetilde{\Omega}_\chi$ on which $T(\mathbb{F}_q)$ acts by $\chi$, we then have that $\hat{\omega}_1^{d_1}, \ldots, \hat{\omega}_r^{d_r}$ (with $d_i$ as in \Cref{stab_chi}) and the generators of $\Omega_{T'}$ have a simultaneous eigenvector (since they commute) and this eigenvector spans a one dimensional subrepresentation. If $V$ is irreducible it must therefore be one dimensional. When $\chi$ is supersingular the corresponding simple supersingular $\He$-module $(\chi\otimes V)\otimes_{\He_\chi}\He$ then has $k$-dimension $d_1\cdots d_r$.

\begin{lem}\label{d_i_bigger_1}
Suppose that $\chi=(J, \xi)$ is a character of $\Hea$ and that $\widetilde{\Omega}_\chi=\widetilde{\Omega}_{T'}\rtimes (\langle \hat{\omega}_1^{d_1}\rangle\times\cdots\times \langle \hat{\omega}_r^{d_r}\rangle)$.
\begin{enumerate}
\item If there exist $s, s'\in S_i$ adjacent in the affine Dynkin diagram, for some $1\leq i\leq r$, such that $\chi(T_{\hat{s}})\neq \chi(T_{\hat{s'}})$ then $d_i>1$.
\item If $S\neq S_\xi$ then $d_i>1$ for every $i$ such that $S_i\nsubseteq S_\xi$.
\end{enumerate}
In particular, if $\chi$ is supersingular then $d_i>1$ for all $1\leq i\leq r$.
\end{lem}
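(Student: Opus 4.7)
The plan is to analyse how $\hat{\omega}_i$ acts on both the subset $J$ and the character $\xi$ parametrising $\chi$, using the fact that $\omega_i$ acts on $S_i$ by a cyclic rotation of order $n_i$. By \Cref{char_not_conj}, $\chi^{\hat{\omega}_i}$ corresponds to the pair $(\omega_i^{-1} J \omega_i, \xi^{\hat{\omega}_i})$, so $d_i = 1$ is equivalent to requiring both $\omega_i^{-1} J \omega_i = J$ and $\xi^{\hat{\omega}_i} = \xi$. First I would record, via a direct matrix computation with the explicit matrix defining $\hat{\omega}_i$, that conjugation by $\hat{\omega}_i$ sends $\diag(t_1, \ldots, t_{n_i}) \in T_i(\mathbb{F}_q)$ to $\diag(t_{n_i}, t_1, \ldots, t_{n_i-1})$, and that correspondingly the induced action on the affine Dynkin diagram of type $\tilde{A}_{n_i - 1}$ underlying $S_i$ is a single cyclic rotation of order $n_i$.

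For (i), the assumption $d_i = 1$ makes $J \cap S_i$ stable under this cyclic rotation, forcing $J \cap S_i \in \{\emptyset, S_i\}$; thus $\chi(T_{\hat{s}})$ takes a common value (either $0$ or $-1$) for all $s \in S_i$, contradicting the existence of the adjacent pair $s, s'$. For (ii), write $\xi|_{T_i(\mathbb{F}_q)} = \psi_1 \otimes \cdots \otimes \psi_{n_i}$ with each $\psi_j$ a character of $\mathbb{F}_q^\times$. Invariance of $\xi$ under conjugation by $\hat{\omega}_i$ together with the cyclic shift computed above yields $\psi_1 = \psi_2 = \cdots = \psi_{n_i}$, so that $\xi|_{T_i(\mathbb{F}_q)}$ is the composite of $\det$ with a single character of $\mathbb{F}_q^\times$. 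Since each $T_s(\mathbb{F}_q)$ for $s \in S_i$ lies inside $\ker(\det)$, this forces $S_i \subseteq S_\xi$, contradicting the assumption.

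For the final assertion, fix $1 \leq i \leq r$. If $S_i \nsubseteq S_\xi$ then part (ii) directly gives $d_i > 1$. Otherwise $S_i \subseteq S_\xi$ and the restriction of $\chi$ to the irreducible component of $\Hea$ corresponding to $S_i$ is supersingular; by the characterisation of supersingularity for characters recalled in \S5.1, this forces $J \cap S_i$ to be a proper nonempty subset of $S_i$ (since being $\emptyset$ or $S_i$ would recover the restriction of a twisted trivial or twisted sign character). As the affine Dynkin diagram of $\tilde{A}_{n_i - 1}$ is a connected cycle, one may then find adjacent $s, s' \in S_i$ with exactly one lying in $J$, reducing to (i). The main obstacle I anticipate is the explicit computation of the $\hat{\omega}_i$-action, and in particular identifying both the cyclic permutation of the tensor factors of $\xi|_{T_i(\mathbb{F}_q)}$ and the cyclic rotation on $S_i$ cleanly; once these are in place the remaining arguments reduce to unwinding the parametrisation of characters of $\Hea$.

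\begin{rem}\label{det_rmk}
The computation above shows in particular that, for $G$ a product of $\mathrm{GL}_n$'s as in this subsection, the condition $S_i \subseteq S_\xi$ is equivalent to $\xi|_{T_i(\mathbb{F}_q)} = \psi \circ \det$ for some character $\psi$ of $\mathbb{F}_q^\times$.
\end{rem}
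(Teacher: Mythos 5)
Your proposal is correct and takes essentially the same approach as the paper. The paper argues directly rather than by contrapositive: for (i), it notes that after possibly swapping $s,s'$ one may assume $s'=\omega_i s\omega_i^{-1}$, so $\chi(T_{\hat{s}})\neq\chi(T_{\hat{s'}})=\chi^{\hat{\omega}_i}(T_{\hat{s}})$ and hence $\hat{\omega}_i\notin\widetilde{\Omega}_\chi$; for (ii), it takes $s\in S_i\setminus S_\xi$, expresses $T_s(\mathbb{F}_q)$ as $\{\diag(t)_j\diag(t^{-1})_{j+1}\}$, and reads off $\xi^{\hat{\omega}_i}(\diag(t)_j)=\xi(\diag(t)_{j+1})\neq\xi(\diag(t)_j)$ for suitable $t$. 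Your contrapositive reformulations (stability of $J\cap S_i$ under the cyclic rotation forcing $J\cap S_i\in\{\emptyset,S_i\}$; $\xi$-invariance forcing all tensor factors $\psi_j$ equal and hence $\xi|_{T_i(\mathbb{F}_q)}=\psi\circ\det$, which lands each $T_s(\mathbb{F}_q)\subseteq\ker(\det)$ inside the kernel of $\xi$) are equivalent and equally valid, if slightly longer. The "main obstacle" you anticipate is already handled in the setup paragraph preceding the lemma: the conjugation action $\hat{\omega}_i^{-1}\diag(t_1,\ldots,t_{n_i})\hat{\omega}_i=\diag(t_2,\ldots,t_{n_i},t_1)$ is stated explicitly there, and the cyclic action of $\omega_i$ on $S_i$ is cited to Iwahori–Matsumoto. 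Your closing remark is also consistent with the paper's \Cref{det_rmk}, which records the same $\det$-description (stated globally for $S=S_\xi$ rather than per component).
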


\begin{proof}
For (i), since the action of $\omega_i$ permutes the nodes of the affine Dynkin diagram of $\mathrm{GL}_{n_i}$ cyclically, cf.\ \cite[\S 1.8]{IwMa65}, without loss of generality we may assume $s'=\omega_i s\omega_i^{-1}$. Thus we have that $\chi(T_{\hat{s}})\neq \chi^{\hat{\omega}_i}(T_{\hat{s}})$ and hence $\hat{\omega}_i\notin\widetilde{\Omega}_\chi$ as required.

For (ii), suppose that we have $s=s_\alpha\in S_i\setminus S_\xi$. Then there is some $1\leq j\leq n_i$ such that the subtorus $\alpha^\vee(\mathbb{F}_q^\times)$ of $T_i(\mathbb{F}_q)$ is equal to $\{\diag(t)_j\cdot \diag(t^{-1})_{j+1}\mid t\in \mathbb{F}_q\}$, where the subscripts are taken modulo $n_i$. The condition that $s\notin S_\xi$ then corresponds to saying that there exists $t\in \mathbb{F}_q$ such that $\xi(\diag(t)_j)\neq \xi(\diag(t)_{j+1})$. Since $\diag(t)_{j+1}=\hat{\omega}_i\diag(t)_j\hat{\omega}_i^{-1}$, this shows that $\xi^{\hat{\omega}_i}\neq \xi$ and thus $\hat{\omega}_i\notin\widetilde{\Omega}_\chi$ as required.

Finally, if $\chi$ is supersingular then it must be that for every $i$ such that $S_i\subseteq S_\xi$, there exist $s,s'\in S_i$ as in (i). The last claim thus follows immediately from (i) and (ii).
\end{proof}

\begin{rem}\label{det_rmk}
The arguments in the above proof show that $S_\xi=S$ if and only if for all $1\leq i\leq r$, all $1\leq j\leq n_i$ and all $t\in\mathbb{F}_q$, $\xi(\diag(t)_j)=\xi(\diag(t)_{j+1})$. In particular, we see that $\xi^{\tilde{\omega}}=\xi$ for every $\tilde{\omega}\in\widetilde{\Omega}$ whenever $S=S_\xi$. Since the image of $\xi$ in $k$ is contained in a subfield isomorphic to $\mathbb{F}_q$, we may view $\xi$ as an $\mathbb{F}_q$-valued character. Then we see from the above that $S=S_\xi$ if and only if, for all $1\leq i\leq r$, there is some $0\leq a_i\leq q-2$ such that $\xi|_{T_i(\mathbb{F}_q)}=\det^{a_i}$.
\end{rem}

Next, observe that for any face $F\subseteq\overline{C}$ with $F\neq C$ we have $\Omega_F\neq \Omega$. Indeed, we then have that $S_F\cap S_i$ is a non-empty proper subset of $S_i$ for some $i$ and so, picking some $s\in S_F\cap S_i$ and $s'\in S_i\setminus S_F$, the fact that the action of $\Omega_i$ on $S_i$ by conjugation is transitive implies that $\omega_i^{-d}s\omega_i^d=s'$ for some $d$ and thus that $\omega_i^d\notin \Omega_F$. 

We begin our study of isomorphism classes of simple supersingular $\He$-modules in $\Ho(\He)$ in this setting. We first show that when two simple supersingular modules have the same underlying $\Hea$-characters $\chi=\chi'$ (up to conjugation), then we can quickly reduce to the case where $S_\xi=S$.

\begin{lem}\label{case_S_not_Sxi}
Let $\chi$ be a supersingular character of $\Hea$ of infinite projective dimension, and let $\m=(\chi\otimes V)\otimes_{\He_\chi}\He$ and $\m'=(\chi\otimes V')\otimes_{\He_\chi}\He$ be two simple supersingular $\He$-modules. Suppose that $S_\xi\neq S$. Then $\m\cong \m'$ in $\Ho(\He)$ if and only if $\m\cong \m'$ in $\Mod(\He)$.
\end{lem}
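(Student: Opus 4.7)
The plan is to reduce this to Theorem \ref{imperfect_theorem}. The easy direction is free: if $\m\cong\m'$ in $\Mod(\He)$, they are isomorphic in $\Ho(\He)$ automatically. So I would focus on the converse. Since $\chi=\chi'$ already in the hypothesis, there is no need to invoke \Cref{same_character}, and the hypothesis $S_\xi\neq S$ puts us outside the exceptional case in \Cref{imperfect_theorem} (which only applies when $\xi=\xi'$ and $S_\xi=S$ in the rank-$(2,1,\ldots,1)$ situation). Consequently, it suffices to exhibit a face $F\subseteq \overline{C}$ satisfying $\widetilde{\Omega}_\chi\subseteq \widetilde{\Omega}_F$ and such that $\chi|_{\He_F}$ is not projective, and then apply the theorem directly.

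To construct $F$, I would use the structure of $\widetilde{\Omega}_\chi$ coming from \Cref{stab_chi}: write $\widetilde{\Omega}_\chi=\widetilde{\Omega}_{T'}\rtimes \prod_{i=1}^r\langle\hat{\omega}_i^{d_i}\rangle$ with $d_i\mid n_i$. Since $S_\xi\neq S$, there is some index $i$ with $S_i\nsubseteq S_\xi$, and by \Cref{d_i_bigger_1}(ii) we then have $d_i>1$. Pick any $s\in S_i\setminus S_\xi$. The key observation is that $S_\xi$ is stable under the conjugation action of $\Omega_\chi$ (since $\xi^{\omega}=\xi$ for $\omega\in\Omega_\chi$ forces $S_{\xi^\omega}=\omega^{-1}S_\xi\omega=S_\xi$), so the $\Omega_\chi$-orbit $\mathcal{O}$ of $s$ lies entirely in $S_i\setminus S_\xi$. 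Moreover, $\Omega_{T'}$ is central in $W$ and $\omega_j$ (for $j\neq i$) centralises the $\mathrm{GL}_{n_i}$-factor, so these act trivially on $S_i$ by conjugation; thus $\mathcal{O}$ is precisely the orbit of $s$ under the cyclic shift $\langle\omega_i^{d_i}\rangle$, which has size $n_i/d_i<n_i$. In particular $\mathcal{O}\subsetneq S_i$, and $\mathcal{O}\cap S_j=\emptyset$ for $j\neq i$, so via the bijection \eqref{bijection_faces} the set $S_F:=\mathcal{O}$ corresponds to a unique face $F\subseteq\overline{C}$.

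Finally, I would verify both hypotheses of \Cref{imperfect_theorem} for this $F$: the containment $\widetilde{\Omega}_\chi\subseteq \widetilde{\Omega}_F$ holds by construction of $S_F$ as an $\Omega_\chi$-invariant set, and non-projectivity of $\chi|_{\He_F}$ follows from \Cref{Koziol_projective_face}(i) because $s\in S_F\setminus S_\xi$. Then \Cref{imperfect_theorem} yields $\m\cong\m'$ in $\Mod(\He)$. There is no substantial obstacle here; the real content lies in the orbit argument showing that non-projectivity of the restriction can be achieved simultaneously with $\Omega_\chi$-invariance of $S_F$, which is exactly where the product-of-$\mathrm{GL}_n$'s hypothesis (giving cyclic $\omega_i$-action on each $S_i$) makes the construction straightforward.
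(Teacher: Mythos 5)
Your proposal is correct and follows essentially the same route as the paper's proof: both reduce to \Cref{imperfect_theorem} via the face $F$ with $S_F$ equal to the $\langle\omega_i^{d_i}\rangle$-orbit of a chosen $s\in S_i\setminus S_\xi$, using \Cref{d_i_bigger_1}(ii) to ensure $d_i>1$ so the face exists, and \Cref{Koziol_projective_face}(i) for non-projectivity. The only (cosmetic) difference is that you present $S_F$ as the full $\Omega_\chi$-orbit of $s$ and deduce $\Omega_\chi\subseteq\Omega_F$ by invariance, whereas the paper writes down the $\langle\omega_i^{d_i}\rangle$-orbit directly and computes $\Omega_F$ explicitly; these are the same set and both verifications succeed.
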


\begin{proof}
We write $\widetilde{\Omega}_\chi=\widetilde{\Omega}_{T'}\rtimes (\langle \hat{\omega}_1^{d_1}\rangle\times\cdots\times \langle \hat{\omega}_r^{d_r}\rangle)$ and we pick $s\in S\setminus S_\xi$, say with $s\in S_i$. Note that $\Res^\Hea_{\He_{F}}(\chi)$ is not projective for any $F\subseteq \overline{C}$ such that $s\in S_F$ by \Cref{Koziol_projective_face}(i). In particular, this applies to the face $F$ given by $S_{F}=\{\omega_i^{-d_il}s\omega_i^{d_il}\mid 0\leq l<n_i/d_i\}$, which is well-defined by \Cref{d_i_bigger_1}(ii). Also note that this $F$ satisfies $\Omega_{F}=\langle\omega_i^{d_i}\rangle\times \Omega_{T'}\times \prod_{j\neq i}\Omega_j$ and thus we have $\widetilde{\Omega}_\chi\subseteq \widetilde{\Omega}_F$. We may therefore apply \Cref{imperfect_theorem} to obtain the result.
\end{proof}

\begin{rem}
When $\mathbb{G}$ is a reductive group whose root system is irreducible and not of type $A$, then the action of $\Omega$ on the affine Dynkin diagram can never be transitive (cf.\ \cite[\S 1.8]{IwMa65}). Hence, in those cases, we also have that two nonisomorphic simple supersingular $\He$-modules are still nonisomorphic in $\Ho(\He)$ whenever they contain a common supersingular character $\chi=(J, \xi)$ of $\Hea$ such that $S_\xi\neq S$. The argument is the same as the above proof: if we pick $s\in S\setminus S_\xi$ then the face $F$ such that $S_F$ is equal to the orbit of $s$ under $\Omega_\chi$, which is well-defined by the non-transitivity of the $\Omega$-action, allows us to conclude using \Cref{imperfect_theorem}.
\end{rem}

In the case where $S=S_\xi$, the following result will be key in order to apply \Cref{imperfect_theorem}.

\begin{lem}\label{case_d_neq_2} Let $\chi=(J,\xi)$ be a supersingular character of $\Hea$ such that $S=S_\xi$ and fix $1\leq i\leq r$. For all $2<d$ with $d$ dividing $n_i$, if such a $d$ exists, there is a face $F\subseteq \overline{C}$ such that $\Res^\Hea_{\He_F}(\chi)$ is not projective and $\Omega_F=\langle \omega_i^d\rangle\times \Omega_{T'}\times \prod_{j\neq i}\Omega_j$.
\end{lem}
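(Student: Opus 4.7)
The plan is to construct $F$ explicitly by prescribing $S_F\subseteq S_i$, with $S_F\cap S_j=\emptyset$ for $j\neq i$. First, label $S_i=\{s_0,s_1,\ldots,s_{n_i-1}\}$ cyclically so that $\omega_i s_l\omega_i^{-1}=s_{l+1}$ with indices taken modulo $n_i$; this is possible by the cyclic action of $\omega_i$ on the affine Dynkin diagram of $\mathrm{GL}_{n_i}$ (cf.\ \cite[\S 1.8]{IwMa65}, as used in the proof of \Cref{d_i_bigger_1}). Note that $n_i\geq 3$ since $d>2$ divides $n_i$. Under this labelling, adjacency in the affine Dynkin diagram of $\Phi_i$ (of type $A_{n_i-1}$) corresponds to consecutive indices, and the orbits of $\langle\omega_i^d\rangle$ acting on $S_i$ are exactly the residue classes modulo $d$, each of cardinality $n_i/d$.

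The first real step is to locate a pair of adjacent indices $(k,k+1)$ with $\chi(T_{\hat{s_k}})\neq\chi(T_{\hat{s_{k+1}}})$. By supersingularity of $\chi$, its restriction to the irreducible component $\He_{\aff,i}$ of $\Hea$ is neither a twisted trivial nor a twisted sign character, so $J\cap S_i$ is a proper non-empty subset of $S_i$. Walking around the cyclic sequence $s_0,\ldots,s_{n_i-1},s_0$, one is forced to encounter consecutive indices $k,k+1$ with exactly one of $s_k,s_{k+1}$ in $J$, and these necessarily satisfy $\chi(T_{\hat{s_k}})\neq\chi(T_{\hat{s_{k+1}}})$.

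With such a pair fixed, I would define $F$ by $S_F:=\{s_l\in S_i : l\equiv k\text{ or }k+1\pmod d\}$ together with $S_F\cap S_j=\emptyset$ for $j\neq i$. This is a legitimate face by \eqref{bijection_faces} since $|S_F\cap S_i|=2n_i/d<n_i$ (using $d>2$). Non-projectivity of $\Res^\Hea_{\He_F}(\chi)$ then follows from (c)$\Rightarrow$(a) in \Cref{Koziol_projective_face}(ii), applied to the adjacent pair $s_k,s_{k+1}\in S_F$ and using the hypothesis $S_\xi=S$ (which yields $S_F\subseteq S_\xi$).

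It remains to verify $\Omega_F=\langle\omega_i^d\rangle\times\Omega_{T'}\times\prod_{j\neq i}\Omega_j$. The inclusion $\supseteq$ is immediate: each $\omega_j$ with $j\neq i$ stabilises $S_F$ because $S_F\cap S_j=\emptyset$, $\Omega_{T'}$ acts trivially on $S$, and $\omega_i^d$ preserves $S_F$ by construction. For the reverse inclusion, since $\omega_i^{d'}$ acts on $S_i$ by translating indices by $d'$ modulo $n_i$, preserving $\{l : l\equiv k\text{ or }k+1\pmod d\}$ forces $d'\equiv 0\text{ or }1\pmod d$ (testing on $l=k$) and $d'\equiv -1\text{ or }0\pmod d$ (testing on $l=k+1$); using $d>2$, the only common solution is $d'\equiv 0\pmod d$, i.e.\ $\omega_i^{d'}\in\langle\omega_i^d\rangle$. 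The only delicate point is the first step, extracting an adjacent pair of differing $\chi$-values from supersingularity; the remainder is a purely combinatorial verification.
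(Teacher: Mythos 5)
Your proof is correct and follows essentially the same approach as the paper: fix a cyclic labelling of $S_i$, extract an adjacent pair with differing $\chi$-values from supersingularity and $S=S_\xi$, take $S_F$ to be the $\langle\omega_i^d\rangle$-orbit of that pair, and read off non-projectivity from \Cref{Koziol_projective_face}(ii). The only difference is that you fill in details the paper treats as immediate (why the adjacent pair exists, why $|S_F\cap S_i|<n_i$, and the reverse inclusion in the stabiliser computation), all of which are verified correctly.
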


\begin{proof}
The requirements that $\chi$ is supersingular and $S=S_\xi$ ensure that there exist $s_i, s_i'\in S_i$, adjacent in the affine Dynkin diagram, such that $\chi(T_{\hat{s_i}})\neq \chi(T_{\hat{s_i'}})$. If there is $d>2$ dividing $n_i$ then we automatically have $n_i>2$ and we may define $F\subseteq \overline{C}$ such that $S_F$ is equal to the orbit of $\{s_i, s_i'\}$ under the subgroup $\langle \omega_i^d\rangle$.  By construction we have $\Omega_F=\langle \omega_i^d\rangle\times \Omega_{T'}\times \prod_{j\neq i}\Omega_j$, and \Cref{Koziol_projective_face}(ii) gives that $\Res^\Hea_{\He_F}(\chi)$ is not projective as required.
\end{proof}

We will be unable to use the above with $d=d_j$ (some $j$) in the case that $d_i=2$ for all $1\leq i\leq r$. We deal with that case next, first introducing some notation. For each $1\leq i\leq r$ we let $[r]_i:=\{1\leq j\leq r\mid j\neq i\}$, and for any $\mathbf{j}\subseteq [r]_i$ we let $\hat{\omega}_{\mathbf{j}}:=\prod_{j\in \mathbf{j}}\hat{\omega}_j\in\widetilde{\Omega}$. We fix a generating set $\omega_{T',1}, \ldots, \omega_{T',l}$ for $\Omega_{T'}$ and write $\hat{\omega}_{T',1}, \ldots, \hat{\omega}_{T',l}$ for the corresponding lifts to $\widetilde{\Omega}_{T'}$.

\begin{prop}\label{case_d_equal_2} Suppose that for every $1\leq i\leq r$ there is some $m_i\geq 1$ such that $n_i=2m_i$, and let $\chi=(J,\xi)$ be a supersingular character of $\Hea$ with $\widetilde{\Omega}_\chi=\widetilde{\Omega}_{T'}\rtimes (\langle \hat{\omega}_1^2\rangle\times\cdots\times \langle \hat{\omega}_r^2\rangle)$, $S=S_\xi$ and $\pd_\Hea(\chi)=\infty$. Suppose that $V$ and $V'$ are two one-dimensional representations of $\widetilde{\Omega}_\chi$ on which $T(\mathbb{F}_q)$ acts via $\chi$, and let $\m=(\chi\otimes V)\otimes_{\He_\chi}\He$ and $\m'=(\chi\otimes V')\otimes_{\He_{\chi}}\He$. If $\m\cong \m'$ in $\Ho(\He)$ then $\m\cong \m'$ in $\Mod(\He)$.
\end{prop}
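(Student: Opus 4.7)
The plan is to show that $V=V'$ as 1-dimensional characters of $\widetilde{\Omega}_\chi$. Both agree on $T(\mathbb{F}_q)$ (giving $\xi$), so it suffices to show agreement on the generators $\hat\omega_1^2,\ldots,\hat\omega_r^2$ together with lifts to $\widetilde{\Omega}_{T'}$ of a generating set of $\Omega_{T'}$. The generators of $\widetilde{\Omega}_{T'}$ and the $\hat\omega_i^2$ for which $n_i=2$ all lie in $Z(\widetilde{W})$, so the associated elements of $\He$ are central; they act by scalars on the simple modules $\m$ and $\m'$, and a standard central character argument in $\Ho(\He)$---via factoring the difference of the two scalar endomorphisms of $\m'$ through a projective and invoking $\pd_\He(\m')=\infty$---forces these scalars to coincide.

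For each remaining generator $\hat\omega_i^2$ with $n_i\geq 4$, I attempt to apply \Cref{Omega_iso} to a face $F\subseteq\overline{C}$ satisfying $\hat\omega_i^2\in\widetilde{\Omega}_{F,\chi}$ and $\chi|_{\He_F}$ non-projective. Since $\omega_i^2$ acts trivially on $S_j$ for $j\neq i$, the condition $\hat\omega_i^2\in\widetilde{\Omega}_F$ is equivalent to $S_F\cap S_i$ being $\omega_i^2$-stable, so if some $j\neq i$ has $n_j\geq 3$, I may take $S_F=\{s_j,s'_j\}$ to consist of an adjacent pair in $S_j$ at which $\chi$ takes distinct values (these exist by supersingularity of $\chi$ on the $j$-th factor combined with $S=S_\xi$); then \Cref{Koziol_projective_face}(ii) and \Cref{Omega_iso} together give the required identity $V(\hat\omega_i^2)=V'(\hat\omega_i^2)$.

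The main obstacle is the sub-case in which $n_i\geq 4$ but $n_j=2$ for every $j\neq i$, when no face $F$ of the above form exists: the $\omega_i^2$-invariant proper subsets of $S_i$ are unions of $\omega_i^2$-orbits, each consisting of pairwise non-adjacent nodes of the affine Dynkin diagram, and the remaining components $S_j$ are too small to furnish an adjacent pair. To handle this case, one exploits the $\widetilde{\Omega}$-invariance of $\xi$ guaranteed by $S=S_\xi$ (see \Cref{det_rmk}): a direct computation on the decomposition $\m|_{\He_\chi}=\bigoplus_\sigma\chi^\sigma\otimes V^\sigma$ shows that $T_{\hat\omega_i^2}$ acts on all of $\m$ (respectively $\m'$) by the scalar $\mu:=V(\hat\omega_i^2)$ (respectively $\mu':=V'(\hat\omega_i^2)$), even though $T_{\hat\omega_i^2}$ itself is not central in $\He$. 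Since the square $T_{\hat\omega_i^4}$ \emph{is} central, the central character argument above forces $\mu^2=(\mu')^2$, so $\mu'=\pm\mu$; in residue characteristic $2$ this already completes the proof. In odd residue characteristic, ruling out $\mu'=-\mu$ is the technical heart of the proof: one picks a representative $\tilde{f}\colon Q_c\m\to\m'$ of the putative isomorphism, where $0\to\mathfrak{f}\to Q_c\m\xrightarrow{p}\m\to 0$ is a Gorenstein projective replacement with $\pd_\He(\mathfrak{f})<\infty$, and combines the mismatched identities $\tilde{f}(xT_{\hat\omega_i^2})=-\mu\tilde{f}(x)$ and $p(xT_{\hat\omega_i^2})=\mu p(x)$ with the triviality of $\mathfrak{f}$ in $\Ho(\He)$, the simplicity of $\m'$, and $\pd_\He(\m')=\infty$ to obtain the required contradiction.
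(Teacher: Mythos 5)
Your treatment of the generators of $\widetilde{\Omega}_{T'}$ and of $\hat\omega_i^2$ with $n_i=2$ via central characters is correct (those $T_{\hat\omega}$ are genuinely central in $\He$, and the naturality of multiplication by a central element descends to $\Ho(\He)$, so the scalars must agree); this is a valid alternative to the paper's use of \Cref{Omega_iso} for these generators. Likewise, when $n_i\geq 4$ and there is another index $j\neq i$ with $n_j\geq 4$, your choice of $S_F=\{s_j,s_j'\}$ does satisfy $\hat\omega_i^2\in\widetilde{\Omega}_{F,\chi}$ and $\chi|_{\He_F}$ non-projective, so \Cref{Omega_iso} applies and gives $\lambda_i=\lambda_i'$.

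The problem is the sub-case you yourself identify as ``the technical heart'' --- exactly one factor with $n_i\geq 4$ and all others $\mathrm{GL}_2$, which already contains $G=\mathrm{GL}_n$ with $n\geq 4$ even --- and there the argument has a genuine gap. First, the claim that $T_{\hat\omega_i^4}$ is central in $\He$ is false unless $n_i\mid 4$: the element $\hat\omega_i^k$ is central in $\widetilde{W}$ precisely when $n_i\mid k$, so for $n_i\geq 6$ only $T_{\hat\omega_i^{n_i}}$ is available, and the central character argument then yields only $\lambda_i^{m_i}=(\lambda_i')^{m_i}$, i.e.\ $\lambda_i'=\zeta\lambda_i$ with $\zeta$ an $m_i$-th root of unity, not $\lambda_i'=\pm\lambda_i$. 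Second, even for $n_i=4$, the sketched contradiction does not go through as stated. From $\tilde f(xT_{\hat\omega_i^2})=-\mu\tilde f(x)$ and $p(xT_{\hat\omega_i^2})=\mu p(x)$, what one gets is $\tilde f(x)=-\tfrac{1}{2\mu}\tilde f\bigl(xT_{\hat\omega_i^2}-\mu x\bigr)$, hence $\mathrm{im}(\tilde f)\subseteq\tilde f(\mathfrak{f})$; but the map $x\mapsto xT_{\hat\omega_i^2}-\mu x$ is not $\He$-linear (since $T_{\hat\omega_i^2}$ is not central), so this does not exhibit $\tilde f$ as factoring through a trivial object in $\Ho(\He)$, and from $\mathrm{im}(\tilde f)\subseteq\tilde f(\mathfrak{f})$ alone (together with simplicity of $\m'$ and $\pd_\He(\m')=\infty$) no contradiction follows: $\tilde f|_\mathfrak{f}$ may perfectly well surject onto $\m'$. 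So the odd-characteristic case is not actually ruled out.

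For comparison, the paper avoids this sub-case altogether by not restricting to faces $F$ satisfying $\hat\omega_i^2\in\widetilde{\Omega}_F$ (which, as you correctly note, do not contain an adjacent pair inside $S_i$). Instead, for $n_i>2$ it picks a prime factor $\ell$ of $m_i$, lets $S_F$ be the $\langle\omega_i^{2\ell}\rangle$-orbit of an adjacent pair $\{s_0,s_1\}\subseteq S_i$, and enlarges to the face $F'$ with $S_{F'}=S_F\cup S_{F\omega_i^{-1}}$. Because $\omega_i\notin\Omega_F$, \Cref{Omega_iso} cannot be invoked directly; the substitute is the identity $f_F=f_{F\omega_i^{-1}}\cdot\omega_i$ from \Cref{action_Omega_faces}, combined with the block-diagonal form of $f_{F'}$ and the explicit $2\times 2$ blocks of the $T_{\hat\omega_i}$-action, which forces $\lambda_i=\lambda_i'$ in a single matrix identity. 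You would need something of this kind --- a way to relate $f_F$ across faces not stabilised by $\omega_i^2$ --- to close your remaining sub-case.
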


\begin{proof}
For each $1\leq i\leq r$, we write $\lambda_i$ and $\lambda'_i$ for the scalar by which $\hat{\omega}_i^2$ acts on $V$ and $V'$, respectively. Similarly, for each $1\leq j\leq l$, we write $\nu_j$ and $\nu_j'$ for the scalar by which $\hat{\omega}_{T',j}$ acts on $V$ and $V'$ respectively. From our assumption on $\chi$ and by \Cref{facts_H_F}(iii), there is a face $F\subseteq \overline{C}$ for which $\chi|_{\He_F}$ is not projective. Note that we have $\widetilde{\Omega}_{T'}\subseteq\widetilde{\Omega}_{F,\chi}$ and $\hat{\omega}_i^2\in \widetilde{\Omega}_{F,\chi}$ for each $i$ such that $n_i=2$. Applying \Cref{Omega_iso} we deduce that $\nu_j=\nu_j'$ for all $j$ and that $\lambda_i=\lambda'_i$ whenever $n_i=2$.

We are thus left to show that $\lambda_i=\lambda'_i$ for all $i$ for which $n_i>2$, i.e.\ $m_i>1$. Pick $s_0\in S_i$ and write $s_j=\omega_i^{-j}s_0\omega_i^j$ for $1\leq j\leq n_i-1$.  By our assumptions on $\chi$, without loss of generality we may assume that $J\cap S_i=\{s_{2j}\mid 0\leq j\leq m_i-1\}$.  Similarly $\chi^{\hat{\omega}_i}=(J^{\omega_i},\xi)$ then satisfies $J^{\omega_i}\cap S_i=\{s_{2j+1}\mid 0\leq j\leq m_i-1\}$. Note that any $\widetilde{\Omega}$-conjugate of $\chi$ is non-projective when restricted to any $\He_F$ such that $S_F\cap S_i$ contains two adjacent simple affine reflections in the affine Dynkin diagram, cf.\ \Cref{Koziol_projective_face}(ii).  Fix an isomorphism $f:\m\to \m'$ in $\Ho(\He)$, write $V=kv$ and $V'=kv'$, and for each $F\subseteq\overline{C}$ let $f_F:\m\to \m'$ be a lift in $\Mod(\He_F)$ of $\Ho(\Res^\He_{\He_F})(f)$.

Pick a prime factor $\ell$ of $m_i$. Similarly to the proof of \Cref{case_d_neq_2}, we let $F$ be the face given by taking $S_F$ to be the $\langle \omega_i^{2\ell}\rangle$-orbit of $\{s_0, s_1\}$ so that $\chi|_{\He_{F}}$ and $(\chi^{\hat{\omega}_i})|_{\He_{F}}$ are not projective. We also have that $\chi|_{\He_{F\omega_i^{-1}}}$ and $(\chi^{\hat{\omega}})|_{\He_{F\omega_i^{-1}}}$ are not projective.

Next, since $s_0\in S_F\cap S_{F\omega_i^{-1}}$ it follows that $\abs{S_F\cup S_{F\omega_i^{-1}}} <2\cdot\abs{S_F}=4m_i/\ell\leq n_i$ and hence there is a face $F'\subseteq \overline{F}$ with $S_{F'}=S_F\cup S_{F\omega_i^{-1}}$. Again both $\chi$ and $\chi^{\hat{\omega}_i}$ have non-projective restrictions to $\He_{F'}$. Also note that for all $\tilde{\omega}\in\widetilde{\Omega}$ we have that $\chi^{\tilde{\omega}}|_{\He_{F'}}$ is equal to either $\chi|_{\He_{F'}}$ or $(\chi^{\hat{\omega}_i})|_{\He_{F'}}$. By the proof of \Cref{Omega_iso} and \Cref{basic_htpy_lemma} we deduce that, with respect to the bases $\bigcup_{\mathbf{j}\subseteq [r]_i}\{v\otimes T_{\hat{\omega}_{\mathbf{j}}}, v\otimes T_{\hat{\omega}_{\mathbf{j}}}T_{\hat{\omega}_i}\}$ and $\bigcup_{\mathbf{j}\subseteq [r]_i}\{v'\otimes T_{\hat{\omega}_{\mathbf{j}}}, v'\otimes T_{\hat{\omega}_{\mathbf{j}}}T_{\hat{\omega}_i}\}$ of $\m$ and $\m'$ respectively, $f_{F'}$ is a block diagonal matrix $A$ whose blocks are $2\times 2$-matrices, indexed by the subsets $\mathbf{j}\subseteq [r]_i$, equal to $\begin{pmatrix}
    \mu_\mathbf{j} & 0\\
    0 & \mu'_\mathbf{j}
\end{pmatrix}$ for some scalars $\mu_\mathbf{j}, \mu'_\mathbf{j}\in k^\times$. Similarly, $f_F$ and $f_{F\omega_i^{-1}}$ are also given by block diagonal matrices of the same form as above, and \Cref{basic_htpy_lemma} ensures that both of these are actually equal to $A$ since $f_F\sim (f_{F'})|_{\He_F}$ and $f_{F\omega_i^{-1}}\sim (f_{F'})|_{\He_{F\omega_i^{-1}}}$.

Now, with respect to the above bases, the action of $T_{\hat{\omega}_i}$ on $\m$ and $\m'$ is given by two block diagonal matrices $B$ and $B'$ respectively, whose $2\times 2$-blocks are indexed by the subsets $\mathbf{j}\subseteq [r]_i$ as above but with each block equal to $\begin{pmatrix}
    0 & 1\\
    \lambda_i & 0
\end{pmatrix}$ and $\begin{pmatrix}
    0 & 1\\
    \lambda_i' & 0
\end{pmatrix}$ respectively (where we view these matrices as acting on the right). Using that $f_F=f_{F\omega_i^{-1}}\cdot\omega$, cf.\ \Cref{action_Omega_faces} and \Cref{basic_htpy_lemma}, we deduce that $A=B^{-1}AB'$. Looking at each $2\times2$-block, this tells us that
$$
\begin{pmatrix}
    \mu_\mathbf{j} & 0\\
    0 & \mu_\mathbf{j}'
\end{pmatrix}=\begin{pmatrix}
    0 & \lambda_i^{-1}\\
    1 & 0
\end{pmatrix}\begin{pmatrix}
    \mu_\mathbf{j} & 0\\
    0 & \mu_\mathbf{j}'
\end{pmatrix}\begin{pmatrix}
    0 & 1\\
    \lambda_i' & 0
\end{pmatrix}=\begin{pmatrix}
    \mu_\mathbf{j}'\lambda_i'\lambda_i^{-1} & 0\\
    0 & \mu_\mathbf{j}
\end{pmatrix}
$$
for all $\mathbf{j}\subseteq [r]_i$. We deduce that $\lambda_i=\lambda_i'$ as required.
\end{proof}

\begin{rem}\label{not_apply}
Assuming that $(n_1, \ldots, n_r)\neq(2,\ldots, 2)$, note that \Cref{case_d_equal_2} is the only situation where we could not appeal to \Cref{imperfect_theorem}. Indeed, by the description of $\chi$ as given in the proof we obtain from \Cref{Koziol_projective_face}(ii) that a face $F\subseteq\overline{C}$ satisfies that $\chi|_{\He_F}$ is not projective if and only if $S_F$ contains two adjacent nodes in the affine Dynkin diagram. If those belong to $S_i$, say, then $F\omega_i^2\neq F$ and we deduce that $\widetilde{\Omega}_\chi\nsubseteq\widetilde{\Omega}_F$.
\end{rem}

Using all of the above, we can finally establish a classification theorem for isomorphism classes in $\Ho(\He)$ of simple supersingular $\He$-modules.

\begin{thm}\label{last_thm}
Suppose that $\mathbb{G}=\mathrm{GL}_{n_1}\times \cdots\times \mathrm{GL}_{n_r}\times \mathbb{T}'$, where $n_1\geq\ldots\geq n_r\geq 2$ and $\mathbb{T}'\cong \mathbb{G}_m^l$ is a split torus. Let $\m=(\chi\otimes V)\otimes_{\He_\chi}\He$ and $\m'=(\chi'\otimes V')\otimes_{\He_{\chi'}}\He$ be two simple supersingular $\He$-modules of infinite projective dimension over $\He$, where $\chi=(J, \xi)$ and $\chi'=(J', \xi')$.
\begin{enumerate}
\item Suppose that $(n_1, n_2, \ldots, n_r)\neq(3,2,\ldots, 2)$. Then $\m\cong \m'$ in $\Ho(\He)$ if and only if $\m\cong \m'$ in $\Mod(\He)$.
\item Suppose that $(n_1, n_2, \ldots, n_r)=(3,2,\ldots, 2)$ and $\m\not\cong \m'$ in $\Mod(\He)$. Then $\m\cong \m'$ in $\Ho(\He)$ if and only if $\xi=\xi'$, $S=S_\xi$, $J'\subseteq J$ such that $\abs{J\cap S_1}=2$ and $\abs{J'\cap S_1}=1$ (up to permuting $\m$ and $\m'$), and $V\cong V'$ as representations of $\widetilde{\Omega}_\chi=\widetilde{\Omega}_{\chi'}$.
\end{enumerate}
\end{thm}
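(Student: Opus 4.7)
The plan is to proceed via a case analysis driven by \Cref{same_character} and the structure of the stabiliser $\widetilde{\Omega}_\chi$ described in \Cref{stab_chi} and \Cref{d_i_bigger_1}. Throughout, I assume $\m \cong \m'$ in $\Ho(\He)$ and aim to deduce either $\m \cong \m'$ in $\Mod(\He)$ or the exceptional configuration described in part (ii). The key observation is that the exceptional configuration of \Cref{characters_Haff_Ho}(i) requires $\Phi_1$ of rank two and $\Phi_i$ of rank one for $i \geq 2$, so for our $\mathbb{G}$ it occurs precisely when $(n_1,\ldots,n_r) = (3,2,\ldots,2)$.

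For part (i), since we are outside the exceptional case, \Cref{same_character} forces $\chi$ and $\chi'$ to be $\widetilde{\Omega}$-conjugate, and after replacing $(\chi',V')$ by a suitable conjugate we may assume $\chi = \chi'$. If $S_\xi \neq S$, \Cref{case_S_not_Sxi} concludes. Otherwise \Cref{d_i_bigger_1} gives $\widetilde{\Omega}_\chi = \widetilde{\Omega}_{T'} \rtimes (\langle \hat{\omega}_1^{d_1}\rangle \times \cdots \times \langle \hat{\omega}_r^{d_r}\rangle)$ with $1 < d_i \mid n_i$. If every $d_i = 2$ (so every $n_i$ is even), \Cref{case_d_equal_2} concludes. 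Otherwise some $d_i > 2$, and \Cref{case_d_neq_2} (applied with $d = d_i$) yields a face $F \subseteq \overline{C}$ such that $\chi|_{\He_F}$ is non-projective and $\Omega_F = \langle \omega_i^{d_i}\rangle \times \Omega_{T'} \times \prod_{j \neq i}\Omega_j$, which visibly contains $\Omega_\chi$. The disjunctive hypothesis of \Cref{imperfect_theorem} is irrelevant because $(n_1,\ldots) \neq (3,2,\ldots,2)$, so \Cref{imperfect_theorem} yields $\m \cong \m'$ in $\Mod(\He)$; the converse is immediate.

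For the forward direction of part (ii), I first rule out the possibility that $\chi, \chi'$ are $\widetilde{\Omega}$-conjugate. If they were, we could arrange $\chi = \chi'$ and run the same strategy: since $d_1 \mid 3$ and $d_1 > 1$ force $d_1 = 3$, \Cref{case_d_neq_2} with $d = 3$ produces a face $F$ with $\widetilde{\Omega}_\chi \subseteq \widetilde{\Omega}_F$, and \Cref{imperfect_theorem} still applies because its second disjunctive clause is vacuously satisfied when $\chi = \chi'$ (the antecedent $\chi(T_{\hat{s}}) \neq \chi'(T_{\hat{s}})$ fails for every $s$), contradicting $\m \not\cong \m'$ in $\Mod(\He)$. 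Hence $\chi, \chi'$ must fit the configuration of \Cref{characters_Haff_Ho}(i) by \Cref{same_character}, which directly yields $\xi = \xi'$, $S_\xi = S$, and the stated containment $J' \subseteq J$ with $|J \cap S_1| = 2, |J' \cap S_1| = 1$. A direct computation then shows $d_1 = 3$ and $d_j = 2$ for $j \geq 2$ for both $\chi$ and $\chi'$, so $\widetilde{\Omega}_\chi = \widetilde{\Omega}_{\chi'}$. To extract $V \cong V'$, take the face $F$ with $S_F = \{s', s''\}$ consisting of the two elements of $S_1$ on which $\chi$ and $\chi'$ agree with distinct values; then $\chi|_{\He_F}$ is non-projective by \Cref{Koziol_projective_face}(ii), the cyclic action of $\omega_1$ on $S_1$ and the triviality of the $\omega_j$-action on $S_1$ for $j \geq 2$ give $\widetilde{\Omega}_\chi \subseteq \widetilde{\Omega}_F$, and \Cref{Omega_iso} concludes.

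For the reverse direction of (ii), I would use the product decomposition $G = \mathrm{GL}_3 \times G_2$ with $G_2 = \mathrm{GL}_2^{r-1} \times \mathbb{T}'$, giving $\He \cong \He^{(1)} \otimes_k \He^{(2)}$ and $\widetilde{\Omega}_\chi = \widetilde{\Omega}_{\chi_1} \times \widetilde{\Omega}_{\chi_2}$, so that $\m = \m_1 \otimes_k \m_2$ and $\m' = \m'_1 \otimes_k \m'_2$. Since the relevant representations $V_1, V_2, V'_1, V'_2$ are one-dimensional, the hypothesis $V \cong V'$ factors as $V_1 \cong V'_1$ and $V_2 \cong V'_2$, giving $\m_2 \cong \m'_2$ in $\Mod(\He^{(2)})$ together with the identification of $(\m_1, \m'_1)$ as the pair of simple supersingular $\He^{(1)}$-modules of \Cref{GL3_exception}, hence $\m_1 \cong \m'_1$ in $\Ho(\He^{(1)})$. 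The root system of $G_2$ is of type $A_1^{r-1}$ and $S_{\xi_2} = S_2 \cup \cdots \cup S_r$, so \Cref{Koziol_ss_thm} gives $\pd_{\He^{(2)}}(\m_2) < \infty$, and \Cref{tensor_lemma} yields the desired $\m \cong \m'$ in $\Ho(\He)$. The main obstacle I anticipate is the careful verification that the extra disjunctive hypothesis of \Cref{imperfect_theorem} is automatically satisfied in the $\chi = \chi'$ subcase of the forward argument for (ii), together with the bookkeeping needed to match tensor factorisations of $V, V'$ with the product decomposition of $\widetilde{\Omega}_\chi$ in the reverse direction.
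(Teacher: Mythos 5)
Your treatment of part (i) and of the reverse direction of part (ii) follows the same route as the paper and is correct. The ruling-out of the $\widetilde{\Omega}$-conjugate case in the forward direction of (ii) is also fine, and you are right that the second bullet of the disjunctive hypothesis in \Cref{imperfect_theorem} holds vacuously when $\chi=\chi'$.

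However, there is a genuine gap in your forward argument for part (ii). After establishing that $\chi$ and $\chi'$ match the configuration of \Cref{characters_Haff_Ho}(i), you invoke \Cref{Omega_iso} to conclude $V\cong V'$. But \Cref{Omega_iso} is stated (and proved) only for two modules $\m=(\chi\otimes V)\otimes_{\He_\chi}\He$ and $\m'=(\chi\otimes V')\otimes_{\He_\chi}\He$ built from the \emph{same} supersingular character $\chi$. In the situation you are handling, $\chi$ and $\chi'$ are distinct and not even $\widetilde{\Omega}$-conjugate, so the hypotheses of \Cref{Omega_iso} are not met. The issue is not cosmetic: the proof of \Cref{Omega_iso} hinges on the fact that $\Ho(\Res^\He_{\He_F})(g)$ is block-diagonal with respect to the decomposition $\Res^\He_{\He_\chi}(\m)\cong\bigoplus_{\tilde\omega}\chi^{\tilde\omega}\otimes V^{\tilde\omega}$, which in turn relies on $[\chi^{\tilde\omega},\chi^{\tilde\omega'}]_{\He_F}=0$ for $\tilde\omega\neq\tilde\omega'$ via \Cref{characters_Haff_Ho}(ii) and \Cref{char_not_conj}. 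When $\chi\neq\chi'$ are as in \Cref{characters_Haff_Ho}(i), this diagonality fails. This can be seen concretely in \Cref{GL3_exception}: the isomorphism $\Res^\He_{\He_x}(\m)\cong\Res^\He_{\He_x}(\m')$ constructed there matches the summand $\chi_2=\chi^{\hat\omega^2}$ of $\m$ with the summand $\chi_1'=(\chi')^{\hat\omega}$ of $\m'$, i.e.\ it is a genuine off-diagonal map. The paper circumvents this entirely by first proving the reverse direction and then running a transitivity argument: writing $\m=(\chi,\underline{\lambda},\underline{\nu})$ and $\m'=(\chi',\underline{\lambda'},\underline{\nu'})$, one uses the reverse direction to obtain $(\chi',\underline{\lambda'},\underline{\nu'})\cong(\chi,\underline{\lambda'},\underline{\nu'})$ in $\Ho(\He)$, so that $(\chi,\underline{\lambda},\underline{\nu})\cong(\chi,\underline{\lambda'},\underline{\nu'})$ in $\Ho(\He)$, and then the already-established conjugate case (your (5.1)-style claim) forces $\underline{\lambda}=\underline{\lambda'}$ and $\underline{\nu}=\underline{\nu'}$. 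Your argument, as written, does not reach this conclusion.
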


\begin{proof}
We write $\widetilde{\Omega}_\chi=\widetilde{\Omega}_{T'}\rtimes(\langle \tilde{\omega}_1^{d_1}\rangle\times \cdots \times \langle \tilde{\omega}_r^{d_r}\rangle)$ and $\widetilde{\Omega}_{\chi'}=\widetilde{\Omega}_{T'}\rtimes(\langle \tilde{\omega}_1^{d'_1}\rangle\times \cdots \times \langle \tilde{\omega}_r^{d'_r}\rangle)$. For $1\leq i\leq r$, let $\lambda_i, \lambda_i'\in k^\times$ for the scalars by which $\tilde{\omega}_i^{d_i}$ and $\tilde{\omega}_i^{d_i'}$ act on $V$ and $V'$ respectively. Similarly, for each $1\leq j\leq l$, we write $\nu_j$ and $\nu_j'$ for the scalar by which $\hat{\omega}_{T',j}$ acts on $V$ and $V'$ respectively. Note that $\pd_\Hea(\chi)=\pd_\Hea(\chi')=\infty$ by \Cref{Koziol_ss_thm}.

We first claim that the following holds:
\begin{equation}\label{last_claim}
\text{If $\chi$ and $\chi'$ are $\widetilde{\Omega}$-conjugate, then $\m\cong \m'$ in $\Ho(\He)\iff\m\cong \m'$ in $\Mod(\He)$.}
\end{equation}
One direction being clear, we suppose that $\m\cong \m'$ in $\Ho(\He)$ and we note that, replacing the pair $(\chi', V')$ by some conjugate if necessary, we may in fact assume that $\chi=\chi'=(J, \xi)$, say.

If $S\neq S_\xi$ then \Cref{case_S_not_Sxi} implies our claim, so that we only need to consider the case $S=S_\xi$. If $d_i>2$ for some $1\leq i\leq r$ then \Cref{case_d_neq_2} and \Cref{imperfect_theorem} together imply that $\m\cong \m'$ in $\Mod(\He)$. If $d_i=2$ for all $1\leq i\leq r$, we get $\m\cong \m'$ in $\Mod(\He)$ from \Cref{case_d_equal_2}. This concludes the proof of \eqref{last_claim}.

When $(n_1, n_2, \ldots, n_r)\neq(3,2,\ldots, 2)$, \Cref{same_character} implies that $\chi$ and $\chi'$ must be $\widetilde{\Omega}$-conjugate for $\m$ and $\m'$ to be isomorphic in $\Ho(\He)$. Thus (i) follows immediately from \eqref{last_claim}. We are therefore left to consider the case where $(n_1, n_2, \ldots, n_r)=(3,2,\ldots, 2)$ and $\chi, \chi'$ are not $\widetilde{\Omega}$-conjugate. Then \Cref{same_character} again implies that, up to permuting $\m$ and $\m'$, we must have $\chi$ and $\chi'$ as in the statement in order for $\m$ and $\m'$ to be isomorphic in $\Ho(\He)$. Note that we then have $\widetilde{\Omega}_\chi=\widetilde{\Omega}_{\chi'}=\widetilde{\Omega}_{T'}\rtimes (\langle \hat{\omega}_1^3\rangle\times \langle \hat{\omega}_2^2\rangle\times\cdots \times \langle \hat{\omega}_r^2\rangle)$.

We must finally show that under these assumptions on $\chi$ and $\chi'$, $\m\cong \m'$ in $\Ho(\He)$ if and only if $\lambda_i=\lambda_i'$ and $\nu_j=\nu_j'$ for all $1\leq i\leq r$ and all $1\leq j\leq l$. Assume first that these equalities all hold. We argue similarly to the proof of \Cref{characters_Haff_Ho}. By the product decomposition of $\mathbb{G}$, we have an algebra isomorphism $\He\cong \He_1\otimes_k\He_{\geq 2}$ where $\He_1$ is the pro-$p$ Iwahori-Hecke algebra of $\mathrm{GL}_3(\mathfrak{F})$ and $\He_{\geq 2}$ is the pro-$p$ Iwahori-Hecke algebra of $\mathrm{GL}_2(\mathfrak{F})\times\cdots\times \mathrm{GL}_2(\mathfrak{F})\times T'$ (with $r-1$ $\mathrm{GL}_2$-factors). There is a similar decomposition $\Hea=\He_{\aff, 1}\otimes_k\He_{\aff, \geq 2}$ and correspondingly we have $\chi=\chi_1\otimes_k \chi_{\geq 2}$. Also, we have a decomposition $\widetilde{\Omega}_\chi=\widetilde{\Omega}_{\chi,1}\times \widetilde{\Omega}_{\chi,\geq 2}$, and we may thus write $V=V_1\otimes_k V_{\geq 2}$ with $V_1=kv_1$ and $V_{\geq 2}=kv_{\geq 2}$ one-dimensional representations of $\widetilde{\Omega}_{\chi,1}$ and $\widetilde{\Omega}_{\chi,\geq 2}$ respectively. 

Write $v=v_1\otimes v_{\geq 2}\in V$. By considering the basis $\bigcup_{\mathbf{j}\subseteq [r]_1}\{v\otimes T_{\hat{\omega}_{\mathbf{j}}}, v\otimes T_{\hat{\omega}_{\mathbf{j}}}T_{\hat{\omega}_1}, v\otimes T_{\hat{\omega}_{\mathbf{j}}}T_{\hat{\omega}^2_1}\}$ of $\m$, we see by inspection that $\m\cong \m_1\otimes_k \m_{\geq 2}$ where $\m_1\cong(\chi_1\otimes V_1)\otimes_{\He_{\chi, 1}}\He_1$ and $\m_{\geq 2}\cong(\chi_{\geq 2}\otimes V_{\geq 2})\otimes_{\He_{\chi, \geq 2}}\He_{\geq 2}$. We note that these are simple supersingular modules of $\He_1$ and $\He_{\geq 2}$ respectively, and that $\pd_{\He_{\geq 2}}(\m_{\geq 2})<\infty$ by \Cref{Koziol_ss_thm}. Completely analogously, we have a decomposition $\m'\cong \m'_1\otimes_k \m'_{\geq 2}$ where the factors are simple supersingular modules over $\He_1$ and $\He_{\geq 2}$ respectively. Furthermore, by our assumptions on $\chi$ and $\chi'$ we have $\m_{\geq 2}\cong \m'_{\geq 2}$. We then deduce from \Cref{GL3_exception} and \Cref{tensor_lemma} that $\m\cong \m'$ in $\Ho(\He)$ as required.

Conversely, suppose that $\m\cong \m'$ in $\Ho(\He)$. Write $\underline{\lambda}=(\lambda_1, \ldots, \lambda_r)$,  $\underline{\lambda'}=(\lambda'_1, \ldots, \lambda'_r)$, $\underline{\nu}=(\nu_1, \ldots, \nu_l)$ and $\underline{\nu'}=(\nu'_1, \ldots, \nu'_l)$. The modules $\m$ and $\m'$ are determined uniquely, up to isomorphism, in $\Mod(\He)$ by the triples $(\chi, \underline{\lambda}, \underline{\nu})$ and $(\chi', \underline{\lambda'}, \underline{\nu'})$, and we write $\m=(\chi, \underline{\lambda}, \underline{\nu})$ and $\m'=(\chi', \underline{\lambda'}, \underline{\nu'})$ for short. Using this notation, we then have by the converse direction above that 
$$
(\chi, \underline{\lambda}, \underline{\nu})\cong (\chi', \underline{\lambda'}, \underline{\nu'})\cong (\chi, \underline{\lambda'}, \underline{\nu'})
$$
in $\Ho(\He)$. By \eqref{last_claim} this implies $\underline{\lambda}=\underline{\lambda'}$ and $\underline{\nu}=\underline{\nu'}$, as required.
\end{proof}

\bibliographystyle{abbrv}
\bibliography{bibforme}

\begin{thebibliography}{10}

\bibitem{Abe19}
N.~Abe.
\newblock Modulo {$p$} parabolic induction of pro-{$p$}-{I}wahori {H}ecke
  algebra.
\newblock {\em J. Reine Angew. Math.}, 749:1--64, 2019.

\bibitem{Abe19_2}
N.~Abe.
\newblock Parabolic inductions for pro-{$p$}-{I}wahori {H}ecke algebras.
\newblock {\em Adv. Math.}, 355:106776, 63, 2019.

\bibitem{Abe22}
N.~Abe.
\newblock Extension between simple modules of pro-$p$-{I}wahori {H}ecke
  algebras.
\newblock {\em J. Inst. Math. Jussieu}, 22(6):2775--2804, 2022.

\bibitem{AHV18}
N.~Abe, G.~Henniart, and M.-F. Vign\'{e}ras.
\newblock On pro-{$p$}-{I}wahori invariants of {$R$}-representations of
  reductive {$p$}-adic groups.
\newblock {\em Represent. Theory}, 22:119--159, 2018.

\bibitem{Stacks}
T.~S.~P. Authors.
\newblock {Stacks Project}, 2018.
\newblock \url{http://stacks.columbia.edu}.

\bibitem{BR}
D.~Barnes and C.~Roitzheim.
\newblock {\em Foundations of stable homotopy theory}, volume 185 of {\em
  Cambridge Studies in Advanced Mathematics}.
\newblock Cambridge University Press, Cambridge, 2020.

\bibitem{Beck14}
H.~Becker.
\newblock Models for singularity categories.
\newblock {\em Adv. Math.}, 254:187--232, 2014.

\bibitem{BruTit72}
F.~Bruhat and J.~Tits.
\newblock Groupes r\'{e}ductifs sur un corps local.
\newblock {\em Inst. Hautes \'{E}tudes Sci. Publ. Math.}, (41):5--251, 1972.

\bibitem{KD21}
N.~Dupr\'{e} and J.~Kohlhaase.
\newblock Model categories and pro-{$p$} {I}wahori-{H}ecke modules.
\newblock {\em J. Inst. Math. Jussieu}, 23(3):1029--1076, 2024.

\bibitem{EJ11}
E.~E. Enochs and O.~M.~G. Jenda.
\newblock {\em Relative homological algebra. {V}olume 1}, volume~30 of {\em De
  Gruyter Expositions in Mathematics}.
\newblock Walter de Gruyter GmbH \& Co. KG, Berlin, extended edition, 2011.

\bibitem{Gil11}
J.~Gillespie.
\newblock Model structures on exact categories.
\newblock {\em J. Pure Appl. Algebra}, 215(12):2892--2902, 2011.

\bibitem{Gil16_2}
J.~Gillespie.
\newblock Gorenstein complexes and recollements from cotorsion pairs.
\newblock {\em Adv. Math.}, 291:859--911, 2016.

\bibitem{GK16}
E.~Grosse-Kl\"{o}nne.
\newblock From pro-{$p$} {I}wahori-{H}ecke modules to
  {$(\varphi,\Gamma)$}-modules, {I}.
\newblock {\em Duke Math. J.}, 165(8):1529--1595, 2016.

\bibitem{HeVi19}
G.~Henniart and M.-F. Vign\'{e}ras.
\newblock Representations of a {$p$}-adic group in characteristic {$p$}.
\newblock In {\em Representations of reductive groups}, volume 101 of {\em
  Proc. Sympos. Pure Math.}, pages 171--210. Amer. Math. Soc., Providence, RI,
  2019.

\bibitem{Hirsch}
P.~S. Hirschhorn.
\newblock {\em Model categories and their localizations}, volume~99 of {\em
  Mathematical Surveys and Monographs}.
\newblock American Mathematical Society, Providence, RI, 2003.

\bibitem{HovBook}
M.~Hovey.
\newblock {\em Model categories}, volume~63 of {\em Mathematical Surveys and
  Monographs}.
\newblock American Mathematical Society, Providence, RI, 1999.

\bibitem{Hov2}
M.~Hovey.
\newblock Cotorsion pairs, model category structures, and representation
  theory.
\newblock {\em Math. Z.}, 241(3):553--592, 2002.

\bibitem{IwMa65}
N.~Iwahori and H.~Matsumoto.
\newblock On some {B}ruhat decomposition and the structure of the {H}ecke rings
  of {$p$}-adic {C}hevalley groups.
\newblock {\em Inst. Hautes \'{E}tudes Sci. Publ. Math.}, (25):5--48, 1965.

\bibitem{KaSch}
M.~Kashiwara and P.~Schapira.
\newblock {\em Categories and sheaves}, volume 332 of {\em Grundlehren der
  mathematischen Wissenschaften [Fundamental Principles of Mathematical
  Sciences]}.
\newblock Springer-Verlag, Berlin, 2006.

\bibitem{Koh}
J.~Kohlhaase.
\newblock Coefficient systems on the {B}ruhat-{T}its building and pro-{$p$}
  {I}wahori-{H}ecke modules.
\newblock {\em Mem. Amer. Math. Soc.}, 279(1374), 2022.

\bibitem{Koz}
K.~Kozio\l.
\newblock Homological dimension of simple pro-{$p$}-{I}wahori-{H}ecke modules.
\newblock {\em Math. Res. Lett.}, 26(3):769--804, 2019.

\bibitem{Kra10}
H.~Krause.
\newblock Localization theory for triangulated categories.
\newblock In {\em Triangulated categories}, volume 375 of {\em London Math.
  Soc. Lecture Note Ser.}, pages 161--235. Cambridge Univ. Press, Cambridge,
  2010.

\bibitem{MCR01}
J.~C. McConnell and J.~C. Robson.
\newblock {\em Noncommutative {N}oetherian rings}, volume~30 of {\em Graduate
  Studies in Mathematics}.
\newblock American Mathematical Society, Providence, RI, revised edition, 2001.
\newblock With the cooperation of L. W. Small.

\bibitem{Neeman}
A.~Neeman.
\newblock {\em Triangulated categories}, volume 148 of {\em Annals of
  Mathematics Studies}.
\newblock Princeton University Press, Princeton, NJ, 2001.

\bibitem{Nor79}
P.~N. Norton.
\newblock {$0$}-{H}ecke algebras.
\newblock {\em J. Austral. Math. Soc. Ser. A}, 27(3):337--357, 1979.

\bibitem{Oll09}
R.~Ollivier.
\newblock Le foncteur des invariants sous l'action du pro-{$p$}-{I}wahori de
  {${\rm GL}_2(F)$}.
\newblock {\em J. Reine Angew. Math.}, 635:149--185, 2009.

\bibitem{OS14}
R.~Ollivier and P.~Schneider.
\newblock Pro-{$p$} {I}wahori-{H}ecke algebras are {G}orenstein.
\newblock {\em J. Inst. Math. Jussieu}, 13(4):753--809, 2014.

\bibitem{OS18}
R.~Ollivier and P.~Schneider.
\newblock A canonical torsion theory for pro-{$p$} {I}wahori-{H}ecke modules.
\newblock {\em Adv. Math.}, 327:52--127, 2018.

\bibitem{OS19}
R.~Ollivier and P.~Schneider.
\newblock The modular pro-{$p$} {I}wahori-{H}ecke {E}xt-algebra.
\newblock In {\em Representations of reductive groups}, volume 101 of {\em
  Proc. Sympos. Pure Math.}, pages 255--308. Amer. Math. Soc., Providence, RI,
  2019.

\bibitem{OllVign18}
R.~Ollivier and M.-F. Vign\'{e}ras.
\newblock Parabolic induction in characteristic {$p$}.
\newblock {\em Selecta Math. (N.S.)}, 24(5):3973--4039, 2018.

\bibitem{Verdier}
J.-L. Verdier.
\newblock Des cat\'{e}gories d\'{e}riv\'{e}es des cat\'{e}gories
  ab\'{e}liennes.
\newblock {\em Ast\'{e}risque}, (239):xii+253 pp. (1997), 1996.
\newblock With a preface by Luc Illusie, Edited and with a note by Georges
  Maltsiniotis.

\bibitem{Vign5}
M.-F. Vign\'{e}ras.
\newblock The pro-{$p$} {I}wahori {H}ecke algebra of a reductive {$p$}-adic
  group, {V} (parabolic induction).
\newblock {\em Pacific J. Math.}, 279(1-2):499--529, 2015.

\bibitem{Vign1}
M.-F. Vigneras.
\newblock The pro-{$p$}-{I}wahori {H}ecke algebra of a reductive {$p$}-adic
  group {I}.
\newblock {\em Compos. Math.}, 152(4):693--753, 2016.

\bibitem{Vign16}
M.-F. Vign\'{e}ras.
\newblock The right adjoint of the parabolic induction.
\newblock In {\em Arbeitstagung {B}onn 2013}, volume 319 of {\em Progr. Math.},
  pages 405--425. Birkh\"{a}user/Springer, Cham, 2016.

\bibitem{Vign3}
M.-F. Vigneras.
\newblock The pro-{$p$}-{I}wahori {H}ecke algebra of a reductive {$p$}-adic
  group {III} (spherical {H}ecke algebras and supersingular modules).
\newblock {\em J. Inst. Math. Jussieu}, 16(3):571--608, 2017.

\end{thebibliography}

\end{document}